\DeclareMathOperator*{\tend}{\longrightarrow}
\DeclareMathOperator*{\D}{\rm{div}}
\DeclareMathOperator*{\wtend}{\,\, \rightharpoonup \,\,}
\DeclareMathOperator*{\limii}{\underline{\lim}}
\DeclareMathOperator*{\limss}{\overline{\lim}}
\theoremstyle{definition}
\newtheorem{defi}{Definition}[section]
\newtheorem{rmk}[defi]{Remark}
\theoremstyle{plane}
\newtheorem{thm}[defi]{Theorem}
\newtheorem{prop}[defi]{Proposition}
\newtheorem{cor}[defi]{Corollary}
\newtheorem{lemma}[defi]{Lemma}
\newcommand{\tsl}{\textsl}
\newcommand{\mc}{\mathcal}
\newcommand{\what}{\widehat}
\newcommand{\R}{\mathbb{R}}
\newcommand{\N}{\mathbb{N}}
\newcommand{\Z}{\mathbb{Z}}
\renewcommand{\P}{\mathbb{P}}
\newcommand{\curl}{{\rm curl}\,}
\newcommand{\dx}{ \, {\rm d} x}
\newcommand{\dt}{ \, {\rm d} t}
\newcommand{\B}{B^s_{\infty, r}}
\begin{document}

\newcommand{\dimitri}[1]{\textcolor{red}{[***DC: #1 ***]}}
\newcommand{\fra}[1]{\textcolor{blue}{[***FF: #1 ***]}}

\title{\Large{\textbf{\textsc{On the Well-Posedness of a Fractional Stokes-Transport System}}}}

\author{\normalsize\textsl{Dimitri Cobb}\vspace{.5cm} \\
\footnotesize{\textsc{Universität Bonn}} \\
{\footnotesize \it Mathematisches Institut} \vspace{.1cm} \\
{\footnotesize Endenicher Allee 60, 53115 Bonn, Germany} \vspace{.1cm} \\
\footnotesize{\ttfamily{cobb@math.uni-bonn.de}}
}

\vspace{.2cm}
\date\today

\date\today

\maketitle

\subsubsection*{Abstract}
{\footnotesize The purpose of this paper is to study the existence, uniqueness and lifespan of solutions for a fractional Stokes-Transport system. This problem should be understood as a model for sedimentation in a fluid where the viscosity law is given by a fractional Lapalce operator $(- \Delta)^{\alpha/2}$, with $\alpha = 2$ corresponding to the case of a normal viscous fluid, and $\alpha = 0$ reducing the problem to the Inviscid Incompressible Porous Media equation.
For each value of $\alpha \in [0, d]$, we prove various results related to well-posedness in critical function spaces, such as the existence of global weak solutions (for $\alpha > 0$), local existence and uniqueness (for $\alpha \geq 0$), global existence and uniqueness (for $\alpha \geq 1$), as well as study the lifespan of local solutions (for $0 \leq \alpha < 1$).
In particular, we show that gravity stratification leads to a directional blow-up criterion for local solutions (for $\alpha \in [0, 1[$) and find a lower bound for the lifespan of solutions which depends on the value of the dissipation parameter $\alpha \in [0, 1[$.
}

\paragraph*{\small 2020 Mathematics Subject Classification:}{\footnotesize 35Q35
(primary); 
76D03, 
35Q49, 
35S10, 
76D50  
(secondary).}

\paragraph*{\small Keywords: }{\footnotesize Stokes-Transport System, Fractional Laplacian, Incompressible Fluids, Critical Regularity, Lifespan of Solutions.}

\section{Introduction}

In this article, we study the existence, uniqueness and lifespan of solutions for the initial value problem related to the following active scalar equation:
\begin{equation}\label{ieq:AIPM}
\begin{cases}
\big( \partial_t + u \cdot \nabla \big) \rho = 0\\
(- \Delta)^{\alpha/2}u + \nabla \pi = \rho g\\
\D(u) = 0.
\end{cases}
\end{equation}
Here, the equations are set in the whole space $\R^d$ with $d \geq 2$, although our results could certainly be extended to the torus $\mathbb{T}^d$ with minimal adaptations. The unknowns are the scalar function $\rho(t, x) \in \R$ which represents a particle density, the vector field $u(t, x) \in \R^d$ which is a fluid velocity, and a scalar pressure function $\pi(t,x) \in \R$. In the second equation, $g = e_d = (0, ..., 0, 1) \in \R^d$ is a constant vector and $\alpha \geq 0$ a real number. The fractional Laplace operator $(- \Delta)^{\alpha/2}$ is is defined by its Fourier transform:
\begin{equation*}
    \forall f \in \mc S, \qquad \mc F [(- \Delta)^{\alpha/2} f ](\xi) = |\xi|^s \what{f}(\xi).
\end{equation*}

The physical background for the equations of \eqref{ieq:AIPM} is the study of the sedimentation of a cloud of particles under the effect of gravity and lying in a fluid with a generalized --fractionary-- viscosity law. As a matter of fact, the special cases $\alpha = 2$ and $\alpha = 0$, where the second equation then reduces to the Stokes equation and Darcy's law,
\begin{equation*}
-\Delta u + \nabla \pi = \rho g \qquad \text{and} \qquad u + \nabla \pi = \rho g,
\end{equation*}
correspond, respectively, to the Stokes-transport equation derived by Höfer \cite{Hofer2018} as the homogenization limit of particle system in a Stokes fluid, and the Inviscid Incompressible Porous Medium equation, studied in \textsl{e.g.} \cite{Elgindi}.

\medskip

The main motive for considering a fractional viscosity law is to better understand the mechanisms that provide existence and uniqueness of solutions in the special cases: when $\alpha = 2$, the initial value problem is known to have global solutions (this will be further discussed below), whereas even the existence of global weak solutions is challenging in the extreme, in the case $\alpha = 0$. Although the study of well-posedness for problem \eqref{ieq:AIPM} is interesting in itself and already presents a number of issues, it is also meant to contribute to a better understanding the ``classical'' problems $\alpha \in \{0, 2\}$. In particular, investigating the questions related to well-posedness for all intermediate values $\alpha \in [0, 2]$ will allow us to better isolate the difficulties when dealing with well-posedness when $\alpha = 0$.

To be precise, we will investigate the existence of global weak solutions (for $\alpha > 0$), local well-posedness (for $\alpha \geq 0$), global well-posedness (for $\alpha \geq 1$), as well as properties of the lifespan of the local solutions (for $0 \leq \alpha < 1$) such as lower bounds and continuation criteria.

\subsection{Broader Context: Active Scalar Equations}

The fractional Stokes-Transport equation is a part of a larger family of PDE problems which have been studied extensively in the past years, namely active scalar equations. These take the following form: the unknown is a scalar quantity $\rho(t, x)$ that is advected by a vector field $u(t, x)$, thus solving the transport equation
\begin{equation*}
\partial_t \rho + u \cdot \nabla \rho = 0,
\end{equation*}
and the vector field is given as a linear, and possibly non-local, function of $\rho$. In our case, it is fairly easy to see how $u$ can be expressed as a function of $\rho$: define the Leray projection operator $\P = {\rm Id} + \nabla (- \Delta)^{-1} \D$, which is the $L^2$-orthogonal projection on the space of divergence-free vector fields. Applying this operator to the second equation in \eqref{ieq:AIPM} gives
\begin{equation}\label{ieq:fracLapOp}
u = (- \Delta)^{- \alpha / 2} \P (\rho g).
\end{equation}
Many other equations fall into this general form. The best known example is probably the 2D Euler equations in their vorticity form, where the velocity field is given by
\begin{equation*}
u = -\nabla^\perp (- \Delta)^{-1} \rho
\end{equation*}
and $\rho$ represents the (scalar) vorticity of the fluid. Similarly, one may think of the Surface Quasi-Geostrophic (SQG) equation, where $u$ is given by $u = -\nabla^\perp (- \Delta)^{-1/2} \rho$. Both the 2D Euler equation and the SQG problem are instances of the so-called generalized SQG equations (gSGQ in the literature), where the velocity law is given by 
\begin{equation*}
u = \nabla^\perp (- \Delta)^{- \beta/2} m(D) \rho,
\end{equation*}
and $\beta \in \R$ and $m(D)$ is a positive Fourier multiplication operator, such as $m(D) = \log({\rm Id} - \Delta)^\mu$ with $\mu \in \R$ (see \cite{CCCGW} for an example of such a velocity law, or \cite{CW} in the case of a generalized Boussinesq equation). Many such variants exists, but the unifying feature of gSQG equations is the presence of the orthogonal gradient $\nabla^\perp$ in the velocity law, so that $u$ is the image of $\rho$ by a skew-symmetric operator $\nabla^\perp (- \Delta)^{- \beta/2}m(D)$. This endows the non-linearity $u \cdot \nabla \rho$ with a commutator structure and allows for solutions to exist even when the velocity should be \textsl{more} singular than the unknown $0 \leq \beta < 1$ (see \cite{Marchand} or \cite{CCCGW} for explanations). Because the Leray projection operator $\P$ is symmetric, this is not true in the case of \eqref{ieq:fracLapOp}, even in 2D when $\P = \nabla^\perp (- \Delta)^{-1} \curl$.

\medskip

Another type of velocity law consists in abandoning the world of incompressible fluids and using velocity fields that are not divergence free. For example, one may take $u = - \nabla (- \Delta)^{- \beta/2} \rho$, as in \cite{Chae2014}, where the problem has its own particularities. Specially, in the case $\beta = 0$, it reduces to the Hamilton-Jacobi equation
\begin{equation*}
\partial_t \rho + |\nabla \rho|^2 = 0.
\end{equation*}
Other examples would include Burgers equation $u = \rho$ (when $d=1$), magnetogeostrophic equations, or agregation equations (see \cite{BGb} or \cite{BK} for examples).

\medskip

Many of these equations have their own physical relevance, but a number of them are also studied as toy-models for more complex equations: thus the case $u = H\rho$ in dimention $d=1$, where $H$ is the Hilbert transform, is sometimes understood as a model for the 3D Euler equations. On a different note, we point out the proximity between active scalar equations and the 2D non-resistive incompressible magnetohydrodynamics system, which reads
\begin{equation*}
\begin{cases}
\partial_t a + u \cdot \nabla a = 0\\
\big( \partial_t + u \cdot \nabla - \Delta \big) u + \nabla P = (b \cdot \nabla)b\\
\D(u) = 0\\
b = \nabla^\perp a,
\end{cases}
\end{equation*}
where $u(t, x) \in \R^d$ is the velocity of the fluid, $b(t,x)$ is the magnetic field, $P(t,x)$ the MHD pressure and $a(t, x) \in \R$ is the (scalar) magnetic potential. Here, the velocity is no longer given as a linear function of the advected scalar quantity, but as the solution of a (non-linear) Navier-Stokes problem. This last system concentrates huge difficulties, and even existence of global weak solutions remains unknown, despite progess on the front of local well-posedness (see \cite{FMRR1} or \cite{LTY} for instance). The study of the simpler active scalar equations can also be seen as a first step to understand these more sophisticated models.

\subsection{Presentation of the Main Results}

In this paragraph, we give an overview of our main results as well as some of the main ideas involved in the proofs. At the same time, we attempt a comparision with other existing results in the literature.

\subsubsection{Global Weak Solutions: $\alpha > 0$}

Our first result concerns the existence of global weak solutions in Lebesgue spaces when $\alpha > 0$ (see Theorem \ref{t:globalWeakSol}).

\begin{thm}\label{it:globalWeakSol}
Consider $\alpha \in ]0, d[$ and an exponent $p$ such that
\begin{equation}\label{ieq:pCondition}
\frac{2}{1 + \frac{\alpha}{d}} < p < \frac{d}{\alpha}.
\end{equation}
For all $\rho_0 \in L^p(\R^d)$, there exists a global weak solution $\rho_0 \in L^\infty(\R_+ ; L^p(\R^d))$ of \eqref{ieq:AIPM} associated to the initial datum $\rho_0$. 
\end{thm}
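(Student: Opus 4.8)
The plan is to construct the solution as the limit of solutions to a regularized problem. The crucial structural observation is that the compactness needed for the limiting procedure cannot come from the scalar $\rho$ itself — the transport equation offers no smoothing — but must be extracted from the velocity field, which by \eqref{ieq:fracLapOp} is $\alpha$ derivatives more regular than $\rho$.

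Concretely, with $(J_n)_n$ a standard mollifier, I would study the regularized system
\begin{equation*}
\partial_t \rho^n + (J_n*u^n)\cdot\nabla\rho^n = 0,\qquad u^n = (-\Delta)^{-\alpha/2}\P(\rho^n g),\qquad \rho^n|_{t=0} = J_n*\rho_0 ,
\end{equation*}
possibly with an extra vanishing diffusion $-\tfrac1n\Delta\rho^n$ or a spatial cut-off to ensure clean global solvability; neither affects what follows. The transporting velocity $J_n*u^n$ is smooth in $x$ and divergence-free, so by a standard fixed-point argument the system has a global solution, and since the flow is measure-preserving, every Lebesgue norm is conserved: $\|\rho^n(t)\|_{L^p}=\|J_n*\rho_0\|_{L^p}\le\|\rho_0\|_{L^p}$ for all $t\ge0$. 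Combining this with the Hardy--Littlewood--Sobolev inequality (valid because $0<\alpha<d$) and the $L^p$-boundedness of $\P$ — note that \eqref{ieq:pCondition} forces $p>1$ since $\alpha<d$ — one gets $\|u^n(t)\|_{L^q}\lesssim\|\rho_0\|_{L^p}$ with $\tfrac1q=\tfrac1p-\tfrac\alpha d$, where $q<\infty$ precisely because $p<d/\alpha$. Hence $\rho^n$ is bounded in $L^\infty(\R_+;L^p)$, $u^n$ in $L^\infty(\R_+;L^q)$, and the flux $(J_n*u^n)\rho^n$ in $L^\infty(\R_+;L^r)$ with $\tfrac1r=\tfrac1p+\tfrac1q=\tfrac2p-\tfrac\alpha d$; the lower bound on $p$ in \eqref{ieq:pCondition} is exactly the statement that $r>1$. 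From the equation, $\partial_t\rho^n$ is bounded in $L^\infty(\R_+;W^{-1,r})$, and since $u^n=(-\Delta)^{-\alpha/2}\P(\rho^n g)$, the $u^n$ are bounded in space in the homogeneous Riesz-potential space $(-\Delta)^{-\alpha/2}L^p(\R^d)$ while $\partial_t u^n$ is bounded in a fixed negative-order space, locally in $x$.

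Next comes the compactness step. On any ball $B$ the space $(-\Delta)^{-\alpha/2}L^p$ embeds compactly into $L^{q'}(B)$ for every subcritical $q'<q$ (Rellich--Kondrachov), and it is here that the \emph{strict} inequality $r>1$, i.e. $\tfrac1p+\tfrac1q<1$, is used: it leaves room to pick $q'$ slightly below $q$ with still $\tfrac1p+\tfrac1{q'}\le1$ and $q'\ge p'$ (equivalently, $q>p'$). A standard Aubin--Lions--Simon argument, applied on balls and then diagonalized over exhausting balls and times, then yields — along a subsequence — $u^n\to u$ strongly in $C_{\rm loc}(\R_+;L^{q'}_{\rm loc})$, while $\rho^n\rightharpoonup\rho$ weak-$*$ in $L^\infty(\R_+;L^p)$ and $u=(-\Delta)^{-\alpha/2}\P(\rho g)$ by linearity and boundedness of the operator. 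The flux then passes to the limit in $\mathcal D'$: writing $(J_n*u^n)\rho^n-u\rho=(J_n*u^n-u)\rho^n+u(\rho^n-\rho)$, the first term tends to $0$ in $L^1_{\rm loc}$ by Hölder (strong convergence of the velocity against the uniform $L^p$ bound on $\rho^n$), and the second tends to $0$ weakly in $L^1_{\rm loc}$ (weak convergence of $\rho^n$ tested against $u\psi\in L^{p'}_{\rm loc}$ for $\psi\in L^\infty$ compactly supported). Feeding this, together with $J_n*\rho_0\to\rho_0$ in $L^p$, into the weak formulation of the regularized equation and letting $n\to\infty$ produces a global weak solution $\rho\in L^\infty(\R_+;L^p)$ of \eqref{ieq:AIPM}.

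The main obstacle, and the heart of the matter, is precisely this compactness step: because transport dynamics does not regularize $\rho$, the scalar is only weakly compact, and the nonlinear flux $\rho u$ can be passed to the limit only because \eqref{ieq:fracLapOp} endows $u^n$ with genuine strong compactness, and because the integrability window in \eqref{ieq:pCondition} is tuned so that $\rho$ (weak) times $u$ (strong) pairs in $L^1_{\rm loc}$ with a margin — that margin being exactly the strictness of \eqref{ieq:pCondition}. A secondary, more routine point requiring care is setting up a regularized problem that is genuinely globally solvable while retaining all the uniform bounds above, for which the conservation of $L^p$ norms under the mollified divergence-free transport is the decisive fact.
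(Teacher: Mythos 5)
Your argument is sound in its overall architecture and reaches the same conclusion as the paper, but through a noticeably different toolbox. Where you use the Hardy--Littlewood--Sobolev inequality and Calder\'on--Zygmund bounds to place $u^n$ in $L^q$ with $\tfrac1q=\tfrac1p-\tfrac\alpha d$, and then Rellich--Kondrachov plus Aubin--Lions--Simon on balls to get strong local compactness of the velocity, the paper works entirely in the Littlewood--Paley calculus: it splits $u_N=\Delta_{-1}u_N+({\rm Id}-\Delta_{-1})u_N$, bounds the high frequencies in $B^\alpha_{p,\infty}$, controls the product $\rho_N u_N$ via a Bony-decomposition lemma (Lemma \ref{l:LQproduct}), and gets time compactness from an interpolation/H\"older-in-time estimate. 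Both routes exploit the same two structural facts you correctly identify: the compactness must come from the $\alpha$-smoothing of the velocity law, and the strictness of both inequalities in \eqref{ieq:pCondition} is exactly the margin needed to pair the weakly convergent $\rho^n$ against the strongly convergent velocity in $L^1_{\rm loc}$. Your identification of the limit velocity by weak-weak continuity of the bounded linear map $L^p\to L^q$ is in fact slightly cleaner than the paper's Fourier-side argument, and the HLS viewpoint automatically resolves the low-frequency singularity of $(-\Delta)^{-\alpha/2}$ that the paper handles through $\mc S'_h$ and the homogeneous decomposition. The paper's regularization is also different in kind: a Friedrichs spectral truncation producing first $L^2$ solutions (Proposition \ref{p:L2weakSol}), then a Littlewood--Paley cut-off of the velocity for general $p$, versus your physical-space mollification in one stage.

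The one step you cannot dispatch as written is the global solvability of the regularized system by ``a standard fixed-point argument.'' Mollifying the velocity makes the transport field smooth, but the map $\rho\mapsto -(J_n*u^n)\cdot\nabla\rho$ is not locally Lipschitz on $L^p$: stability of transport solutions under perturbation of the velocity produces a term $(v_1-v_2)\cdot\nabla\rho_2$, and $\nabla\rho_2$ is not controlled when the datum is only $L^p$. This is precisely why the paper replaces the fixed point by a Friedrichs scheme, where the spectral projector $A_n$ turns the whole right-hand side into a locally Lipschitz map on $L^2$ and Cauchy--Lipschitz applies. Your parenthetical fix --- adding a vanishing viscosity $-\tfrac1n\Delta\rho^n$ --- is not optional but essential if you want a genuine contraction (via Duhamel in $C_T(L^p)$), and you then owe the reader the parabolic $L^p$-decay estimate that keeps the approximate solutions global and the observation that the extra term vanishes in $\mc D'$. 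With that repair made explicit, the proof goes through.
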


\begin{rmk}
The reader will note that an important case is not covered by Theorem \ref{it:globalWeakSol}, that is $\alpha = d = 2$, where \eqref{ieq:AIPM} becomes the 2D Stokes-Transport system for which global well-posedness is known in bounded domains and in the infinite strip $\R \times ]0, 1[$, \cite{Leblond}, or for compactly supported solutions \cite{Grayer}. But this does not really affect much the significance of our result: when $\alpha = d$, we will see that the system in fact has \textsl{global and unique} solutions (see Theorem \ref{it:globalWPBesov} below). The same remark applies for the statements pertaining to local solutions, Theorems \ref{it:localWP}, \ref{it:ContCrit} and \ref{it:lifespanIncrease} below.
\end{rmk}

This result is based on one of the essential distinctive features of our problem \eqref{ieq:AIPM}, that the velocity field is divergence free $\D(u) = 0$. This implies that we have at our disposal the conservation of all $L^p$ norms, namely, provided the velocity field is regular enough,
\begin{equation*}
\| \rho(t) \|_{L^p} = \| \rho_0 \|_{L^p}.
\end{equation*}
Theorem \ref{it:globalWeakSol} combines the conservation of $L^p$ norms with the velocity law of \eqref{ieq:fracLapOp} to provide global estimates on $u$ in spaces of functions that are compactly embedded in $L^p_{\rm loc}$.

\medskip

However, while this is the main principle of the proof, we should point out the two main difficulties. The first one is that the conservation of $L^p$ norms mentioned above can only be taken for granted if the velocity field possesses sufficient regularity. For example, $u$ must be at least $W^{1, 1}_{\rm loc}$ if Di Perna-Lions theory is to be applied, whereas $u$ will have at best an $\alpha$ index of regularity by \eqref{ieq:fracLapOp}. Our way around that will be to first start by constructing $L^2$ solutions by means of a Friedrichs scheme, and then proceeding to general $L^p$ solutions through regularization of the velocity field. But because we are below the regularity level where Di Perna-Lions theory applies, the weak solutions may display some decay of $\| \rho(t) \|_{L^p}$, instead of the conservation we expect from strong solutions.

Secondly, when handling the nonlinear term $u \cdot \nabla \rho = \D(\rho u)$, caution should be taken with the product $\rho u$ when $p < 2$, as it is not obvious it is properly defined. Condition \eqref{ieq:pCondition} is written so that the $s = \alpha$ regularity of $u$ and embeddings of well-chosen function spaces will make the product $\rho u$ well defined.

\medskip

We mention that results of this kind already exist for SQG and gSQG equations. Resnick proved existence of global $L^2$ solutions for SQG in his PhD dissertation \cite{Resnick}, while a result in $L^p$ with $p > 4/3$ is due to Marchand \cite{Marchand}. This has been extended to gSQG by Lazar and Xue, who construct global weak $L^1 \cap L^2$ solutions in \cite{LX}. These results rely on the commutator structure the non-linear terms in gSQG equations present, and hence are capable of covering more singular velocities that we can with the fractional Stokes-Transport equation: we must have $\alpha > 0$.

On the other hand, our methods do not use the structure of the velocity law in any special way: only that $u$ is a divergence free function and that $(- \Delta)^{- \alpha/2} \P g$ is a Fourier multiplier of order $- \alpha$. Our results should also hold with no modification for gSQG equations with velocity law
\begin{equation*}
u = \nabla^\perp (- \Delta)^{- \frac{1}{2}( \alpha + 1)} \rho
\end{equation*}
and $\alpha > 0$ (so that the velocity is not more singular than the density), thus extending, in the case $\alpha > 0$, the $L^1 \cap L^2$ existence of \cite{LX} to $L^p$ solutions with \eqref{ieq:pCondition}.

\subsubsection{Local well-posedness: $\alpha \geq 0$}

We now turn to the question of uniqueness of solutions. Theorem \ref{it:localWP} below states the existence of solutions that are regular enough to be unique.

\begin{thm}\label{it:localWP}
Consider $\alpha \in [0, d[$ as well as indices $s > 0$ and $p \in [1, + \infty]$ such that
\begin{equation*}
p < \frac{d}{\alpha}, \qquad \text{ and } \qquad s \geq 1 - \alpha.
\end{equation*}
For all initial datum $\rho_0 \in B^s_{\infty, 1}(\R^d) \cap L^p (\R^d)$, there exists a time $T > 0$ such that the system \eqref{ieq:AIPM} has a unique solution in that space $\rho \in C^0([0, T[; B^s_{\infty, 1}(\R^d) \cap L^p (\R^d))$. \end{thm}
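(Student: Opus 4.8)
The plan is to set up a standard \emph{a priori} estimate / approximation / stability argument in the space $E^s_p := B^s_{\infty,1}(\R^d) \cap L^p(\R^d)$. Since the velocity law \eqref{ieq:fracLapOp} reads $u = (-\Delta)^{-\alpha/2}\P(\rho g)$, the operator $\rho \mapsto u$ is a Fourier multiplier of order $-\alpha$; in particular, for $\alpha \in [0,d[$ and $p < d/\alpha$, the Hardy--Littlewood--Sobolev inequality gives $\|u\|_{L^q} \lesssim \|\rho\|_{L^p}$ with $\tfrac1q = \tfrac1p - \tfrac\alpha d$, and the Besov part satisfies $\|u\|_{B^{s+\alpha}_{\infty,1}} \lesssim \|\rho\|_{B^s_{\infty,1}}$ (with the low-frequency block controlled using the $L^p$ information). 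The key point is that the condition $s \geq 1-\alpha$ guarantees $s+\alpha \geq 1$, so that $u \in B^{s+\alpha}_{\infty,1} \hookrightarrow B^1_{\infty,1} \hookrightarrow \mathrm{Lip}$; thus $\nabla u$ is bounded and the transport equation $\partial_t\rho + u\cdot\nabla\rho = 0$ is well-behaved. Moreover $\div u = 0$, so $L^p$ norms are (formally) conserved along the flow.

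The steps I would carry out, in order, are: (1) \textbf{A priori estimates.} Apply the $j$-th Littlewood--Paley block $\Delta_j$ to the transport equation, use the commutator estimate $\|[\Delta_j, u\cdot\nabla]\rho\|_{L^\infty} \lesssim c_j 2^{-js}\|\nabla u\|_{L^\infty}\|\rho\|_{B^s_{\infty,1}}$ (valid for $s>0$, and also for $s+1$ in the relevant range), sum over $j$, and combine with the conservation (or decay) of $\|\rho\|_{L^p}$ and the multiplier bound $\|\nabla u\|_{L^\infty} \lesssim \|u\|_{B^1_{\infty,1}} \lesssim \|\rho\|_{B^s_{\infty,1}} + \|\rho\|_{L^p}$ to obtain a Riccati-type inequality $\tfrac{d}{dt}\|\rho(t)\|_{E^s_p} \lesssim \|\rho(t)\|_{E^s_p}^2$, yielding a lifespan $T \gtrsim 1/\|\rho_0\|_{E^s_p}$. (2) \textbf{Construction.} Build approximate solutions by a Friedrichs-type scheme (frequency truncation of both the transport term and the velocity law), check the uniform bounds from step (1) survive on a common time interval, and pass to the limit; alternatively an iteration scheme $\rho^{n+1}$ solving the linear transport equation with velocity generated by $\rho^n$. (3) \textbf{Uniqueness and continuity in time.} Estimate the difference of two solutions $\rho_1 - \rho_2$ in the lower-regularity space $B^{s-1}_{\infty,1}\cap L^p$ (losing one derivative, as is typical for transport equations), using that $u_1 - u_2$ gains $\alpha$ derivatives from $\rho_1 - \rho_2$ and $s-1+\alpha \geq 0$; a Grönwall argument closes uniqueness, and then continuity in time with values in $E^s_p$ (as opposed to merely $L^\infty$ in time) follows from the usual Besov regularity-of-transport arguments.

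The main obstacle will be step (3) together with making step (1) airtight at the borderline regularity. For uniqueness one typically estimates the difference in a space one notch below the solution space, and here that notch-below space is $B^{s-1}_{\infty,1}$, which for $s$ close to $1-\alpha$ sits right at (or near) the negative-index regime; one must check carefully that $u_1-u_2 = (-\Delta)^{-\alpha/2}\P((\rho_1-\rho_2)g)$ still maps $B^{s-1}_{\infty,1}\cap L^p$ into a space where $\nabla(u_1-u_2)$ and $(\rho_1-\rho_2)\nabla u_2$ are controlled — this is exactly where $s+\alpha\geq 1$ is used a second time. A secondary subtlety, inherited from the weak-solution discussion, is that below the Lipschitz threshold for $\rho$ itself one only has $\|\rho(t)\|_{L^p} \leq \|\rho_0\|_{L^p}$ rather than equality; this is harmless for the a priori bound but should be acknowledged when closing the estimates. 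The endpoint value $p=d/\alpha$ (when $\alpha>0$) is excluded precisely because HLS fails there, and $\alpha=d$ is excluded because then no $p\geq 1$ satisfies $p<d/\alpha=1$ — consistent with the remark that the $\alpha=d$ case is instead handled by the global result.
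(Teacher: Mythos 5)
Your outline follows essentially the same route as the paper: a priori estimates via Littlewood--Paley commutator bounds leading to a Riccati inequality and the lifespan $T \gtrsim 1/\|\rho_0\|_{E^s_p}$, construction by an iterative scheme with the velocity frozen at the previous step, uniqueness by a stability estimate one derivative below, and time continuity by the standard regularization arguments (the paper uses the Friedrichs commutator lemma for the $L^p$ part and the Cauchy property of $S_n\rho$ for the Besov part). The use of the condition $s+\alpha\geq 1$ to get $u\in B^1_{\infty,1}\subset W^{1,\infty}$, and the role of $p<d/\alpha$ in controlling the low-frequency singularity of $(-\Delta)^{-\alpha/2}\P g$, are both identified correctly.

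One step would fail as written: you propose to run the stability estimate in $B^{s-1}_{\infty,1}\cap L^p$, i.e.\ you lose a derivative only in the Besov component. The difference equation is $(\partial_t+u_2\cdot\nabla)\delta\rho=-\D(\rho_1\,\delta u)$, and to close a Gr\"onwall bound on $\|\delta\rho\|_{L^p}$ you would need $\D(\rho_1\,\delta u)=\delta u\cdot\nabla\rho_1$ to be controlled in $L^p$ by $\|\rho_1\|_{L^p\cap B^s_{\infty,1}}\|\delta\rho\|_Y$; but $\nabla\rho_1$ is not in $L^p$ (only $\rho_1$ is, and for $s<1$ the gradient need not even be a bounded function), so this term cannot be estimated in $L^p$. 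The paper instead takes the stability space $Y=B^{s-1}_{\infty,1}\cap B^{-1}_{p,\infty}$, losing one derivative in \emph{both} components: then
\begin{equation*}
\big\|\D(\rho_1\,\delta u)\big\|_{B^{-1}_{p,\infty}}\lesssim\|\rho_1\,\delta u\|_{L^p}\lesssim\|\rho_1\|_{L^p}\,\|\delta u\|_{L^\infty}
\lesssim\|\rho_1\|_{L^p}\Big(\|\delta\rho\|_{B^{-1}_{p,\infty}}+\|\delta\rho\|_{B^{s-1}_{\infty,1}}\Big),
\end{equation*}
where the last step uses $s-1+\alpha\geq 0$ so that $\delta u\in B^{s+\alpha-1}_{\infty,1}\subset L^\infty$ (Proposition \ref{p:stabEst}). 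With this modification, together with the case distinction $s\geq 1$ versus $s<1$ in the paraproduct $\mc T_{\partial_k\rho}(\delta u_k)$ and the verification that $2s+\alpha-1>0$ for the remainder, the rest of your argument goes through exactly as in the paper. A minor secondary point: the Hardy--Littlewood--Sobolev bound you invoke for the low frequencies requires $p>1$; the paper's route via $\|\Delta_{-1}u\|_{L^\infty}\lesssim\sum_{j\leq 1}2^{j(d/p-\alpha)}\|\rho\|_{L^p}$ covers $p=1$ as well.
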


In addition to the local existence of a unique solution, we also prove a lower bound for the lifespan of solutions and a continuation criterion. These are contained in Theorem \ref{t:localWP} below.

\medskip

The essence of Theorem \ref{it:localWP} is finding \textsl{a priori} estimates at a level where the velocity field is Lipschitz, thus enabling us to perform stability estimates and bound the difference of two solutions. The condition $s \geq 1 - \alpha$ is designed so that the velocity law \eqref{ieq:fracLapOp} will bring $u$ to a $B^1_{\infty, 1} \subset W^{1, \infty}$ level of regularity.

\medskip

Local well-posedness result already exists for many active scalar equations with fractional velocity laws, such as \cite{CCCGW} for gSQG, \cite{CJ} for fractional porous medium flows or \cite{Chae2014} for two different types of problems, including \eqref{ieq:AIPM}. As we have already noted above, the velocity law $u = (- \Delta)^{- \alpha/2} \P g$ is somewhat different from gSQG problems because the latter possess a commutator structure that provides well-posedness even when the velocity is more singular that the unknown. Consequently, existing results on \eqref{ieq:AIPM} only hold for $\alpha \geq 0$, as mentioned in \cite{Chae2014}, and ill-posedness has even been proved for $\alpha < 0$, see \cite{FGSV}.

As far as we are aware, all well-posedness results existing in the literature concern initial data that are in $H^s$ Sobolev spaces, with appropriate $s$. 

\medskip

At this point, we should note that Theorem \ref{it:localWP} is not optimal when $\alpha \geq 1$. In that case, as we will see, solutions are global and we will dedicate a whole Section to global well-posedness with initial data in \textsl{critical} spaces (see below).

\subsubsection{A Directional Continuation Criterion}

So far, none of our results have used any specific feature of the velocity law \eqref{ieq:fracLapOp}, other than incompressibility $\D(u) = 0$ and the order $\alpha$ of the operator. But this does not mean that \eqref{ieq:fracLapOp} has no interesting underlying structure. Here, we use the way gravity stratification affects the velocity to prove a directional continuation criterion.

\begin{thm}\label{it:ContCrit}
Assume that $0 \leq \alpha < 1$. Under the assumptions and notations of Theorem \ref{it:localWP}, the unique solution $\rho \in C^0([0, T[; B^{1 - \alpha}_{\infty, 1}(\R^d) \cap L^p(\R^d))$ may be continued beyond time $T$ if and only if
\begin{equation}\label{ieq:thContcritPartialD}
\int_0^T \big\| \partial_d \rho \big\|_{B^{- \alpha}_{\infty, 1}} \dt < + \infty
\end{equation}
\end{thm}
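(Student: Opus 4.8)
The plan is to prove the continuation criterion by improving the naive blow-up criterion — which would require control of $\|\rho\|_{B^{1-\alpha}_{\infty,1}}$, hence of $\|\nabla\rho\|_{B^{-\alpha}_{\infty,1}}$ via a standard transport estimate — to one involving only the \emph{single} directional derivative $\partial_d \rho$. The starting point is the transport equation $(\partial_t + u\cdot\nabla)\rho = 0$ and the standard logarithmic-type a priori estimate for transport in Besov spaces: for $s = 1-\alpha \in ]0,1]$,
\begin{equation*}
\frac{d}{dt}\|\rho(t)\|_{B^{s}_{\infty,1}} \lesssim \|\nabla u(t)\|_{L^\infty}\,\|\rho(t)\|_{B^{s}_{\infty,1}} + (\text{commutator terms controlled by } \|\nabla u\|_{L^\infty}\|\rho\|_{B^s_{\infty,1}}),
\end{equation*}
so that by Grönwall the solution persists as long as $\int_0^T \|\nabla u(t)\|_{L^\infty}\dt < +\infty$. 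Hence everything reduces to showing that $\|\nabla u\|_{L^\infty}$ (or at least $\|\nabla u\|_{B^0_{\infty,1}}$) is controlled by $\|\partial_d\rho\|_{B^{-\alpha}_{\infty,1}}$ together with the conserved $L^p$ norm of $\rho$.

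The key structural input is the velocity law $u = (-\Delta)^{-\alpha/2}\P(\rho g)$ with $g = e_d$. Writing $\P = \mathrm{Id} + \nabla(-\Delta)^{-1}\div$ and using that $g$ is the constant vector $e_d$, one gets componentwise, for $j = 1,\dots,d$,
\begin{equation*}
u_j = (-\Delta)^{-\alpha/2}\Big(\delta_{jd}\,\rho + \partial_j(-\Delta)^{-1}\partial_d\rho\Big),
\end{equation*}
so that $\nabla u$ is a sum of terms each of which is a zero-order Fourier multiplier (of Calderón--Zygmund type, like $\partial_i\partial_j(-\Delta)^{-1}$, composed with $(-\Delta)^{-\alpha/2}$ of order $-\alpha$) applied to $\partial_d\rho$ — \emph{except} for the single term $\partial_i u_d = (-\Delta)^{-\alpha/2}\partial_i\rho$ coming from the $\delta_{jd}\rho$ piece, where the derivative that falls is not $\partial_d$. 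This is the crucial point: $\partial_i u_d$ for $i \ne d$ is genuinely $\partial_i\rho$ to the order $-\alpha$, so it cannot be bounded by $\partial_d\rho$ in general; but in the transport estimate this "bad" entry of $\nabla u$ can be avoided. I would exploit the commutator/cancellation in $u\cdot\nabla\rho$: when one estimates $\frac{d}{dt}\|\rho\|_{B^s_{\infty,1}}$ via Littlewood--Paley and commutators, the dangerous contribution involves $\mathrm{div}(u)=0$ and rearranges so that only $\partial_d$ of quantities appears — concretely, $u\cdot\nabla\rho = \div(\rho u)$ and the relevant term is $\partial_d(\rho u_d) + \sum_{i<d}\partial_i(\rho u_i)$ where the $u_i$ ($i<d$) are all of the form $(-\Delta)^{-\alpha/2}\partial_i(-\Delta)^{-1}\partial_d\rho$, i.e. carry a $\partial_d\rho$, and $\rho u_d = \rho\,(-\Delta)^{-\alpha/2}(\rho + \partial_d(-\Delta)^{-1}\partial_d\rho)$ — here one needs a product/paraproduct estimate to pass $\partial_d$ outside, using $\partial_d(\rho u_d) = (\partial_d\rho)u_d + \rho\,\partial_d u_d$ and noting $\partial_d u_d = (-\Delta)^{-\alpha/2}\partial_d\rho + (\text{Riesz-type of }\partial_d\rho)$, all expressible through $\partial_d\rho$. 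Combining these, one obtains an estimate of the schematic form
\begin{equation*}
\frac{d}{dt}\|\rho(t)\|_{B^{1-\alpha}_{\infty,1}} \lesssim \big(1 + \|\partial_d\rho(t)\|_{B^{-\alpha}_{\infty,1}}\big)\,\|\rho(t)\|_{B^{1-\alpha}_{\infty,1}} + C\big(\|\rho_0\|_{L^p}\big),
\end{equation*}
and Grönwall then gives persistence of regularity under \eqref{ieq:thContcritPartialD}; conversely, if the solution extends past $T$ with $\rho\in C^0([0,T];B^{1-\alpha}_{\infty,1})$, then $\partial_d\rho\in C^0([0,T];B^{-\alpha}_{\infty,1})$ trivially, giving the "only if".

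The main obstacle I anticipate is making the heuristic "only $\partial_d\rho$ appears" rigorous at the endpoint regularity $s = 1-\alpha$, which may be as small as $0^+$ (when $\alpha\to 1$) or as large as $1$ (when $\alpha = 0$). One must run the commutator estimate for the transport equation carefully — Bony decomposition of $u\cdot\nabla\rho$, with the paraproduct $T_{\nabla\rho}u$ being the term that a priori sees all of $\nabla u$; the trick is that $T_{\nabla\rho}u = T_{\partial_d\rho}\,u_d + \sum_{i<d}T_{\partial_i\rho}u_i$, and one rewrites each $T_{\partial_i\rho}u_i$ ($i<d$) by moving the multiplier: $u_i = R_i\,(-\Delta)^{-\alpha/2}\partial_d(-\Delta)^{-1/2}\cdots$ is, up to bounded operators, $(-\Delta)^{-\alpha/2}$ applied to something carrying $\partial_d\rho$; while for $i=d$ one uses $\div u = 0$ to trade $\partial_d u_d = -\sum_{i<d}\partial_i u_i$, again bringing in only $\partial_i u_i$ which carry $\partial_d\rho$. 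Getting the bookkeeping of these Fourier multipliers clean, and verifying that each composition $(-\Delta)^{-\alpha/2}\circ(\text{Riesz transform})$ acts boundedly $B^{-\alpha}_{\infty,1}\cap(\text{derivative of }L^p) \to B^0_{\infty,1}$ (which is where condition $p<d/\alpha$ and the low-frequency $L^p$ bound enter), is the technical heart of the argument. The high-frequency part is routine; the low frequencies of $u$ need the $L^p$ control of $\rho$ exactly as in the proof of Theorem \ref{it:localWP}.
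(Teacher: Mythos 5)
Your proposal is correct and follows essentially the same route as the paper: you isolate the structural identity $u\cdot\nabla\rho = (-\Delta)^{-\alpha/2}\rho\,\partial_d\rho + \nabla\rho\cdot\nabla(-\Delta)^{-1-\alpha/2}\partial_d\rho$ coming from $g=e_d$, so that every term in the nonlinearity carries a factor of $\partial_d\rho$, and then rerun the Bony/commutator analysis for the $B^{1-\alpha}_{\infty,1}$ transport estimate term by term (using the $L^p$ bound for the low frequencies of the order $-\alpha$ multipliers) before closing with Gr\"onwall. This is precisely the paper's argument, including your identification of the paraproduct with $\nabla\rho$ as low-frequency factor as the term that a priori sees the full gradient of $u$ and must be split according to the two pieces of the velocity law.
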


Understanding \eqref{ieq:thContcritPartialD} is quite simple: by writing explicitly the Leray projection operator $\P = {\rm Id} + \nabla (- \Delta)^{-1} \D$ and remembering that the gravity vector is $g = e_d$, we may express the nonlinearity in \eqref{ieq:AIPM} as
\begin{equation*}
u \cdot \nabla \rho = (- \Delta)^{- \alpha / 2} \rho \, \partial_d \rho + \nabla \rho \cdot \nabla (- \Delta)^{- \alpha / 2} (- \Delta)^{-1} \partial_d \rho.
\end{equation*}
Both products in the above involve a factor that is linearly dependent on the derivative $\partial_d \rho$. This implies that blow-up of the solution is prevented provided $\partial_d \rho$ remains bounded in an appropriate space.

\subsubsection{Global well-posedness: $\alpha \geq 1$}

We now turn our attention to the range $\alpha \geq 1$, in which case solutions are global. Roughly, the idea is that conservation of $L^q$ norms yielded by incompressibility automatically grant $B^\alpha_{q, \infty}$ regularity to the velocity field through \eqref{ieq:fracLapOp}, and the embeddings
\begin{equation*}
B^\alpha_{q, \infty} \subset B^1_{\infty, 1} \subset W^{1, \infty}
\end{equation*}
will hold as long as $q > d/(1 - \alpha)$, hence global Besov-Lipschitz estimates on the velocity field. However, with this in mind, the limit case $q = d/(1 - \alpha)$ must be seen as critical. As a matter of fact, if $\rho \in L^{d/(1 - \alpha)}$, then $(- \Delta)^{- \alpha/2} \P (\rho g)$ will not be in general Lipschitz, but $\log$-Lipschitz. 

Therefore, if we are to prove uniqueness of global $L^{d/(1 - \alpha)}$ solutions, we must turn to the theory of transport equations with non-Lipschitz velocity fields, as in \cite{Danchin2005}, which provides estimates at the cost of a loss of regularity.

\begin{thm}\label{it:globalStrongLebesgue}
Consider $d \geq 2$ and $\alpha \in ]1, d[$. Let $p \in [1, d/\alpha [$ and set $q = d/(\alpha - 1)$. Then, for any initial datum $\rho_0 \in L^p (\R^d) \cap L^q (\R^d)$, problem \eqref{ieq:AIPM} has a unique global solution $\rho \in L^\infty(\R_+ ; L^p(\R^d) \cap L^q(\R^d))$.
\end{thm}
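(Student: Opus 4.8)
The strategy rests on two facts already visible in the statement. First, incompressibility $\div(u)=0$ propagates every Lebesgue norm along the flow, so a sufficiently smooth solution satisfies $\|\rho(t)\|_{L^r}=\|\rho_0\|_{L^r}$ for all $r\in[p,q]$. Second, at the exponent $q=d/(\alpha-1)$ the velocity law \eqref{ieq:fracLapOp} is exactly critical: $\nabla u=\nabla(-\Delta)^{-\alpha/2}\P(\rho g)$ is a Fourier multiplier of order $1-\alpha<0$, and since $d/(\alpha-1)$ is precisely the Sobolev exponent attached to a gain of $\alpha-1$ derivatives, a bound on $\rho$ in $L^{q}$ does not make $u$ Lipschitz but only $\log$-Lipschitz; the complementary bound $\rho\in L^{p}$ with $p<d/\alpha$ controls the low frequencies (there the Hardy--Littlewood--Sobolev inequality applies) and yields, uniformly in time,
\[
\|u(t)\|_{LL}\ \lesssim\ \|\rho(t)\|_{L^{p}\cap L^{q}}\ \le\ \|\rho_0\|_{L^{p}\cap L^{q}},
\]
where $\|\cdot\|_{LL}$ denotes the $\log$-Lipschitz seminorm. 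The proof therefore splits into the construction of a solution at this regularity and a uniqueness statement within the class of solutions whose velocity is $\log$-Lipschitz; for the latter one must appeal to the transport theory of \cite{Danchin2005} for non-Lipschitz drifts.

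For existence, I would run the regularization/compactness scheme used for Theorem~\ref{it:globalWeakSol} (a Friedrichs truncation, or a mollification of the transport field), but propagate all the bounds $\|\rho_n(t)\|_{L^{r}}\le\|\rho_0\|_{L^{r}}$, $r\in[p,q]$, simultaneously. Through \eqref{ieq:fracLapOp} the uniform $L^{p}\cap L^{q}$ bound on $\rho_n$ promotes to a uniform bound on $u_n$ with $\alpha>1$ derivatives in local Lebesgue spaces, hence to relative compactness of $(u_n)$ in $L^{m}_{\rm loc}$; together with the time-equicontinuity of $\rho_n$ coming from $\partial_t\rho_n=-\div(\rho_n u_n)$, an Aubin--Lions argument gives strong convergence of a subsequence of $(\rho_n)$, which suffices to pass to the limit in the nonlinear term $\rho_n u_n$. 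That $\rho u$ is meaningful in the limit is precisely what the condition $p<d/\alpha$ guarantees --- even more comfortably here, thanks to the additional $L^{q}$ control --- exactly as in Theorem~\ref{it:globalWeakSol}; and, as there, one should only expect the limiting solution to satisfy the one-sided inequality $\|\rho(t)\|_{L^{r}}\le\|\rho_0\|_{L^{r}}$, which is all the uniqueness argument uses.

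For uniqueness, let $\rho_1,\rho_2\in L^\infty(\R_+;L^{p}\cap L^{q})$ share the same datum, set $\delta\rho=\rho_1-\rho_2$ and $\delta u=(-\Delta)^{-\alpha/2}\P(\delta\rho\,g)$, so that
\[
\partial_t\delta\rho+u_1\cdot\nabla\delta\rho=-\div(\rho_2\,\delta u).
\]
The plan is to estimate $\delta\rho$ in a negative-regularity space tuned to $q$ --- concretely $\dot W^{-1,m}$ (or $\dot B^{-1}_{m,\infty}$) for a suitable $m<q$, or simply $\dot H^{-1}$ in the subrange $\alpha<1+d/2$. The forcing closes at the critical exponent: since $\delta u$ gains $\alpha$ derivatives over $\delta\rho$, we have $\delta u\in\dot W^{\alpha-1,m}\hookrightarrow L^{m_1}$ with $1/m_1=1/m-(\alpha-1)/d=1/m-1/q$, whence $\rho_2\,\delta u\in L^{m}$ by Hölder and
\[
\big\|\div(\rho_2\,\delta u)\big\|_{\dot W^{-1,m}}\ \lesssim\ \|\rho_2\|_{L^{q}}\,\|\delta\rho\|_{\dot W^{-1,m}} ,
\]
which is where the identity $q=d/(\alpha-1)$ is used in an essential way and explains its presence in the statement. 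The transport term $u_1\cdot\nabla\delta\rho$ is handled by the estimates of \cite{Danchin2005} for transport by a $\log$-Lipschitz, divergence-free field: in place of a Grönwall bound these yield a control with a time-dependent loss of regularity governed by $\int_0^t\|u_1\|_{LL}$, and ultimately a differential inequality for $\|\delta\rho\|$ with an Osgood --- rather than Lipschitz --- modulus. Since $\delta\rho(0)=0$, Osgood's lemma forces $\delta\rho\equiv0$ on a short time interval; and because $\|u_i\|_{LL}$ is bounded globally in time, the length of that interval is uniform, so iterating yields uniqueness on all of $\R_+$.

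The main obstacle is precisely this interplay: reconciling the loss of regularity built into the non-Lipschitz transport estimate with the forcing estimate above, which holds only at the critical exponent and hence leaves essentially no slack --- one must choose the auxiliary exponent $m$ and keep track of the logarithmic loss carefully enough that the estimate still closes before the regularity index drifts out of the admissible range. A secondary, more technical point is to check that the low-regularity solutions produced in the first step are genuine distributional solutions of \eqref{ieq:AIPM} for which the difference identity --- and the Littlewood--Paley manipulations underlying the transport estimate --- are legitimate; that $u$ is $\log$-Lipschitz, hence in $W^{1,r}_{\rm loc}$ for every finite $r$, is what makes this work.
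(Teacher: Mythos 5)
Your overall strategy coincides with the paper's: existence is obtained from the weak-solution theory (the paper simply observes that, by interpolation of Lebesgue norms, $\rho_0\in L^p\cap L^q$ lies in some $L^r$ with $\tfrac{2}{1+\alpha/d}<r<\tfrac{d}{\alpha}$, so Theorem \ref{t:globalWeakSol} applies directly and already yields $\|\rho(t)\|_{L^p\cap L^q}\le\|\rho_0\|_{L^p\cap L^q}$ --- no new compactness argument is needed), and uniqueness is run at negative regularity for the difference $\delta\rho$, using that $u_2$ is only $\log$-Lipschitz ($u_2\in B^1_{\infty,r}$ with $r=\max(2,q)<+\infty$, via the refined embeddings $L^q\subset B^0_{q,r}$) together with the loss-of-regularity transport estimates of \cite{Danchin2005} (Theorem \ref{t:limitedLoss} in the appendix).

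There is, however, a genuine gap at the point you yourself flag as ``the main obstacle'', and it is precisely the technical heart of the paper's proof. The loss-of-regularity estimate does not ask for the forcing bound $\|\D(\rho_1\delta u)\|\lesssim\|\rho_1\|_{L^p\cap L^q}\|\delta\rho\|$ at the single exponent $-1$: it asks for it \emph{uniformly over a whole interval of regularities} $[\sigma_1-\epsilon,\sigma_1]$, because the solution drifts downward in regularity as time evolves. Your H\"older argument ($\delta u\in \dot W^{\alpha-1,m}\hookrightarrow L^{m_1}$, then $\rho_2\,\delta u\in L^m$) only produces the estimate at exactly $-1$ and does not extend to $-1-\eta$ with $\eta>0$, where one must run the full Bony decomposition. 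Doing so, the paraproduct $\mc T_{\delta u_k}(\partial_k\rho_1)$ carries a constant $C/|1+\sigma_1-\eta|$ that blows up as the regularity approaches $-1$; this is why the paper works at $\sigma_1$ \emph{strictly} below $-1$, first reduces to $p>1$ by interpolation so that the admissible threshold $\sigma_0=-1-d\min\{1/q,1/q',1/p'\}$ is strictly below $-1$, and then chooses $|\sigma_1+1|+\epsilon$ small enough that the remainder terms stay at positive total regularity. (This is also exactly what breaks when $\alpha=1$, forcing the separate Besov-space theorem.) A second, smaller confusion: once the forcing estimate is in place, Theorem \ref{t:limitedLoss} concludes $\delta\rho\equiv0$ \emph{directly} from $\delta\rho_0=0$ --- there is no Osgood modulus and no short-time iteration. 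The Osgood argument you describe is the paper's method for the endpoint cases $\alpha\in\{1,d\}$ of Theorem \ref{t:globalWPBesov}, and it requires the logarithmic interpolation inequality of Proposition \ref{p:LogInt}, hence $B^1_{\infty,1}$ control of the velocities, which is not available in the pure Lebesgue framework of this theorem.
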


In the recent preprint \cite{MS}, Mecherbet and Sueur obtain essentially the same result in the special case $\alpha = 2$ and $d=3$, albeit by very different methods. While we rely on stability estimates, the authors of \cite{MS} prove well-posedness in $L^1 \cap L^3$ using estimates in the Wasserstein distance $W_1$ to prove uniqueness.

\medskip

Theorem \ref{it:globalStrongLebesgue} has two limitations: two settings of special interest are excluded from its scope. The first one is the case of the 2D Stokes-Transport problem $\alpha = d = 2$. The second is the case $\alpha = 1$, which has the same scaling as the 2D Euler system.

In those endpoint cases, we may obtain global well-posedness, but at the cost of working in smaller (but still critical) function spaces.

\begin{thm}\label{it:globalWPBesov}
Consider $d \geq 2$ and $\alpha \in [1, d]$. Let $p = d/\alpha$ and $q = d/(\alpha - 1)$. Then, for any initial datum $\rho_0 \in \dot{B}^0_{p, 1} (\R^d) \cap B^0_{q, 1} (\R^d)$, problem \eqref{ieq:AIPM} has a unique global solution $\rho \in C^0(\R_+ ; \dot{B}^0_{p, 1} (\R^d) \cap B^0_{q, 1} (\R^d))$.
\end{thm}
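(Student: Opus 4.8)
The plan is to run the classical programme---global \emph{a priori} estimates, construction of a solution by compactness, then uniqueness by a stability estimate---the whole difficulty being that $p=d/\alpha$ and $q=d/(\alpha-1)$ are \emph{critical} for the two scalings of \eqref{ieq:AIPM}, so that no room is left for the plain Gronwall arguments available in the subcritical Theorems \ref{it:localWP} and \ref{it:globalStrongLebesgue}.

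\emph{A priori estimates.} For a smooth solution, incompressibility $\D(u)=0$ conserves all Lebesgue norms, in particular $\|\rho(t)\|_{L^p}=\|\rho_0\|_{L^p}$ and $\|\rho(t)\|_{L^q}=\|\rho_0\|_{L^q}$ (using $\dot{B}^0_{p,1}\hookrightarrow L^p$ and $B^0_{q,1}\hookrightarrow L^q$). Since $u=(-\Delta)^{-\alpha/2}\P(\rho g)$ is a Fourier multiplier of order $-\alpha$, a Littlewood--Paley decomposition of $\nabla u$ together with Bernstein's inequality and the critical relations $d/p=\alpha$, $d/q=\alpha-1$---estimating high frequencies in $L^q$ and low ones in $L^p$---gives
\[
\|\nabla u\|_{B^0_{\infty,1}}\ \lesssim\ \|\rho\|_{\dot{B}^0_{p,1}}+\|\rho\|_{B^0_{q,1}},\qquad\text{so that in particular}\qquad \|\nabla u\|_{L^\infty}\ \lesssim\ \|\rho\|_{\dot{B}^0_{p,1}\cap B^0_{q,1}}.
\]
The crucial point is then the transport estimate for $\rho$: in the \emph{borderline} spaces $\dot{B}^0_{p,1}$ and $B^0_{q,1}$ (regularity index $0$, summation index $1$) this estimate grows not exponentially but only \emph{linearly} in the Lipschitz norm of the flow,
\[
\|\rho(t)\|_{\dot{B}^0_{p,1}\cap B^0_{q,1}}\ \lesssim\ \|\rho_0\|_{\dot{B}^0_{p,1}\cap B^0_{q,1}}\Bigl(1+\int_0^t\|\nabla u(\tau)\|_{L^\infty}\,{\rm d}\tau\Bigr)
\]
(a Vishik-type borderline estimate). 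Setting $Y(t)=\|\rho(t)\|_{\dot{B}^0_{p,1}\cap B^0_{q,1}}$ and inserting the velocity bound yields $Y(t)\lesssim Y(0)\bigl(1+\int_0^tY\bigr)$, hence $Y(t)\lesssim Y(0)\,e^{CY(0)t}$ for all times, with a constant uniform over bounded sets of data---and this is where globality comes from. The conservation of the $L^p\cap L^q$ norms is what further permits, via an Osgood argument, the treatment of the limiting cases $\alpha=1$ ($q=\infty$) and $\alpha=d$ ($p=1$), where the $s=0$ transport estimate is the genuine endpoint.

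\emph{Existence and uniqueness.} For existence I would regularise the datum, $\rho_0^n=S_n\rho_0\in\dot{B}^0_{p,1}\cap B^0_{q,1}$, solve the corresponding smooth problems globally (from the local theory together with the bound above, which prevents blow-up, or from a Friedrichs scheme), and note that $(\rho^n)$ is bounded in $L^\infty_{\rm loc}(\R_+;\dot{B}^0_{p,1}\cap B^0_{q,1})$ and $(u^n)$ in $L^\infty_{\rm loc}(\R_+;\dot{B}^1_{\infty,1})$, so that $\partial_t\rho^n=-\D(\rho^n u^n)$ is bounded in a negative-regularity space; an Aubin--Lions / Arzelà--Ascoli argument then gives, along a subsequence, convergence strong enough in $C^0_{\rm loc}(\R_+;L^r_{\rm loc})$ to pass to the limit in the nonlinearity, and strong time-continuity with values in $\dot{B}^0_{p,1}\cap B^0_{q,1}$ is recovered a posteriori. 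For uniqueness, writing $\delta\rho=\rho_1-\rho_2$ and $\delta u=(-\Delta)^{-\alpha/2}\P(\delta\rho\,g)$, one has $\partial_t\delta\rho+u_1\cdot\nabla\delta\rho=-\D(\delta u\,\rho_2)$; since $u_1$ is globally (borderline) Lipschitz, I would estimate $\delta\rho$ in a space of regularity one notch below, e.g. $\dot{B}^{-\eta}_{q,1}$ with small $\eta>0$, in which $\delta u$ regains enough regularity ($\|\delta u\|_{L^\infty}\lesssim\|\delta\rho\|_{\dot{B}^{-\eta}_{q,1}}$) to absorb the source, the loss of one derivative on $\rho_2$ forcing one into the transport theory with non-Lipschitz velocities of \cite{Danchin2005} and an Osgood inequality $y'(t)\lesssim C\,y(t)\log\!\bigl(e+A/y(t)\bigr)$ with $A$ bounded in terms of $\rho_1,\rho_2$, whence $y(0)=0\Rightarrow y\equiv0$.

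\emph{Main obstacle.} The heart of the matter is the \emph{a priori} step, and precisely the borderline (linear-growth) transport estimate: it is what replaces the finite-time-blowing-up Gronwall estimate of the subcritical theory by a globally valid one, and obtaining it while handling the two distinct critical exponents $p$ and $q$ simultaneously---together with the endpoints $\alpha=1$ and $\alpha=d$---is where the real work lies. The second delicate point is the uniqueness estimate, which must absorb the loss of one derivative coming from the coupling term $\delta u\cdot\nabla\rho_2$ at exactly the critical level.
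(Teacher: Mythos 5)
Your \emph{a priori} estimate is exactly the paper's: the Lipschitz bound $\|u\|_{B^1_{\infty,1}}\lesssim\|\rho\|_{\dot{B}^0_{p,1}}+\|\rho\|_{B^0_{q,1}}$ from Proposition \ref{p:LPoperatorBound}, followed by the linear-in-$\nabla u$ transport estimate of Proposition \ref{p:linEstTV} in the $s=0$ spaces and Gr\"onwall; this is indeed where globality comes from, and the existence-by-compactness step is handled exactly as you indicate (the paper also leaves out those details). The gap is in the uniqueness argument. You propose to estimate $\delta\rho$ in $\dot{B}^{-\eta}_{q,1}$ with \emph{small} $\eta>0$, but that space cannot absorb the source term: $\D(\rho_2\,\delta u)=\delta u\cdot\nabla\rho_2$ carries a full derivative of $\rho_2\in B^0_{q,1}$, so even granting $\delta u\in L^\infty$ the product is at best in $B^{-1}_{q,\infty}$, which is strictly worse than $B^{-\eta}_{q,1}$ for $\eta<1$; the estimate does not close. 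One is forced down to regularity $-1$ with third index $\infty$, i.e. $Y=\dot{B}^{-1}_{p,\infty}\cap B^{-1}_{q,\infty}$, and there the real obstruction appears: $\delta u=(-\Delta)^{-\alpha/2}\P(\delta\rho\,g)$ then only lands in $B^0_{\infty,\infty}$, which is \emph{not} contained in $L^\infty$, so $\rho_1\delta u$ cannot be estimated in $L^p\cap L^q$ directly.

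The missing idea --- the heart of the paper's uniqueness proof --- is the logarithmic interpolation inequality of Proposition \ref{p:LogInt}: since $\delta u=u_2-u_1$ with $u_1,u_2\in B^1_{\infty,1}$, one has
\[
\|\delta u\|_{L^\infty}\lesssim\|\delta u\|_{B^0_{\infty,\infty}}\left\{1+\log\left(\frac{\|(u_1,u_2)\|_{B^1_{\infty,1}}}{\|\delta u\|_{B^0_{\infty,\infty}}}\right)\right\},
\]
which yields $\|\D(\rho_1\delta u)\|_Y\lesssim\|\delta\rho\|_Y\,\|\rho_1\|_{L^p\cap L^q}\bigl(1+\log(\cdot)\bigr)$ and hence an Osgood inequality with modulus $\mu(r)=r(C-\log r)$. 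You do write down an Osgood inequality of the correct shape at the end, but you attribute the logarithm to the losing estimates of \cite{Danchin2005} for non-Lipschitz velocities; that mechanism is not the relevant one here, since the transporting field $u_1$ \emph{is} genuinely Lipschitz precisely because the data are taken in $B^0_{q,1}$ rather than $L^q$ (Danchin's theory is what the paper uses for Theorem \ref{t:globalStrongLebesgue}, not here). Without the log-interpolation step, comparing $\delta u$ measured in $B^0_{\infty,\infty}$ against the higher $B^1_{\infty,1}$ regularity of $u_1,u_2$, your stability estimate has no way to control $\|\delta u\|_{L^\infty}$ at the critical level, and uniqueness does not follow.
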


The assumption $\rho_0 \in \dot{B}^0_{p, 1}$, stronger than simple $L^p$ integrability, serves to insure that the velocity field is well-defined. The operator $(- \Delta)^{- \alpha}\P g$ has a symbol which is singular at low frequencies, so we require the Fourier transform of $\rho$ to be regular enough that $\what{u}$ be properly defined. When $\alpha = d$, even $\rho_0 \in L^1$ is not enough, and we must resort to the stronger $\rho \in \dot{B}^0_{1, 1} \subsetneq L^1$ condition. In previous works for the 2D Stokes Transport system, \textsl{i.e.} when $\alpha = d = 2$, global well-posedness has been obtained under other integrability conditions, namely a compact support assumption on the initial data \cite{Grayer}, or in the framework of solutions on a bounded domain $\Omega \subset \R^2$ or the infinite strip $\Omega = \R \times ]0, 1[$, see \cite{Leblond}.

Concerning the $B^0_{q, 1}$ assumption, it has two uses. The first one is that for $\dot{B}^0_{p, 1}$ regularity to be propagated by the transport equation, we need a Lipschitz vector field. Therefore, working with initial data in $L^{d/(\alpha - 1)}$ will not do, as it will only give a $\log$-Lipschitz vector field. Secondly, in the case where $\alpha = 1$, finding uniqueness in $L^{d/(\alpha - 1)} = L^\infty$ seems fraught with extreme difficulties.

This may seem surprising, given the closeness of the case $\alpha = 1$ to the 2D Euler problem, where uniqueness with $L^\infty$ vorticity is well-known. However, all $L^\infty$ uniqueness results we are aware of for 2D Euler (such as the method of Yudovich \cite{Yudovich} or that of Serfati \cite{Serfati}, \cite{AKLNH}) rely the fact that the vorticity equation is the curl of a lower order equation: the original Euler system. This ``integration'' of the vorticity equation is not available in our case.

As we present Theorem \ref{t:globalWPBesov} below, we give additional details and comments on why the proof used for Theorem \ref{it:globalStrongLebesgue} does not work in the case $\alpha = 1$. We refer to the discussion below.

\subsubsection{Lifespan of Solutions for $\alpha \rightarrow 1^-$}

To conclude this article, we study the way the lifespan of the solution depends on the value of $\alpha < 1$. The idea is quite simple: since the equation is globally well-posed for $\alpha = 1$, then continuous dependency on the parameters should imply that the lifespan becomes arbitrarily large as $\alpha \rightarrow 1^-$. This is Theorem \ref{it:lifespanIncrease} below.

\begin{thm}\label{it:lifespanIncrease}
Assume that $d \geq 2$ and consider an exponent $p \in [1, d[$. For any $\frac{1}{2} < \alpha < 1$ and initial datum $\rho_0 \in X := L^{p}(\R^d) \cap B^{1-\alpha}_{\infty, 1} (\R^d)$, the lifespan $T^*$ of the unique associated solution of \eqref{ieq:AIPM} is at least
\begin{equation*}
T^* \geq \frac{C}{\| \rho_0 \|_{X}}\log \left( 1 + \frac{C}{1 - \alpha} \right).
\end{equation*}
\end{thm}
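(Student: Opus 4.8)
\section*{Proof proposal}

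\textbf{Reduction to an \textsl{a priori} bound.} By Theorem \ref{it:localWP} (applicable since $\alpha<1$ forces $p<d<d/\alpha$ and $s=1-\alpha\geq 1-\alpha$), the datum $\rho_0\in X=L^p\cap B^{1-\alpha}_{\infty,1}$ generates a unique solution on a maximal interval $[0,T^*[$, and by Theorem \ref{it:ContCrit} one has $T^*=+\infty$ unless $\int_0^{T^*}\|\partial_d\rho\|_{B^{-\alpha}_{\infty,1}}\,\dt=+\infty$. Since $\partial_d$ maps $B^{1-\alpha}_{\infty,1}$ into $B^{-\alpha}_{\infty,1}$, it suffices to control $Y(t):=\|\rho(t)\|_{B^{1-\alpha}_{\infty,1}}$ on the relevant interval. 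Along the way I record the two conserved quantities: because the flow is measure-preserving and $B^{1-\alpha}_{\infty,1}\hookrightarrow L^\infty$, both $\|\rho(t)\|_{L^p}=\|\rho_0\|_{L^p}$ and $\|\rho(t)\|_{L^\infty}\leq C\|\rho_0\|_{L^\infty}\leq C\|\rho_0\|_X$ hold; write $M:=\|\rho_0\|_X$ and note $Y(0)\leq M$.

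\textbf{The key differential inequality.} The heart of the proof is to establish, for $\tfrac12<\alpha<1$, an inequality of Bernoulli type
\begin{equation*}
Y'(t)\ \leq\ C\,M\,Y(t)\ +\ C\,(1-\alpha)\,Y(t)^2 \qquad\text{(up to harmless logarithmic corrections in }Y/M\text{).}
\end{equation*}
The linear term is the tame part, coming from the low-frequency block of $\nabla u$ (dominated by the conserved $\|\rho\|_{L^p}$) together with the transport commutator estimate for $B^{1-\alpha}_{\infty,1}$, which is legitimate precisely because $0<1-\alpha<\tfrac12$ (this is why the range $\alpha>\tfrac12$ is imposed). The essential point is that the \textsl{quadratic} coefficient is proportional to $1-\alpha$: the naive bound $\|\nabla u\|_{L^\infty}\lesssim \|\rho\|_{L^p}+\|\rho\|_{B^{1-\alpha}_{\infty,1}}$ only yields a quadratic coefficient of size $O(1)$, hence merely the lifespan $\gtrsim 1/M$ of Theorem \ref{t:localWP}. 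To extract the extra factor I would exploit the proximity of the velocity law to the globally well-posed endpoint: using the stratified form $u=(-\Delta)^{-\alpha/2}\rho\,e_d+\nabla(-\Delta)^{-\alpha/2-1}\partial_d\rho$, one compares the operator $(-\Delta)^{-\alpha/2}\P$ with $(-\Delta)^{-1/2}\P$ — on each dyadic block the symbols differ by $|\xi|^{-\alpha}-|\xi|^{-1}=O\big((1-\alpha)\log\langle\xi\rangle\big)$ — so that, after subtracting the contribution of the $\alpha=1$ nonlinearity (which, by the mechanism behind Theorem \ref{it:globalWPBesov}, is tame enough to forbid finite-time blow-up), only terms carrying the prefactor $1-\alpha$ remain. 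Equivalently, one may run a direct perturbation argument: set $\rho=\bar\rho+\sigma$, where $\bar\rho$ is the global $\alpha=1$ solution with the same datum (which again lies in $\dot B^0_{d,1}\cap B^0_{\infty,1}$ since $B^{1-\alpha}_{\infty,1}\cap L^p\subset \dot B^0_{d,1}$ for $p<d$, so Theorem \ref{it:globalWPBesov} applies), estimate the remainder $\sigma$ via its transport equation $\partial_t\sigma+u\cdot\nabla\sigma=-(u-\bar u)\cdot\nabla\bar\rho$ whose source has size $O(1-\alpha)$, and close the estimate against the (fast-growing but finite) bounds on $\bar\rho$. This step — rigorously producing the factor $1-\alpha$ on the quadratic term, and in particular handling the genuinely singular high–high frequency interactions in the nonlinearity without losing it — is what I expect to be the main obstacle.

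\textbf{Integration of the ODE.} Granted $Y'\leq aY+bY^2$ with $a=CM$, $b=C(1-\alpha)$ and $Y(0)\leq M$, the substitution $Z=1/Y$ gives $Z'\geq -aZ-b$, hence $e^{at}Z(t)\geq Z(0)-\tfrac ba(e^{at}-1)$, so $Y$ stays finite as long as $e^{at}<1+\tfrac{a}{bY(0)}$, that is for
\begin{equation*}
t\ <\ \frac1a\log\!\Big(1+\frac{a}{bY(0)}\Big).
\end{equation*}
Since $Y(0)\leq M$ one has $\tfrac{a}{bY(0)}\geq \tfrac{CM}{C(1-\alpha)M}=\tfrac{C}{1-\alpha}$, which yields $T^*\geq \tfrac{C}{\|\rho_0\|_X}\log\!\big(1+\tfrac{C}{1-\alpha}\big)$, as claimed; the logarithmic corrections in the differential inequality only alter the absolute constants. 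Together with the reduction of the first paragraph, this closes the argument.
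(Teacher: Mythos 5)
The reduction to an \textsl{a priori} bound on $\|\rho(t)\|_{B^{1-\alpha}_{\infty,1}}$ and the integration of the Riccati-type inequality at the end are both fine, and you have correctly identified the crux: one must produce a factor $1-\alpha$ in front of the dangerous quadratic contribution, since the naive bound $\|\nabla u\|_{L^\infty}\lesssim\|\rho\|_{L^p}+\|\rho\|_{B^{1-\alpha}_{\infty,1}}$ only reproduces the lifespan $T^*\gtrsim 1/\|\rho_0\|_X$ of Theorem \ref{t:localWP}. But that crucial step is exactly what is missing, and neither of the two routes you sketch survives scrutiny. The symbol comparison $|\xi|^{-\alpha}-|\xi|^{-1}=|\xi|^{-1}\bigl(e^{(1-\alpha)\log|\xi|}-1\bigr)$ is only of size $(1-\alpha)\log|\xi|\cdot|\xi|^{-1}$ on the frequency range $|\xi|\lesssim e^{1/(1-\alpha)}$; beyond it the difference is comparable to $|\xi|^{-\alpha}$ itself, so the genuinely high-frequency part of the nonlinearity carries no small factor. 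The perturbation around the global $\alpha=1$ solution $\bar\rho$ faces a matching loss: the bounds of Theorem \ref{t:globalWPBesov} for $\bar\rho$ grow like $e^{CT\|\rho_0\|_X}$, so on the target time interval $T\sim\|\rho_0\|_X^{-1}\log\frac{1}{1-\alpha}$ they are of size $(1-\alpha)^{-C}$, which can absorb the $O(1-\alpha)$ gain claimed for the source $-(u-\bar u)\cdot\nabla\bar\rho$ (whose operator-difference part is, again, only small at low frequencies). You flag this step yourself as the main obstacle, which is honest, but it means the proof is not closed.

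The paper takes a different and more robust route: the factor $1-\alpha$ does not come from the nonlinearity at all, but from the transport estimate itself. Lemma \ref{l:transportLin} refines the Vishik/Hmidi--Keraani linear-in-$\nabla v$ estimate (Proposition \ref{p:linEstTV}, valid at regularity $s=0$) to regularity $\sigma$, giving $\|f\|_{L^\infty_T(B^\sigma_{\infty,1})}\lesssim\bigl(\|f_0\|_{B^\sigma_{\infty,1}}+\|g\|_{L^1_T(B^\sigma_{\infty,1})}\bigr)\bigl(1+\int_0^T\|\nabla v\|_{L^\infty}\bigr)\exp\bigl\{C\sigma\int_0^T\|\nabla v\|_{L^\infty}\bigr\}$. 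The factor $\sigma$ in the exponent is obtained by decomposing $f=\sum_k f_k$ according to the frequency blocks of the data, treating the near-diagonal interactions $|j-k|<N$ by plain $L^\infty$ conservation (at a cost $N2^{\sigma N}$), treating the off-diagonal ones by the standard exponential estimate damped by $2^{-\sigma_0 N}$, and optimizing in $N$. Feeding $\|\nabla u\|_{L^\infty}\lesssim E(t):=\|\rho(t)\|_X$ into this with $\sigma=1-\alpha$ yields $E(T)\lesssim\|\rho_0\|_X\bigl(1+\int_0^TE\bigr)\exp\bigl\{(1-\alpha)C\int_0^TE\bigr\}$, an integral inequality (not your pointwise Riccati one) which integrates to $T^*\geq\frac{C}{\|\rho_0\|_X}\int_0^\infty\frac{e^{-(1-\alpha)Cs}}{1+s}\,{\rm d}s\gtrsim\frac{C}{\|\rho_0\|_X}\log\bigl(1+\frac{C}{1-\alpha}\bigr)$. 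If you want to complete your argument, proving a transport lemma of this type is the step to supply.
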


In fact, in Theorem \ref{t:lifespanIncrease} below is even more general, as we study the lifespan around an equilibrium solution $R(x_d)$. This amounts to adding an extra linear term to the equation and changes the inequality above to account for the norm of derivatives of $R(x_d)$.

\medskip

Theorem \ref{it:lifespanIncrease} is not the first of its kind: several authors have proved similar ``asymptotically global well-posedness'' results. For instance, Danchin and Fanelli \cite{DF} study the 2D non-homogeneous incompressible Euler equations and find the lifespan to be arbitrarily large in the limit of constant densities, with a similar logarithmic lower bound (see also \cite{Sbaiz} by Sbaiz for the case of the 2D non-homogeneous Euler equations with fast rotation), while Fanelli and Liao do so in \cite{FL} for a zero Mach number limit system.

Similarly, Fanelli and the present author examine the case of 2D ideal magnetohydrodynamics in \cite{CF3} and \cite{CF4} in the limit of weak magnetic fields, but because of the complexity of the system, the method only yields a triply iterated logarithmic bound. 

\medskip

The proof of Theorem \ref{it:lifespanIncrease} is a bit different from the lower bounds from the other works mentioned here. These usually rely on estimates for the transport equation, in Besov spaces of regularity $s=0$, that are linear with respect to the transport field, as in the proof of global well-posedness for the 2D Euler problem in $B^1_{\infty, 1}$ (see the articles of Vishik \cite{Vis} and Hmidi-Keraani \cite{HK}). In our setting, we have to adapt these estimates to a regularity level of $\epsilon = 1 - \alpha \rightarrow 0^+$.

\subsubsection{Outline of the paper}

We start by defining precisely the notion of weak solution of \eqref{ieq:AIPM} in Section \ref{s:DefweakSol}. This requires some clarification on the Stokes problem, and we explain how $u$ can be uniquely determined by $\rho$.

In Sections \ref{s:globalWeak} to \ref{s:lifespan}, we state comment and prove all of our results in the order they were mentioned in this Introduction. 

The paper ends with an Appendix containing definitions related to Littlewood-Paley decompositions, Besov spaces and various results on transport equations.

\subsubsection{Notation}

We adopt the following notation throughout the paper.

\begin{itemize}
\item The notation $C$ will be used to denote constants whose precise value are irrelevant to the computations, and which may differ from one line to the next. When needed, we may indicate dependency on a given parameter by writing the constant as $C(\, . \,)$.

\item We note $f \lesssim g$ for $f \leq C g$. The notation $f \approx g$ will mean $\frac{1}{C}f \leq g \leq Cf$.

\item Unless otherwise mentioned, all derivatives or pseudo-differential operators refer to the space variables only. For example, if $f : (t, x) \in \R \times \R^d \longmapsto f(t,x) \in \R$, the Laplacian $\Delta f$ is given by $\Delta f(t,x) = \partial_1^2 f(t,x) + \cdots + \partial_d^2 f(t,x)$.

\item If $X$ is a Banach space and $T > 0$, we note $C^0_T(X) := C^0([0, T[ ; X)$, or $C^0(X) := C^0(\R_+ ; X)$ when $T = +\infty$. We define the spaces $L^p_T(X)$, $L^p(X)$, $W^{1, \infty}_T(X)$, $W^{1, \infty}(X)$, \textsl{etc}. in the same way.

\item In order to simplify notations, we will omit the reference to the $\R^d$ when using function spaces, so we write for example $L^p := L^p(\R^d)$, $H^s := H^s(\R^d)$ and so on. For instance, we have
\begin{equation*}
C^0_T (L^p) = C^0([0, T[ ; L^p(\R^d)).
\end{equation*}

\item We note $D = -i \nabla$. More generally, for $\phi : \R^d \tend \R^d$, the Fourier multiplication operator $\phi(D)$ is defined by the Fourier transform
\begin{equation*}
\forall f \in \mc S, \qquad \what{\phi(D)f}(\xi) = \phi(\xi) \what{f}(\xi).
\end{equation*}
\item We note $C^0_b(X)$ the space of continuous and bounded functions on $\R_+$ with values in a Banach space $X$, while $C^0(X)$ without the subscript refers to the space of continuous functions $\R_+ \tend X$ that are not necessarily bounded.
\end{itemize}

\subsubsection*{Acknowledgements}

The author is grateful to Francesco Fanelli for many long discussions on this problem and his constant help throughout the project. Some of the ideas behind Theorem \ref{t:globalWeakSol} were kindly suggested by Rafael Granero-Belinch\'on. 

This work has been partially supported by the Deutsche Forschungsgemeinschaft (DFG, German Research Foundation) Project ID 211504053 - SFB 1060, and by the project CRISIS (ANR-20-CE40-0020-01), operated by the French National Research Agency (ANR).

\section{A Definition of Weak solution}\label{s:DefweakSol}

In this paragraph, we define the notion of weak solution of \eqref{ieq:AIPM} and give the precise meaning of the fractional Laplace operator we use throughout the article.

\subsection{The Fractional Stokes Equation}

We focus for a while on the fractional Stokes equation:
\begin{equation}\label{eq:StokesEQ}
\begin{cases}
(- \Delta)^{\alpha/2} u + \nabla \pi = \rho g\\
\D(u) = 0.
\end{cases}
\end{equation}
Here, $\rho$ is considered as a datum for the system while the unknowns are $u$ and $\pi$. The fractional Laplace operator is defined by its Fourier transform (see for example Proposition \ref{p:FracLapBesov}).

The first remark we make is that solutions of this system are in general highly non-unique. For example, in the case of the Stokes system $\alpha = 2$, the velocity $u$ is only given up to the addition of a (divergence free) harmonic function, and $\pi$ up to the addition of a constant. But on top of that, there is a further ambiguity that cannot be reduced to harmonic or constant summands: for example, if $\rho = 1$, then the pairs
\begin{equation*}
\begin{split}
& \pi(x) = x \cdot g \qquad u(x) = 0 \\
& \pi(x) = 0, \qquad u(x) = -\frac{1}{2d}|x|^2 g
\end{split}
\end{equation*}
are both solutions of \eqref{eq:StokesEQ}. This phenomenon is due to the lack of a far-field condition, such as decay at $|x| \rightarrow + \infty$, and is not specific to the Stokes system. We refer to \cite{Cobb1} and the references therein for a discussion of a similar problem in the Euler equations set on $\R^d$.

\medskip

In order to map the density $\rho$ to a unique velocity field that is solution of \eqref{eq:StokesEQ}, we impose sufficient decay at $|x| \rightarrow + \infty$. Our goal in this paragraph is to prove the following result, which involves the space $\mc S'_h$ defined in the Appendix (see Definition \ref{d:SpH}).

\begin{prop}\label{p:StokesExUn}
Consider $d \geq 2$. The following assertions hold:
\begin{enumerate}[(i)]
\item Assume that $\rho \in L^p$ with $p \in [1, d/p[$. Then there is a unique $u \in \mc S'_h$ such that \eqref{eq:StokesEQ} is satisfied for some pressure function $\pi$. In addition, $\pi$ is uniquely determined up to a constant.
\item The same holds when $\rho \in \dot{B}^0_{p, 1}$ with $p = \frac{d}{\alpha}$.
\end{enumerate}
\end{prop}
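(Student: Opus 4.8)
The plan is to construct $u$ explicitly via the Leray projector and the Fourier side, then verify membership in $\mc S'_h$ and uniqueness separately. First I would set $u := (-\Delta)^{-\alpha/2}\P(\rho g)$ as in \eqref{ieq:fracLapOp}, and check that this formula makes sense: the symbol of $(-\Delta)^{-\alpha/2}\P$ is $|\xi|^{-\alpha}$ times a bounded matrix ($\P$ has symbol ${\rm Id} - \xi\otimes\xi/|\xi|^2$), so it is a Fourier multiplier of order $-\alpha$ that is singular only at $\xi = 0$. Away from the origin it maps tempered distributions to tempered distributions; the issue is purely the low-frequency singularity $|\xi|^{-\alpha}$. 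In case (i), $\rho \in L^p$ with $1 \le p < d/\alpha$ (I read the ``$d/p$'' in the statement as a typo for $d/\alpha$); by the Hardy–Littlewood–Sobolev inequality, the Riesz-type operator $(-\Delta)^{-\alpha/2}$ maps $L^p \to L^{p^*}$ with $1/p^* = 1/p - \alpha/d > 0$, and composing with the Calderón–Zygmund operator $\P$ (bounded on $L^{p^*}$ for $1 < p^* < \infty$, with the endpoint $p=1$ handled via weak-$L^1$ / Besov refinement or simply by noting $p^*>1$) gives $u \in L^{p^*} \subset \mc S'$. In case (ii), $\rho \in \dot B^0_{d/\alpha,1}$; here I would work dyadically, since $\| \Delta_j u \|_{L^q} \lesssim 2^{-\alpha j} 2^{jd(\frac{\alpha}{d} )}\|\Delta_j \rho\|_{L^{d/\alpha}} = \|\Delta_j\rho\|_{L^{d/\alpha}}$ for $q = d/(d-\alpha)$ wait — more carefully, the Bernstein-type gain from $(-\Delta)^{-\alpha/2}$ on the $j$-th block is exactly $2^{-\alpha j}$, and the change of Lebesgue exponent from $d/\alpha$ to $\infty$ costs $2^{j\alpha}$, so $\sum_j \|\Delta_j u\|_{L^\infty} \lesssim \sum_j \|\Delta_j\rho\|_{L^{d/\alpha}} = \|\rho\|_{\dot B^0_{d/\alpha,1}} < \infty$, placing $u \in \dot B^0_{\infty,1} \subset L^\infty$; in particular $u \in \mc S'_h$. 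The summability at low frequencies is precisely what the $\dot B^0_{d/\alpha,1}$ (rather than plain $L^{d/\alpha}$) hypothesis buys us.

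Next I would check that this $u$, together with a suitable $\pi$, actually solves \eqref{eq:StokesEQ}. Since $\P$ is the projection onto divergence-free fields, $\D(u) = 0$ automatically. Applying $(-\Delta)^{\alpha/2}$ gives $(-\Delta)^{\alpha/2}u = \P(\rho g) = \rho g - \nabla(-\Delta)^{-1}\D(\rho g) = \rho g - \nabla\big((-\Delta)^{-1}\partial_d\rho\big)$, so setting $\pi := -(-\Delta)^{-1}\partial_d\rho = (-\Delta)^{-1}(-\partial_d\rho)$ yields exactly $(-\Delta)^{\alpha/2}u + \nabla\pi = \rho g$. One should note $\pi$ is a well-defined element of $\mc S'$ up to polynomials — or one simply records that only $\nabla\pi$ enters the equation and $\nabla\pi = \nabla(-\Delta)^{-1}\div(\rho g)$ is a bona fide tempered distribution (again by CZ/HLS estimates in case (i) and dyadically in case (ii)). The assertion ``$\pi$ determined up to a constant'' will come out of the uniqueness argument.

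For uniqueness, suppose $u_1, u_2 \in \mc S'_h$ both solve \eqref{eq:StokesEQ} with the same $\rho$ (and pressures $\pi_1,\pi_2$). Then $v := u_1 - u_2 \in \mc S'_h$ satisfies $(-\Delta)^{\alpha/2}v + \nabla(\pi_1-\pi_2) = 0$ and $\D(v) = 0$. Taking $\div$ of the first equation kills the $v$-term? No — rather, apply $\P$: since $\P\nabla = 0$ on (the divergence-free-complement) gradients, $\P\big((-\Delta)^{\alpha/2}v\big) = 0$, and because $v$ is divergence free, $\P v = v$, so $\P(-\Delta)^{\alpha/2}v = (-\Delta)^{\alpha/2}\P v = (-\Delta)^{\alpha/2}v = 0$. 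Hence $\what v$ is supported at $\xi = 0$, so $v$ is a polynomial; being in $\mc S'_h$ forces $v = 0$ (this is the defining property of $\mc S'_h$ — it contains no nonzero polynomials). Then $\nabla(\pi_1 - \pi_2) = 0$, so $\pi_1 - \pi_2$ is a polynomial with vanishing gradient, i.e.\ a constant, giving the last claim. The one point requiring a little care — and the main obstacle — is the endpoint $p = 1$ in (i) when $\alpha$ is large, where $p^* = d/(d-\alpha)$ could still be $\le$ some problematic value and the CZ operator $\P$ is not bounded on $L^1$; but since $p=1 < d/\alpha$ forces $\alpha < d$, we get $p^* = d/(d-\alpha) \in (1,\infty)$, so $\P$ is bounded there and no genuine endpoint issue arises. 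The only real subtlety is ensuring $u$ lands in $\mc S'_h$ rather than merely $\mc S'$, which for (i) follows since $u \in L^{p^*}$ with $p^* < \infty$ (such functions have no polynomial part and their low-frequency Littlewood–Paley truncations tend to $0$), and for (ii) follows from $u \in \dot B^0_{\infty,1}$, as noted above.
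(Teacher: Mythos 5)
Your proposal is correct and shares the paper's overall skeleton: construct $u$ explicitly as $(-\Delta)^{-\alpha/2}\P(\rho g)$, check it lies in $\mc S'_h$, recover $\pi$ from the gradient part, and get uniqueness by observing that two $\mc S'_h$ solutions have Fourier transforms agreeing on $\R^d\setminus\{0\}$, so their difference is a polynomial and hence zero. Where you diverge is in how membership in $\mc S'_h$ is established in case (i): you invoke Hardy--Littlewood--Sobolev to place $(-\Delta)^{-\alpha/2}\rho$ in $L^{p^*}$ with $1/p^*=1/p-\alpha/d$ and then Calder\'on--Zygmund boundedness of $\P$, whereas the paper defines $u$ as the sum $\sum_{j\in\Z}\dot\Delta_j(-\Delta)^{-\alpha/2}\P(\rho g)$, uses the embedding $L^p\subset \dot B^0_{p,\infty}$ together with the dyadic Fourier-multiplier lemma (Lemma \ref{l:FourierMultiplier}) to land in $\dot B^{\alpha}_{p,\infty}$, which is a legitimate homogeneous Besov space precisely because $\alpha<d/p$, and therefore a subset of $\mc S'_h$. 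The paper's dyadic route has two advantages: it treats $p=1$ on the same footing as $p>1$ (your HLS step genuinely fails at $p=1$, where the Riesz potential is only of weak type; ``noting $p^*>1$'' does not repair this, though the weak-Lebesgue or Besov substitute you parenthetically mention does, since weak-$L^{p^*}$ functions with $p^*<\infty$ still lie in $\mc S'_h$), and it directly produces the $\dot B^{\alpha}_{p,\infty}$ bound that is reused later in Proposition \ref{p:LPoperatorBound} and in the weak-solution construction. Your treatment of case (ii) and of uniqueness matches the paper's; for the existence of the pressure function itself (not just of $\nabla\pi$ as a tempered distribution) the paper cites an external lemma, which you sidestep by remarking that only $\nabla\pi$ enters the equation --- acceptable, though the statement does assert existence of $\pi$ itself.
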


\begin{proof}
Formally, applying the Leray projection operator $\P = {\rm Id} + \nabla (- \Delta)^{-1} \D$ to the first equation in \eqref{eq:StokesEQ} gives the velocity as a function of the density:
\begin{equation*}
u = (- \Delta)^{- \alpha/2} \P (\rho g).
\end{equation*}
The obvious problem is that the operator $(- \Delta)^{- \alpha/2} \P g$ may not be well defined, as it is a Fourier multiplier whose symbol is singular at $\xi = 0$. We set
\begin{equation*}
v := \sum_{j \in \Z} \dot{\Delta}_j (- \Delta)^{- \alpha/2} \P (\rho g).
\end{equation*}
Now, thanks to the embedding $L^p \subset \dot{B}^0_{p, \infty}$, which holds because $p < + \infty$, Lemma \ref{l:FourierMultiplier} shows that this sum defines an element of the homogeneous Besov space $\dot{B}^\alpha_{p, \infty}$. In particular, we have $v \in \mc S'_h$.

Similarly, note that the pressure force $F = \nabla \pi$ can be defined as the sum of its homogeneous Littlewood-Paley decomposition:
\begin{equation*}
F = \sum_{j \in \Z} \dot{\Delta}_j \nabla (- \Delta)^{-1} \D (\rho g),
\end{equation*}
and this sum defines an element of the homogeneous Besov space $F \in \dot{B}^0_{p, \infty}$. The existence of a pressure function $\pi \in \mc S'$ such that $F = \nabla \pi$ follows from, for example, Lemma 3.1 in \cite{CF3}.

\medskip

Now consider another solution $(v', \pi')$ of the problem such that $v' \in \mc S'_h$ and let $F' = \nabla \pi'$. Because the symbol of the operator $(- \Delta)^{- \alpha/2} \P g$ is smooth outside of $\xi = 0$, we see that the Fourier transforms of $v$ and $v'$ must agree on $\R^d \setminus {0}$, namely
\begin{equation*}
\what{v}(\xi) = \what{v'}(\xi) \qquad \text{in } \mc D'(\R^d \setminus {0}),
\end{equation*}
so the difference $v-v'$ must be reduced to a polynomial. As there is no polynomial function in $\mc S'_h$, we conclude that $v = v'$, and hence $F = F'$ so the pressure functions $\pi$ and $\pi'$ are equal up to a constant summand.
\end{proof}

The consequence the computations in the proof of Proposition \ref{p:StokesExUn} is that we have an explicit representation of the velocity field. This makes it possible to bound it in appropriate norms.

\begin{prop}\label{p:LPoperatorBound}
Consider $0 \leq \alpha \leq d$ and assume that $\rho : \R^d \tend \R$ is a density such that one of the assumptions of Proposition \ref{p:StokesExUn} holds. Then, the uniquely determined velocity field $u \in \mc S'_h$ satisfies the following properties:
\begin{enumerate}[(i)]
\item if $\rho \in L^p$ for some $p < d/\alpha$ then $\| \Delta_{-1} u \|_{L^\infty} \lesssim \| \rho \|_{L^p}$. Moreover, if $\rho \in L^p \cap B^s_{\infty, 1}$ for some $s \in \R$ then
\begin{equation*}
\| u \|_{B^{s+\alpha}_{\infty, 1}} \lesssim \| \rho \|_{L^p} + \| \rho \|_{B^s_{\infty, 1}} \, ;
\end{equation*}
\item if $\rho \in \dot{B}^0_{p, 1}$ for $p = d/p$ then $\| \Delta_{-1} u \|_{L^\infty} \lesssim \| \rho \|_{\dot{B}^0_{p, 1}}$. Moreover, if $\rho \in \dot{B}^0_{p, 1} \cap B^s_{\infty, 1}$ for some $s \in \R$ then
\begin{equation*}
\| u \|_{B^{s+\alpha}_{\infty, 1}} \lesssim \| \rho \|_{\dot{B}^0_{p, 1}} + \| \rho \|_{B^s_{\infty, 1}}.
\end{equation*}

\end{enumerate}

\end{prop}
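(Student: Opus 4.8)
The plan is to work frequency block by frequency block, splitting the velocity $u = (-\Delta)^{-\alpha/2}\P(\rho g)$ into its low-frequency part $\Delta_{-1}u$ and its high-frequency part $\sum_{j\geq 0}\Delta_j u$, and to estimate each separately. For the high-frequency part, the key observation is that the operator $(-\Delta)^{-\alpha/2}\P g$ restricted to frequencies $|\xi|\gtrsim 1$ is a Fourier multiplier of order $-\alpha$ with a smooth symbol away from the origin; in particular, for each $j\geq 0$, the rescaled symbol $\xi\mapsto |2^j\xi|^{-\alpha}(\P g)(2^j\xi)\,\psi(\xi)$ (with $\psi$ a fixed cutoff supported in an annulus) has derivatives bounded uniformly in $j$, so Bernstein/the Mikhlin-type Lemma \ref{l:FourierMultiplier} gives $\|\Delta_j u\|_{L^\infty}\lesssim 2^{-\alpha j}\|\Delta_j(\rho g)\|_{L^\infty}=2^{-\alpha j}\|\Delta_j\rho\|_{L^\infty}$ for $j\geq 0$. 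Summing against the $B^s_{\infty,1}$ norm of $\rho$ then yields
\begin{equation*}
\sum_{j\geq 0} 2^{(s+\alpha)j}\|\Delta_j u\|_{L^\infty} \lesssim \sum_{j\geq 0} 2^{sj}\|\Delta_j\rho\|_{L^\infty}\lesssim \|\rho\|_{B^s_{\infty,1}},
\end{equation*}
which is the high-frequency half of the desired bound, valid in both cases (i) and (ii) since only $j\geq 0$ is involved.

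The low-frequency estimate $\|\Delta_{-1}u\|_{L^\infty}\lesssim \|\rho\|_{L^p}$ (resp. $\lesssim\|\rho\|_{\dot B^0_{p,1}}$) is where the hypothesis $p<d/\alpha$ (resp. $p=d/\alpha$) enters and is the main obstacle. Here we use the explicit representation from the proof of Proposition \ref{p:StokesExUn}: $\Delta_{-1}u = \sum_{j\leq -1}\dot\Delta_j(-\Delta)^{-\alpha/2}\P(\rho g)$. For $j\leq -1$, Lemma \ref{l:FourierMultiplier} gives $\|\dot\Delta_j u\|_{L^\infty}\lesssim 2^{(d/p-\alpha)j}\|\dot\Delta_j\rho\|_{L^p}$ by combining the order $-\alpha$ of the multiplier with a Bernstein embedding $L^p\hookrightarrow L^\infty$ at frequency $2^j$ (which costs $2^{dj/p}$). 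In case (i), since $\|\dot\Delta_j\rho\|_{L^p}\lesssim\|\rho\|_{L^p}$ (indeed $L^p\subset\dot B^0_{p,\infty}$) and $d/p-\alpha>0$, the geometric series $\sum_{j\leq -1}2^{(d/p-\alpha)j}$ converges, giving $\|\Delta_{-1}u\|_{L^\infty}\lesssim\|\rho\|_{L^p}$. In case (ii), the exponent $d/p-\alpha$ vanishes, so we instead sum $\sum_{j\leq -1}\|\dot\Delta_j\rho\|_{L^p}\lesssim\|\rho\|_{\dot B^0_{p,1}}$, which is exactly why the stronger Besov hypothesis is imposed there.

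Finally, one assembles the pieces: $\|u\|_{B^{s+\alpha}_{\infty,1}} = \|\Delta_{-1}u\|_{L^\infty} + \sum_{j\geq 0}2^{(s+\alpha)j}\|\Delta_j u\|_{L^\infty}$, and the two terms are controlled by $\|\rho\|_{L^p}$ (or $\|\rho\|_{\dot B^0_{p,1}}$) and $\|\rho\|_{B^s_{\infty,1}}$ respectively, which gives the stated inequalities, as well as the standalone bounds on $\|\Delta_{-1}u\|_{L^\infty}$. I expect the only delicate bookkeeping to be checking that the rescaled symbols of $(-\Delta)^{-\alpha/2}\P g$ satisfy the hypotheses of Lemma \ref{l:FourierMultiplier} with constants uniform in the dyadic parameter $j$, both for $j\geq 0$ and $j\leq -1$; everything else is a routine summation of geometric series, with the borderline case $\alpha = d$ (so $p=1$) handled identically in (ii) since the relevant exponent still vanishes and the $\dot B^0_{1,1}$ norm closes the sum.
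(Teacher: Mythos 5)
Your proposal is correct and follows essentially the same route as the paper's proof: split $u$ into $\Delta_{-1}u$ plus the high frequencies, control the low-frequency part by the normal convergence in $L^\infty$ of the homogeneous Littlewood--Paley series (Bernstein giving the factor $2^{j(d/p-\alpha)}$, geometric when $p<d/\alpha$ and $\ell^1$-summable via $\dot B^0_{p,1}$ when $p=d/\alpha$), and control the high frequencies with the order-$(-\alpha)$ multiplier estimate of Lemma \ref{l:FourierMultiplier}. The only slip is cosmetic: since $\Delta_{-1}=\chi(D)$ is supported in $|\xi|\le 2$, the low-frequency sum runs over $j\le 1$ rather than $j\le -1$, which changes nothing in the estimates.
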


\begin{proof}
The proof is fairly straightforward. Start by assuming that $\rho \in L^p$ for some $p < d/ \alpha$. Then, as $u$ belongs to the Besov space $\dot{B}^\alpha_{p, \infty}$ the series
\begin{equation}
\Delta_{-1} u = \sum_{j \leq 1} \dot{\Delta}_j (- \Delta)^{- \alpha/2} \P (\rho g)
\end{equation}
is normally convergent in $L^\infty$, and we deduce that $\| \Delta_{-1} u \|_{L^\infty} \lesssim \| \rho \|_{L^p}$. On the other hand, if we also have $\rho \in B^s_{\infty, 1}$ for some $s \in \R$, then it follows from Lemma \ref{l:FourierMultiplier} that
\begin{equation*}
\begin{split}
\| u \|_{B^s_{\infty, 1}} & \lesssim \| \Delta_{-1} u \|_{L^\infty} + \sum_{j \geq 0} 2^{j(s+\alpha)} \big\| \Delta_j (- \Delta)^{- \alpha/2} \P (\rho g) \big\|_{L^\infty} \\
& \lesssim \| \rho \|_{L^p} + \sum_{j \geq 0} 2^{js} \| \Delta_j \rho \|_{L^\infty} \\
& \lesssim \| \rho \|_{L^p} + \| \rho \|_{B^s_{\infty, 1}}.
\end{split}
\end{equation*}
This proves point \textit{(i)} of the Proposition. Point \textit{(ii)} follows from nearly identical computations.
\end{proof}

\subsection{Defining Weak Solutions}

In this paragraph, we define the notion of weak solution of problem \eqref{ieq:AIPM}. Throughout this article, we will work with solutions which do not necessarily have locally integrable derivatives, so we will always understand the non-linear convective term in the sense of the divergence form:
\begin{equation*}
u \cdot \nabla \rho = \D(\rho u),
\end{equation*}
which is possible thanks to incompressibility $\D(u) = 0$. Even then, we will have to take care that the product $\rho u$ is always properly defined.

\begin{defi}
Consider $ \alpha \in [0, d]$ and $q \in [1, + \infty]$ such that $q > \frac{2}{1 + \frac{\alpha}{d}}$ and $T > 0$. A function $\rho \in L^2_{\rm loc}([0, T[ ; L^q)$ is said to be a weak solution of the fractional Stokes-Transport problem \eqref{ieq:AIPM} with initial datum $\rho_0 \in L^q$ if
\begin{enumerate}[(i)]
\item we have $\rho \in L^2_{\rm loc}([0, T[ ; L^p)$ for some $p \in [1, d/\alpha [$ or $\rho \in L^2_{\rm loc}([0, T[ ; \dot{B}^0_{p, 1})$ for $p = d/\alpha$ ;
\item the density $\rho$ solves the weak form of the transport equation
\begin{equation*}
\forall \phi \in \mc D ([0, T[ \times \R^d ; \R), \qquad \iint \Big\{ \rho \partial_t \phi + \rho u \cdot \nabla \phi \Big\} \dx \dt + \int \rho_0(x) \phi(0, x) \dx = 0
\end{equation*}
with the velocity field given, for almost all times, as the unique $\mc S'_h$ solution of the Stokes system \eqref{eq:StokesEQ}, as specified by Proposition \ref{p:StokesExUn} above. In that case, it follows from Lemma \ref{l:LQproduct} below (whose proof is independent from this definition) that the product $\rho u$ is well defined as an element of the space $\rho u \in L^1_{\rm loc}([0, T[ ; B^{\alpha - d/q}_{q, \infty})$.
\end{enumerate}

\end{defi}

\section{Global Weak Solutions for $\alpha > 0$}\label{s:globalWeak}

\begin{thm}\label{t:globalWeakSol}
Consider $\alpha \in ]0, d[$ and an exponent $p$ such that
\begin{equation}\label{eq:pCondition}
\frac{2}{1 + \frac{\alpha}{d}} < p < \frac{d}{\alpha}.
\end{equation}
For all $\rho_0 \in L^p$, there exists a global weak solution $\rho \in L^\infty(L^p)$ of \eqref{ieq:AIPM} associated to the initial datum $\rho_0$. In addition, for any $q \in [1, +\infty]$, if $\rho_0 \in L^q$ then this solution satisfies
\begin{equation}\label{eq:LqBound}
\| \rho \|_{L^\infty(L^q)} \leq \| \rho_0 \|_{L^q}.
\end{equation}
\end{thm}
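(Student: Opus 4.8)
I would build $\rho$ as a limit of regularised problems, keeping in mind the two structural obstructions: the velocity law \eqref{eq:StokesEQ} only gains $\alpha$ derivatives of regularity on $u$ over $\rho$, which is below the threshold at which the transport equation is classically well posed and at which the $L^p$-norm of $\rho$ is conserved; and the product $\rho u$ in $u\cdot\nabla\rho=\div(\rho u)$ has to be given a meaning, which is delicate when $p<2$. The plan is therefore: (1) introduce approximate problems whose transport velocity is mollified, so that the transport equation is genuinely well posed and \emph{every} $L^q$-norm of the approximate density is conserved; (2) extract uniform-in-$n$ bounds from this conservation together with Proposition \ref{p:LPoperatorBound} and Lemma \ref{l:LQproduct}; (3) pass to the limit, the non-linear term being controlled precisely by condition \eqref{eq:pCondition}.

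\textbf{Approximate problems and uniform estimates.} For $n\in\N$ let $J_n$ be the Fourier truncation onto $\{2^{-n}\le|\xi|\le 2^n\}$, which both regularises the velocity and removes the low-frequency singularity of the operator $(-\Delta)^{-\alpha/2}\P g$, and consider
\begin{equation*}
\partial_t\rho_n+\tilde u_n\cdot\nabla\rho_n=0,\qquad \tilde u_n:=J_n(-\Delta)^{-\alpha/2}\P(\rho_n g),\qquad \rho_n|_{t=0}=\rho_0.
\end{equation*}
Here $\tilde u_n$ is smooth and divergence-free with $W^{1,\infty}$-norm controlled, up to a constant depending on $n$, by $\|\rho_n\|_{L^p}$; a Picard iteration in $C^0_T(L^p)$ (following the Friedrichs method and starting, if need be, from $L^2$ data before extending to general $L^p$ data by density), closed by the bound below, produces a unique global smooth $\rho_n$. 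Since $\tilde u_n$ is divergence-free, $\rho_n$ is transported along its measure-preserving flow, so $\|\rho_n(t)\|_{L^q}=\|\rho_0\|_{L^q}$ for every $q\in[1,+\infty]$ with $\rho_0\in L^q$, uniformly in $n$ and $t\ge0$; in particular $(\rho_n)_n$ is bounded in $L^\infty(L^p)$. From this, the embedding $L^p\subset\dot B^0_{p,\infty}$ and Lemma \ref{l:FourierMultiplier}, the velocity $\tilde u_n$ inherits an $\alpha$-gain of regularity over $\rho_n$ \emph{uniformly in $n$}: its low-frequency part is bounded in $L^\infty$ (Proposition \ref{p:LPoperatorBound}, using $p<d/\alpha$), and its higher-frequency part is bounded in every $B^s_{p,1}$ with $s<\alpha$ (the relevant series converges because $\alpha>0$). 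Then Lemma \ref{l:LQproduct} makes $\rho_n\tilde u_n$ meaningful and bounded in $L^\infty(B^{\alpha-d/p}_{p,\infty})$, so $\partial_t\rho_n=-\div(\rho_n\tilde u_n)$ is bounded in $L^\infty$ of a fixed negative-index space.

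\textbf{Passing to the limit.} By Banach--Alaoglu a subsequence satisfies $\rho_n\to\rho$ weakly-$*$ in $L^\infty(L^p)$, and also in $L^\infty(L^q)$ whenever $\rho_0\in L^q$. The uniform Besov bounds on $\tilde u_n$, the time-equicontinuity from the $\partial_t$-estimate, and the local compact embeddings $B^s_{p,1}\hookrightarrow\hookrightarrow L^r_{\rm loc}$ (for $r<p_s$, $\tfrac1{p_s}=\tfrac1p-\tfrac sd$) give, by an Aubin--Lions--Simon argument, $\tilde u_n\to u$ strongly in $L^r_{\rm loc}([0,T]\times\R^d)$ for all $T>0$ and all $r<p_\alpha$, $\tfrac1{p_\alpha}=\tfrac1p-\tfrac\alpha d$, where $u=(-\Delta)^{-\alpha/2}\P(\rho g)$ is the velocity attached to $\rho$ by Proposition \ref{p:StokesExUn}. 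This is where \eqref{eq:pCondition} enters: the upper bound $p<d/\alpha$ is exactly what makes the velocity gain $\alpha$ derivatives into a \emph{subcritical} Lebesgue scale and keeps its low frequencies bounded, while the lower bound $p>\tfrac2{1+\alpha/d}$ is equivalent to $p'<p_\alpha$ (with $p'=p/(p-1)$), so one may pick $r\in[p',p_\alpha)$ and obtain $\rho_n\tilde u_n\rightharpoonup\rho u$ in $\mc D'$ by pairing the weak $L^p$-convergence of $\rho_n$ against the strong $L^{p'}_{\rm loc}$-convergence of $\tilde u_n$. Passing to the limit in the weak formulation (the remaining terms are linear), and noting that $\rho\in L^\infty(L^p)$ with $p<d/\alpha$ so that $u$ is indeed the $\mc S'_h$-solution of \eqref{eq:StokesEQ}, shows $\rho$ is a weak solution with datum $\rho_0$. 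Weak-$*$ lower semicontinuity of the norms then yields $\|\rho\|_{L^\infty(L^q)}\le\liminf_n\|\rho_n\|_{L^\infty(L^q)}=\|\rho_0\|_{L^q}$, which is \eqref{eq:LqBound}; equality may be lost because we are below the DiPerna--Lions regularity.

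\textbf{Main obstacle.} The crux is the passage to the limit in $\div(\rho_n\tilde u_n)$: the velocity law regularises only $u$, so the densities $\rho_n$ carry \emph{no} spatial compactness of their own, all the compactness must be extracted from the velocities, and the product must be closed by testing the weakly convergent factor $\rho_n\in L^p$ against the strongly convergent factor $\tilde u_n\in L^{p'}_{\rm loc}$ — which is possible exactly under \eqref{eq:pCondition}. A secondary technical point, hidden in the construction, is to arrange an approximation scheme that is simultaneously well posed (whence the detour through $L^2$ Friedrichs solutions) and exactly norm-conserving, so that the bounds above are genuinely uniform in $n$.
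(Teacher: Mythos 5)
Your proposal follows essentially the same route as the paper: regularise the velocity by a Fourier truncation (the paper's $T_N$, after a preliminary Friedrichs/$L^2$ construction), exploit conservation of all Lebesgue norms for the approximate transport problems, extract all the compactness from the velocity via Proposition \ref{p:LPoperatorBound}, Lemma \ref{l:FourierMultiplier} and the product Lemma \ref{l:LQproduct}, and close the nonlinear term by pairing the weakly convergent $\rho_n$ against the strongly $L^{p'}_{\rm loc}$-convergent $\tilde u_n$ --- your observation that the lower bound in \eqref{eq:pCondition} is exactly $p'<p_\alpha$ is precisely the embedding $B^\beta_{p,\infty}\subset L^{p'}$ the paper invokes at the corresponding step. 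The one caveat is that a literal Picard iteration in $C^0_T(L^p)$ does not contract (stability estimates cost a derivative, as the paper warns); this is repaired by the Friedrichs ODE scheme in $L^2$ that you also mention.
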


\begin{rmk}
We remark that condition \eqref{eq:pCondition} is not void because $0 < \alpha < d$. In addition, as long as $\alpha < d/2$, the value $p=2$ always works. It should also be noted that the bound \eqref{eq:LqBound} remains valid in the case where $q = 1$.
\end{rmk}

\begin{rmk}
The case of a plane problem $d=2$ could seem to be a major limitation of our result, as is will not then apply to $\alpha = 2$, which is a value of special interest. However, we will have other means of proving \textsl{global} well-posedness whenever $\alpha \geq 1$ (see Theorem \ref{t:globalWPBesov} below), so we should not be too worried about this issue.
\end{rmk}

Before giving the full proof, let us sketch ou argument. The main difficulty is the propagation of $L^p$ norms in the transport equation at such a low level of regularity for $u$. For instance, Di Perna-Lions theory requires the velocity field to be at least $W^{1, 1}_{\rm loc}$, and such a bound is unavailable here, as $u$ will have a regularity level of at most $\alpha$.

In order to replace Di Perna-Lions theory, we will Sarto by using energy estimates and a Friedrichs scheme to construct global $L^\infty(L^2)$ solutions. Incidentally, it should be noted that fixed-point arguments (a.k.a. iterative schemes) do not work at this level of regularity, as they require stability estimates to hold. This will be the first part of the proof.

Secondly, in order to propagate $L^p$ norms, we work with a regularization of the velocity field and construct approximate solutions that are uniformly bounded in $L^\infty(L^p)$. The proof will then be a matter of finding the estimates we need to obtain convergence of the approximate solutions.

\medskip

We therefore start by the case where $p = 2$. In order to prepare the way for further estimates, we already introduce a regularization of the velocity field.

\begin{prop}\label{p:L2weakSol}
Consider a Fourier multiplier $T = \phi(D)$ whose symbol $\phi(\xi)$ is a $C^\infty$ function with compact support bounded away from $\xi = 0$ such that $|\phi| \leq 1$ and let $\rho_0 \in L^2$. Then the PDE system
\begin{equation}\label{eq:L2appSyst}
\begin{cases}
\partial_t \rho + \D(\rho u) = 0\\
u = T \P (- \Delta)^{- \alpha/2} (\rho g)
\end{cases}
\end{equation}
has a global weak solution $\rho \in L^\infty(L^2)$ associated to the initial datum $\rho_0$.
\end{prop}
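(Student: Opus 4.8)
The plan is to build the solution by a Friedrichs (spectral truncation) scheme at the $L^2$ level. For $n \in \N$ let $J_n$ denote the Fourier cut-off onto the ball $\{|\xi| \leq n\}$, set $L^2_n := J_n L^2$, and consider, in the Hilbert space $L^2_n$, the ODE
\begin{equation*}
\partial_t \rho_n = - J_n \div(\rho_n\, u_n), \qquad u_n = T\P (-\Delta)^{-\alpha/2}(\rho_n g), \qquad \rho_n(0) = J_n \rho_0 .
\end{equation*}
Since the symbol of $T = \phi(D)$ is supported in a fixed compact set bounded away from $\xi = 0$, the symbol of $T\P(-\Delta)^{-\alpha/2}(\,\cdot\, g)$ is a bounded function, and standard estimates on band-limited functions show that $\rho \mapsto J_n \div\bigl(\rho\, T\P(-\Delta)^{-\alpha/2}(\rho g)\bigr)$ is a bounded bilinear (hence smooth, locally Lipschitz) map on $L^2_n$. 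The Cauchy--Lipschitz theorem therefore produces a unique maximal solution $\rho_n \in C^1([0, T_n^*[; L^2_n)$, and $J_n \rho_n = \rho_n$ for all $t$ since the right-hand side takes values in $L^2_n$.

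\textbf{Uniform bounds.} Because $\P$ projects onto divergence-free fields and $T$ is a scalar Fourier multiplier, $\div u_n = 0$. Pairing the equation with $\rho_n$ in $L^2$ and using $J_n \rho_n = \rho_n$ gives
\begin{equation*}
\frac{1}{2}\frac{\rm d}{{\rm d}t}\|\rho_n\|_{L^2}^2 = -\langle \rho_n, \div(\rho_n u_n)\rangle_{L^2} = \frac{1}{2}\int_{\R^d} |\rho_n|^2 \, \div u_n \dx = 0,
\end{equation*}
so $\|\rho_n(t)\|_{L^2} = \|J_n \rho_0\|_{L^2} \leq \|\rho_0\|_{L^2}$; combined with the bilinear bound this excludes finite-time blow-up, hence $T_n^* = +\infty$. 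Since the symbol $\xi \mapsto \phi(\xi)|\xi|^{-\alpha}$ is smooth, compactly supported and bounded, for every integer $k$ one has, uniformly in $n$, $\|u_n\|_{L^\infty(H^k)} \lesssim_k \|\rho_0\|_{L^2}$ and likewise $\|\partial_t u_n\|_{L^\infty(H^k)} = \|T\P(-\Delta)^{-\alpha/2}(\partial_t\rho_n\, g)\|_{L^\infty(H^k)} \lesssim_k \|\partial_t\rho_n\|_{L^\infty(H^{-1})}$, which is itself controlled because $\partial_t\rho_n = -J_n\div(\rho_n u_n)$ and $\rho_n u_n$ is bounded in $L^\infty(L^1 \cap L^2)$ (product of $L^\infty(L^2)$ and $L^\infty(L^\infty)$). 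Thus $(u_n)_n$ is bounded in $W^{1,\infty}(H^k)$ for every $k$.

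\textbf{Compactness and passage to the limit.} By weak-$*$ compactness, up to extraction $\rho_n \upra \rho$ in $L^\infty(L^2)$, which already yields $\|\rho\|_{L^\infty(L^2)} \leq \|\rho_0\|_{L^2}$. By the Aubin--Lions--Simon lemma applied on balls (using $H^k(B_R) \hookrightarrow\hookrightarrow L^2(B_R)$ and the equicontinuity in time just obtained), a further extraction gives $u_n \to u$ strongly in $C([0,T]; L^2(B_R))$, hence in $L^2_{\rm loc}$ in space-time, for every $T, R > 0$; weak continuity of $T\P(-\Delta)^{-\alpha/2}(\,\cdot\, g)$ identifies $u = T\P(-\Delta)^{-\alpha/2}(\rho g)$. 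For any $\phi \in \mc D([0,+\infty[\times\R^d)$, the function $u_n \cdot \nabla\phi$ then converges strongly in $L^2([0,T]\times\R^d)$ while $\rho_n \upra \rho$ weakly there, so $\iint \rho_n u_n \cdot \nabla\phi \to \iint \rho u \cdot \nabla\phi$. The linear terms are immediate: $J_n \rho_0 \to \rho_0$ in $L^2$, and writing $\langle J_n\div(\rho_n u_n), \phi\rangle = -\langle \rho_n u_n, J_n\nabla\phi\rangle$ we use $J_n\nabla\phi \to \nabla\phi$ in $L^2$ together with the $L^\infty(L^1 \cap L^2)$ bound on $\rho_n u_n$. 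Passing to the limit in the weak formulation of the truncated equation shows that $\rho \in L^\infty(L^2)$ is a weak solution of \eqref{eq:L2appSyst} with datum $\rho_0$.

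\textbf{Main obstacle.} The only genuinely delicate point is the passage to the limit in the nonlinear term $\rho_n u_n$, since neither factor converges strongly a priori. The resolution relies precisely on the regularization built into the velocity law: the Fourier multiplier $T = \phi(D)$ confines $u_n$ to a fixed compact set of frequencies, which upgrades the mere $L^\infty(L^2)$ bound coming from the energy estimate into uniform $W^{1,\infty}(H^k)$ bounds for all $k$; this is exactly what makes $(u_n)$ relatively compact in $C([0,T]; L^2_{\rm loc})$ and lets the weak--strong product pass to the limit. This is also where the argument differs from the genuine system \eqref{ieq:AIPM}, for which the corresponding compactness is recovered afterwards by a separate regularization argument.
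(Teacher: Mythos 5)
Your proposal is correct and follows essentially the same route as the paper: a Friedrichs spectral truncation solved by Cauchy--Lipschitz, conservation of the $L^2$ norm from $\div u_n = 0$ to get global approximate solutions, uniform $W^{1,\infty}$-in-time bounds on $u_n$ in smooth spaces coming from the compactly supported symbol of $T\P(-\Delta)^{-\alpha/2}g$, and a weak--strong splitting to pass to the limit in the product $\rho_n u_n$. The only differences are cosmetic (you phrase the velocity bounds in $H^k$ and invoke Aubin--Lions, where the paper uses $B^k_{\infty,1}$ and an Ascoli-type argument), so nothing further is needed.
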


\begin{rmk}
Although the proof of this Proposition is by no means challenging, the PDE problem \eqref{eq:L2appSyst} still is a fully non-linear set of equations, so that the existence of solutions cannot be considered as totally trivial.
\end{rmk}

\begin{proof}[Proof (of the Proposition)]
We start by noting that, by assumption on the function $\phi(\xi)$, the operator $T \P (- \Delta)^{- \alpha / 2}g$ has a smooth, bounded and compactly supported symbol. It therefore is $L^2 \tend B^k_{\infty, 1}$ continuous for all $k > 0$. This fact will be of constant use throughout the proof.

\medskip

\textbf{STEP 1: approximate system.} In order to implement a Friedrichs scheme, we introduce the following set of approximate equations: for all $n \geq 1$, consider
\begin{equation}\label{eq:L2ODE}
\begin{cases}
\partial_t \rho_n + A_n \D (\rho_n u_n) = 0 \\
u_n = T \P (- \Delta)^{- \alpha/2} (\rho_n g),
\end{cases}
\qquad \text{with initial data } \rho_n(0) = A_n \rho_0.
\end{equation}
Here, $A_n$ is the spectral projector on the ball $|\xi| \leq n$. More precisely, for any $f \in L^2$, the function $A_n f \in L^2$ has Fourier transform $\mathds{1}_{|\xi| \leq n} \what{f}(\xi)$. We wish to make use of the Cauchy-Lipschitz theorem to construct solutions of \eqref{eq:L2ODE} in the Banach space $L^2$. We must therefore prove that $\partial_t \rho_n = - A_n \D (\rho_n u_n)$ is a locally Lipschitz function of $\rho_n$ in $L^2$. 

Firstly, since the function $A_n \D (\rho_n u_n)$ has a compactly supported Fourier transform, the first Bernstein inequality gives
\begin{equation*}
\big\| A_n \D (\rho_n u_n) \big\|_{L^2} \leq C(n) \| \rho_n u_n \|_{L^2} \leq C(n) \| u_n \|_{L^\infty} \| \rho_n \|_{L^2}.
\end{equation*}
Next, thanks to the boundedness properties of the operator $T \P (- \Delta)^{- \alpha / 2}g$ mentioned at the beginning of the proof, we get
\begin{equation*}
\big\| A_n \D (\rho_n u_n) \big\|_{L^2} \leq C(n) \| \rho_n \|_{L^2}^2,
\end{equation*}
so that the quantity $A_n \D (\rho_n u_n)$ is indeed locally Lipschitz in $L^2$ with respect to $\rho_n$.

Consequently, the Cauchy-Lipschitz theorem provides, for every $n \geq 1$, a solution of \eqref{eq:L2ODE} $\rho_n \in C^0([0, T_n[;L^2)$ associated to the initial datum $\rho_0$, with positive lifespan $T_n > 0$.

\medskip

\textbf{STEP 2: uniform estimates.} Because the velocity fields $u_n$ are divergence-free and smooth, we may test the evolution equation in \eqref{eq:L2ODE} against $\rho_n$ and obtain conservation of the $L^2$ norms of the approximate solutions:
\begin{equation*}
\forall t \in [0, T_n[, \qquad \| \rho_n(t) \|_{L^2} = \| \rho_n(0) \|_{L^2} \leq \| \rho_0 \|_{L^2}.
\end{equation*}
As a consequence, the approximate solutions $\rho_n$ provided by the Cauchy-Lipschitz theorem do not blow-up in $L^2$, and so they must be global $T_n = +\infty$.

We deduce from the continuity of the operator $T\P (- \Delta)^{- \alpha /2} g : L^2 \tend B^k_{\infty, 1}$ that the sequence of velocity fields $(u_n)_n$ is uniformly bounded in $L^\infty(B^k_{\infty, 1})$ for every $k > 0$. In order to also provide time compactness, we use the equation and bound the time derivative $\partial_t u_n$. First, we see that
\begin{equation*}
\| \partial_t \rho_n \|_{H^{-1}} = \big\| A_n \D (\rho_n u_n) \big\|_{H^{-1}} \leq \| \rho_n \|_{L^2} \| u_n \|_{L^\infty} \lesssim \| \rho_0 \|_{L^2}^2,
\end{equation*}
so that, since the operator $T \P (- \Delta)^{- \alpha / 2}g$ has a smooth and compactly supported symbol, the Bernstein inequalities provide, for any $k > 0$,
\begin{equation*}
\| \partial_t u_n \|_{B^k_{\infty, 1}} \lesssim \| \partial_t \rho_n \|_{H^{-1}} \lesssim \| \rho_0 \|_{L^2}^2,
\end{equation*}
and the sequence $(u_n)_n$ is therefore uniformly bounded in the space $W^{1, \infty}_T(B^k_{\infty, 1})$ for every $T > 0$.

\medskip

\textbf{STEP 3: convergence to a solution.} Thanks to the uniform bounds we have obtained above, namely
\begin{equation*}
\| \rho_n \|_{L^\infty(L^2)} \leq \| \rho_0 \|_{L^2} \qquad \text{and} \qquad \| u_n \|_{W^{1, \infty}_T(B^k_{\infty, 1})} \leq \| \rho_0 \|_{L^2} + \| \rho_0 \|_{L^2}^2
\end{equation*}
for every $T > 0$ and $k > 0$, we have convergence of the sequences $(\rho_n)_n$ and $(u_n)_n$ up to an extraction: there is a $\rho \in L^\infty (L^2)$ and a $u \in L^\infty_{\rm loc}(L^\infty)$ such that, as $n \rightarrow + \infty$,
\begin{equation*}
\begin{split}
& \rho_n \wtend^* \rho \qquad \text{in } L^\infty(L^2), \\
& \rho_n(0) \rightarrow \rho_0 \qquad \text{in } L^2, \\
& u_n \rightarrow u \qquad \text{in } L^\infty_T(L^\infty_{\rm loc}).
\end{split}
\end{equation*}
for every $T > 0$. We now study the convergence of all the terms in the PDE to show that the limit $(\rho, u)$ is in fact a solution of the system. Firstly, we have,
\begin{equation*}
\partial_t \rho_n \tend \partial_t \rho \qquad \text{in } \mc D'(\R_+^* \times \R^d).
\end{equation*}
Next, we study the convergence of the nonlinear term $\rho_n u_n$. Let $\psi \in \mc D (]0, T[ \times \R^d)$. Then, if $K \subset \R^d$ is a compact set such that ${\rm supp}\big( \psi(t) \big) \subset K$ for all times $t \in ]0, T[$,
\begin{equation*}
\begin{split}
\left| \langle \rho_n u_n - \rho u, \psi \rangle \right| & \leq \| u_n - u \|_{L^\infty(K)} \| \psi \|_{L^2} \| \rho_n \|_{L^2} + \left| \langle \rho_n - \rho, u \psi \rangle \right| \\
& \leq \| u_n - u \|_{L^\infty(K)} \| \psi \|_{L^2} \| \rho_0 \|_{L^2} + \left| \langle \rho_n - \rho, u \psi \rangle \right|
\end{split}
\end{equation*}
On the one hand, $u_n \tend u$ in the space $L^\infty_T \big(L^\infty(K) \big)$, so the first summand tends to zero, and on the other hand, the fact that $u \psi \in L^\infty_T(L^2)$ insures that the bracket converges to zero. We have shown that $\rho$ solves the equation
\begin{equation*}
\partial_t \rho + \D(\rho u) = 0 \qquad \text{in } \mc D'(\R_+^* \times \R^d),
\end{equation*}
and the assertion concerning the initial datum follows from the convergence $A_n \rho_0 \tend \rho_0$ in $L^2$.
\end{proof}

Now that we have constructed $L^2$ solutions, we may now finish the proof of Theorem \ref{t:globalWeakSol}. 

\begin{proof}[Proof (of Theorem \ref{t:globalWeakSol})]
Our argument runs pretty much along the same lines as that of Proposition \ref{p:L2weakSol}, starting from a set of approximate solutions whose convergence must be shown, except that this time much more care is required to find estimates in the correct spaces.

\medskip

\textbf{STEP 1: approximate solutions.} We use Proposition \ref{p:L2weakSol} in order to construct a family of approximate solutions. For any $N \geq 1$, we define a sequence of initial data $\rho_{0, N}$ by
\begin{equation*}
\rho_{0, N}(x) = \Psi(x/N) S_N \rho_0(x),
\end{equation*}
where $\Psi \in \mc D$ is such that $0 \leq \Psi \leq 1$ and $\Psi(x) = 1$ for all $|x| \leq 1$, and $S_N$ is the Littlewood-Paley operator defined in Subsection \ref{ss:LP} of the appendix. As a consequence, $\rho_{0, N} \in L^2$ for every $N \geq 1$ and $\| \rho_{0, N} \|_{L^r} \leq \| \rho_0 \|_{L^r}$ for every $N \geq 1$ and $r \in [1, + \infty]$. In addition, we define the smoothing operator $T_N$ by
\begin{equation*}
T_N := \sum_{-N}^N \dot{\Delta}_j = \chi(2^{-N} D) \big({\rm Id} - \chi(2^N D) \big),
\end{equation*}
where $\chi(\xi)$ is the Littlewood-Paley function from Subsection \ref{ss:LP}. Note that the operator $T_N$ is uniformly bounded in the $L^r \tend L^r$ topology, as it is a composition of scaled versions of the Fourier multiplier $\chi(D) = \Delta_{-1}$.

We consider the following approximate system, for which the velocity field has been regularized thanks to the operator $T_N$. For every $N \geq 1$, consider the problem
\begin{equation}\label{eq:LPapprox}
\begin{cases}
\partial_t \rho_N + \D (\rho_N u_N) = 0\\
u_N = T_N \P (- \Delta)^{- \alpha / 2} (\rho_N g),
\end{cases}
\qquad \text{with initial data } \rho_N(0) = \rho_{0, N}.
\end{equation}
The existence of global solutions $\rho_N \in L^\infty(L^2)$ is guaranteed by Proposition \ref{p:L2weakSol}.

\medskip

\textbf{STEP 2: uniform estimates.} First of all, the $\rho_N$ solve pure transport equations with divergence-free velocity fields $u_N$. Since, by construction, each $u_N$ is $C^\infty$ smooth, all the Lebesgue norms of the solutions are conserved: for all $r \in [1, + \infty]$,
\begin{equation*}
\forall t \geq 0, \qquad \| \rho_N(t) \|_{L^r} = \| \rho_{0, N} \|_{L^r} \lesssim \| \rho_0 \|_{L^r}.
\end{equation*}

Now, we seek uniform estimates on the velocity field in order to provide compactness on the sequence $(u_N)_N$. Here, unlike in the proof of Proposition \ref{p:L2weakSol} above, we have to deal with the full operator $(- \Delta)^{- \alpha/2} \P g$ whose symbol involves a singularity at low frequencies $\xi = 0$. To better understand the specific low and high frequency issues, we split $u_N$ into
\begin{equation*}
u_N = \Delta_{-1} u_N + ({\rm Id} - \Delta_{-1}) u_N
\end{equation*}
and will prove compactness separately on each part. On the one hand, because of Proposition \ref{p:LPoperatorBound}, we may handle the low-frequency part
\begin{equation}\label{eq:LFvelocitySpace}
\| \Delta_{-1} u_N \|_{L^\infty} = \big\| \Delta_{-1} T_N (- \Delta)^{- \alpha/2} \P (\rho_N g) \big\|_{L^\infty} \lesssim \| \rho_N \|_{L^p} \leq \| \rho_0 \|_{L^p}.
\end{equation}
And since $\Delta_{-1} u_N$ has a compactly supported Fourier transform, this means that we in fact have the uniform bound $\| \Delta_{-1} u_N \|_{B^k_{\infty, 1}} \lesssim \| \rho_0 \|_{L^p}$. Concerning the high frequency part, we have, thanks to Lemma \ref{l:FourierMultiplier},
\begin{equation}\label{eq:HFvelocitySpace}
\big\| ({\rm Id} - \Delta_{-1}) u_N \big\|_{B^\alpha_{p, \infty}} \lesssim \big\| ({\rm Id} - \Delta_{-1}) \rho_N \big\|_{B^0_{p, \infty}} \lesssim \| \rho_0 \|_{L^p}.
\end{equation}

These two estimates provide space compactness on the velocity field. In order to achieve time compactness, we will need to estimate the derivative $\partial_t \rho_N$. This requires a careful analysis of the product $\rho_N u_N$. Once again, we use the low and high frequency decomposition of the velocity field and write
\begin{equation}\label{eq:productLHFdecomposition}
\rho_N u_N = \rho_N \Delta_{-1} u_N + \rho_N ({\rm Id} - \Delta_{-1})u_N.
\end{equation}
Estimating the first of these two terms is straightforward: because $\Delta_{-1} u_N$ is already known to be bounded in $L^\infty$, we have, by \eqref{eq:LFvelocitySpace}
\begin{equation*}
\| \rho_N \Delta_{-1}u_N \|_{L^p} \leq \| \Delta_{-1} u_N \|_{L^\infty} \| \rho_N \|_{L^p} \lesssim \| \rho_0 \|_{L^p}^2.
\end{equation*}
For the other summand, things are much more delicate: the function $({\rm Id} - \Delta_{-1})u_N$ has $B^\alpha_{p, \infty}$ regularity, and it is not obvious that the function product is well defined in $B^0_{p, \infty} \times B^\alpha_{p, \infty}$. This is the purpose of the following Lemma.

\begin{lemma}\label{l:LQproduct}
Consider $\beta > 0$ and $q \in [1, + \infty]$ such that the inequality
\begin{equation}\label{eq:LQproductCondition}
\frac{2}{1 + \frac{\beta}{d}} < q
\end{equation}
holds. Then, for all $(f, h) \in B^0_{q, \infty} \times B^\beta_{q, \infty}$, the product $fh$ is well defined as an element of $B^{\beta - d/q}_{q, \infty}$ and we have the inequality
\begin{equation*}
\| fh \|_{B^{\beta - d/q}_{q, \infty}} \lesssim \| f \|_{B^0_{q, \infty}} \| h \|_{B^\beta_{q, \infty}}
\end{equation*}
\end{lemma}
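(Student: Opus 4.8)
The plan is to use the Bony paraproduct decomposition, writing $fh = T_f h + T_h f + R(f,h)$, and to estimate each of the three pieces in $B^{\beta - d/q}_{q,\infty}$. The paraproduct $T_f h$ is the easiest: since $f \in B^0_{q,\infty} \subset L^\infty$ is false in general, but $S_{j-1} f$ is controlled in $L^\infty$ only through a logarithmic loss, so instead I would bound $\|S_{j-1} f\|_{L^\infty} \lesssim 2^{jd/q} \|f\|_{B^0_{q,\infty}}$ by Bernstein, which gives $\|\dot\Delta_j(T_f h)\|_{L^q} \lesssim 2^{jd/q} \|f\|_{B^0_{q,\infty}} \cdot 2^{-j\beta}\|h\|_{B^\beta_{q,\infty}}$, i.e. exactly the claimed $B^{\beta - d/q}_{q,\infty}$ bound. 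For the symmetric paraproduct $T_h f$, since $\beta > 0$ we have $h \in B^\beta_{q,\infty}$ and hence $S_{j-1} h$ is bounded in $L^\infty$ with the gain $\|S_{j-1} h\|_{L^\infty} \lesssim \sum_{k < j} 2^{kd/q} 2^{-k\beta} \|h\|_{B^\beta_{q,\infty}} \lesssim \|h\|_{B^\beta_{q,\infty}}$ provided $\beta > d/q$ — but here $\beta$ may be $\leq d/q$, so one instead keeps the worst term and gets $\|S_{j-1}h\|_{L^\infty} \lesssim 2^{j(d/q - \beta)_+}\|h\|_{B^\beta_{q,\infty}}$, which combined with $\|\dot\Delta_j f\|_{L^q} \lesssim \|f\|_{B^0_{q,\infty}}$ yields a $B^{-(d/q-\beta)_+}_{q,\infty}$ bound; since $\beta > 0$, one checks $-(d/q - \beta)_+ \geq \beta - d/q$, so this is absorbed into the target space.

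The remainder term $R(f,h) = \sum_j \dot\Delta_j f \, \widetilde{\dot\Delta_j} h$ is where condition \eqref{eq:LQproductCondition} enters and is the main obstacle. Each summand $\dot\Delta_j f \, \widetilde{\dot\Delta_j} h$ has spectrum in a ball of radius $\sim 2^j$, so to land in $B^s_{q,\infty}$ one needs $\sum_j 2^{js} \|\dot\Delta_j f \, \widetilde{\dot\Delta_j} h\|_{L^q}$ summable — but the product of two $L^q$ functions is only in $L^{q/2}$, not $L^q$, which is the heart of the difficulty when $q < \infty$. I would remedy this by a Bernstein embedding: $\|\dot\Delta_j f \, \widetilde{\dot\Delta_j} h\|_{L^q} \lesssim 2^{jd/q} \|\dot\Delta_j f \, \widetilde{\dot\Delta_j} h\|_{L^{q/2}} \lesssim 2^{jd/q} \|\dot\Delta_j f\|_{L^q}\|\widetilde{\dot\Delta_j} h\|_{L^q} \lesssim 2^{jd/q} 2^{-j\beta} \|f\|_{B^0_{q,\infty}}\|h\|_{B^\beta_{q,\infty}}$, so that $R(f,h) \in B^{\beta - d/q}_{q,\infty}$ with the correct norm bound, and the series defining $R(f,h)$ converges in $\mathcal{S}'$ precisely when $\beta - d/q$ can be absorbed, i.e. when $2^{j(d/q - \beta)}$ is summable at $j \to -\infty$ (automatic) and the high-frequency tail behaves — this is where $q > 2/(1 + \beta/d)$, equivalently $d/q < \beta(1 + \dots)$, makes the relevant exponent work out so that $fh$ is genuinely a well-defined tempered distribution and not merely a formal series. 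I would spell out that the condition \eqref{eq:LQproductCondition} is exactly $d/q - \beta < d/q'$ rearranged, guaranteeing the remainder exponent stays in the range where the sum converges.

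Finally I would assemble the three estimates: the worst of the three target regularities is $\beta - d/q$ (the paraproduct $T_f h$ and the remainder both produce exactly this, while $T_h f$ produces something no worse), so $fh \in B^{\beta - d/q}_{q,\infty}$ with $\|fh\|_{B^{\beta-d/q}_{q,\infty}} \lesssim \|f\|_{B^0_{q,\infty}}\|h\|_{B^\beta_{q,\infty}}$, and the convergence of all three series in $\mathcal{S}'$ (using $f, h \in \mathcal{S}'_h$ or suitable density, or simply the absolute convergence established by the bounds) shows the product is well-defined. The one point requiring genuine care, rather than bookkeeping, is verifying that \eqref{eq:LQproductCondition} is precisely the threshold ensuring the low-frequency part of the remainder series converges and the resulting distribution is unambiguous; everything else is a routine application of Bony decomposition plus Bernstein inequalities, which I would carry out referring to Lemma \ref{l:FourierMultiplier} and the Besov space definitions in the appendix.
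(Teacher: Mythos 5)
Your treatment of the two paraproducts is essentially the paper's (Bernstein to trade $\| f\|_{B^0_{q,\infty}}$ for $\|f\|_{B^{-d/q}_{\infty,\infty}}$, then the mapping properties of $\mc T$), and it works in the relevant regime $\beta< d/q$. The gap is in the remainder, which is exactly where condition \eqref{eq:LQproductCondition} has to do work. Your blockwise bound $\|\dot\Delta_j f\,\widetilde{\dot\Delta}_j h\|_{L^q}\lesssim 2^{j(d/q-\beta)}\|f\|_{B^0_{q,\infty}}\|h\|_{B^\beta_{q,\infty}}$ has two problems. First, the Bernstein step $L^{q/2}\to L^q$ is only covered by Lemma \ref{l:Bernstein} when $q/2\geq 1$; but for $q\geq 2$ condition \eqref{eq:LQproductCondition} is automatic, so the only case in which the lemma has content, namely $1\leq q<2$, is precisely the case in which that inequality is outside the paper's toolbox. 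Second, and more seriously, even granting the blockwise bound you cannot conclude: the pieces $R_j=\dot\Delta_j f\,\widetilde{\dot\Delta}_j h$ have spectra in \emph{balls} $B(0,C2^j)$, not annuli, and the criterion ``$\sum_j u_j\in B^s_{q,\infty}$ whenever $\sup_j 2^{js}\|u_j\|_{L^q}<\infty$'' holds only for $s>0$. Here $s=\beta-d/q$ is negative in the intended application (recall $p<d/\alpha$ in \eqref{eq:pCondition}), and then $\|\Delta_k R(f,h)\|_{L^q}\leq\sum_{j\geq k-N_0}\|R_j\|_{L^q}\lesssim\sum_{j\geq k-N_0}2^{-j(\beta-d/q)}$ diverges. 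Note also that \eqref{eq:LQproductCondition} never actually enters your displayed estimates; it is invoked only at the end to ``make the exponent work out'', but \eqref{eq:LQproductCondition} is equivalent to $\beta-d\left(\frac1q-\frac1{q'}\right)>0$, not to $\beta-d/q>0$, and the latter is what your summation would require.

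The repair (which is the paper's route) is to spend the positive regularity of $h$ on integrability \emph{before} forming the remainder. For $q<2$, embed $B^\beta_{q,\infty}\subset B^\sigma_{q',\infty}$ with $\sigma=\beta-d\left(\frac1q-\frac1{q'}\right)$, which is positive precisely under \eqref{eq:LQproductCondition}; then Proposition \ref{p:op} applies to $\mc R$ on $B^0_{q,\infty}\times B^\sigma_{q',\infty}$ (total regularity $\sigma>0$ and $\frac1q+\frac1{q'}=1$), landing in $B^\sigma_{1,\infty}\subset B^{\beta-d/q}_{q,\infty}$. For $q\geq 2$ one instead sums the $R_j$ in $L^{q/2}$ with the positive exponent $\beta$ and only afterwards embeds $B^\beta_{q/2,\infty}\subset B^{\beta-d/q}_{q,\infty}$. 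In both cases the summation over $j$ is carried out at positive regularity, and the negative exponent $\beta-d/q$ appears only through an embedding of the full Besov space at the very end; reversing this order, as your plan does, is where the argument breaks.
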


\begin{proof}[Proof (of the Lemma)]
The proof is based on the Bony decomposition for the product $fh$, which reads
\begin{equation*}
 fh = T_f(h) + \mc T_h(f) + \mc R(f, h).
\end{equation*}
The two paraproducts create no issue. Proposition \ref{p:op} yields
\begin{equation*}
\begin{split}
\| T_f(h) + \mc T_h(f) \|_{B^{\beta - d/q}_{q, \infty}} & \lesssim \| f \|_{B^{-d/q}_{\infty, \infty}} \| h \|_{B^{\beta}_{q, \infty}} + \| f \|_{B^{0}_{q, \infty}} \| h \|_{B^{\beta - d/q}_{\infty, \infty}} \\
& \lesssim \| f \|_{B^0_{q, \infty}} \| h \|_{B^\beta_{q, \infty}},
\end{split}
\end{equation*}
thanks to the embedding properties of Besov spaces (see Proposition \ref{p:BesovEmbed} in the appendix). However, the remainder $\mc R (f, h)$ is more difficult to deal with, because it requires simultaneous conditions on the regularity and intergrability exponents of $f$ and $h$ simply to be defined. There are two distinct settings. The first one is when $q \geq 2$, in which case Proposition \ref{p:op} and the condition $\beta > 0$ insure that the remainder operator 
\begin{equation*}
    \mc R : B^0_{q, \infty} \times B^\beta_{q, \infty} \tend B^\beta_{\frac{q}{2}, \infty} \subset B^{\beta - d/q}_{q, \infty}
\end{equation*}
is bounded. In the case where we on the contrary have $q < 2$, we increase the integrability indices by means of embeddings of Besov spaces. Let $r \geq q$ whose precise value we will fix later on. We have $B^\beta_{q, \infty} \subset B^\sigma_{r, \infty}$ for
\begin{equation*}
\sigma := \beta - d \left( \frac{1}{q} - \frac{1}{r} \right).
\end{equation*}
For the remainder operator $\mc R$ to be defined on $B^0_{q, \infty} \times B^\sigma_{r, \infty}$, Proposition \ref{p:op} requires that $\sigma > 0$ and $1/q + 1/r \leq 1$. We take the optimal value: set the value of $r$ such that
\begin{equation}
\frac{1}{q} + \frac{1}{r} = 1.
\end{equation}
Then the condition $\sigma > 0$ can be written
\begin{equation*}
\frac{2}{q} < 1 + \frac{\beta}{d},
\end{equation*}
which is an equivalent form of \eqref{eq:LQproductCondition}.
\end{proof}

\begin{rmk}
In fact, it is also possible to make sense of the product $fh$ when $q = q_0 := 2 \left( 1 + \beta/d \right)^{-1}$, assuming that $(f, h) \in L^q \times H^\beta_q$, where $H^\beta_q$ is the potential Sobolev space associated to the norm $\| ({\rm Id} - \Delta)^{s/2} h \|_{L^q}$. In that case, the refined embeddings of Proposition \ref{p:refinedEmbed} give, as $q < 2$, the inclusion $L^q \times H^\beta_q \subset B^0_{q, 2} \times B^\beta_{q, 2}$, and Proposition \ref{p:op} then insures the remainder operator is bounded on $B^0_{q, 2} \times B^\sigma_{r, 2} \tend B^0_{1, \infty}$, even though $\sigma = 0$.

However, this will not be useful in the sequel as the strict inequality in \eqref{eq:pCondition} is necessary to provide compactness in appropriate spaces later on. In other words, the non-optimal result in Lemma \ref{l:LQproduct} is not responsible for our being unable to construct weak solutions in the critical space $L^{p_0}$ with $p_0 = 2 (1 + \alpha/d)^{-1}$.

\end{rmk}

Thanks to Lemma \ref{l:LQproduct}, we may bound the second part of the product $\rho_N u_N$. We have, under condition \eqref{eq:pCondition} from the statement of the Theorem, and with \eqref{eq:HFvelocitySpace},
\begin{equation*}
\big\| \rho_N ({\rm Id} - \Delta_{-1})u_N \big\|_{B^{\alpha - d/p}_{p, \infty}} \lesssim \| \rho_N \|_{B^0_{p, \infty}} \| ({\rm Id} - \Delta_{-1})u_N \|_{B^\alpha_{p, \infty}} \lesssim \| \rho_0 \|_{L^p}^2.
\end{equation*}
By noticing that $\alpha - d/p < 0$ since $p < d/\alpha$ by assumption, we may use the embeddings $L^p \subset B^0_{p, \infty} \subset B^{\alpha - d / p}_{p, \infty}$ to see that the whole product $\rho_N u_N$ remains uniformly bounded in the space $L^\infty(B^{\alpha - d/p}_{p, \infty})$. This yields the bound
\begin{equation*}
\| \partial_t \rho_N \|_{B^{\alpha - 1 - d/p}_{p, \infty}} \leq \| \rho_N u_N \|_{B^{\alpha - d/p}_{p, \infty}} \lesssim \| \rho_0 \|_{L^p}^2.
\end{equation*}
Finally, this estimate allows us to bound the time derivatives of both low and high frequency components $\partial_t \Delta_{-1} u_N$ and $\partial_t ({\rm Id} - \Delta_{-1})u_N$ of the velocity field: for all $k > 0$,
\begin{equation}
\| \partial_t \Delta_{-1} u_N \|_{L^\infty(B^k_{\infty, 1})} \lesssim \| \partial_t \rho_N \|_{L^\infty(B^{\alpha - 1 - d/p}_{p, \infty})} \lesssim \| \rho_0 \|_{L^p}^2
\end{equation}
and
\begin{equation*}
\big\| \partial_t ({\rm Id} - \Delta_{-1})u_N \big\|_{L^\infty(B^{-s}_{p, \infty})} \lesssim \big\| \partial_t \rho_N \big\|_{L^\infty(B^{\alpha - 1 - d/p}_{p, \infty})} \lesssim \| \rho_0 \|_{L^p}^2,
\end{equation*}
where we have set $- s = 2 \alpha - 1 - d/p < 0$. As a consequence, both low and high frequency components of the velocity field are bounded in spaces that provide time compactness, namely, for all $T > 0$ and $k > 0$,
\begin{equation}
\| \Delta_{-1} u_N \|_{W^{1, \infty}_T (B^k_{\infty, 1})} + \big\| ({\rm Id} - \Delta_{-1})u_N \big\|_{W^{1, \infty}_T (B^{-1-d}_{p, \infty})} \lesssim \| \rho_0 \|_{L^p}^2 + \| \rho_0 \|_{L^p}.
\end{equation}

Our last estimate will be an interpolation inequality in order to get simultaneous time and space compactness on $({\rm Id} - \Delta_{-1})u_N$ (this is already done for $\Delta_{-1}u_N$). Consider $\theta \in ]0, 1[$ whose precise value we will fix later on. We define the quantity $\sigma := -s \theta + (1 - \theta) \alpha$ and, for simplicity of notation, note $V_N := ({\rm Id} - \Delta_{-1})u_N$. Then, for all $t_1, t_2 \geq 0$, we have by Proposition \ref{p:interpolation},
\begin{equation*}
\begin{split}
\big\| V_N(t_2) - V_N(t_1) \big\|_{B^\sigma_{p, \infty}} & \lesssim \big\| V_N(t_2) - V_N(t_1) \big\|_{B^{-s}_{p, \infty}}^\theta \big\| V_N(t_2) - V_N(t_1) \big\|_{B^{\alpha}_{p, \infty}}^{1 - \theta} \\
& \lesssim |t_2 - t_1|^\theta \big\| V_N \big\|_{W^{1, \infty}_T(B^{-s}_{p, \infty})}^\theta \big\| V_N \big\|_{L^\infty_T(B^{\alpha}_{p, \infty})}^{1 - \theta},
\end{split}
\end{equation*}
so that the functions $V_N = ({\rm Id} - \Delta_{-1}) u_N$ are uniformly bounded in the space $C^{0, \theta}_T(B^\sigma_{p, \infty})$.

\medskip

\textbf{STEP 3: convergence to a solution.} Let us summarize the uniform bounds we have obtained so far. We have, for every $N \geq 1$, $k > 0$ and $T > 0$,
\begin{equation}
\| \rho_N \|_{L^\infty(L^p)} + \| \Delta_{-1} u_N \|_{W^{1, \infty}_T(B^k_{\infty, 1})} + \big\| ({\rm Id} - \Delta_{-1}) u_N \big\|_{C^{0, \theta}_T(B^\sigma_{p, \infty})} \lesssim \| \rho_0 \|_{L^p} + \| \rho_0 \|_{L^p}^2.
\end{equation}
Here, $\sigma \in ]-s, \alpha[$ can be chosen positive and as close to $\alpha$ as desired while still keeping the Hölder exponent positive $\theta > 0$. In addition, if, as in the statement of Theorem \ref{t:globalWeakSol}, the initial datum is $L^q$ for some $q \in [1, + \infty]$, then we also have
\begin{equation}\label{eq:LqIneq}
\| \rho_N \|_{L^\infty(L^q)} = \| \rho_{0, N} \|_{L^q} \leq \| \rho_0 \|_{L^q}.
\end{equation}

Recall that $p > 1$ because $\alpha < d$, so that all spaces involved (except possibly $L^q$) have separable preduals. We deduce the convergence of the sequences $(\rho_N)_N$, $(\Delta_{-1}u_N)_N$ and $\big( ({\rm Id} - \Delta_{-1})u_N \big)_N$, up to an extraction, in the following topologies (here $K \subset \R^d$ is an arbitrary compact set): as $N \rightarrow + \infty$, we have
\begin{equation}\label{eq:LPconvergences}
\begin{split}
& \rho_N \wtend^* \rho \qquad \text{in } L^\infty(L^p)\\
& \Delta_{-1} u_N \rightarrow u_{LF} \qquad \text{in } L^\infty_T(L^\infty_{\rm loc}) \\
& ({\rm Id} - \Delta_{-1})u_N \rightarrow u_{HF} \qquad \text{in } L^\infty_T(B^{\beta}_{p, \infty}(K)).
\end{split}
\end{equation}
In the lines above, $\beta < \sigma$ is an exponent whose value we will chose later, and the functions $u_{LF}$ and $u_{HF}$ are the limits of the Low Frequency and High Frequency parts. We accordingly set $u := u_{LF} + u_{HF}$ so that $u_N \tend u$ in, say, $\mc D'(]0, T[ \times \R^d)$. Convergence of the initial data $\rho_{0, N}$ follows from the fact that $p < + \infty$ (by assumption), and so (recall that $0 \leq \Psi \leq 1$ is a cut-off function),
\begin{equation*}
\begin{split}
\| \rho_{0, N} - \rho_0 \|_{L^p} & \leq \big\| \Psi(x/N) ({\rm Id} - S_N) \rho_0(x) \big\|_{L^p} + \big\| (1 - \Psi(x/N)) \rho_0(x) \big\|_{L^p}\\
& \leq \big\| ({\rm Id} - S_N) \rho_0(x) \big\|_{L^p} + o(1) \tend_{N \rightarrow + \infty} 0.
\end{split}
\end{equation*}

As weak convergence of the densities insures that $\partial_t \rho_N \tend \partial_t \rho$ in $\mc D'(\R_+^* \times \R^d)$, the only real problem is the convergence of the product $\rho_N u_N$. We will once again use decomposition \eqref{eq:productLHFdecomposition}. On the one hand, the convergence results \eqref{eq:LPconvergences} above make it clear that
\begin{equation}\label{eq:LFprodConv}
\rho_N \Delta_{-1} u_N \wtend^* \rho u_{LF} \qquad \text{in } L^\infty(L^p).
\end{equation}
The treatment of the other part of the product is more delicate. For this, we fix a test function $\psi \in \mc D(]0, + \infty[ \times \R^d)$ and study the brackets
\begin{equation}\label{eq:convProdBrackets}
\Big\langle \rho_N ({\rm Id} - \Delta_{-1})u_N - \rho u_{HF}, \psi \Big\rangle = \Big\langle \rho_N \big( ({\rm Id} - \Delta_{-1})u_N - u_{HF} \big), \psi \Big\rangle + \big\langle \rho - \rho_N, \psi u_{HF} \big\rangle.
\end{equation}
Concerning the first bracket, we fix a function $\varphi \in \mc D (]0, + \infty[ \times \R^d)$ such that $\varphi \psi = \psi$. In order to use the strong convergence of the $({\rm Id} - \Delta_{-1})u_N$ from \eqref{eq:LPconvergences}, we wish to apply the product Lemma \ref{l:LQproduct} to $B^0_{p, \infty} \times B^\beta_{p, \infty}$. For this, we need $\beta$ to be close enough to $\alpha$ that
\begin{equation}\label{eq:betaCondition}
\frac{2}{1 + \frac{\alpha}{d}} < \frac{2}{1 + \frac{\beta}{d}} < p,
\end{equation}
and this is always possible due to the strict inequality in \eqref{eq:pCondition}. Under this condition, and using the fact that $B^{\beta - d/p}_{p, \infty}$ has (pre)dual $B^{d/p - \beta}_{p', 1}$, we obtain
\begin{multline*}
\left| \Big\langle \rho_N \big( ({\rm Id} - \Delta_{-1})u_N - u_{HF} \big), \psi \Big\rangle \right| \leq \| \psi \|_{L^1_T(B^{d/p - \beta}_{p', 1})} \, \| \rho_N \|_{L^\infty(B^0_{p, \infty})}\\
\times \Big\| \big(({\rm Id} - \Delta_{-1})u_N - u_{HF} \big) \varphi \Big\|_{L^\infty_T(B^\beta_{p, \infty})} \; \tend_{N \rightarrow + \infty} 0.
\end{multline*}
For the second bracket in \eqref{eq:convProdBrackets}, since we know that $\rho_N$ converges to $\rho$ in the weak-$(*)$ topology of $L^\infty_T(L^p)$, we must make sure that $u_{HF} \psi$ lies in the predual $L^1(L^{p'})$.  Here, it is useful to consider two cases. Firstly, if $p \geq 2$, then we must have $p' \leq 2 \leq p$, and since $\beta > 0$, the function $u_{HF}$ must be locally $B^\beta_{p, \infty} \subset L^p \subset L^{p'}_{\rm loc}$. Secondly, if $p \leq 2 \leq p'$, we must check that $B^\beta_{p, \infty} \subset L^{p'}$. As $p' \geq p$, embeddings of Besov spaces give the chain of inclusions
\begin{equation*}
B^\beta_{p, \infty} \subset B^{\beta - d \left( \frac{1}{p} - \frac{1}{p'} \right)}_{p', \infty} \subset L^{p'},
\end{equation*}
as long as the condition
\begin{equation*}
\beta - d \left( \frac{1}{p} - \frac{1}{p'} \right) > 0
\end{equation*}
holds. But this is equivalent to \eqref{eq:betaCondition}, which we already enforced on $\beta$. Therefore, in both cases, we have shown that $u_{HF} \psi \in L^1(L^{p'})$, so that we indeed have
\begin{equation*}
\big\langle \rho - \rho_N, \psi u_{HF} \big\rangle \tend 0 \qquad \text{as } N \rightarrow + \infty.
\end{equation*}
Together with \eqref{eq:LFprodConv}, this shows convergence of the whole product $\rho_N u_N$ to
\begin{equation*}
\rho_N u_N \tend \rho u_{LF} + \rho u_{HF} = \rho u \qquad \text{in } \mc D'(]0, + \infty[ \times \R^d).
\end{equation*}

Finally, we have to check that $(\rho, u)$ is indeed a solution of the fractional Stokes problem. Because of the convergence properties \eqref{eq:LPconvergences}, we gather that the Fourier transforms of $u_N$ and $\rho_N$ converge locally, \textsl{e.g.} in the sense of distributions:
\begin{equation*}
(\what{u_N}, \what{\rho_N}) \tend (\what{u}, \what{\rho}) \qquad \text{in } \mc D'(]0, +\infty[ \times \R^d).
\end{equation*}
Let us fix $\phi(t, \xi) \in \mc D(]0, +\infty[ \times \R^d)$ that is supported away from $\xi = 0$. Then $T_N \phi = \phi$ for $N$ large enough. By noting $m(\xi)$ the symbol of the operator $\P g$, we see that
\begin{equation*}
\langle \what{u_N}, \phi \rangle = \left\langle \what{\rho_N}(t, \xi), \frac{m(\xi)}{|\xi|^\alpha} \phi(t, \xi) \right\rangle \tend_{N \rightarrow + \infty} \left\langle \what{\rho}(t, \xi), \frac{m(\xi)}{|\xi|^\alpha} \phi(t, \xi) \right\rangle,
\end{equation*}
where the brackets refer to the $\mc D'(]0, + \infty[ \times \R^d) \times \mc D(]0, + \infty[ \times \R^d)$ duality. Uniqueness of the limit implies that
\begin{equation*}
\what{u}(t, \xi) = \frac{m(\xi)}{|\xi|^\alpha} \what{\rho}(\xi) \qquad \text{in } \mc D' \big( ]0, +\infty[ \times (\R^d \setminus \{ 0 \} ) \big),
\end{equation*}
so that $u$ is indeed equal, up to a polynomial, to the unique $\mc S'_h$ solution of the Stokes equation with datum $\rho$, as given by Proposition \ref{p:StokesExUn}, and we only need to check that $u(t) \in \mc S'_h$ for almost all $t > 0$. This follows from Remark \ref{r:Fatou} and the fact that the sequence $(u_N)_N$ is uniformly bounded in a homogeneous Besov space:
\begin{equation*}
\| u_N \|_{\dot{B}^{\alpha}_{p, \infty}} \lesssim \| \rho_N \|_{L^p},
\end{equation*}
so that we must have $u \in L^\infty(\dot{B}^{\alpha}_{p, \infty})$, and hence $u(t) \in \mc S'_h$ for almost all $t > 0$.

\medskip

\textbf{STEP 4:} $L^q$ \textbf{estimates.} Finally, we only have to prove the uniform $L^q$ bounds \eqref{eq:LqBound} hold for the solution. If $q > 1$, it is a simple consequence of the $L^q$ boundedness \eqref{eq:LqIneq} of the approximate solutions, their weak-$(*)$ convergence and the Banach-Steinhaus theorem. When $q = 1$ however, the space $L^1$ does not have a separable predual, so the argument fails. Instead, we view $L^1$ as a subset of the space $\mc M$ of finite measures, whose predual is the space $C_0$ of continuous functions that tend to zero at infinity. Applying the Banach-Steinhaus theorem to $\mc M$ gives the bound
\begin{equation*}
\| \rho \|_{\mc M} \leq \| \rho_0 \|_{L^1}.
\end{equation*}
But we know that $\rho$ also is an $L^p$ function with $1 < p < + \infty$. Consequently, $\rho$ is locally integrable and the $L^1$ and $\mc M$ norms therefore agree for $\rho$. In other words
\begin{equation*}
\| \rho \|_{L^1} = \| \rho \|_{\mc M} \leq \| \rho_0 \|_{L^1}.
\end{equation*}
This remark ends the proof.
\end{proof}

\section{Local well-posedness}

In this Section, we are concerned with proving local well-posedness of \eqref{ieq:AIPM} by means of a fixed-point argument. This result is mainly interesting when $\alpha < 1$, as we will have means of proving global well-posedness when $\alpha \geq 1$. In particular, this means that we do not worry too much about the case $d=2$, where the value $\alpha = 2$ is of special interest, although Theorem \ref{t:localWP} could be adapted to fit that setting.

\begin{thm}\label{t:localWP}
Consider a dimension $d \geq 2$ as well as indices $s > 0$ and $p \in [1, + \infty]$ such that
\begin{equation*}
p < \frac{d}{\alpha}, \qquad \text{ and } \qquad s \geq 1 - \alpha.
\end{equation*}
For all initial datum $\rho_0 \in B^s_{\infty, 1} \cap L^p$, there exists a time $T > 0$ such that the system \eqref{ieq:AIPM} has a unique solution in that space $\rho \in C^0([0, T[; B^s_{\infty, 1} \cap L^p)$. Moreover, the lifespan $T^*$ of this solution satisfies the inequality
\begin{equation}\label{eq:LifespanLB1}
T^* \geq \frac{C}{\| \rho_0 \|_{B^{s}_{\infty, 1} \cap L^p}}.
\end{equation}
Finally, the lifespan $T^*$ is finite if and only if
\begin{equation*}
\int_0^{T^*} V(t) \dt = +\infty,
\end{equation*}
where the quantity $V(t)$ is defined by $V(t) = \| \nabla u \|_{L^\infty}$ if $s < 1$ and $V(t) = \| \nabla u \|_{L^\infty} + \| \nabla \rho \|_{L^\infty}$ otherwise.
\end{thm}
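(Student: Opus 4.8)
The plan is to recast \eqref{ieq:AIPM} as the pure transport equation $\partial_t\rho+u\cdot\nabla\rho=0$ with $u=(-\Delta)^{-\alpha/2}\P(\rho g)$, and to use that, by Proposition \ref{p:LPoperatorBound} together with $s\geq 1-\alpha$,
$$\|u\|_{B^{s+\alpha}_{\infty,1}}\lesssim \|\rho\|_{L^p}+\|\rho\|_{B^s_{\infty,1}}\qquad\text{with } s+\alpha\geq 1,$$
so that $u$ is Lipschitz throughout. \textbf{A priori estimate and lifespan.} First I would combine the $B^s_{\infty,1}$ transport estimate from the Appendix with the bound above and with conservation of $\|\rho(t)\|_{L^p}=\|\rho_0\|_{L^p}$ (valid since $u$ is Lipschitz and divergence free): when $s<1$ the transport estimate is linear in the velocity, $\tfrac{d}{dt}\|\rho\|_{B^s_{\infty,1}}\lesssim\|\nabla u\|_{L^\infty}\|\rho\|_{B^s_{\infty,1}}$, and when $s\geq 1$ it carries an extra commutator contribution $\|\nabla\rho\|_{L^\infty}\|\nabla u\|_{B^{s-1}_{\infty,1}}$; in both cases, bounding $\|\nabla u\|_{L^\infty}$ and $\|\nabla u\|_{B^{s-1}_{\infty,1}}$ by $\|u\|_{B^{s+\alpha}_{\infty,1}}\lesssim\|\rho\|_{L^p}+\|\rho\|_{B^s_{\infty,1}}$ and $\|\nabla\rho\|_{L^\infty}$ by $\|\rho\|_{B^s_{\infty,1}}$, one gets the Riccati-type inequality $\tfrac{d}{dt}\|\rho\|_{B^s_{\infty,1}}\lesssim(\|\rho\|_{B^s_{\infty,1}}+\|\rho_0\|_{L^p})^2$. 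Integrating it produces a positive existence time with $T^*\gtrsim\|\rho_0\|_{B^s_{\infty,1}\cap L^p}^{-1}$, which is \eqref{eq:LifespanLB1}.

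\textbf{Existence.} I would run successive approximations: set $\rho^{(0)}=\rho_0$ and let $\rho^{(n+1)}$ solve the \emph{linear} transport equation $\partial_t\rho^{(n+1)}+u^{(n)}\cdot\nabla\rho^{(n+1)}=0$ with datum $\rho_0$ and $u^{(n)}=(-\Delta)^{-\alpha/2}\P(\rho^{(n)}g)$. Since $u^{(n)}\in C^0_T(B^{s+\alpha}_{\infty,1})\subset C^0_T(B^s_{\infty,1})\cap C^0_T(W^{1,\infty})$, linear transport theory makes each $\rho^{(n)}$ well defined in $C^0_T(B^s_{\infty,1}\cap L^p)$, and the a priori estimate applied inductively gives a uniform bound on $[0,T_0]$ with $T_0\approx\|\rho_0\|_{B^s_{\infty,1}\cap L^p}^{-1}$. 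Then I would show $(\rho^{(n)})$ is Cauchy in a weaker norm: $\delta\rho^{(n)}=\rho^{(n+1)}-\rho^{(n)}$ solves a transport equation with zero datum and source $-\delta u^{(n-1)}\cdot\nabla\rho^{(n)}=-\div(\rho^{(n)}\delta u^{(n-1)})$; estimating $\delta\rho^{(n)}$ in $L^p\cap B^\sigma_{\infty,1}$ for an index $\sigma$ below $s$ chosen so that $\sigma+\alpha\geq 0$ (hence $\delta u^{(n-1)}\in L^\infty$, using $s\geq 1-\alpha$), a Bony-decomposition/product estimate together with the uniform bounds yields $\|\delta\rho^{(n)}\|_{C^0_{T_0}}\lesssim T_0^{\theta}\|\delta\rho^{(n-1)}\|_{C^0_{T_0}}$, so after possibly shrinking $T_0$ the sequence converges. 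Interpolating this convergence against the uniform $B^s_{\infty,1}$ bound identifies the limit $\rho\in L^\infty_{T_0}(B^s_{\infty,1}\cap L^p)$, and passing to the limit in the weak form is routine once $\rho^{(n)}u^{(n)}\to\rho u$ is controlled — again via the same product estimates.

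\textbf{Uniqueness, continuity, continuation.} Uniqueness is the same stability estimate applied to the difference $\delta\rho$ of two solutions, with their $B^s_{\infty,1}$-bounds playing the role of the uniform bounds: $\delta\rho(0)=0$ and a Grönwall inequality in $L^p\cap B^\sigma_{\infty,1}$ force $\delta\rho\equiv 0$. Time continuity $\rho\in C^0_T(B^s_{\infty,1}\cap L^p)$ follows from the transport-continuity statement of the Appendix (the velocity being Lipschitz and lying in $C^0_T(B^{s+\alpha}_{\infty,1})$), upgrading the $L^\infty_T$ bound. Finally, for the continuation criterion I would rerun the a priori estimate keeping $V(t)$ explicit: for $s<1$ one gets $\tfrac{d}{dt}\|\rho\|_{B^s_{\infty,1}}\lesssim\|\nabla u\|_{L^\infty}\|\rho\|_{B^s_{\infty,1}}$, while for $s\geq 1$ the commutator forces the extra term $\|\nabla\rho\|_{L^\infty}$, re-bounded through the velocity law by $\|\nabla\rho\|_{L^\infty}(\|\rho\|_{B^s_{\infty,1}}+\|\rho_0\|_{L^p})$; in either case $\|\rho(t)\|_{B^s_{\infty,1}}$ stays bounded on $[0,T]$ once $\int_0^T V\,dt<+\infty$ (using $\|\rho(t)\|_{L^p}=\|\rho_0\|_{L^p}$), and then the local existence result restarted from a time near $T^*$ extends the solution — so $T^*<\infty$ is equivalent to $\int_0^{T^*}V\,dt=+\infty$.

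\textbf{Main obstacle.} The crux is the stability estimate. Because the velocity law gains only $\alpha\leq 1$ derivatives, for $\alpha<1$ the source $\div(\rho_2\delta u)$ in the difference equation lives one derivative \emph{below} the regularity of $\delta\rho$, so the difference cannot be closed in $B^s_{\infty,1}$ itself and must be measured in a strictly rougher space while still keeping $\delta u\in L^\infty$ — this is precisely where the borderline hypothesis $s\geq 1-\alpha$ enters. In practice this forces a low/high-frequency splitting of $\delta u$: the low frequencies $\Delta_{-1}\delta u$ are controlled in $L^\infty$ by $\|\delta\rho\|_{L^p}$ (as in Proposition \ref{p:LPoperatorBound}, using $p<d/\alpha$), while the high frequencies enjoy the genuine $\alpha$-gain; the $L^p$ component of the difference has to be tracked with corresponding care. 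The rest (linear transport theory, Bony estimates, interpolation) is standard.
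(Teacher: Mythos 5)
Your outline follows essentially the same route as the paper's proof: commutator estimates from Lemma \ref{l:CommBCD} giving a Riccati-type inequality for $\|\rho\|_{B^s_{\infty,1}}+\|\rho_0\|_{L^p}$ (whence the lifespan bound \eqref{eq:LifespanLB1} and, via Gr\"onwall, the continuation criterion with the extra $\|\nabla\rho\|_{L^\infty}$ term when $s\geq 1$), a linearized iterative scheme, a stability estimate one derivative below the solution space, and the observation that $s\geq 1-\alpha$ is exactly what keeps $\delta u$ bounded.

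The one step that would fail as written is measuring the difference $\delta\rho^{(n)}$ in $L^p\cap B^\sigma_{\infty,1}$. The source in the difference equation is the pure divergence $-\D(\rho^{(n)}\delta u^{(n-1)})=-\delta u^{(n-1)}\cdot\nabla\rho^{(n)}$, and neither form of it lands in $L^p$: the product $\rho^{(n)}\delta u^{(n-1)}$ does lie in $L^p$ (as $\delta u\in L^\infty$), but its divergence only lies in $B^{-1}_{p,\infty}$, and writing the source as $\delta u\cdot\nabla\rho$ does not help since $\delta u=(-\Delta)^{-\alpha/2}\P(\delta\rho\,g)$ has no $L^p$ low-frequency part and $\nabla\rho$ is at best bounded (not even that when $s<1$). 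You flag that the ``$L^p$ component has to be tracked with care,'' but the actual resolution is to drop one derivative on the Lebesgue scale as well, i.e. to run the stability estimate in $Y=B^{s-1}_{\infty,1}\cap B^{-1}_{p,\infty}$, where $\|\D(\rho\,\delta u)\|_{B^{-1}_{p,\infty}}\lesssim\|\rho\,\delta u\|_{L^p}\lesssim\|\rho\|_{L^p}\|\delta u\|_{L^\infty}$ closes immediately; this is Proposition \ref{p:stabEst}. Two further differences are cosmetic: the paper seeds the scheme with $S_n\rho_0$ so that $\|\delta\rho_{0,n}\|_Y=O(2^{-n})$ and the telescoping series converges on the whole interval $[0,T_0]$ without shrinking the time (your contraction after shrinking $T_0$ works too and only changes the constant in \eqref{eq:LifespanLB1}); and the $C^0_T(L^p)$ continuity is obtained through the Friedrichs commutator lemma rather than by citing the linear transport theorem, whose Besov-scale statement does not directly deliver continuity in $L^p$.
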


Let us make a few remarks concerning Theorem \ref{t:localWP}, and in particular about the lifespan of the local solution.

\begin{rmk}
A direct consequence of the continuation criterion in the Theorem above is that the problem \eqref{ieq:AIPM} is \textsl{globally} well-posed when $\alpha > 1$, under appropriate conditions on the initial datum. The following Corollary explains why this is the case. 

Note that, for technical reasons, the 2D Transport-Stokes system $\alpha = d = 2$ is excluded (in that case, we would need $p < 1$). However, this case of special interest will be covered by Theorem \ref{t:globalWPBesov} below, which is more general than Theorem \ref{t:localWP} on the question of global solutions when $\alpha \geq 1$. 

Similarly, the case $\alpha = 1$, which is especially relevant as system \eqref{ieq:AIPM} then behaves nearly identically to the 2D Euler system, is not in the scope of Corollary \ref{c:globalFromLocal}. Just as the 2D Euler equations, the setting $\alpha = 1$ necessitates different methods which we will employ to prove the more advanced results of Theorem \ref{t:globalWPBesov} below.
\end{rmk}

\begin{cor}\label{c:globalFromLocal}
With the notation of Theorem \ref{t:localWP}, we make the additional assumptions that $s < 1$ and $\alpha > 1$. Then the unique solution provided by Theorem \ref{t:localWP} is in fact global.
\end{cor}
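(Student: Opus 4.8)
The plan is to invoke the continuation criterion of Theorem~\ref{t:localWP}. Since $s < 1$, the relevant quantity there is $V(t) = \|\nabla u(t)\|_{L^\infty}$, and the lifespan $T^*$ is finite if and only if $\int_0^{T^*} \|\nabla u(t)\|_{L^\infty}\,dt = +\infty$. It therefore suffices to prove that $t \mapsto \|\nabla u(t)\|_{L^\infty}$ is bounded on $[0,T^*)$ by a constant depending only on $\rho_0$; then $\int_0^{T^*}\|\nabla u\|_{L^\infty}\,dt \le C\,T^*$ is finite whenever $T^* < +\infty$, which is the required contradiction, forcing $T^* = +\infty$.

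First I would observe that, by Proposition~\ref{p:LPoperatorBound}, the velocity field satisfies $u \in C^0([0,T^*[\,; B^{s+\alpha}_{\infty,1})$ with $s+\alpha > 1$ (because $s > 0$ and $\alpha > 1$), hence $u$ is Lipschitz in the space variables. Consequently the transport equation $\partial_t\rho + \D(\rho u) = 0$ with $\D(u) = 0$ preserves every Lebesgue norm, so $\|\rho(t)\|_{L^r} = \|\rho_0\|_{L^r}$ for all $r \in [1,+\infty]$ and $t \in [0,T^*)$. In particular both $\|\rho(t)\|_{L^p}$ and $\|\rho(t)\|_{L^\infty}$ (using $B^s_{\infty,1} \hookrightarrow L^\infty$, valid since $s > 0$) are constant in time.

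Next I would split $\nabla u = \Delta_{-1}\nabla u + ({\rm Id}-\Delta_{-1})\nabla u$. For the low frequencies, Proposition~\ref{p:LPoperatorBound}\,(i), applicable since $p < d/\alpha$, together with the Bernstein inequality gives $\|\Delta_{-1}\nabla u\|_{L^\infty} \lesssim \|\Delta_{-1}u\|_{L^\infty} \lesssim \|\rho\|_{L^p}$. For the high frequencies, since $\nabla(-\Delta)^{-\alpha/2}\P g$ is a Fourier multiplier of order $1-\alpha$ with symbol smooth away from $\xi = 0$, Lemma~\ref{l:FourierMultiplier} applied dyadic-block-wise yields
\[ \big\|({\rm Id}-\Delta_{-1})\nabla u\big\|_{L^\infty} \lesssim \sum_{j \ge 0} 2^{j(1-\alpha)}\|\Delta_j\rho\|_{L^\infty} \lesssim \|\rho\|_{L^\infty}\sum_{j\ge 0}2^{j(1-\alpha)} \lesssim \|\rho\|_{L^\infty}, \]
the geometric series converging precisely because $\alpha > 1$. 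Combining the two estimates gives $\|\nabla u(t)\|_{L^\infty} \lesssim \|\rho_0\|_{L^p} + \|\rho_0\|_{L^\infty}$ uniformly on $[0,T^*)$, which is the desired bound.

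The roles of the hypotheses are transparent. The assumption $\alpha > 1$ is what makes the velocity law strictly smoothing, so that $\nabla u$ is controlled by $\rho$ \emph{itself} (here by its conserved $L^p$ and $L^\infty$ norms) rather than by a higher-order norm of $\rho$; this is the crucial difference from the purely local theory, and it is what prevents any Gronwall inequality with exponential growth. The assumption $s < 1$ ensures that the continuation criterion involves only $\|\nabla u\|_{L^\infty}$ and not $\|\nabla\rho\|_{L^\infty}$, so no additional propagation of a norm of $\nabla\rho$ is needed. The only point demanding a little care is the high-frequency estimate above, where one must check that the order of the multiplier combines with the summation over dyadic blocks to produce a genuine $L^\infty$ bound — and this is exactly the step where $\alpha > 1$ is used.
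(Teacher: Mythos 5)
Your proof is correct and follows essentially the same route as the paper: invoke the continuation criterion with $V=\|\nabla u\|_{L^\infty}$, note that the velocity is Lipschitz so Lebesgue norms of $\rho$ are conserved, and use the gain of $\alpha>1$ derivatives in the velocity law to bound $\|\nabla u\|_{L^\infty}$ by conserved quantities, uniformly up to $T^*$. The only (harmless) difference is that you exploit the conserved $L^\infty$ norm directly through the convergent series $\sum_{j\ge 0}2^{j(1-\alpha)}$, i.e. the embedding $B^0_{\infty,\infty}\subset B^{1-\alpha}_{\infty,1}$, whereas the paper first interpolates $\rho_0$ into a finite $L^q$ with $q>d/(\alpha-1)$ and then uses $L^q\subset B^{-d/q}_{\infty,\infty}\subset B^{1-\alpha}_{\infty,1}$.
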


\begin{proof}[Proof (of the Corollary)]
Let $\rho \in C^0([0, T^*[ ; L^p \cap B^s_{\infty, 1})$ be the unique solution given by Theorem \ref{t:localWP}, where $T^*$ is, as above, the lifespan of the solution. Then, the velocity field has Lipschitz regularity $u \in L^1([0, T^*[ ; W^{1, \infty})$, as Proposition
\ref{p:LPoperatorBound} gives
\begin{equation}\label{eq:corEst1}
\| u \|_{B^1_{\infty, 1}} \lesssim \| \rho \|_{L^p} + \| \rho \|_{B^{1 - \alpha}_{\infty, 1}}.
\end{equation}
As a consequence, the conservation of Lebesgue norms holds: for every $q \in [1, + \infty]$, we have $\| \rho(t) \|_{L^q} = \| \rho_0 \|_{L^q}$. In particular, if we choose $q > d/(\alpha - 1)$, then the solution will be $L^\infty(L^q)$ provided $\rho_0 \in L^q$. Let us show that this is indeed the case: because $\rho_0 \in B^s_{\infty, 1} \subset L^\infty$ and
\begin{equation*}
p < \frac{d}{\alpha} < \frac{d}{1 - \alpha} < q < + \infty,
\end{equation*}
interpolation between $L^p$ and $L^\infty$ shows that we must have $\| \rho_0 \|_{L^q} \leq \| \rho_0 \|_{L^p}^{\theta} \| \rho_0 \|_{L^\infty}^{1 - \theta}$ with $\theta = p(\alpha - 1)/d$. Together with \eqref{eq:corEst1}, the embeddings
\begin{equation*}
L^q \subset B^0_{q, \infty} \subset B^{-d/q}_{\infty, \infty} \subset B^{1 - \alpha}_{\infty, 1},
\end{equation*}
show that the Lipschitz norm of $u$ is always bounded $\| u \|_{B^1_{\infty, 1}} \lesssim \| \rho_0 \|_{L^q}$, so that the integral
\begin{equation*}
\int_0^{T} \| \nabla u \|_{L^\infty} \dt \lesssim T \| \rho_0 \|_{L^q}
\end{equation*}
remains finite for every $T > 0$. By the continuation criterion of Theorem \ref{t:localWP}, we deduce that the solution is global $T^* = + \infty$.
\end{proof}

\begin{rmk}
Due to the global existence of weak solutions in Theorem \ref{t:globalWeakSol} above, explosion of solutions should be thought of as a loss of regularity (and therefore possible loss of uniqueness) at the time of blow-up. In that spirit, note that the finiteness of the usual $\| \nabla u \|_{L^1_T(L^\infty)}$ norm is not enough to provide continuation of solutions with $s \geq 1$. If $\| \nabla \rho \|_{L^\infty}$ blows-up but not $\| \nabla u \|_{L^\infty}$, there may be a loss of regularity at the singularity, but while still retaining uniqueness of solutions. 
\end{rmk}

\begin{rmk}
Finally, as is usually the case with such equations, the continuation criterion implies that the lifespan of the $C^0_T(B^s_{\infty, 1})$ solution is essentially independent of $s$. Therefore, the lower bound \eqref{eq:LifespanLB1} can be reformulated as
\begin{equation*}
T^* \geq \frac{C}{\| \rho_0 \|_{B^{1 - \alpha}_{\infty, 1} \cap L^p}} \text{ if } s < 1, \qquad \text{or} \qquad T^* \geq \frac{C}{\| \rho_0 \|_{B^{1}_{\infty, 1} \cap L^p}} \text{ if } s \geq 1.
\end{equation*}
\end{rmk}

\begin{rmk}
By taking into account the structure of the equations implied by gravity stratification, we will be able to exhibit a continuation which involves the derivative of the density in a single direction $\partial_d \rho$. We refer to Theorem \ref{t:ContCrit} below.
\end{rmk}

\subsection{\textsl{A Priori} Estimates}

\begin{prop}\label{p:aPrioriWP}
Assume that $s \geq 1 - \alpha$ satisfies $s > -1$ and that $\alpha < 1$ or $p < d/(\alpha - 1)$. For any regular solution $\rho$ of \eqref{ieq:AIPM} associated to the initial datum $\rho_0$, there exists a time $T > 0$ such that
\begin{equation*}
\| \rho \|_{L^\infty_T (B^s_{\infty, 1} \cap L^p)} \lesssim \| \rho_0 \|_{B^s_{\infty, 1} \cap L^p}.
\end{equation*}
In addition, inequality \eqref{eq:LifespanLB1} holds for $T$.
\end{prop}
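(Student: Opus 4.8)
The plan is to follow the classical scheme for transport-type equations: combine the conservation of Lebesgue norms along the divergence-free flow with the linear transport estimates in Besov spaces, close the loop through the velocity bound of Proposition~\ref{p:LPoperatorBound}, and then run a Grönwall-type bootstrap. Since $\rho$ is a regular solution, the velocity $u=(-\Delta)^{-\alpha/2}\P(\rho g)$ is divergence free, so the transport equation conserves every Lebesgue norm; in particular $\|\rho(t)\|_{L^p}=\|\rho_0\|_{L^p}$ throughout the interval of existence.

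For the Besov part I would invoke the transport estimates recalled in the Appendix, which for a divergence-free field $u$, $s>-1$ and third index equal to $1$ give an inequality of the form
\begin{equation*}
\|\rho(t)\|_{B^s_{\infty,1}}\le \|\rho_0\|_{B^s_{\infty,1}}+C\int_0^t V(\tau)\,\|\rho(\tau)\|_{B^s_{\infty,1}}\,d\tau ,
\end{equation*}
with $V(\tau)=\|\nabla u(\tau)\|_{L^\infty}$ when $s<1$ and $V(\tau)$ the higher-order quantity of Theorem~\ref{t:localWP} when $s\ge1$. The decisive point is the hypothesis $s\ge1-\alpha$: since then $s+\alpha\ge1$, Proposition~\ref{p:LPoperatorBound} (available under the integrability conditions on $p$) bounds the velocity at the Lipschitz level,
\begin{equation*}
\|u(\tau)\|_{B^{s+\alpha}_{\infty,1}}\lesssim \|\rho(\tau)\|_{L^p}+\|\rho(\tau)\|_{B^s_{\infty,1}},\qquad B^{s+\alpha}_{\infty,1}\subset B^1_{\infty,1}.
\end{equation*}
Plugging this back — using $\|\nabla u\|_{L^\infty}\lesssim\|u\|_{B^{s+\alpha}_{\infty,1}}$ when $s<1$ and, when $s\ge1$, that after differentiating the equation both $V$ and the source $\nabla u\cdot\nabla\rho$ are controlled in $B^{s-1}_{\infty,1}$ (a Banach algebra) by $\bigl(\|u\|_{B^{s+\alpha}_{\infty,1}}+\|\rho\|_{B^s_{\infty,1}}\bigr)\|\rho\|_{B^s_{\infty,1}}$ — yields, in all cases, $\|\rho(t)\|_{B^s_{\infty,1}}\le \|\rho_0\|_{B^s_{\infty,1}}+C\int_0^t\bigl(\|\rho(\tau)\|_{L^p}+\|\rho(\tau)\|_{B^s_{\infty,1}}\bigr)\|\rho(\tau)\|_{B^s_{\infty,1}}\,d\tau$.

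To close, set $E(t):=\|\rho(t)\|_{L^p}+\|\rho(t)\|_{B^s_{\infty,1}}$; by the conservation of $L^p$ norms, $E(t)=\|\rho_0\|_{L^p}+\|\rho(t)\|_{B^s_{\infty,1}}$, so the last estimate reads $E(t)\le E(0)+C\int_0^t E(\tau)^2\,d\tau$. Comparing with the solution $z(t)=E(0)/(1-CE(0)t)$ of $z'=Cz^2$, $z(0)=E(0)$, gives $E(t)\le 2E(0)$ for all $t\le T:=1/(2CE(0))$. Since $E(0)\approx\|\rho_0\|_{B^s_{\infty,1}\cap L^p}$, this is exactly
\begin{equation*}
\|\rho\|_{L^\infty_T(B^s_{\infty,1}\cap L^p)}\lesssim \|\rho_0\|_{B^s_{\infty,1}\cap L^p},\qquad T\ge\frac{C}{\|\rho_0\|_{B^s_{\infty,1}\cap L^p}},
\end{equation*}
i.e. the asserted a priori bound together with the lifespan estimate \eqref{eq:LifespanLB1}.

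The hard part is the closure in the second step, i.e. verifying that the quantity $V(\tau)$ governing the transport estimate is itself dominated by $E(\tau)$. This is exactly where the critical threshold $s\ge1-\alpha$ enters — it is what makes the velocity law \eqref{ieq:fracLapOp} return a field at the Lipschitz level $B^1_{\infty,1}$ — together with, in the regime $s\ge1$, the observation that the top-order quantities and the product $\nabla u\cdot\nabla\rho$ are absorbed by the $L^p$ and $B^s_{\infty,1}$ norms. Everything else — the conservation of $\|\rho\|_{L^p}$ and the bootstrap — is routine.
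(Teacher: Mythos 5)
Your proposal is correct and follows essentially the same route as the paper: block-wise transport/commutator estimates in $B^s_{\infty,1}$ (with the case split $s<1$ versus $s\ge1$), conservation of $\|\rho\|_{L^p}$, closure of $V(t)\lesssim E(t)$ via Proposition~\ref{p:LPoperatorBound} and the threshold $s+\alpha\ge1$, and the quadratic integral inequality $E(t)\le E(0)+C\int_0^t E^2$ yielding the lifespan bound. The only cosmetic differences are that the paper bounds the commutator $[u\cdot\nabla,\Delta_j]\rho$ directly via Lemma~\ref{l:CommBCD} rather than differentiating the equation in the regime $s\ge1$, and closes the bootstrap by defining $T_0$ as the supremum of times where $\int_0^T E^2\le E(0)$ instead of comparing with the ODE $z'=Cz^2$ — both equivalent.
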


\begin{proof}
Finding \textsl{a priori} estimates for problem \eqref{ieq:AIPM} is based on the theory of transport equations in Besov spaces. The idea is to bound every dyadic block $\Delta_j \rho$ of the solution in $L^\infty$ to find a $B^s_{\infty, 1}$ estimate. Fix a $j \geq -1$ and apply the operator $\Delta_j$ to the transport equation in \eqref{ieq:AIPM}. We have:
\begin{equation}\label{eq:transportCommutator}
\big( \partial_t + u \cdot \nabla \big) \Delta_j \rho = \big[ u \cdot \nabla, \Delta_j \big]\rho.
\end{equation}
Now, Lemma \ref{l:CommBCD} lets us bound the commutator in the righthand side in terms of $\rho$ and $\nabla u$. Let us treat separately the case wheres $s < 1$ and $s \geq 1$. 

\medskip

\textbf{First case.} In the first case $s < 1$, applying Lemma \ref{l:CommBCD} to our setting gives the following inequality:
\begin{equation*}
2^{js} \left\| \big[ u \cdot \nabla, \Delta_j \big]\rho \right\|_{L^p} \lesssim c_j(t) \| \nabla u \|_{L^\infty} \| \rho \|_{B^s_{\infty, 1}} = c_j(t) V(t) \| \rho \|_{B^s_{\infty, 1}},
\end{equation*} 
where the sequences $\big( c_j(t) \big)_{j \geq -1}$ have unit norm in the space $\ell^1(j \geq -1)$ and where $V(t)$ is defined as in the statement of Theorem \ref{t:localWP}. In order to have an upper bound that depends only on the unknown $\rho$, we express the derivatives of the velocity field as the image of the density $\nabla u = \nabla (- \Delta)^{- \alpha / 2} \P (\rho g)$ by a Fourier multiplier of degree $1 - \alpha$ and use Proposition \ref{p:LPoperatorBound} to obtain
\begin{equation}\label{eq:Vineq1}
V(t) = \| \nabla u \|_{L^\infty} \lesssim \| \rho \|_{L^p} + \| \rho \|_{B^{1 - \alpha}_{\infty, 1}}  \lesssim \| \rho_0 \|_{L^p} + \| \rho \|_{B^s_{p, r}}.
\end{equation}
Here, we have used the fact that, for regular solutions, the $L^p$ norms are simply transported by the (incompressible) flow of $u$. Overall, we may write the following inequality:
\begin{equation}\label{eq:commIneq}
2^{js} \left\| \big[ u \cdot \nabla, \Delta_j \big]\rho \right\|_{L^p} \lesssim c_j(t) V(t) \| \rho \|_{B^s_{\infty, 1}} \lesssim c_j(t) \big( \| \rho_0 \|_{L^p} + \| \rho \|_{B^s_{\infty, 1}} \big)^2.
\end{equation}

\medskip

\textbf{Second case.} In the second case, where $s \geq 1$, an additional term appears in the commutator inequalities of Lemma \ref{l:CommBCD}, so that we instead get
\begin{equation*}
\begin{split}
2^{js} \left\| \big[ u \cdot \nabla, \Delta_j \big]\rho \right\|_{L^p} & \lesssim c_j(t)\Big(  \| \nabla u \|_{L^\infty} \| \rho \|_{B^s_{\infty, 1}} + \| \nabla \rho \|_{L^\infty} \| \nabla u \|_{B^{s - 1}_{\infty, 1}} \Big) \\
& = c_j(t) V(t) \Big( \| \rho \|_{B^s_{\infty, 1}} + \| \nabla u \|_{B^{s-1}_{\infty, 1}} \Big),
\end{split}
\end{equation*}
where, once again, the sequences $\big( c_j(t) \big)_{j \geq -1}$ have unit norm in the space $\ell^1(j \geq -1)$ and the quantity $V(t)$ is the one defined in Theorem \ref{t:localWP}. Now, because $s \geq 1$, the space $B^s_{\infty, 1} \subset B^1_{\infty, 1} \subset W^{1, \infty}$ is embedded in the space of Lipschitz functions and $\| \nabla \rho \|_{L^\infty} \lesssim \| \rho \|_{B^s_{\infty, 1}}$. We may therefore write an inequality similar to \eqref{eq:Vineq1}, namely
\begin{equation*}
V(t) = \| \nabla u \|_{L^\infty} + \| \nabla \rho \|_{L^\infty} \lesssim \| \rho \|_{L^p} + \| \rho \|_{B^s_{\infty, 1}}.
\end{equation*}
Likewise, we have
\begin{equation*}
\| \nabla u \|_{B^{s-1}_{\infty, 1}} \lesssim \| \nabla u \|_{B^{\alpha + s-1}_{\infty, 1}} \lesssim \| \rho \|_{L^p} + \| \rho \|_{B^s_{\infty, 1}},
\end{equation*}
so that the whole commutator is bounded by an inequality that is nearly identical to \eqref{eq:commIneq}, 
\begin{equation}\label{eq:commIneq2}
2^{js} \left\| \big[ u \cdot \nabla, \Delta_j \big]\rho \right\|_{L^p} \lesssim c_j(t) V(t) \Big( \|\rho_0 \|_{L^p} + \| \rho \|_{B^s_{\infty, 1}} \Big) \lesssim c_j(t) \big( \| \rho_0 \|_{L^p} + \| \rho \|_{B^s_{\infty, 1}} \big)^2.
\end{equation}
Note that both inequalities in \eqref{eq:commIneq2} also hold in the first case, where $s < 1$.

\medskip

Inequality \eqref{eq:commIneq2}, which, recall, is valid regardless of whether $s < 1$ or not, allows us to perform an $L^\infty$ estimate on equation \eqref{eq:transportCommutator}. We obtain, for all $j \geq -1$ and $0 \leq t \leq T$,
\begin{equation}\label{eq:LPblockEstimate}
\begin{split}
2^{js} \| \Delta_j \rho (t) \|_{L^\infty} & \leq 2^{js} \| \Delta_j \rho_0 \|_{L^\infty} + C \int_0^T c_j(t) V(t) \Big( \| \rho \|_{L^p} + \| \rho \|_{B^s_{\infty, 1}} \Big) \dt\\
& \leq 2^{js} \| \Delta_j \rho_0 \|_{L^\infty} + C_0 \int_0^T c_j(t) \Big( \| \rho \|_{L^p} + \| \rho \|_{B^s_{\infty, 1}} \Big)^2 \dt.
\end{split}
\end{equation}
In the above, $C_0$ is a fixed constant which we will use in further estimates. Taking the sum over all $j \geq -1$ provides a integral inequality involving the norms of the solution. In order to simpify notations further, we set
\begin{equation*}
E(t) = \| \rho \|_{L^p} + \| \rho \|_{B^s_{\infty, 1}} = \| \rho_0 \|_{L^p} + \| \rho \|_{B^s_{\infty, 1}},
\end{equation*}
so that 
\begin{equation}\label{eq:intIneq}
E(T) \leq E(0) + C_0 \int_0^T V(t) E(t) \dt \leq E(0) + C_0 \int_0^T E(t)^2 \dt.
\end{equation}
Incidentally, note that if $V(t)$ is integrable on $[0, T]$, then $E(t)$ cannot blow-up. In order to deduce upper bounds on the solution from this estimate, we work on a time interval $[0, T_0]$ where the integral of the previous inequality is not too large. More precisely, set
\begin{equation*}
T_0 := \sup \left\{ T > 0, \quad \int_0^T E(t)^2 \dt \leq E(0) \right\},
\end{equation*}
so that for all $T \leq T_0$, we must have $E(t) \leq (1 + C_0) E(0)$. In particular, so long as $T \leq T_0$, the integral in \eqref{eq:intIneq} cannot be larger than
\begin{equation*}
\int_0^T E(t)^2 \dt \leq (1 + C_0)^2 E(0)^2,
\end{equation*}
which implies that $T_0$ is nonzero and bounded from below by
\begin{equation*}
T_0 \geq \frac{(1 + C_0)^{-1}}{E(0)}.
\end{equation*}
This fact ends the proof.
\end{proof}

\subsection{Stability Estimates}

The purpose of this paragraph is to prove stability estimates that will serve to implement a fixed point argument, thus constructing unique solutions. We will have to study a system of the form 
\begin{equation*}
\begin{cases}
\big( \partial_t + u \cdot \nabla \big) \delta \rho = - (\delta u \cdot \nabla) \rho \\
\delta u = (- \Delta)^{\alpha / 2} \P (\delta \rho g),
\end{cases}
\end{equation*}
where $(\rho, u)$ is a solution of \eqref{ieq:AIPM} and $(\delta \rho, \delta u)$ are, loosely stated, perturbations of this solution. Our work in this Section is summarized in the following Proposition.

\begin{prop}\label{p:stabEst}
Assume that the pair $(s, p)$ fulfills the assumptions of Theorem \ref{t:localWP}, that is $s > 0$, $s \geq 1 - \alpha$ and $p < d/p$. Consider $\rho \in B^s_{\infty, 1} \cap L^p$ and $\delta \rho \in B^{s-1}_{\infty, 1} \cap B^{-1}_{p, \infty}$ associated to $\delta u$ through $\delta u = (- \Delta)^{- \alpha / 2} \P (\delta \rho g)$. Then the product $\D (\rho \delta u)$ belongs to $B^{s-1}_{\infty, 1} \cap B^{-1}_{p, \infty}$ and the estimate
\begin{equation*}
\big\| \D(\rho \delta u) \big\|_{B^{s-1}_{\infty, 1} \cap B^{-1}_{p, \infty}} \lesssim \| \rho \|_{B^s_{\infty, 1} \cap L^p} \| \delta \rho \|_{B^{s-1}_{\infty, 1} \cap L^p}
\end{equation*}
holds.
\end{prop}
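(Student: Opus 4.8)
The plan is to estimate the product $\rho\,\delta u$ itself and lose one derivative only at the very end, rather than working with the equivalent expression $\delta u\cdot\nabla\rho=\D(\rho\,\delta u)$; these agree because $\delta u=(-\Delta)^{-\alpha/2}\P(\delta\rho\,g)$ is divergence free, being the image of $\delta\rho\,g$ under the Leray projector followed by a Fourier multiplier. The basic input is Proposition \ref{p:LPoperatorBound}, applied with the regularity index $s-1$ in place of $s$: it gives $\|\delta u\|_{B^{s-1+\alpha}_{\infty,1}}\lesssim\|\delta\rho\|_{L^p}+\|\delta\rho\|_{B^{s-1}_{\infty,1}}$, and since the hypothesis $s\geq 1-\alpha$ forces $s-1+\alpha\geq 0$, we also have $\delta u\in L^\infty$ with $\|\delta u\|_{L^\infty}\lesssim\|\delta u\|_{B^{s-1+\alpha}_{\infty,1}}$.

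The $B^{-1}_{p,\infty}$ bound is then immediate: $\rho\in L^p$ and $\delta u\in L^\infty$ give $\rho\,\delta u\in L^p$ with $\|\rho\,\delta u\|_{L^p}\leq\|\rho\|_{L^p}\|\delta u\|_{L^\infty}$, and the Bernstein inequalities yield $\|\D f\|_{B^{-1}_{p,\infty}}\lesssim\|f\|_{L^p}$ for any $f\in L^p$. For the $B^{s-1}_{\infty,1}$ bound I would start from the Bony decomposition
\[
\rho\,\delta u = T_\rho(\delta u) + \mc T_{\delta u}(\rho) + \mc R(\rho,\delta u).
\]
The last two pieces are harmless: $\delta u\in L^\infty$ places $\mc T_{\delta u}(\rho)$ in $B^s_{\infty,1}$, and since $s+(s-1+\alpha)=2s-1+\alpha\geq s>0$, Proposition \ref{p:op} places $\mc R(\rho,\delta u)$ in $B^{2s-1+\alpha}_{\infty,1}\subset B^s_{\infty,1}$; as $\D$ maps $B^s_{\infty,1}$ into $B^{s-1}_{\infty,1}$, these contribute correctly, with norms $\lesssim\|\rho\|_{B^s_{\infty,1}}\big(\|\delta u\|_{L^\infty}+\|\delta u\|_{B^{s-1+\alpha}_{\infty,1}}\big)$.

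The hard part is the paraproduct $T_\rho(\delta u)$, which a priori only lies in $B^{s-1+\alpha}_{\infty,1}$ — strictly rougher than $B^s_{\infty,1}$ as soon as $\alpha<1$, so one cannot just differentiate it. Here I would exploit the divergence-free structure of $\delta u$: writing $T_\rho(\delta u)=\sum_k S_{k-1}\rho\,\Delta_k\delta u$ and using that $\Delta_k$ commutes with $\D$, the term $\sum_k S_{k-1}\rho\,\Delta_k(\D\delta u)$ vanishes, leaving $\D\big(T_\rho(\delta u)\big)=T_{\nabla\rho}(\delta u)$. This paraproduct is then controlled with $\nabla\rho\in B^{s-1}_{\infty,1}$ and $\delta u\in B^{s-1+\alpha}_{\infty,1}$: when $s\geq 1$ one has $\nabla\rho\in L^\infty$ and $T_{\nabla\rho}(\delta u)\in B^{s-1+\alpha}_{\infty,1}\subset B^{s-1}_{\infty,1}$, while for $0<s<1$ the index $s-1$ is negative, Proposition \ref{p:op} gives $T_{\nabla\rho}(\delta u)\in B^{2s-2+\alpha}_{\infty,1}$, and the inequality $2s-2+\alpha\geq s-1$ — equivalent to $s\geq 1-\alpha$ — yields the embedding into $B^{s-1}_{\infty,1}$, so in all cases $\|T_{\nabla\rho}(\delta u)\|_{B^{s-1}_{\infty,1}}\lesssim\|\rho\|_{B^s_{\infty,1}}\|\delta u\|_{B^{s-1+\alpha}_{\infty,1}}$.

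Collecting the three contributions and invoking Proposition \ref{p:LPoperatorBound} once more to bound $\|\delta u\|_{L^\infty}+\|\delta u\|_{B^{s-1+\alpha}_{\infty,1}}$ by $\|\delta\rho\|_{L^p}+\|\delta\rho\|_{B^{s-1}_{\infty,1}}$ gives the stated estimate. The only genuine difficulty is $T_\rho(\delta u)$: the velocity is exactly $1-\alpha$ derivatives too singular for a naive product estimate in $B^{s-1}_{\infty,1}$, and it is precisely incompressibility $\D\delta u=0$ — which turns $\D(T_\rho\delta u)$ into the tamer $T_{\nabla\rho}(\delta u)$ — together with the borderline condition $s\geq 1-\alpha$ that makes the estimate close.
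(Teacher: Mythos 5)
Your proof is correct and follows essentially the same route as the paper: the paper pushes the divergence inside first (writing $\D(\rho\,\delta u)=\mc T_{\delta u_k}(\partial_k\rho)+\mc T_{\partial_k\rho}(\delta u_k)+\partial_k\mc R(\delta u_k,\rho)$ using $\D\delta u=0$) while you apply Bony to $\rho\,\delta u$ and differentiate afterwards, but the three resulting terms, the use of incompressibility to convert the dangerous paraproduct into $\mc T_{\nabla\rho}(\delta u)$, and the key index inequality $2s-2+\alpha\geq s-1\Leftrightarrow s\geq 1-\alpha$ are identical. The $B^{-1}_{p,\infty}$ bound via $\|\rho\,\delta u\|_{L^p}\leq\|\rho\|_{L^p}\|\delta u\|_{L^\infty}$ also matches the paper exactly.
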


\begin{proof}
We start by finding $B^{s-1}_{\infty, 1}$ estimates for the product. Our argument is based on the Bony decomposition: owing to the fact that $\delta u$ is divergence free, we may write
\begin{equation*}
\D (\rho \delta u) = \mc T_{\delta u_k}(\partial_k \rho) + \mc T_{\partial_k \rho}(\delta u_k) + \partial_k \mc R (\delta u_k, \rho),
\end{equation*}
where, here and below, there is an implicit summation on the repeated index $k = 1, ..., d$. Let us start by finding bounds for the velocity perturbation $\delta u$ which involve only $\delta \rho$. By using Proposition \ref{p:LPoperatorBound} we find that
\begin{equation}\label{eq:duBounded}
\| \delta u \|_{B^{s+\alpha - 1}_{\infty, 1}} \lesssim \| \delta \rho \|_{B^{-1}_{p, \infty}} + \| \delta \rho \|_{B^s_{\infty, 1}}.
\end{equation}
We now look at the paraproducts. The first one is straightforward to bound with Proposition \ref{p:op}, as $\delta u \in B^0_{\infty, 1} \subset L^\infty$ is a bounded function (recall that $s + \alpha - 1 \geq 0$). We may therefore use \eqref{eq:duBounded} to obtain
\begin{equation*}
\| \mc T_{\delta u_k} (\partial_k \rho) \|_{B^{s-1}_{\infty, 1}} \lesssim \| \nabla \rho \|_{B^{s-1}_{\infty, 1}} \| \delta u \|_{L^\infty} \lesssim \| \rho \|_{B^s_{\infty, 1}} \Big( \| \delta \rho \|_{B^{-1}_{p, \infty}} + \| \delta \rho \|_{B^s_{\infty, 1}} \Big).
\end{equation*}
For the second one, we must consider whether $s \geq 1$, in which case $\partial_k \rho \in L^\infty$ or not. If indeed we have $s \geq 1$, then the paraproduct assumes the regularity of $\delta u$ and
\begin{equation*}
\begin{split}
\| \mc T_{\partial_k \rho} (\delta u_k) \|_{B^{s - 1}_{\infty, 1}} & \leq \| \mc T_{\partial_k \rho} (\delta u_k) \|_{B^{s+\alpha - 1}_{\infty, 1}} \\
& \lesssim \| \rho \|_{B^s_{\infty, 1}} \| \delta u \|_{B^{s+\alpha - 1}_{\infty, 1}} \\
& \lesssim \| \rho \|_{B^s_{\infty, 1}} \Big( \| \delta \rho \|_{B^{-1}_{p, \infty}} + \| \delta \rho \|_{B^s_{\infty, 1}} \Big).
\end{split}
\end{equation*}
However, in the case where $s < 1$, the derivatives $\partial_k \rho$ induce a loss of regularity, according to Proposition \ref{p:op}, and the solution cannot be simply to factor out the divergence $\partial_k \mc T_\rho (\delta u_k)$, as we would not be making full use of the regularity of $\rho$. By use of Proposition \ref{p:op}, we have
\begin{equation*}
\| \mc T_{\partial_k \rho} (\delta u_k) \|_{B^{2s + \alpha - 2}_{\infty, 1}} \lesssim \| \nabla \rho \|_{B^{s-1}_{\infty, \infty}} \| \delta u \|_{B^{s+\alpha - 1}_{\infty, 1}},
\end{equation*}
but given that $s \geq 1 - \alpha$, we must have $2s-2 + \alpha \geq s-1$, and we still obtain the desired bound:
\begin{equation*}
\| \mc T_{\partial_k \rho} (\delta u_k) \|_{B^{s - 1}_{\infty, 1}} \lesssim \| \rho \|_{B^s_{\infty, 1}} \Big( \| \delta \rho \|_{B^{-1}_{p, \infty}} + \| \delta \rho \|_{B^s_{\infty, 1}} \Big).
\end{equation*}

We now only are left with the remainder $\partial_k \mc R(\delta u_k, \rho)$. In order for it to be properly defined (as an element of $B^0_{\infty, \infty}$), it requires that the sum of the regularities of $\partial_k \rho$ and $\delta u_k$ be nonnegative $2s - 1 + \alpha \geq 0$, or in other words $s \geq \frac{1}{2}(1 - \alpha)$. However, our assumptions bearing on $s$ show that in fact $s > \frac{1}{2}(1 - \alpha)$, as we either have $\alpha < 1$ and
\begin{equation*}
s \geq 1 - \alpha > \frac{1}{2}(1 - \alpha),
\end{equation*}
or we have $\alpha \geq 1$ and 
\begin{equation*}
s > 0 \geq \frac{1}{2}(1 - \alpha) \geq 1 - \alpha.
\end{equation*}
In both cases, $2s-1 + \alpha > 0$ and we may apply Proposition \ref{p:op} to get
\begin{equation*}
\begin{split}
\big\| \partial_k \mc R( \delta u_k, \rho) \big\|_{B^{s - 1}_{\infty, 1}} & \lesssim \| \mc R (\delta u_k, \rho) \|_{B^{2s + \alpha - 1}_{\infty, 1}} \\
& \lesssim \| \rho \|_{B^s_{\infty, 1}} \| \delta u \|_{B^{s + \alpha}_{\infty, 1}} \\
& \lesssim \| \rho \|_{B^s_{\infty, 1}} \Big( \| \delta \rho \|_{B^{-1}_{p, \infty}} + \| \delta \rho \|_{B^s_{\infty, 1}} \Big).
\end{split}
\end{equation*}
Overall, we have shown that the quantity $\D(\rho \delta u)$ is bounded in the space $B^{s-1}_{\infty, 1}$ by
\begin{equation*}
\| \rho \|_{B^s_{\infty, 1}} \Big( \| \delta \rho \|_{B^{-1}_{p, \infty}} + \| \delta \rho \|_{B^s_{\infty, 1}} \Big).
\end{equation*}

\medskip

It now only remains to find the $B^{-1}_{p, \infty}$ bound. Although it is possible to do this by means of the Bony decomposition, a much faster way exists: $\D(\rho \delta u)$ simply is the derivative of a $L^p$ function, so
\begin{equation*}
\begin{split}
\big\| \D (\rho \delta u) \big\|_{B^{-1}_{p, \infty}} & \lesssim \| \rho \delta u \|_{B^0_{p, \infty}} \\
& \lesssim \| \rho \delta u \|_{L^p} \\
& \lesssim \| \rho \|_{L^p} \| \delta u \|_{L^\infty} \\
& \lesssim  \| \rho \|_{L^p} \Big( \| \delta \rho \|_{B^{-1}_{p, \infty}} + \| \delta \rho \|_{B^s_{\infty, 1}} \Big),
\end{split}
\end{equation*}
and this finally ends the proof of the Proposition.

\end{proof}

\subsection{Proof of Theorem\ref{t:localWP}: Iterative Scheme}

Let us now finish proving Theorem \ref{t:localWP}. We will construct a solution of \eqref{ieq:AIPM} by means of an iterative scheme, and prove its uniqueness.

\begin{proof}[Proof (of Theorem \ref{t:localWP})]
\textbf{STEP 1: iterative scheme.} Let us define a sequence $(\rho_n)_{n\geq 1}$ of approximate solutions in the following way. For the initial data, set $\rho_{0, n} = S_n \rho_0$ and define $\rho_1(t,x) := \rho_{0, 1}(x)$ and, for $n \geq 1$, $\rho_{n+1}$ is the unique global in time solution of
\begin{equation}\label{eq:iterativeScheme}
\begin{cases}
\big( \partial_t + u_n \cdot \nabla \big) \rho_{n+1} = 0\\
u_n = (- \Delta)^{\alpha/2} \P (\rho_n g),
\end{cases}
\qquad \text{with initial datum } \rho_{n+1}(0) = \rho_{0, n+1}.
\end{equation}
For notational convenience, we call $X := B^s_{\infty, 1} \cap L^p$ the space in which the estimates for the solution are made, and $Y := B^{s-1}_{\infty, 1} \cap B^{-1}_{p, \infty}$ the one in which we make the stability estimates. Firstly, by performing \textsl{mutatis mutandi} the same computations as those of Proposition \ref{p:aPrioriWP}, we obtain from \eqref{eq:iterativeScheme} that, for all $n \geq 1$ and $T > 0$,
\begin{equation*}
\| \rho_{n+1} \|_{L^\infty_T(X)} \lesssim \| \rho_0 \|_X + \int_0^T \| \rho_n \|_X \| \rho_{n+1} \|_X \dt.
\end{equation*}
Similarly, by the same arguments as those of Proposition \ref{p:aPrioriWP}, by defining the times $T_n > 0$ by
\begin{equation*}
T_{n+1} := \sup \left\{ T > 0, \qquad \int_0^T \| \rho_n \|_X \| \rho_{n+1} \|_X \dt \leq \| \rho_0 \|_X \right\},
\end{equation*}
we see that $\| \rho_{n+1} \|_{L^\infty_{T_{n+1}}(X)} \lesssim \| \rho_0 \|_X$ and an inductive argument allows us to show that the times $T_n$ are uniformly bounded from below by
\begin{equation*}
T_n \geq \frac{C}{\| \rho_0 \|_X} := T_0.
\end{equation*}

\medskip

\textbf{STEP 2: convergence.} We must now show that the sequence $(\rho_n)$ is convergent in some space (in $Y$ in fact). To do so, we define the difference functions
\begin{equation*}
\delta \rho_n := \rho_{n+1} - \rho_n \qquad \text{ and } \qquad \delta u_n := u_{n+1} - u_n,
\end{equation*}
and proceed identically with the initial data $\delta \rho_{0, n}$. We agree that $\delta \rho_0(t,x) = 0$ and $\delta \rho_{0, 0}(x) = 0$. The difference functions solve the following set of equations:
\begin{equation*}
\begin{cases}
\big( \partial_t + u_n \big) \delta \rho_n = - \D (\rho_n \delta u_{n-1}) \\
\delta u_{n-1} := (- \Delta)^{-\alpha/2} \P (\delta \rho_{n-1} g).
\end{cases}
\end{equation*}
In order to show that $(\rho_n)_n$ is Cauchy, we will show that the series
\begin{equation*}
\rho_n - \rho_1 := \delta \rho_{n-1} + \cdots + \delta \rho_{1}
\end{equation*}
is normally convergent in the Banach space $Y$. Now, because of our assumptions on $s > 0$, the norms $Y = B^{s-1}_{\infty, 1} \cap B^{-1}_{p, \infty}$ are propagated by the transport equation: since $u$ is divergence free, Theorem \ref{th:transport} insures that we have, for all $T \leq T_0$,
\begin{equation}\label{eq:stabEst1}
\begin{split}
\| \delta \rho_n \|_{L^\infty_T(Y)} & \lesssim \Big( \| \delta \rho_{0, n} \|_Y + \| \D (\rho_n \delta u_{n-1}) \|_{L^1_T(Y)} \Big) \exp \left\{ C \int_0^T \| \nabla u_n \|_{L^\infty \cap B^{s-2}_{\infty, 1}}  \dt \right\} \\
& \lesssim \left( \| \delta \rho_{0, n} \|_Y + \| \rho_n \|_{L^\infty_T(X)} \int_0^T \| \delta \rho_{n-1} \|_Y \dt \right) \exp \left\{ C \int_0^T \| \nabla u_n \|_{L^\infty \cap B^{s-2}_{\infty, 1}} \dt \right\},
\end{split}
\end{equation}
where this last inequality comes from Proposition \ref{p:stabEst}. As we go on, we will no longer be interested in precise inequalities (we only need to show that $(\delta \rho_n)_n$ is summable in $Y$), so we will treat all quantities bounded by $\| \rho_n \|_{L^\infty_{T_0}(X)} \lesssim \| \rho_0 \|_X$ as irrelevant constants. The previous inequalities then simplify to
\begin{equation*}
\| \delta \rho_n \|_{L^\infty_T(Y)} \lesssim \| \delta \rho_{0, n} \|_Y + \int_0^T \| \delta \rho_{n-1} \|_Y \dt .
\end{equation*}

In order to use these inequalities, we must study the norms $\| \delta \rho_{0, n} \|_Y$ for the initial data more closely. We have
\begin{equation*}
\| \delta \rho_{0, n} \|_{B^{s-1}_{\infty, 1}} = \big\| (S_{n+1} - S_n)\rho_0 \big\|_{B^{s-1}_{\infty, 1}} = 2^{n(s-1)} \| \Delta_n \rho_0 \|_{L^\infty} \leq 2^{-n} \| \rho_0 \|_{B^s_{\infty, 1}}.
\end{equation*}
We may make a similar computation with $\| \delta \rho_{0, n} \|_{B^{-1}_{p, \infty}}$. Overall, we get that $\| \delta \rho_{0, n} \|_Y = O (2^{-n})$. By plugging this into our estimate and setting $r_n(T) := \| \delta \rho_n \|_{L^\infty_T(Y)}$, we have
\begin{equation*}
\forall T \leq T_0, \qquad r_n(T) \leq \frac{C_0}{2^n} + C_0 \int_0^T r_{n-1}(t) \dt,
\end{equation*}
where $C_0 > 0$ is a fixed constant whose precise value depends on $\| \rho_0 \|_X$. By repeated use of this last inequality and remembering that $r_1(T) = r_1$ does not depend on time, we find that, for all $n \geq 1$, 
\begin{equation*}
\begin{split}
r_n(T) & \leq \sum_{k=0}^{n-2} \frac{C^{k+1}T^k}{2^{n-k} k!} \, + \, \frac{C^{n-1}T^{n-1}}{(n-1)!} r_1 \\
& \leq \frac{C}{2^n} \sum_{k=0}^{n-2} \frac{(2CT)^k}{k!} \, + \, \frac{(CT)^{n-1}}{(n-1)!} r_1 \\
& \leq \frac{C}{2^n} e^{2CT} + \frac{(CT)^{n-1}}{(n-1)!} r_1.
\end{split}
\end{equation*}
Now, this proves that the series $\sum \delta \rho_n$ is normally convergent in $Y$, and therefore the sequence $(\rho_n)$ is certainly convergent in $L^\infty_{T_0}(Y)$. Let $\rho$ be the associated limit. We have, as $n \rightarrow + \infty$,
\begin{equation}\label{eq:convApproxStr}
\rho_n \tend \rho \qquad \text{in } L^\infty_{T_0}(Y).
\end{equation}
We define the associated velocity field by $u = (- \Delta)^{- \alpha/2} \P (\rho g) \in L^\infty_{T_0}(B^{s-1+\alpha}_{\infty, 1})$. Note that, although we have shown the convergence of the $\rho_n$ in $L^\infty_{T_0}(Y)$, the sequence is uniformly bounded in $L^\infty_{T_0}(X)$. By Proposition \ref{p:Fatou}, we see that the solution also lies in this space.


\medskip

\textbf{STEP 3: the limit is a solution.} We have to check that the function $\rho \in L^\infty_{T_0}(Y)$ we have constructed is indeed a solution of problem \eqref{ieq:AIPM}. First of all, it is clear from the convergence of the $\rho_n$ shown above that, as $n \rightarrow + \infty$,
\begin{equation*}
\begin{split}
&\rho_{0, n} \tend \rho_0 \qquad \text{in } \mc D'(\R^d),\\
&\partial_t \rho_n \tend \partial_t \rho \qquad \text{in } \mc D'(]0, T_0[ \times \R^d),
\end{split}
\end{equation*}
so that we just have to study the convergence of the product $\rho_n u_n$. We use the following product lemma, whose proof flows directly from Proposition \ref{p:op}.

\begin{lemma}\label{l:prodLim}
Consider $s>0$ such that $s \geq 1 - \alpha$. Then the function product $(f, h) \longmapsto fh$ is bounded in the following topologies
\begin{equation}\label{eq:LtopoProd}
\begin{split}
& B^s_{\infty, 1} \times B^{s+ \alpha - 1}_{\infty, 1} \tend B^{- \alpha}_{\infty, 1}\\
& B^{s-1}_{\infty, 1} \times B^{s+\alpha}_{\infty, 1} \tend B^{- \alpha}_{\infty, 1}.
\end{split}
\end{equation}
\end{lemma}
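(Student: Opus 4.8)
The plan is to apply Bony's decomposition to each of the two products and estimate the three resulting pieces with Proposition \ref{p:op}. Writing $fh = T_f(h) + \mc T_h(f) + \mc R(f, h)$, the whole argument reduces to checking that each paraproduct and the remainder are well defined and land in some Besov space $B^\sigma_{\infty, 1}$ with $\sigma \geq - \alpha$, after which the embedding $B^\sigma_{\infty, 1} \subset B^{-\alpha}_{\infty, 1}$ (Proposition \ref{p:BesovEmbed}) concludes. The numerical facts I would record first, all consequences of $s > 0$ and $s + \alpha \geq 1$, are: $s + \alpha - 1 \geq 0$; $s + \alpha > 0$; $s - 1 \geq - \alpha$; $2s + \alpha - 1 \geq 1 - \alpha \geq - \alpha$; and $2s + \alpha - 1 > 0$ (which is $2s + \alpha - 1 \geq 1 - \alpha > 0$ when $\alpha < 1$, and $2s + \alpha - 1 \geq 2s > 0$ when $\alpha \geq 1$). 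These guarantee that all the pieces below are well defined and of regularity $\geq - \alpha$.

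For the topology $B^s_{\infty, 1} \times B^{s + \alpha - 1}_{\infty, 1}$, both factors are bounded functions (because $s > 0$ and $s + \alpha - 1 \geq 0$, using $B^0_{\infty, 1} \subset L^\infty$ at the endpoint), so by Proposition \ref{p:op} neither paraproduct loses regularity: $T_f(h) \in B^{s + \alpha - 1}_{\infty, 1}$ and $\mc T_h(f) \in B^{s}_{\infty, 1}$, both contained in $B^{-\alpha}_{\infty, 1}$; and since $s + (s + \alpha - 1) > 0$, the remainder is defined with $\mc R(f, h) \in B^{2s + \alpha - 1}_{\infty, 1} \subset B^{-\alpha}_{\infty, 1}$. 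For the topology $B^{s - 1}_{\infty, 1} \times B^{s + \alpha}_{\infty, 1}$, the remainder behaves identically, $\mc R(f, h) \in B^{2s + \alpha - 1}_{\infty, 1}$, and since $h \in B^{s + \alpha}_{\infty, 1} \subset L^\infty$ the paraproduct $\mc T_h(f)$ keeps the regularity of $f$, giving $\mc T_h(f) \in B^{s - 1}_{\infty, 1} \subset B^{-\alpha}_{\infty, 1}$. The only term requiring a case split is $T_f(h)$, because the low-frequency factor $f \in B^{s-1}_{\infty,1}$ may fail to be bounded: if $s \geq 1$ then $f \in L^\infty$ and $T_f(h) \in B^{s + \alpha}_{\infty, 1}$, whereas if $0 < s < 1$ Proposition \ref{p:op} produces the expected loss of $1 - s$ derivatives and $T_f(h) \in B^{(s - 1) + (s + \alpha)}_{\infty, 1} = B^{2s + \alpha - 1}_{\infty, 1}$; in either sub-case the outcome embeds into $B^{-\alpha}_{\infty, 1}$.

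Summing the three estimates in each case gives the bilinear bounds, hence continuity of the product maps in the topologies of \eqref{eq:LtopoProd}. I do not expect a genuine obstacle: the proof is bookkeeping of regularity indices together with the routine paraproduct/remainder estimates of Proposition \ref{p:op}. The two points deserving slight care are, first, checking that the remainder $\mc R$ is always well defined — i.e. $2s + \alpha - 1 > 0$ in every admissible range of $\alpha$ — and, second, noticing that the hypothesis $s \geq 1 - \alpha$ is used in a truly sharp way only through the embedding $B^{s - 1}_{\infty, 1} \subset B^{-\alpha}_{\infty, 1}$ needed to place $\mc T_h(f)$ in the target space in the second product.
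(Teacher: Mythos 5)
Your proof is correct and follows essentially the same route as the paper's: Bony decomposition, Proposition \ref{p:op} applied to the two paraproducts and the remainder (with the same case split on whether the low-frequency factor of $\mc T_f(h)$ is bounded), and the verification that $2s+\alpha-1>0$ so the remainder is defined. No gaps.
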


\begin{proof}[Proof (of the Lemma)]
The proof is based on the Bony decomposition of a product $fh$,
\begin{equation*}
fh = \mc T_f(h) + \mc T_h(f) + \mc R(f, h).
\end{equation*}
Firstly, we see that the remainder $\mc R$ is bounded in both topologies \eqref{eq:LtopoProd}, according to Proposition \ref{p:op}, since the sum of the regularity indices satisfies
\begin{equation*}
s + (s+\alpha - 1) = (s-1) + (s+\alpha) = 2s + \alpha - 1 \geq s > 0 \geq - \alpha.
\end{equation*}
Concerning the paraproduct $\mc T_f(h)$, we must distinguish between the cases where the regularity index of $f$ is, or not, nonnegative. Overall, Proposition \ref{p:op} provides the following continuity properties for the paraproduct:
\begin{equation*}
(f, h) \in B^s_{\infty, 1} \times B^{s+ \alpha - 1}_{\infty, 1} \longmapsto \mc T_f(h) \in 
\begin{cases}
B^{2s+\alpha - 1}_{\infty, 1} \subset B^{- \alpha}_{\infty, 1} \quad \text{if } s < 0,\\
B^{s-1+\alpha}_{\infty, 1} \subset B^{- \alpha}_{\infty, 1} \quad \text{if } s \geq 0,
\end{cases}
\end{equation*}
and
\begin{equation*}
(f, h) \in B^{s-1}_{\infty, 1} \times B^{s+ \alpha}_{\infty, 1} \longmapsto \mc T_f(h) \in 
\begin{cases}
B^{2s+\alpha - 1}_{\infty, 1} \subset B^{- \alpha}_{\infty, 1} \quad \text{if } s < 1,\\
B^{s + \alpha}_{\infty, 1} \subset B^{- \alpha}_{\infty, 1} \quad \text{if } s \geq 1.
\end{cases}
\end{equation*}
For the paraproduct $\mc T_h(f)$, things are much quicker, since $h$ is either an element of $B^{s+\alpha - 1}_{\infty, 1}$ or $B^{s+\alpha}_{\infty, 1}$, and is in any case bounded $h \in L^\infty$. Therefore, the paraproduct $\mc T_h(f)$ has its value in $B^s_{\infty, 1} \subset B^{- \alpha}_{\infty, 1}$ or $B^{s-1}_{\infty, 1} \subset B^{- \alpha}_{\infty, 1}$.

In all cases, the remainder and both paraproducts have range in $B^{- \alpha}_{\infty, 1}$, and this ends proving the Lemma.
\end{proof}

With product Lemma \ref{l:prodLim}, we may finish proving that $\rho$ is a solution of \eqref{ieq:AIPM}. As we have explained, all we have to do is study the convergence of the product $\rho_n u_n$. We have,
\begin{equation*}
\begin{split}
\| \rho_n u_n - \rho u \|_{B^{- \alpha}_{\infty, 1}} & \leq \| \rho_n (u_n - u) \|_{B^{- \alpha}_{\infty, 1}} + \| (\rho_n - \rho) u \|_{B^{- \alpha}_{\infty, 1}} \\
& \lesssim \| \rho_n \|_{B^s_{\infty, 1}} \| u_n - u \|_{B^{s+\alpha - 1}} + \| \rho_n - \rho \|_{B^{s-1}_{\infty, 1}} \| u \|_{B^{s+1}_{\infty, 1}} \\
& \lesssim \| \rho_n \|_X \| \rho_n - \rho \|_Y + \| \rho_n - \rho \|_Y \| \rho \|_X.
\end{split}
\end{equation*}
From the remarks at the end of STEP 2, we know that the solution $\rho$ belongs to $L^\infty_{T_0}(X)$ and from the \textsl{a priori} estimates of STEP 1, we know that the approximate solutions are uniformly bounded in the same space. The convergence \eqref{eq:convApproxStr} of the $\rho_n$ and the previous inequalities show the convergence of the product $\rho_n u_n \tend \rho u$ in the space $L^\infty_{T_0}(B^{- \alpha}_{\infty, 1})$ and hence, as $n \rightarrow + \infty$,
\begin{equation*}
\D (\rho_n u_n) \tend \D (\rho u) \qquad \text{in } \mc D'([0, T_0[ \times \R^d).
\end{equation*}
This proves that $\rho$ is indeed a solution of the transport equation in \eqref{ieq:AIPM}. Reasoning exactly as in the proof of Theorem \ref{t:globalWeakSol}, see especially the end of STEP 3, we see that $u$ is a solution of the fractional Stokes problem.

\medskip

\textbf{STEP 4: uniqueness of the solution.} The uniqueness of the solution we have constructed pretty much flows from the stability estimates we used to prove convergence of the iterative scheme. Consider a given time $T > 0$ and two solutions $\rho_1, \rho_2 \in L^\infty_T(X)$ related to the initial data $\rho_{0, 1}, \rho_{0, 2} \in X$. Then, by defining the difference function $\delta \rho := \rho_2 - \rho_1$, the difference of initial data $\delta \rho_0 := \rho_{0, 2} - \rho_{0, 1}$, and the associated difference of velocity fields $\delta u$, we find that $\delta \rho$ is a (weak) solution of the problem
\begin{equation*}
\begin{cases}
\big( \partial_t + u_2 \cdot \nabla \big) \delta \rho = - \D (\rho_1 \delta u)\\
\delta u = (- \Delta)^{- \alpha/2} \P (\delta \rho g).
\end{cases}
\end{equation*}
Using Theorem \ref{th:transport} and the estimates therein provides an inequality which is nearly identical to \eqref{eq:stabEst1}, namely
\begin{equation*}
\begin{split}
\| \delta \rho \|_{L^\infty_T(Y)} & \lesssim \Big( \| \delta \rho_0 \|_Y + \| \D (\rho_1 \delta u) \|_{L^1_T(Y)} \Big) \exp \left\{ C \int_0^T \| \nabla u_2 \|_{L^\infty \cap B^{s-2}_{\infty, 1}} \dt \right\} \\
& \lesssim \left( \| \delta \rho_0 \|_Y + \| \rho_1 \|_{L^\infty_T(X)} \int_0^T \| \delta \rho \|_Y \dt \right) \exp \left\{ C \int_0^T \| \nabla u_2 \|_{L^\infty \cap B^{s-2}_{\infty, 1}} \dt \right\}.
\end{split}
\end{equation*}
If $\delta \rho_0 = 0$, then the identity of both solutions $\delta \rho = 0$ follows from Grönwall's lemma.

\medskip

\textbf{STEP 5: proof of the continuation criterion.} By standard arguments, it is enough to show that the solution $\rho$ remains bounded in $X$ as long as $V(t)$ remains integrable. Recall from inequality \eqref{eq:intIneq} that the norm  $E(t) = \| \rho(t) \|_X$ of the solution fulfills the integral inequality
\begin{equation*}
\forall T > 0, \qquad E(T) \leq E(0) + C_0 \int_0^T V(t) E(t) \dt.
\end{equation*}
A direct application of Grönwall's lemma gives the desired boundedness.

\medskip

\textbf{STEP 6: time continuity of the solution.} We set $T := T_0$ for the sake of simplicity. So far, we have constructed a solution in the space $L^\infty_{T}(X)$, but the claim that it is continuous in time with respect to the $X = B^s_{\infty, 1} \cap L^p$ topology remains unproved. The arguments are fairly standard, but will be refered to later on, so we give a concise proof. We start by studying continuity in $L^p$, which rests on the following Lemma (the Friedrichs commutator lemma).

\begin{lemma}[see also Lemma II.1 in \cite{DpL}]\label{l:DPLcommutator}
Consider $T > 0$, a divergence free vector field $u \in L^1_T(W^{1, \infty})$ and a function $\rho \in L^\infty_T(L^p)$ for some $p \in [1, + \infty[$. Then, if $(K_\epsilon)_{\epsilon > 0}$ is a mollification sequence with $K_\epsilon (y) = \epsilon^{-d} K(\epsilon^{-1} y)$, we have
\begin{equation*}
\big[ u \cdot \nabla, K_\epsilon * \big] \rho \tend_{\epsilon \rightarrow 0^+} 0 \qquad \text{in } L^1_T(L^p).
\end{equation*}
In the above, $K_\epsilon *$ denotes the convolution operator $f \mapsto K_\epsilon * f$.
\end{lemma}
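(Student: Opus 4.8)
The plan is to follow the classical argument of DiPerna and Lions, which is particularly comfortable here because $u$ is both bounded and divergence free. Write $r_\epsilon := [u \cdot \nabla, K_\epsilon *]\rho$. Using $\div u = 0$ to put both terms of the commutator in divergence form, $u \cdot \nabla \rho = \div(\rho u)$ and $u \cdot \nabla(K_\epsilon * \rho) = \div\big((K_\epsilon * \rho)u\big)$, and recalling $K_\epsilon * \div(\rho u) = \nabla K_\epsilon * (\rho u)$, a short computation gives, for almost every $t$,
\begin{equation*}
\begin{split}
r_\epsilon(t,x) &= \int_{\R^d} \nabla K_\epsilon(x-y) \cdot \big( u(t,x) - u(t,y) \big) \rho(t,y) \, {\rm d}y \\
&= \int_{\R^d} \nabla K(z) \cdot \frac{u(t,x) - u(t, x - \epsilon z)}{\epsilon} \, \rho(t, x - \epsilon z) \, {\rm d}z,
\end{split}
\end{equation*}
the second line by the change of variables $y = x - \epsilon z$. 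Since $u(t,\cdot)$ is Lipschitz, $|u(t,x) - u(t, x - \epsilon z)| \leq \epsilon \| \nabla u(t) \|_{L^\infty} |z|$, so this representation immediately yields the uniform bound
\begin{equation*}
\| r_\epsilon(t) \|_{L^p} \lesssim \| \nabla u(t) \|_{L^\infty} \| \rho(t) \|_{L^p} \int_{\R^d} |z| \, |\nabla K(z)| \, {\rm d}z ,
\end{equation*}
whose right-hand side lies in $L^1([0,T[)$ because $u \in L^1_T(W^{1, \infty})$ and $\rho \in L^\infty_T(L^p)$.

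By dominated convergence, it therefore suffices to prove that for almost every fixed $t$ one has $r_\epsilon(t) \to 0$ in $L^p(\R^d)$ as $\epsilon \to 0^+$. Fix such a $t$ and drop it from the notation. The argument is by density: given $\eta > 0$, choose $\rho_1 \in C^\infty_c(\R^d)$ with $\| \rho - \rho_1 \|_{L^p} < \eta$ and split $r_\epsilon[\rho] = r_\epsilon[\rho_1] + r_\epsilon[\rho - \rho_1]$. The uniform bound above, applied to $\rho - \rho_1$, controls the second term by $C \| \nabla u \|_{L^\infty} \eta$ independently of $\epsilon$. For the smooth term, $K_\epsilon * \nabla \rho_1 \to \nabla \rho_1$ in $L^p$, so, as $u \in L^\infty$, $u \cdot \nabla(K_\epsilon * \rho_1) \to u \cdot \nabla \rho_1$ in $L^p$; moreover $\rho_1 u \in W^{1,p}$ has compact support, hence $K_\epsilon * (\rho_1 u) \to \rho_1 u$ in $W^{1,p}$ and therefore $K_\epsilon * (u \cdot \nabla \rho_1) = \div\big( K_\epsilon * (\rho_1 u) \big) \to \div(\rho_1 u) = u \cdot \nabla \rho_1$ in $L^p$. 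Consequently $r_\epsilon[\rho_1] \to 0$ in $L^p$, whence $\limsup_{\epsilon \to 0^+} \| r_\epsilon[\rho] \|_{L^p} \leq C \| \nabla u \|_{L^\infty} \eta$; letting $\eta \to 0$ gives $r_\epsilon(t) \to 0$ in $L^p$. Combined with the $L^1_T$ domination of the first step, this yields $r_\epsilon \to 0$ in $L^1_T(L^p)$, as claimed.

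The proof involves no genuine difficulty in the present setting. The two points deserving attention are the rigorous meaning of the commutator when $\rho$ is merely $L^p$ — everything is understood through the divergence form, which is licit since $\div u = 0$ and $\rho u \in L^\infty_T(L^p)$ — and the fact that a uniform $L^p$ bound together with convergence to zero in the sense of distributions would \emph{not} suffice: the truncation onto a dense class of smooth functions is genuinely needed in order to upgrade to strong $L^p$ convergence. (If $u$ were only assumed to have a bounded gradient, the same argument would give convergence in $L^1_T(L^p_{\rm loc})$ after localising the test functions, but this refinement is not needed here.)
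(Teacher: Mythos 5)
Your proof is correct and follows essentially the same route as the paper: the integral representation of the commutator via the divergence form, the uniform $L^p$ bound from the Lipschitz regularity of $u$, and a density argument on $\rho$ to upgrade to strong convergence. The only cosmetic difference is that you fix $t$ and invoke dominated convergence in time (also spelling out why the commutator vanishes on smooth compactly supported data), whereas the paper approximates $\rho$ directly in $L^\infty_T(W^{1,p})$; both are equally valid.
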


\begin{proof}
We start by showing that the commutator remains bounded in $L^p$ as $\epsilon \rightarrow 0^+$. By integration by parts, we see that
\begin{equation*}
\big[ u \cdot \nabla, K_\epsilon * \big] \rho (x) = \int \rho(y) \Big( u(x) - u(y) \Big) \cdot \nabla K_\epsilon(x-y) {\rm d}y.
\end{equation*}
By using the mean value theorem, we know the difference of velocities is bounded by $|x - y| \| \nabla u \|_{L^\infty}$. Therefore, the Hausdorff-Young convolution inequality gives the upper bound
\begin{equation*}
\begin{split}
\left\| \big[ u \cdot \nabla, K_\epsilon * \big] \rho \right\|_{L^p} & \lesssim \| \nabla u \|_{L^\infty} \| \rho \|_{L^p} \int |y| |\nabla K_\epsilon (y)| {\rm d} y \\
& \lesssim \| \nabla u \|_{L^\infty} \| \rho \|_{L^p} \int |y| |\nabla K(y)| {\rm d}y.
\end{split}
\end{equation*}
This shows that the commutator remains bounded in $L^1_T(L^p)$ as $\rightarrow 0^+$. On the other hand, we may note that it must converge to zero in $L^1_T(L^p)$ if the density is regular enough, say, $\rho \in L^\infty_T(W^{1, p})$. Therefore, we fix a sequence $(\rho_n)_n$ of $L^\infty_T(W^{1, p})$ functions such that $\rho_n \tend \rho$ in the space $L^\infty_T(L^p)$ (this is possible because we have assumed $p < +\infty$). Therefore, our commutator is bounded by
\begin{equation*}
\left\| \big[ u \cdot \nabla, K_\epsilon * \big] \rho \right\|_{L^p} \lesssim \| \nabla u \|_{L^\infty} \| \rho_n - \rho \|_{L^p} + \left\| \big[ u \cdot \nabla, K_\epsilon * \big] \rho_n \right\|_{L^p},
\end{equation*}
and so
\begin{equation*}
\limss_{\epsilon \rightarrow 0^+} \left\| \big[ u \cdot \nabla, K_\epsilon * \big] \rho \right\|_{L^p} \lesssim \| \nabla u \|_{L^\infty} \| \rho_n - \rho \|_{L^p} \tend_{n \rightarrow + \infty} 0.
\end{equation*}
This concludes the proof of the Lemma.
\end{proof}

Let us return to the equation. By taking a mollification sequence $(K_\epsilon)_{\epsilon > 0}$ as in Lemma \ref{l:DPLcommutator} above, we see that the function $\rho_\epsilon := K_\epsilon * \rho$ is a (weak) solution of the transport equation
\begin{equation*}
\begin{cases}
\partial_t \rho_\epsilon + \D (u \rho_\epsilon) = \big[ u \cdot \nabla, K_\epsilon * \big] \rho \\
\rho_\epsilon(0) = K_\epsilon * \rho_0.
\end{cases}
\end{equation*}
Because the function $\rho_\epsilon$ lies in $L^\infty_T(W^{1, p})$ for each fixed value of $\epsilon$, we deduce from the previous equation that $\rho_\epsilon \in C^0_T(L^p)$. In addition, thanks to Lemma \ref{l:DPLcommutator}, the sequence $(\rho_\epsilon)$ is Cauchy in that space, and so $\rho \in C^0_T(L^p)$.

\medskip

We now have to show that the solution is continuous in the $B^s_{\infty, 1}$ topology. Here, we follow the ideas in \cite{BCD}, see in particular the proof of Theorem 3.19 and pp. 138--139 therein. 

From Lemma \ref{l:prodLim}, we deduce that the product $\rho u$ lies in the space $L^\infty_T(B^{-1-\alpha}_{\infty})$. Therefore $\partial_t \rho \in L^\infty_T (B^{-1 - \alpha}_{\infty, 1})$, and in particular all the Littlewood-Paley blocks of $\rho$ are continuous with respect to time: $\Delta_j \rho \in C^0_T(L^\infty)$, and we have $S_n \rho \in C^0_T(B^s_{\infty, 1})$. To conclude, we must show that the sequence $(S_n \rho)_n$ is in fact Cauchy in the space $C^0_T(B^s_{\infty, 1})$. We have
\begin{equation*}
\|\rho - S_n \rho \|_{B^s_{\infty, 1}} \lesssim \sum_{j \geq n-1} 2^{js} \| \Delta_j \rho \|_{L^\infty}.
\end{equation*}
By using inequality \eqref{eq:LPblockEstimate} from the proof of Proposition \ref{p:aPrioriWP}, we may bound the sum above: there is a family $\big( c_j(t) \big)_{j \geq -1}$ of sequences lying in the unit ball of $\ell^1(j \geq -1)$ such that
\begin{equation*}
\|\rho - S_n \rho \|_{L^\infty_T(B^s_{\infty, 1})} \lesssim \sum_{j \geq n-1} 2^{js} \| \Delta_j \rho_0 \|_{L^\infty} + \int_0^T \| \rho \|_X^2 \sum_{j \geq n-1} c_j(t) \dt.
\end{equation*}
We infer from the dominated convergence theorem that the righthand side of the above inequality converges to zero as $n \rightarrow + \infty$, and so the sequence $(S_n \rho)_n$ is indeed Cauchy in the space $C^0_T(B^s_{\infty, 1})$. This shows that $\rho \in C^0_T(X)$, and concludes the proof of Theorem \ref{t:localWP}.

\end{proof}

\section{A Directional Continuation Criterion}

\begin{thm}\label{t:ContCrit}
Assume that $\alpha < 1$. Under the assumptions and notations of Theorem \ref{t:localWP}, the unique solution $\rho \in C^0([0, T[; B^{1 - \alpha}_{\infty, 1} \cap L^p)$ may be continued beyond time $T$ provided that
\begin{equation}\label{eq:thContcritPartialD}
\int_0^T \big\| \partial_d \rho \big\|_{B^{- \alpha}_{\infty, 1}} \dt < + \infty
\end{equation}
\end{thm}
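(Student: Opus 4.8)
The strategy is to re-run the \textsl{a priori} estimate of Proposition \ref{p:aPrioriWP} in the space $B^{1-\alpha}_{\infty,1}\cap L^p$, but to isolate a Gr\"onwall inequality whose integrating factor is $\|\partial_d\rho\|_{B^{-\alpha}_{\infty,1}}$ rather than $\|\nabla u\|_{L^\infty}$. First, by the continuation criterion of Theorem \ref{t:localWP}, it suffices to prove that $t\mapsto\|\rho(t)\|_{B^{1-\alpha}_{\infty,1}}$ stays bounded on $[0,T[$ whenever \eqref{eq:thContcritPartialD} holds: indeed the $L^p$ norm is conserved along the incompressible flow, so $\|\rho(t)\|_{L^p}=\|\rho_0\|_{L^p}$, and Proposition \ref{p:LPoperatorBound} then gives $\|\nabla u(t)\|_{L^\infty}\lesssim\|u(t)\|_{B^1_{\infty,1}}\lesssim\|\rho(t)\|_{B^{1-\alpha}_{\infty,1}}+\|\rho_0\|_{L^p}$ (and, in the endpoint $\alpha=0$ where $s=1$, also $\|\nabla\rho\|_{L^\infty}\lesssim\|\rho\|_{B^1_{\infty,1}}$), so that $V(t)$ is bounded, hence integrable on the finite interval $[0,T[$.

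The heart of the matter is the structure recalled in the Introduction: using $\P={\rm Id}+\nabla(-\Delta)^{-1}\D$ and $g=e_d$ one has $u=\big[(-\Delta)^{-\alpha/2}\rho\big]e_d+w$ with $w:=\nabla(-\Delta)^{-\alpha/2}(-\Delta)^{-1}\partial_d\rho$, whence
\begin{equation*}
u\cdot\nabla\rho=\big[(-\Delta)^{-\alpha/2}\rho\big]\,\partial_d\rho+w\cdot\nabla\rho .
\end{equation*}
Here $(-\Delta)^{-\alpha/2}\rho$ is the image of $\rho$, and $w$ the image of $\partial_d\rho$ (equivalently of $\rho$), by Fourier multipliers whose orders make the argument of Proposition \ref{p:LPoperatorBound} applicable; using $\alpha<1$, hence $p<d/\alpha$, to control the low-frequency block by the $L^p$ norm, one gets $\|(-\Delta)^{-\alpha/2}\rho\|_{B^1_{\infty,1}}\lesssim\|\rho\|_{B^{1-\alpha}_{\infty,1}}+\|\rho_0\|_{L^p}$ and $\|w\|_{B^1_{\infty,1}}\lesssim\|\partial_d\rho\|_{B^{-\alpha}_{\infty,1}}+\|\rho_0\|_{L^p}$; in particular both factors are Lipschitz. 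Next I would localise: applying $\Delta_j$ to $\partial_t\rho+u\cdot\nabla\rho=0$ and keeping the divergence-free operator $u\cdot\nabla$ on the left, the source term splits (since $\partial_d$ commutes with $\Delta_j$) as
\begin{equation*}
[u\cdot\nabla,\Delta_j]\rho=[w\cdot\nabla,\Delta_j]\rho+\big[(-\Delta)^{-\alpha/2}\rho,\Delta_j\big]\partial_d\rho ,
\end{equation*}
the last bracket being the commutator of $\Delta_j$ with multiplication by $(-\Delta)^{-\alpha/2}\rho$.

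The first bracket is controlled by the classical commutator estimate of Lemma \ref{l:CommBCD}: $\sum_j 2^{j(1-\alpha)}\|[w\cdot\nabla,\Delta_j]\rho\|_{L^\infty}\lesssim\|\nabla w\|_{L^\infty}\|\rho\|_{B^{1-\alpha}_{\infty,1}}\lesssim\big(\|\partial_d\rho\|_{B^{-\alpha}_{\infty,1}}+\|\rho_0\|_{L^p}\big)\|\rho\|_{B^{1-\alpha}_{\infty,1}}$, linear in $\|\rho\|_{B^{1-\alpha}_{\infty,1}}$ with an integrable prefactor. For the second bracket one uses that the commutator of a Littlewood--Paley block with multiplication by a $B^1_{\infty,1}$ function gains one derivative when acting on $B^{-\alpha}_{\infty,1}$ (licit because $-\alpha>-1$), which a Bony-decomposition computation converts into $\sum_j 2^{j(1-\alpha)}\big\|[(-\Delta)^{-\alpha/2}\rho,\Delta_j]\partial_d\rho\big\|_{L^\infty}\lesssim\|(-\Delta)^{-\alpha/2}\rho\|_{B^1_{\infty,1}}\|\partial_d\rho\|_{B^{-\alpha}_{\infty,1}}\lesssim\big(\|\rho\|_{B^{1-\alpha}_{\infty,1}}+\|\rho_0\|_{L^p}\big)\|\partial_d\rho\|_{B^{-\alpha}_{\infty,1}}$. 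Performing an $L^\infty$ energy estimate on each dyadic block along the flow of $u$ and summing over $j$ then yields
\begin{equation*}
\|\rho(t)\|_{B^{1-\alpha}_{\infty,1}}\le\|\rho_0\|_{B^{1-\alpha}_{\infty,1}}+C\!\int_0^t\!\Big(\|\partial_d\rho(\tau)\|_{B^{-\alpha}_{\infty,1}}+\|\rho_0\|_{L^p}\Big)\Big(\|\rho(\tau)\|_{B^{1-\alpha}_{\infty,1}}+\|\rho_0\|_{L^p}\Big)d\tau ,
\end{equation*}
and Gr\"onwall's lemma applied to $\|\rho(\cdot)\|_{B^{1-\alpha}_{\infty,1}}+\|\rho_0\|_{L^p}$ shows this quantity stays bounded on $[0,T[$ under \eqref{eq:thContcritPartialD}; by the first paragraph the solution then continues past $T$. (The converse implication, namely that continuation forces \eqref{eq:thContcritPartialD}, is immediate since $\|\partial_d\rho\|_{B^{-\alpha}_{\infty,1}}\lesssim\|\rho\|_{B^{1-\alpha}_{\infty,1}}$.)

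I expect the decisive step to be the estimate of the second bracket. In the generic transport estimate the factor $(-\Delta)^{-\alpha/2}\rho$ sits at the Lipschitz level, but its norm is comparable to $\|\rho\|_{B^{1-\alpha}_{\infty,1}}$, so a crude treatment would leave a quadratic term $\|\rho\|_{B^{1-\alpha}_{\infty,1}}^2$ and merely reprove local existence; the gain is that this large factor enters the commutator paired with $\partial_d\rho$, measured in the weaker norm $B^{-\alpha}_{\infty,1}$ that occurs in \eqref{eq:thContcritPartialD}, so that it appears only linearly. Making this rigorous requires the precise commutator-with-multiplication bound in negative-regularity Besov spaces together with careful bookkeeping of the dyadic sums, and it is there — as well as in controlling the low frequencies of $(-\Delta)^{-\alpha/2}\rho$ and $w$ by the conserved quantity $\|\rho_0\|_{L^p}$ — that the hypothesis $\alpha<1$ is essential.
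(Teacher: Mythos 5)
Your proposal is correct and follows essentially the same route as the paper: exploit the stratification structure $u\cdot\nabla\rho=\big[(-\Delta)^{-\alpha/2}\rho\big]\partial_d\rho+w\cdot\nabla\rho$, estimate the dyadic blocks of the transport equation so that every commutator term carries a factor $\|\partial_d\rho\|_{B^{-\alpha}_{\infty,1}}$ (with low frequencies of the nonlocal operators absorbed by the conserved $L^p$ norm), and close with Gr\"onwall plus the continuation criterion of Theorem \ref{t:localWP}. The only difference is bookkeeping: you split the velocity before commuting with $\Delta_j$ and invoke two ready-made commutator bounds, whereas the paper performs the Bony decomposition of $[u\cdot\nabla,\Delta_j]\rho$ directly and tracks the $\partial_d\rho$ factor through each of its five terms --- unpacking your asserted commutator-with-multiplication estimate on $B^{-\alpha}_{\infty,1}$ amounts to exactly that computation.
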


\begin{proof}
As we have explained in the introduction above, the proof of this continuation criterion is based on the precise form of the velocity field due to gravity stratification. By writing the Leray projection operator explicitly, the fractional Stokes equation reads
\begin{equation*}
u = (- \Delta)^{- \alpha/2} \Big( \rho g + \nabla (- \Delta)^{-1} \partial_d \rho \Big),
\end{equation*}
so that the product $u \cdot \nabla$ involves two different terms which factor linear expressions of $\partial_d \rho$. More precisely, we have
\begin{equation}\label{eq:specialStructure}
u \cdot \nabla \rho = (- \Delta)^{- \alpha / 2} \rho . \, \partial_d \rho + \nabla \rho \cdot \nabla (- \Delta)^{- 1 - \alpha / 2} \partial_d \rho.
\end{equation}
In order to prove the continuation criterion, we will need to revisit the usual theory of transport equations in Besov spaces and highlight, at each step, how the special structure \eqref{eq:specialStructure} will affect the estimates. Our goal, as in STEP 5 of the proof of Theorem \ref{t:localWP}, will be to show that the local solution $\rho$ of Theorem \ref{t:localWP} remains bounded in $X = B^{1 - \alpha}_{\infty, 1} \cap L^p$ on every time interval $[0, T]$ on which condition \eqref{eq:thContcritPartialD} holds. The continuation of the solution will then follow by standard arguments.

\medskip

First of all, because the velocity field $u$ associated to the solution is Lipschitz $u \in L^1_T(B^{1}_{\infty, 1}) \subset L^1_T (W^{1, \infty})$, the $L^p$ norm of the solution is transported by the flow:
\begin{equation*}
\forall t, \qquad \| \rho(t) \|_{L^p} = \| \rho_0 \|_{L^p}.
\end{equation*}
We therefore may focus on showing that the solution remains bounded in $B^{1 - \alpha}_{\infty, 1}$ under condition \eqref{eq:thContcritPartialD}. We therefore study the Littlewood-Paley blocks $\Delta_j \rho$ for $j \geq -1$, which solve the equation
\begin{equation}\label{eq:contCritTransport}
\big( \partial_t + u \cdot \nabla \big) \Delta_j \rho = \big[ u \cdot \nabla, \Delta_j \big] \rho.
\end{equation}
Usually, when dealing with transport equations, we use the standard estimates of Lemma \ref{l:CommBCD} to handle the commutator. However, in order to fully take advantage of the structure of the product $u \cdot \nabla \rho$, we must go through its proof in detail and isolate the contribution of the $d$-th derivative $\partial_d \rho$. Let us write the Bony decomposition for the products involved in the commutator. We have:
\begin{equation*}
\big[ u \cdot \nabla, \Delta_j \big] \rho = \big[ \mc T_{u_k}, \Delta_j \big] \partial_k \rho + \mc T_{\Delta_j \partial_k \rho} (u_k) + \mc R (u_k, \Delta_j \partial_k \rho) - \Delta_j \mc T_{\partial_k \rho}(u_k) - \Delta_j \mc R (\partial_k \rho, u_k).
\end{equation*}
We will deal with each of those terms in the order in which they appeared in the previous line.

\medskip

\textbf{First term:} $\big[ \mc T_{u_k}, \Delta_j \big] \partial_k \rho$. We start by writing the explicit meaning of the paraproduct in the commutator:
\begin{equation*}
\big[ \mc T_{u_k}, \Delta_j \big] \partial_k \rho = \sum_{|j-m| \leq 4} \Big\{ \big[ S_{m-1} (- \Delta)^{- \alpha / 2} \rho, \Delta_j \big] \Delta_m \partial_d \rho + \big[ S_{m-1} \nabla (- \Delta)^{- 1 - \alpha / 2} \partial_d \rho, \Delta_j \big] \Delta_m \nabla \rho \Big\}.
\end{equation*}
The sum in the equation above is supported on the indices $m$ which are not too far from $j$, namely $|j-m| \leq 4$, as can be seen from writing explicitly the commutators in the sum. For example, 
\begin{equation*}
\big[ S_{m-1} (- \Delta)^{- \alpha / 2} \rho, \Delta_j \big] \Delta_m \partial_d \rho = S_{m-1} (- \Delta)^{- \alpha/2} \rho . \Delta_j \Delta_m \partial_d \rho - \Delta_j \big( S_{m-1} (- \Delta)^{- \alpha/2} \rho . \Delta_m \partial_d \rho \big).
\end{equation*}
In the above, the operator $\Delta_j \Delta_m$ is zero if $|j - m| > 4$, and the product $S_{m-1} (- \Delta)^{- \alpha/2} \rho . \Delta_m \partial_d \rho$ is spectrally supported in an annulus of size roughly $2^m$, so that it is canceled by $\Delta_j$ when $|j - m| > 4$. Now, to estimate both commutators above, we apply Lemma \ref{l:basicComm}. On the one hand, we get
\begin{equation*}
\left\| \big[ S_{m-1} (- \Delta)^{- \alpha / 2} \rho, \Delta_j \big] \Delta_m \partial_d \rho \right\|_{L^\infty} \lesssim 2^{-j} \| \nabla S_{m-1} (- \Delta)^{- \alpha/2} \rho \|_{L^\infty} \| \Delta_j \partial_d \rho \|_{L^\infty}.
\end{equation*}
By assumption, we have $\alpha < 1$, so that the operator $\nabla ( - \Delta)^{- \alpha / 2}$ is a Fourier multiplier whose symbol is a homogeneous function of degree $1 - \alpha > 0$. An application of Proposition \ref{p:LPoperatorBound} therefore shows that the operator 
\begin{equation*}
    \nabla (- \Delta)^{- \alpha/2} \P g : B^{1 - \alpha}_{\infty, 1} \tend B^0_{\infty, 1} \subset L^\infty
\end{equation*}
is bounded, so that the previous inequality becomes, for $|j-m| \leq 4$,
\begin{equation}\label{eq:FstTerm1}
\begin{split}
\left\| \big[ S_{m-1} (- \Delta)^{- \alpha / 2} \rho, \Delta_j \big] \Delta_m \partial_d \rho \right\|_{L^\infty} & \lesssim 2^{-j} \|  \rho \|_{B^{1 - \alpha}_{\infty, 1}} \| \Delta_j \partial_d \rho \|_{L^\infty} \\
& \lesssim c_j(t) 2^{-j(1 - \alpha)} \|  \rho \|_{B^{1 - \alpha}_{\infty, 1}} \| \partial_d \rho \|_{B^{- \alpha}_{\infty, 1}},
\end{split}
\end{equation}
where, as usual, the sequences $\big( c_j(t) \big)_{j \geq -1}$ have unit norm in $\ell^1(j \geq -1)$. Similarly, we have, for $|j-m| \leq 4$,
\begin{equation}\label{eq:FstTerm2}
\begin{split}
\left\| \big[ S_{m-1} \nabla (- \Delta)^{- 1 - \alpha / 2} \partial_d \rho, \Delta_j \big] \Delta_m \nabla \rho \right\|_{L^\infty} & \lesssim 2^{-j} \big\| S_{m-1} \nabla^2 (- \Delta)^{-1 - \alpha /2} \partial_d \rho \big\|_{L^\infty} \| \Delta_m \nabla \rho \|_{L^\infty} \\
& \lesssim c_j(t) 2^{-j(1 - \alpha)} \| \rho \|_{B^{1 - \alpha}_{\infty, 1}} \big\| \nabla^2 (- \Delta)^{-1 - \alpha /2} \partial_d \rho \big\|_{L^\infty}.
\end{split}
\end{equation}
In the previous upper bound, the operator $\nabla^2 (- \Delta)^{-1 - \alpha/2} \partial_d$ is of order $1 - \alpha > 0$. However, we wish to have an estimate involving $\partial_d \rho$, so we must set aside the last derivative $\partial_d$ and work with the order $- \alpha$ operator $\nabla^2 (- \Delta)^{- 1 - \alpha / 2}$, which requires integrability to be defined, by Proposition \ref{p:StokesExUn}. We have
\begin{equation*}
\begin{split}
\big\| \nabla^2 (- \Delta)^{-1 - \alpha /2} \partial_d \rho \big\|_{L^\infty} & \lesssim \big\| \Delta_{-1} \nabla^2 (- \Delta)^{-1 - \alpha /2} \partial_d \rho \big\|_{L^\infty} + \big\| ({\rm Id} - \Delta_{-1}) \nabla^2 (- \Delta)^{-1 - \alpha /2} \partial_d \rho \big\|_{L^\infty}\\
& \lesssim \| \Delta_{-1} \partial_d \rho \|_{L^p} + \| \partial_d \rho \|_{B^{- \alpha}_{\infty, 1}} \\
& \lesssim \| \rho \|_{L^p} + \| \partial_d \rho \|_{B^{- \alpha}_{\infty, 1}}.
\end{split}
\end{equation*}
By putting together \eqref{eq:FstTerm1} and \eqref{eq:FstTerm2}, we obtain a bound for the whole commutator,
\begin{equation}\label{eq:1stTerm}
\left\| \big[ \mc T_{u_k}, \Delta_j \big] \partial_k \rho \right\|_{L^\infty} \lesssim c_j(t) 2^{-j(1 - \alpha)} \| \rho \|_{B^{1 - \alpha}_{\infty, 1}} \Big( \| \rho_0 \|_{L^p} + \| \partial_d \rho \|_{B^{- \alpha}_{\infty, 1}} \Big).
\end{equation}

\medskip

\textbf{Second Term:} $\mc T_{\Delta_j \partial_k \rho} (u_k)$. Let us write the paraproduct explicitly in order to clarify the meaning of this term:
\begin{equation*}
\mc T_{\Delta_j \partial_k \rho} (u_k) = \sum_{m \geq -1} \Big\{ S_{m-1} \Delta_j \partial_d \rho . \Delta_m (- \Delta)^{- \alpha/2} \rho + S_{m-1} \Delta_j \nabla \rho \cdot \Delta_m \nabla (- \Delta)^{-1 - \alpha/2} \partial_d \rho \Big\}.
\end{equation*}
First of all, note that the operator $S_{m-1} \Delta_j$ is nonzero only when $m \geq j$. In addition, because $S_{-1} = 0$, none of the terms with the $\Delta_m$ blocks involve low frequencies, and so we will be able to use Lemma \ref{l:Bernstein} on them. By taking advantage of these two remarks, we see that
\begin{equation*}
\begin{split}
\| \mc T_{\Delta_j \partial_k \rho} (u_k) \|_{L^\infty} & \leq \sum_{m \geq j} \Big\{ \big\| S_{m-1} \Delta_j \partial_d \rho \big\|_{L^\infty} \big\| \Delta_m (- \Delta)^{- \alpha/2} \rho \big\|_{L^\infty} \\
& \qquad \qquad \qquad \qquad \qquad + \big\| S_{m-1} \Delta_j \nabla \rho \big\|_{L^\infty} \big\| \Delta_m \nabla (- \Delta)^{-1 - \alpha/2} \partial_d \rho \big\|_{L^\infty} \Big\} \\
& \lesssim \sum_{m \geq j} \Big\{ 2^{-m \alpha} \big\| S_{m-1} \Delta_j \partial_d \rho \big\|_{L^\infty} \big\| \Delta_m \rho \big\|_{L^\infty} \\
& \qquad \qquad \qquad \qquad \qquad + 2^{-m(1+\alpha)} \big\| S_{m-1} \Delta_j \nabla \rho \big\|_{L^\infty} \big\| \Delta_m \partial_d \rho \big\|_{L^\infty} \Big\}.
\end{split}
\end{equation*}
Let us bound first the terms featuring the $\Delta_j$ blocks. Since the operator $S_{m-1} : L^\infty \tend L^\infty$ is bounded, we may ignore it and factor the resulting upper bounds out of the sum:
\begin{equation*}
\begin{split}
\| \mc T_{\Delta_j \partial_k \rho} (u_k) \|_{L^\infty} & \lesssim 2^{j \alpha} c_j(t) \| \partial_d \rho \|_{B^{- \alpha}_{\infty, 1}} \sum_{m \geq j} 2^{-m \alpha} \| \Delta_m \rho \|_{L^\infty} \\
& \qquad \qquad \qquad + 2^{j \alpha} c_j(t) \| \rho \|_{B^{1 - \alpha}_{\infty, 1}} \sum_{m \geq j} 2^{-m(1+\alpha)} \| \Delta_m \partial_d \rho \|_{L^\infty}.
\end{split}
\end{equation*}
For the terms with the $\Delta_m$ blocks, straightforward Bernstein inequalities provide 
\begin{equation}\label{eq:2ndTerm}
\begin{split}
\| \mc T_{\Delta_j \partial_k \rho} (u_k) \|_{L^\infty} & \lesssim 2^{j \alpha} c_j(t) \| \partial_d \rho \|_{B^{- \alpha}_{\infty, 1}} \sum_{m \geq j} 2^{-m} c_m(t) \| \rho \|_{B^{1 - \alpha}_{\infty, 1}} \\
& \qquad \qquad \qquad + 2^{j \alpha} c_j(t) \| \rho \|_{B^{1 - \alpha}_{\infty, 1}} \sum_{m \geq j} 2^{-m} c_m(t) \| \partial_d \rho \|_{B^{- \alpha}_{\infty, 1}}. \\
& \lesssim 2^{-j(1 - \alpha)} c_j(t) \| \partial_d \rho \|_{B^{- \alpha}_{\infty, 1}} \| \rho \|_{B^{1 - \alpha}_{\infty, 1}}.
\end{split}
\end{equation}

\medskip

\textbf{Third term:} $\mc R (u_k, \Delta_j \partial_k \rho)$. As before, let us give an explicit form for this term:
\begin{equation*}
\mc R (u_k, \Delta_j \partial_k \rho) = \sum_{|m-n| \leq 1} \Delta_m u_k . \Delta_n \Delta_j \partial_k \rho.
\end{equation*}
In the above, because $m$ and $n$ are close and $\Delta_n \Delta_j$ is nonzero only if $|n - j| \leq 1$, we see that the sum must be supported on those indices such that $j-2 \leq m, n \leq j+2$. Let $I_j$ be that range of indices. We have:
\begin{multline*}
\big\| \mc R (u_k, \Delta_j \partial_k \rho) \big\|_{L^\infty} \leq \sum_{m, n \in I_j} \Big\{ \big\| \Delta_m (- \Delta)^{- \alpha / 2} \rho \big\|_{L^\infty} \big\| \Delta_n \Delta_j \partial_d \rho \big\|_{L^\infty} \\
+ \big\| \Delta_m (- \Delta)^{-1 - \alpha/2} \nabla \partial_d \rho \big\|_{L^\infty} \big\| \Delta_n \Delta_j \nabla \rho \big\|_{L^\infty} \Big\}.
\end{multline*}
On the one hand, we have to use $L^p$ regularity of the solution to deal with the order negative operators $(- \Delta)^{- \alpha / 2}$ and $\nabla (- \Delta)^{-1 - \alpha/2} \partial_d$. Thanks to Proposition \ref{p:LPoperatorBound}, we have
\begin{equation*}
\begin{split}
& \big\| \Delta_m (- \Delta)^{- \alpha / 2} \rho \big\|_{L^\infty} \lesssim 2^{-m} c_m(t) \Big( \| \rho \|_{L^p} + \| \rho \|_{B^{1 - \alpha}_{\infty, 1}} \Big) \\
& \big\| \Delta_m (- \Delta)^{-1 - \alpha/2} \nabla \partial_d \rho \big\|_{L^\infty} \lesssim 2^{-m} c_m(t) \Big( \| \rho \|_{L^p} + \| \partial_d \rho \|_{B^{- \alpha}_{\infty, 1}} \Big)
\end{split}
\end{equation*}\label{eq:3rdTerm}
which we plug into the estimate for the remainder and obtain
\begin{equation}
\begin{split}
\big\| \mc R (u_k, \Delta_j \partial_k \rho) \big\|_{L^\infty} & \lesssim \sum_{m, n \in I_j} \Big\{ 2^{-m} c_m(t) \Big( \| \rho \|_{L^p} + \| \rho \|_{B^{1 - \alpha}_{\infty, 1}} \Big) 2^{j \alpha} c_j(t) \| \partial_d \rho \|_{B^{- \alpha}_{\infty, 1}} \\
& \qquad \qquad \qquad + 2^{-m} c_m(t) \Big( \| \rho \|_{L^p} + \| \partial_d \rho \|_{B^{- \alpha}_{\infty, 1}} \Big) 2^{j \alpha} c_j(t) \| \rho \|_{B^{1 - \alpha}_{\infty, 1}} \Big\} \\
& \lesssim 2^{-j(1 - \alpha)} c_j(t) \| \rho \|_{L^p \cap B^{1 - \alpha}_{\infty, 1}} \Big( \| \rho \|_{L^p} + \| \partial_d \rho \|_{B^{- \alpha}_{\infty, 1}} \Big).
\end{split}
\end{equation}

\medskip

\textbf{Fourth term:} $\Delta_j \mc T_{\partial_k \rho}(u_k)$. For this term, we may directly use the results of Proposition \ref{p:op}. Let us write the paraproduct as
\begin{equation*}
\mc T_{\partial_k \rho} (u_k) = \mc T_{\partial_d \rho} (- \Delta)^{- \alpha/2} \rho + \mc T_{\partial_k \rho} \partial_k (- \Delta)^{-1 - \alpha / 2} \partial_d \rho.
\end{equation*}
In order to apply Proposition \ref{p:op}, we must note that there are two cases, depending on whether the derivatives $\partial_k \rho \in B^{- \alpha}_{\infty, 1}$ have negative regularity (when $\alpha > 0$) or not (when $\alpha = 0$). In the first case, $\alpha > 0$, the second and third inequality of Proposition \ref{p:op} can be used, whereas in the second case we use the first inequality. In both cases, the embedding $B^{- \alpha}_{\infty, 1} \subset B^{- \alpha}_{\infty, \infty}$ allows us to write:
\begin{equation*}
\| \mc T_{\partial_d \rho} (- \Delta)^{- \alpha/2} \rho \|_{B^{1 - \alpha}_{\infty, 1}} \lesssim \| \partial_d \rho \|_{B^{- \alpha}_{\infty, 1}} \| (- \Delta)^{- \alpha / 2} \rho \|_{B^1_{\infty, 1}} \lesssim \| \partial_d \rho \|_{B^{- \alpha}_{\infty, 1}} \Big( \| \rho \|_{L^p} + \| \rho \|_{B^{1 - \alpha}_{\infty, 1}} \Big)
\end{equation*}
and 
\begin{equation*}
\begin{split}
\| \mc T_{\partial_k \rho} (- \Delta)^{-1 - \alpha / 2} \partial_k \partial_d \rho \|_{B^{- \alpha}_{\infty, 1}} & \lesssim \| \nabla \rho \|_{B^{- \alpha}_{\infty, 1}} \| (- \Delta)^{-1 - \alpha / 2} \partial_k \partial_d \rho \|_{B^1_{\infty, 1}} \\
& \lesssim \| \rho \|_{B^{1 - \alpha}_{\infty, 1}} \Big( \| \rho \|_{L^p} + \| \partial_d \rho \|_{B^{- \alpha}_{\infty, 1}} \Big).
\end{split}
\end{equation*}
In terms of the size of the Littlewood-Paley blocks, this means that we have an upper bound identical to the ones above:
\begin{equation}\label{eq:4thTerm}
\big\| \Delta_j \mc T_{\partial_k \rho} (u_k) \big\|_{L^\infty} \lesssim 2^{-j(1 - \alpha)} c_j(t) \| \rho \|_{L^p \cap B^{1 - \alpha}_{\infty, 1}} \Big( \| \rho \|_{L^p} + \| \partial_d \rho \|_{B^{- \alpha}_{\infty, 1}} \Big).
\end{equation}

\medskip

\textbf{Fifth term:} $\Delta_j \mc R (\partial_k \rho, u_k)$. Just as for the previous term, we may simply use Proposition \ref{p:op} on the remainder
\begin{equation*}
\mc R (\partial_k \rho, u_k) = \mc R \big( \partial_d \rho, (- \Delta)^{- \alpha/2} \rho \big) +  \mc R \big( \partial_k \rho, \partial_k (- \Delta)^{-1 - \alpha/2} \partial_d \rho \big).
\end{equation*}
Because we have assumed that $\alpha < 1$, the remainder is always defined in the $B^{- \alpha}_{\infty, 1} \times B^1_{\infty, 1} \tend B^{1 - \alpha}_{\infty, 1}$ topology. Therefore, Proposition \ref{p:op} gives
\begin{equation*}
\begin{split}
& \big\| \mc R \big( \partial_d \rho, (- \Delta)^{- \alpha/2} \rho \big) \big\|_{L^\infty} \lesssim \| \partial_d \rho \|_{B^{- \alpha}_{\infty, 1}} \Big( \| \rho \|_{L^p} + \| \rho \|_{B^{1 - \alpha}_{\infty, 1}} \Big) \\
& \big\| \mc R \big( \partial_k \rho, \partial_k (- \Delta)^{-1 - \alpha/2} \partial_d \rho \big) \big\|_{B^{1 - \alpha}_{\infty, 1}} \lesssim \| \rho \|_{B^{1 - \alpha}_{\infty, 1}} \Big( \| \rho \|_{L^p} + \| \partial_d \rho \|_{B^{- \alpha}_{\infty, 1}} \Big).
\end{split}
\end{equation*}
In terms of the Littlewood-Paley blocks, this gives, just as before,
\begin{equation}\label{eq:5thTerm}
\big\| \Delta_j \mc R (\partial_k \rho, u_k) \big\|_{L^\infty} \lesssim 2^{-j(1 - \alpha)} c_j(t) \| \rho \|_{L^p \cap B^{1 - \alpha}_{\infty, 1}} \Big( \| \rho \|_{L^p} + \| \partial_d \rho \|_{B^{- \alpha}_{\infty, 1}} \Big).
\end{equation}

\medskip

\textbf{Conclusion:} With inequalities \eqref{eq:1stTerm}, \eqref{eq:2ndTerm}, \eqref{eq:3rdTerm}, \eqref{eq:4thTerm} and \eqref{eq:5thTerm} put together, we may finally use $L^\infty$ estimates on the transport equation \eqref{eq:contCritTransport} so as to obtain, for $j \geq -1$ and $T > 0$,
\begin{equation*}
\begin{split}
2^{j(1 - \alpha)} \| \Delta_j \rho \|_{L^\infty_T(L^\infty)} & \lesssim 2^{j(1 - \alpha)} \| \Delta_j \rho_0 \|_{L^\infty} + \int_0^T \big\| \big[ u \cdot \nabla, \Delta_j \big] \rho \big\|_{L^\infty} \dt \\
& \lesssim 2^{j(1 - \alpha)} \| \Delta_j \rho_0 \|_{L^\infty} + \int_0^T c_j(t) \| \rho \|_{L^p \cap B^{1 - \alpha}_{\infty, 1}} \Big( \| \rho \|_{L^p} + \| \partial_d \rho \|_{B^{- \alpha}_{\infty, 1}} \Big) \dt,
\end{split}
\end{equation*}
and after summation on $j \geq -1$,
\begin{equation*}
\| \rho \|_{L^\infty_T(L^p \cap B^{1 - \alpha}_{\infty, 1})} \lesssim \| \rho_0 \|_{L^p \cap B^{1 - \alpha}_{\infty, 1}} + \int_0^T \| \rho \|_{L^p \cap B^{1 - \alpha}_{\infty, 1}} \Big( \| \rho \|_{L^p} + \| \partial_d \rho \|_{B^{- \alpha}_{\infty, 1}} \Big) \dt.
\end{equation*}
By remembering that because the velocity field is Lipschitz, the $L^p$ norms are conserved $\| \rho(t) \|_{L^p} = \| \rho_0 \|_{L^p}$, we may use Grönwall's lemma on the previous integral inequality to show that $\rho$ remains indeed in the space $L^\infty_T(L^p \cap B^{1 - \alpha}_{\infty, 1})$ as long as condition \eqref{eq:thContcritPartialD} holds.
\end{proof}

\section{Global Existence and Uniqueness: $1 \leq \alpha \leq d$}

In this Section, we focus on finding \textsl{global} well-posedness of problem \ref{ieq:AIPM} when $\alpha \geq 1$. We had already obtained such a result when $\alpha > 1$ under a mild (Hölder) regularity assumption in Corollary \ref{c:globalFromLocal}. However, this leaves open the limit cases $\alpha = 1$ and $\alpha = d$, in addition to letting us wonder if it is possible to get rid of the additional regularity requirement.

To answer these questions, we will use two different setting: that of Lebesgue spaces when $1 < \alpha < d$ and that of Besov spaces when $\alpha \in \{ 1, d\}$.

\medskip

The difference between these two settings is lies in the spaces in which it is possible to prove stability estimates, and hence obtain uniqueness of a solution. If $\rho_1$ and $\rho_2$ are two solutions, the difference $\delta \rho = \rho_2 - \rho_1$ solves
\begin{equation}\label{eq:GSintro}
\big( \partial_t + u_1 \cdot \nabla \big) \delta \rho = - \D ( \rho_2 \delta u).
\end{equation}
Now, for the sake of the argument, assume that the solutions are simply bounded $\rho_1, \rho_2 \in C^0(L^\infty)$. Then, we are faced with two different problems. Firstly, the velocity field $u_1$ is not necessarily Lipschitz, but $\log$-Lipschitz, and this means we have to deal with possible loss of regularity in the estimates for the transport equations. Secondly, the righthand side of \eqref{eq:GSintro} will have, at best, the regularity of $\nabla \rho$, that is a $s = -1$ order of regularity. This is no problem when $\alpha > 1$, as the gain of regularity of order $\alpha$ allows for $B^{-1 - \epsilon}_{q, \infty} \subset B^{- \alpha}_{q, \infty}$ estimates in the transport equation, thus giving order $s = -1$ bounds on $\delta \rho$, but it is a major obstacle when $\alpha = 1$. 

Our way around this will be to treat the case $\alpha = 1$ differently by constructing solutions in the endpoint Besov-Lipschitz space $B^0_{\infty, 1}$, which is slightly smaller than $L^\infty$. In doing so, we will use the method of Hmidi and Keraani \cite{HK} based on linear estimates for the transport equation. Incidentally, we will also take advantage of estimates of this type to construct solutions that also lie in the homogeneous Besov space $\dot{B}^0_{\frac{d}{p}, 1}$, thus covering the special case $d = \alpha$.

\subsection{Well-Posedness in Critical Lebesgue Spaces}

Firstly, we consider the case where $1 < \alpha < d$, where we will be able to prove global well-posedness in \textsl{critical} Lebesgue spaces $L^{d/(\alpha - 1)}$.

\begin{thm}\label{t:globalStrongLebesgue}
Consider $d \geq 2$ and $\alpha \in ]1, d[$. Let $p \in [1, d/\alpha [$ and set $q = d/(\alpha - 1)$. Then, for any initial datum $\rho_0 \in L^p \cap L^q$, problem \eqref{ieq:AIPM} has a unique solution $\rho \in L^\infty(L^p \cap L^q)$.
\end{thm}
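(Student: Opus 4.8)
The plan is to get existence essentially for free from the weak solutions of Section~\ref{s:globalWeak}, and to concentrate the real work on uniqueness, where the critical choice $q=d/(\alpha-1)$ forces transport estimates with loss of regularity. For existence, if $p$ already satisfies~\eqref{eq:pCondition} I apply Theorem~\ref{t:globalWeakSol} directly to the pair $(p,q)$; if $p$ is smaller, I first fix an auxiliary exponent $p'\in\bigl(\tfrac{2}{1+\alpha/d},\tfrac d\alpha\bigr)$ and note that $1\le p<p'<d/\alpha<q$, so $\rho_0\in L^{p'}$ by interpolation and Theorem~\ref{t:globalWeakSol} applies to $(p',q)$. In both cases the bound~\eqref{eq:LqBound}, used in turn with the exponents $p$ and $q$, produces a global weak solution $\rho\in L^\infty(L^p\cap L^q)$, whose velocity is the one singled out by Proposition~\ref{p:StokesExUn}, and which moreover satisfies $\|\rho\|_{L^\infty(L^p\cap L^q)}\le\|\rho_0\|_{L^p\cap L^q}$.

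The structural fact underlying the whole section is that, at this exponent, the velocity is only \emph{log-Lipschitz}. Indeed, if $\rho\in L^\infty(L^p\cap L^q)$ is any solution with velocity $u$, then Proposition~\ref{p:LPoperatorBound}\,(i) gives $\|\Delta_{-1}u\|_{L^\infty}\lesssim\|\rho\|_{L^p}$, while Bernstein's inequality gives $\|\Delta_j u\|_{L^\infty}\lesssim 2^{j(d/q-\alpha)}\|\rho\|_{L^q}$ for $j\ge0$; since $\alpha-d/q=1$ \emph{exactly}, this means $({\rm Id}-\Delta_{-1})u\in B^1_{\infty,\infty}$. Hence $u(t,\cdot)$ is log-Lipschitz with norm $\lesssim\|\rho(t)\|_{L^p\cap L^q}\le M:=\|\rho\|_{L^\infty(L^p\cap L^q)}$, so the flow of $u$ satisfies the Osgood condition on every finite interval, while $u$ itself is not Lipschitz.

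For uniqueness, let $\rho_1,\rho_2\in L^\infty(L^p\cap L^q)$ solve~\eqref{ieq:AIPM} with the same initial datum, set $\delta\rho:=\rho_2-\rho_1$ and $\delta u:=(-\Delta)^{-\alpha/2}\P(\delta\rho\,g)$; using $\D(u_1)=\D(\delta u)=0$,
\begin{equation*}
\partial_t\delta\rho+u_1\cdot\nabla\delta\rho=-\D(\rho_2\,\delta u),\qquad\delta\rho(0)=0.
\end{equation*}
I propagate $\delta\rho$ in a negative-regularity Besov space $B^{-1-\epsilon}_{q,\infty}$ which, because $\alpha>1$, embeds into $B^{-\alpha}_{q,\infty}$. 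Since $u_1$ is only log-Lipschitz I invoke the transport estimates with loss of regularity à la Danchin~\cite{Danchin2005}: on a short interval $[0,T_0]$, with $T_0=T_0(M)>0$, the regularity index degrades only mildly and one obtains a bound of the schematic form
\begin{equation*}
\|\delta\rho\|_{L^\infty_{T_0}(B^{-1-\epsilon}_{q,\infty})}\lesssim\int_0^{T_0}\bigl\|\D(\rho_2\,\delta u)\bigr\|_{B^{-1-\epsilon}_{q,\infty}}\dt.
\end{equation*}
The right-hand side is then absorbed: from $\delta\rho\in B^{-1-\epsilon}_{q,\infty}$ and $q>d/\alpha$ one gets $\delta u\in B^{\alpha-1-\epsilon}_{q,\infty}$ (its low frequencies being controlled in $L^\infty$ thanks to $q>d/\alpha$), and since $\rho_2\in L^q\subset B^0_{q,\infty}$ while the pair $(\alpha-1-\epsilon,q)$ fulfils condition~\eqref{eq:LQproductCondition} — here the identity $d/q=\alpha-1$ is what makes the exponents balance — the product Lemma~\ref{l:LQproduct}, together with a direct treatment of the low-frequency part of $\delta u$, yields $\|\D(\rho_2\,\delta u)\|_{B^{-1-\epsilon}_{q,\infty}}\lesssim M\,\|\delta\rho\|_{B^{-1-\epsilon}_{q,\infty}}$. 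Grönwall's lemma then forces $\delta\rho\equiv0$ on $[0,T_0]$, and since $T_0$ depends only on the fixed quantity $M$, iterating over successive intervals of length $T_0$ gives $\delta\rho\equiv0$ on $\R_+$.

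The hard part is precisely this uniqueness step: at the critical exponent $q=d/(\alpha-1)$ the transport field is exactly log-Lipschitz rather than Lipschitz, and the right-hand side $\D(\rho_2\,\delta u)$ costs a full derivative with no commutator structure to recover it (the Leray projector being symmetric). Reconciling these two facts is what pins the stability estimate down to the negative space $B^{-1-\epsilon}_{q,\infty}\subset B^{-\alpha}_{q,\infty}$ and makes Danchin's loss-of-regularity estimates unavoidable; note also that the endpoint cases $\alpha=1$ (where the $\alpha$-smoothing no longer suffices to compensate) and $\alpha=d$ genuinely fall outside this scheme and must be handled separately, which is the purpose of Theorem~\ref{t:globalWPBesov}.
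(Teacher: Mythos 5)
Your existence argument is the paper's: interpolate to place $\rho_0$ in an admissible intermediate Lebesgue space, invoke Theorem \ref{t:globalWeakSol}, and use \eqref{eq:LqBound} at both exponents. The uniqueness strategy is also the right one in outline (negative-index stability estimate, log-Lipschitz velocity, loss-of-regularity transport estimates, a product lemma for the forcing). But there is a genuine gap in the key stability estimate: you propagate $\delta\rho$ only in $B^{-1-\epsilon}_{q,\infty}$ and assert that the low frequencies of $\delta u=(-\Delta)^{-\alpha/2}\P(\delta\rho\,g)$ are ``controlled in $L^\infty$ thanks to $q>d/\alpha$''. This is backwards. By Bernstein, $\|\dot{\Delta}_j\delta u\|_{L^\infty}\lesssim 2^{j(d/q-\alpha)}\|\dot{\Delta}_j\delta\rho\|_{L^q}=2^{-j}\|\dot{\Delta}_j\delta\rho\|_{L^q}$, and the sum over $j\le 0$ diverges when the blocks are merely bounded by an $L^q$-scale norm of $\delta\rho$; it is precisely the condition $p<d/\alpha$ (i.e.\ $d/p-\alpha>0$) that makes $\sum_{j\le 0}2^{j(d/p-\alpha)}$ converge in Proposition \ref{p:LPoperatorBound}\,(i). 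Since $q=d/(\alpha-1)>d/\alpha$, the $q$-based norm of $\delta\rho$ alone does not control $\Delta_{-1}\delta u$, so your bound $\|\D(\rho_2\,\delta u)\|_{B^{-1-\epsilon}_{q,\infty}}\lesssim M\|\delta\rho\|_{B^{-1-\epsilon}_{q,\infty}}$ cannot close. The fix --- and what the paper does --- is to propagate the difference simultaneously in $B^{\sigma_1-\eta}_{p,\infty}\cap B^{\sigma_1-\eta}_{q,\infty}$ with $\sigma_1<-1$ close to $-1$, using the $p$-component to bound $\|\Delta_{-1}\delta u\|_{B^k_{\infty,1}}$ and the $q$-component for the high frequencies; the product estimate \eqref{eq:ineqLossProdEst} must then be proved for both $b=p$ and $b=q$.

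A secondary point: from $L^q\subset B^0_{q,\infty}$ you only obtain $({\rm Id}-\Delta_{-1})u\in B^1_{\infty,\infty}$, i.e.\ a log-Lipschitz field with exponent $1$, for which the limited-loss Theorem \ref{t:limitedLoss} does not apply (its estimate carries a factor $1/(1-\alpha)$ in the exponent, with $\alpha$ the log-Lipschitz index). The paper instead uses the refined embeddings of Proposition \ref{p:refinedEmbed}, $L^q\subset B^0_{q,r}$ with $r=\max(2,q)<+\infty$, to place $u$ in $B^1_{\infty,r}\subset LL_{1-1/r}$ with exponent strictly below $1$, so the loss of regularity stays below $\epsilon$ uniformly on every $[0,T]$ and no short-time iteration is needed. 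Your short-time-plus-iteration scheme with an $LL_1$ field could in principle be run using a linear-loss transport estimate, but that is not the theorem stated in the appendix, and in any case it does not repair the low-frequency issue above.
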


\begin{rmk}
In particular, when $d = 3$ and $\alpha = 2$, we recover the result of Mecherbet and Sueur \cite{MS}, who prove global well-posedness for initial data with $L^3$ regularity (and sufficient integrability).
\end{rmk}

\begin{rmk}\label{r:weakSol}
Note that the assumptions of Theorem \ref{t:globalStrongLebesgue} above are stronger than those of Theorem \ref{t:globalWeakSol} giving the existence of global weak solutions in Lebesgue spaces, since the inequality
\begin{equation}\label{eq:remarkExponentsIneq}
q = \frac{d}{\alpha - 1} > \frac{2}{1 + \frac{\alpha}{d}}
\end{equation}
holds as long as $\alpha < d+2$.
\end{rmk}

\begin{rmk}
It is not known whether the solution from Theorem \ref{t:globalStrongLebesgue} is continuous with respect to time: as it will appear in the proof, the velocity field is only $\log$-Lipschitz, so the proof of Lebesgue time continuity by means of the Friedrichs commutator Lemma (Lemma \ref{l:DPLcommutator}), as in STEP 6 of the proof of Theorem \ref{t:localWP}, fails. However, if the initial datum is subcritical $\rho_0 \in L^p \cap L^r$ with $r > q = d/(\alpha - 1)$, then the velocity field is Lipschitz and it becomes possible to prove that $\rho \in C^0_b(L^p \cap L^r)$.
\end{rmk}

\begin{proof}[Proof (of Theorem \ref{t:globalStrongLebesgue})]
First of all, because $1 < d/\alpha < q$ and $\rho_0 \in L^p \cap L^q$, interpolation of Lebesgue spaces will always allow us to assume that $p > 1$.

Thanks to Remark \ref{r:weakSol}, we see that the assumptions of Theorem \ref{t:globalStrongLebesgue} are enough to provide the existence of a global weak solution: because of inequality \eqref{eq:remarkExponentsIneq}, it is always possible to find a $r > p$ such that
\begin{equation*}
\frac{2}{1 + \frac{\alpha}{d}} < r < \frac{d}{\alpha} < \frac{d}{\alpha - 1} = q,
\end{equation*}
since the inequality $2 \left( 1 + \frac{\alpha}{d} \right)^{-1} < \frac{d}{\alpha}$ holds al long as $\alpha < d$. Noting that $p < r < q$, interpolation of Lebesgue spaces shows that the initial datum $\rho_0 \in L^r \subset L^p \cap L^q$ satisfies the assumptions of Theorem \ref{t:globalWeakSol}, thus providing a global weak solution $\rho \in L^\infty(L^p \cap L^q)$ with
\begin{equation*}
\forall t \geq 0, \qquad \| \rho(t) \|_{L^p \cap L^q} \leq \| \rho_0 \|_{L^p \cap L^q}.
\end{equation*}
We only have to show uniqueness of this solution. We therefore consider two solutions $\rho_1, \rho_2 \in L^\infty(L^p \cap L^q)$ related to the same initial datum $\rho_{0, 1} = \rho_{0, 2} = \rho_0$ and form the difference function $\delta \rho = \rho_2 - \rho_1$. Let $\delta \rho_0 = 0$ be defined accordingly. Then, $\delta \rho$ is a weak solution of the following system
\begin{equation}\label{eq:stabEQ}
\begin{cases}
\big( \partial_t + u_2 \cdot \nabla \big) \delta \rho = - \D (\rho_1 \delta u) \\
\delta u = (- \Delta)^{- \alpha / 2} \P (\delta \rho g).
\end{cases}
\end{equation}

The main difficulty we face at this point is that, because $\rho_2 \in L^q$, the associated velocity field $u_2$ may not be Lipschitz, so we must be careful when using estimates related to the transport equation. As a matter of fact, we may only find weaker regularity: thanks to the embeddings of Proposition \ref{p:refinedEmbed}, 
\begin{equation*}
L^q \subset B^0_{q, 2} \text{ if } q \leq 2 \qquad \text{and} \qquad L^q \subset B^0_{q, q} \text{ if } q \geq 2,
\end{equation*}
we find that, by setting $r = 2$ when $q \leq 2$ and $r = q$ when $q \geq 2$,
\begin{equation*}
\| u_2 \|_{B^1_{\infty, r}} \lesssim \| u_2 \|_{B^{\alpha - d/q}_{\infty, r}} \lesssim \| \rho_2 \|_{L^p} + \| \rho_2 \|_{B^{-d/q}_{\infty, r}} \lesssim \| \rho_2 \|_{L^p} + \| \rho_2 \|_{B^{0}_{q, r}} \lesssim \| \rho \|_{L^p \cap L^q},
\end{equation*}
so that $u_2$ belongs to a Besov space $B^1_{\infty, r}$ whose third index if finite $r < + \infty$. Consequently, the velocity field is $\log$-Lipschitz $u_2 \in L^\infty(LL_\beta)$ for $\beta = 1 - \frac{1}{r} > 0$ (see the discussion in the Appendix on these spaces). More precisely, we may separate low and high frequencies to see that
\begin{equation*}
u_2 = \Delta_{-1}u_2 + ({\rm Id} - \Delta_{-1})u_2 \in L^\infty_T(B^k_{\infty, 1}) + L^\infty_T(B^{\alpha}_{q, r})
\end{equation*}
for all $k \geq 0$. Because $\alpha = 1 + \frac{d}{q}$, we have the embedding $B^\alpha_{q, r} \subset LL^q_\beta$, so the velocity field $u_2$ does indeed satisfy the assumptions of Theorem \ref{t:limitedLoss}, and we may perform estimates on the transport equation, but at the price of a loss of regularity.

\medskip

Let us establish the framework in which we will use the estimaetes of Theorem \ref{t:limitedLoss}. Our goal will be to find bounds on $\delta \rho$ by using the first equation in \eqref{eq:stabEQ}. Since the derivative $\nabla \rho_1$ is involved in the righthand side, we will have to work in a space contained in $B^{-1}_{p, \infty} \cap B^{-1}_{q, \infty}$. For practical reasons that will become clear later, we will conduct the estimates at a lower level of regularity $\sigma_1 < -1$. More precisely, we define a regularity index $\sigma_0$
\begin{equation*}
\sigma_0 := - 1 - d \min \left\{ \frac{1}{q}, \frac{1}{q'}, \frac{1}{p'} \right\},
\end{equation*}
where, as usual, $p'$ and $q'$ are the conjuguate exponents of $p$ and $q$. Then, since we have assumed $p > 1$ and $\alpha = 1 + \frac{d}{q} > 1$, we see that $\sigma_0 < -1$. We may therefore consider a $\sigma_1 < -1$ and a $\epsilon > 0$, whose precise values we will fix later on, such that
\begin{equation*}
\sigma_0 < \sigma_1 - \epsilon < \sigma_1 < -1.
\end{equation*}
In order to use Theorem \ref{t:limitedLoss} with initial datum $\delta \rho \in B^{\sigma_1}_{p, \infty} \cap B^{\sigma_1}_{q, \infty}$, we need to check that the righthand side term $-\D (\rho_1 \delta u)$ satisfies
\begin{equation}\label{eq:ineqLossProdEst}
\forall \eta \in [0, \epsilon], \qquad \big\| \D (\rho_1 \delta u) \big\|_{B^{- \sigma_1 - \eta}_{b, \infty}} \lesssim \Big( \| \delta \rho \|_{B^{- \sigma_1 - \eta}_{p, \infty}} + \| \delta \rho \|_{B^{- \sigma_1 - \eta}_{q, \infty}} \Big) \| \rho_1 \|_{L^p \cap L^q},
\end{equation}
for $b$ taking the values $b=p$ and $b=q$.

\medskip

We focus on proving inequality \eqref{eq:ineqLossProdEst}. By invoking the Bony decomposition for the product $\D(\rho_1 \delta u)$, we obtain
\begin{equation*}
\D (\rho_1 \delta u) = \mc T_{\partial_k \rho_1} (\delta u_k) + \mc T_{\delta u_k} (\partial_k \rho_1 ) + \partial_k \mc R (\delta u_k, \rho_1)
\end{equation*}
where, as usual, there is an implicit sum over the repeated index $k = 1, ..., d$. The paraproducts create no special problem: Proposition \ref{p:op} immediately gives
\begin{equation*}
\begin{split}
\| \mc T_{\partial_k \rho_1} (\delta u_k) \|_{B^{\sigma_1 - \eta}_{b, \infty}} + \| \mc T_{\delta u_k} (\partial_k \rho_1) \|_{B^{\sigma_1 - \eta}_{b, \infty}} & \leq \frac{C}{|1 + \sigma_1 - \eta|} \| \delta u \|_{B^{1 + \sigma_1 - \eta}_{\infty, \infty}} \| \rho_1 \|_{L^p \cap L^q} \\
& \leq \frac{C}{|1 + \sigma_1 - \eta|} \Big( \| \delta \rho \|_{B^{\sigma_1 - \eta}_{p, \infty}} + \| \delta \rho \|_{B^{\sigma_1 - \eta}_{q, \infty}} \Big) \| \rho_1 \|_{L^p \cap L^q}.
\end{split}
\end{equation*}
Since we have chosen $\sigma_1 < 1$, the constant appearing in this inequality is bounded by $|1 + \sigma_1 - \eta|^{-1} \leq |1 + \sigma_1|^{-1}$ for all $\eta \in [0, \epsilon]$, and we can safely ignore it from now on. On the other hand, the remainder term $\mc R (\delta u_k, \rho_1)$ is a bit more delicate to handle: it is only properly defined if the sum of the regularities of $\delta u$ and $\rho_1$ is positive. For this, we take advantage of the fact that $\delta u$ is the image of $\delta \rho$ by the operator $(- \Delta)^{- \alpha/2} \P g$, hence giving it positive regularity as $\alpha > 0$, namely
\begin{equation*}
\| \Delta_{-1} \delta u \|_{B^k_{\infty, 1}} \lesssim \| \delta \rho \|_{B^{\sigma_1 - \eta}_{p, \infty}} \qquad \text{and} \qquad \big\| ({\rm Id} - \Delta_{-1}) \delta u \big\|_{B^{\alpha + \sigma_1 - \eta}_{q, \infty}} \lesssim \| \delta \rho \|_{B^{\sigma_1 - \eta}_{q, \infty}},
\end{equation*}
for any $k \geq 0$. For the low frequency terms, there is no problem. Proposition \ref{p:op} gives
\begin{equation*}
\| \mc R(\Delta_{-1} \delta u_k, \rho_2) \|_{B^k_{b, \infty}} \lesssim \| \Delta_{-1} \delta u \|_{B^k_{\infty, 1}} \| \rho_1 \|_{L^b} \lesssim \| \delta \rho \|_{B^{\sigma_1 - \eta}_{p, \infty}} \| \rho_1 \|_{L^b}.
\end{equation*}
For the high frequency part, we resort to a method nearly identical that used when constructing global weak solutions in the proof of Theorem \ref{t:globalWeakSol} above: we want to consider the remainder as a $B^{\alpha + \sigma_1 - \eta}_{q, \infty} \times B^0_{b, \infty}$ operator. One of two things may happen: either $\frac{1}{\gamma} := \frac{1}{q} + \frac{1}{b} \leq 1$ and Proposition \ref{p:op} yields
\begin{equation*}
\big\| \mc R \big( ({\rm Id} - \Delta_{-1}) \delta u, \rho_1 \big),  \big\|_{B^{\alpha + \sigma_1 - \eta}_{\gamma, \infty}} \lesssim \big\| ({\rm Id} - \Delta_{-1}) \delta u \big\|_{B^{\alpha + \sigma_1 - \eta}_{q, \infty}} \| \rho_1 \|_{B^0_{b, \infty}}
\end{equation*}
since $\alpha + \sigma_1 - \eta \geq \alpha + \sigma - \epsilon > 0$ if we chose $\sigma_1$ close enough to $-1$ and if we take $\epsilon$ small enough. Then, the embedding $B^{\alpha + \sigma_1 - \eta}_{\gamma, \infty} \subset B^{1 + \sigma_1 - \eta}_{b, \infty}$ provides the desired inequality:
\begin{equation}\label{eq:uniqueLQeq1}
\big\| \partial_k \mc R \big( ({\rm Id} - \Delta_{-1}) \delta u, \rho_1 \big),  \big\|_{B^{\sigma_1 - \eta}_{b, \infty}}  \lesssim \big\| ({\rm Id} - \Delta_{-1}) \delta u \big\|_{B^{\alpha + \sigma_1 - \eta}_{q, \infty}} \| \rho_1 \|_{B^0_{b, \infty}}.
\end{equation}
When we have $\frac{1}{q} + \frac{1}{b} > 1$ instead, we must first take advantage of the positive regularity of $({\rm Id} - \Delta_{-1}) \delta u$ and the embeddings of Proposition \ref{p:BesovEmbed} to reach a higher integrability index. Precisely, we get $B^{\alpha + \sigma_1 - \eta}_{q, \infty} \subset B^s_{\gamma, \infty}$ with $\frac{1}{\gamma} + \frac{1}{b} = 1$ and
\begin{equation}\label{eq:prodRegExp}
\begin{split}
s & = \alpha + \sigma_1 - \eta - d \left( \frac{1}{q} - \frac{1}{\gamma}  \right)\\
& = 1 + \sigma_1 - \eta + d \left( 1 - \frac{1}{b} \right).
\end{split}
\end{equation}
Because we have assumed that $q > p > 1$, we can always take this quantity to be positive $s \geq s_0 > 0$ by choosing $\sigma_1$ and $\epsilon$ such that $|\sigma_1 + 1| + \epsilon$ is small enough. In that case, we may apply Proposition \ref{p:op} to get
\begin{equation*}
\big\| \mc R \big( ({\rm Id} - \Delta_{-1}) \delta u, \rho_1 \big),  \big\|_{B^{s}_{1, \infty}} \lesssim C(s_0) \big\| ({\rm Id} - \Delta_{-1}) \delta u \big\|_{B^{\alpha + \sigma_1 - \eta}_{q, \infty}} \| \rho_1 \|_{B^0_{b, \infty}}.
\end{equation*}
The embedding $B^s_{1, \infty} \subset B^{1 + \sigma_1 - \eta}_{b, \infty}$ shows that \eqref{eq:uniqueLQeq1} is also true in that case. Therefore, inequality \eqref{eq:ineqLossProdEst} does indeed hold for all $\eta \in [0, \epsilon]$ provided that we choose $|\sigma_1 + 1| + \epsilon$ small enough.

\medskip

We are at the end of the proof. A direct application of Theorem \ref{t:limitedLoss} gives, for any fixed $T > 0$, after imposing again $\epsilon$ to be small enough, 
\begin{equation*}
\sup_{t \in [0, T]} \Big( \| \delta \rho(t) \|_{B^{\sigma_1 - \epsilon}_{p, \infty}} + \| \delta \rho(t) \|_{B^{\sigma_1 - \epsilon}_{q, \infty}} \Big) \lesssim \| \delta \rho_0 \|_{B^{\sigma_1}_{p, \infty} \cap B^{\sigma_1}_{q, \infty}} \exp \left\{ \frac{C}{\epsilon^{r-1}} \left( \int_0^T \| \rho_2 \|_{L^p \cap L^q} \dt \right)^{r} \right\}.
\end{equation*}
This ends the proof of uniqueness.
\end{proof}

\subsection{Well-Posedness in Besov Spaces: the Endpoint Cases}

The proof of Theorem \ref{t:globalStrongLebesgue} fails in two cases of special interest: those where $\alpha = d$ and where $\alpha = 1$. By going back to the proof, we see that there are two main reasons why these ``endpoint'' cases evade us. 

On the one hand, the fact that we could take $p$ to satisfy $1 < p < d/\alpha$ was crucial to proving inequality \eqref{eq:ineqLossProdEst} with $b=p$. If we had $p=1$ (and even then, we could not take $d=\alpha$) then the regularity exponents \eqref{eq:prodRegExp} involved in the products cannot be positive, as $\sigma_1 < -1$.

On the other hand, and this is a similar problem, if $\alpha = 1$, then we must take $q = + \infty$. In that case, the estimates for the transport equation only hold in function spaces of regularity at best $\sigma_0 = -1$, which is the best regularity exponent we can hope for stability estimates because of the derivatives $\nabla \rho_1$ involved in the righthand side of the first equation in \eqref{eq:stabEQ}. Using estimates which involve a loss of regularity is therefore no longer possible, as it would put us at a $-1 - \epsilon < \sigma_0$ regularity level.

\medskip

The solution to both problems will be to work at a higher regularity level, where we will be able to use global Besov-Lipschitz estimates on the velocity field. Our main tool is a bound for solutions of the transport equation with regularity $s=0$ that is linear with respect to the velocity field (see \ref{p:linEstTV}).

\begin{thm}\label{t:globalWPBesov}
Consider $d \geq 2$ and $\alpha \in [1, d]$. Let $p = d/\alpha$ and $q = d/(\alpha - 1)$. Then, for any initial datum $\rho_0 \in \dot{B}^0_{p, 1} \cap B^0_{q, 1}$, problem \eqref{ieq:AIPM} has a unique solution $\rho \in C^0(\dot{B}^0_{p, 1} \cap B^0_{q, 1})$.
\end{thm}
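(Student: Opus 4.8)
The plan is to run the Vishik--Hmidi--Keraani scheme for critical Besov-Lipschitz regularity directly at the level of the advected density $\rho$. The first observation is that, under the stated hypotheses, the velocity field sits \emph{exactly} at the Lipschitz threshold: since $B^0_{q,1}\subset B^{-d/q}_{\infty,1}=B^{1-\alpha}_{\infty,1}$ by Proposition \ref{p:BesovEmbed}, Proposition \ref{p:LPoperatorBound} gives, for the $\mc S'_h$ solution $u=(-\Delta)^{-\alpha/2}\P(\rho g)$ of the Stokes problem,
\begin{equation*}
\|u\|_{B^1_{\infty,1}}\lesssim\|\rho\|_{\dot B^0_{p,1}}+\|\rho\|_{B^{1-\alpha}_{\infty,1}}\lesssim\|\rho\|_{\dot B^0_{p,1}\cap B^0_{q,1}},
\end{equation*}
together with $u\in\dot B^\alpha_{p,1}\cap B^\alpha_{q,1}$; in particular $u\in L^\infty_{\rm loc}(W^{1,\infty})$, so both the $\dot B^0_{p,1}$ and the $B^0_{q,1}$ norms are candidates to be propagated by the transport equation. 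It is worth keeping in mind the endpoint values to be reached: $\alpha=d$ forces $p=1$, i.e. the \emph{homogeneous} space $\dot B^0_{1,1}\subsetneq L^1$ (needed even to make sense of $u$, cf. Proposition \ref{p:StokesExUn}(ii)), while $\alpha=1$ forces $q=+\infty$, i.e. $B^0_{q,1}=B^0_{\infty,1}$, the endpoint Besov-Lipschitz space that is slightly smaller than $L^\infty$.

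For existence I would construct approximate solutions $\rho_N$ by the regularization procedure of Section \ref{s:globalWeak} (mollifying the initial datum and, if needed, the velocity field, so that the $\rho_N$ are regular enough for all Lebesgue and Besov norms to be exactly transported). The heart of the argument is a \emph{global} uniform bound coming from the linear-in-velocity transport estimate of Proposition \ref{p:linEstTV}: applying it to $\partial_t\rho_N+u_N\cdot\nabla\rho_N=0$ in $\dot B^0_{p,1}$ and in $B^0_{q,1}$ and abbreviating $E_N(t):=\|\rho_N(t)\|_{\dot B^0_{p,1}\cap B^0_{q,1}}$, one obtains
\begin{equation*}
E_N(t)\lesssim E_N(0)\Big(1+\int_0^t\|\nabla u_N(\tau)\|_{L^\infty}\,{\rm d}\tau\Big)\lesssim E_N(0)\Big(1+\int_0^tE_N(\tau)\,{\rm d}\tau\Big),
\end{equation*}
so Grönwall's lemma yields $E_N(t)\lesssim\|\rho_0\|_{\dot B^0_{p,1}\cap B^0_{q,1}}\,e^{C\|\rho_0\|_{\dot B^0_{p,1}\cap B^0_{q,1}}\,t}$ for \emph{all} $t\geq0$. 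This is precisely where linearity is indispensable: the classical estimate carrying the factor $\exp\{C\int_0^t\|\nabla u_N\|_{L^\infty}\}$ would only survive up to a finite time. Given these bounds, one bounds $\partial_t\rho_N=-\D(\rho_N u_N)$ in a space of negative regularity (using the product Lemma \ref{l:LQproduct} and Proposition \ref{p:op} for $\rho_N u_N$) and passes to the limit exactly as in the proof of Theorem \ref{t:globalWeakSol} — weak-$*$ compactness for $\rho_N$, local strong compactness for $u_N$, identification of $\D(\rho_N u_N)$ and of the Stokes relation on the Fourier side. Time continuity $\rho\in C^0(\dot B^0_{p,1}\cap B^0_{q,1})$ follows as in STEP 6 of the proof of Theorem \ref{t:localWP}: every block $\Delta_j\rho$ is continuous in time because $\partial_t\rho$ lies in a negative Besov space, and the high-frequency tail $\|\rho-S_n\rho\|_{\dot B^0_{p,1}\cap B^0_{q,1}}$ is controlled \emph{uniformly in time} by the same linear transport estimate, so dominated convergence gives $S_n\rho\to\rho$ in $C^0_t$.

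Uniqueness comes from stability estimates. If $\rho_1,\rho_2$ are two solutions with the same datum then $\delta\rho=\rho_2-\rho_1$ solves $(\partial_t+u_2\cdot\nabla)\delta\rho=-\D(\rho_1\,\delta u)=-\,\delta u\cdot\nabla\rho_1$ with $\delta u=(-\Delta)^{-\alpha/2}\P(\delta\rho\,g)$, and since $u_2$ is genuinely Lipschitz here the transport estimates of Theorem \ref{th:transport} (and Proposition \ref{p:linEstTV}) apply \emph{without any loss of regularity}. When $\alpha>1$ there is a strict margin — $\delta u$ is $\alpha>1$ derivatives smoother than $\delta\rho$ — so running the estimate at regularity $-1$, the level forced by the factor $\nabla\rho_1$, closes via a Bony decomposition exactly as in Proposition \ref{p:stabEst}, and Grönwall gives $\delta\rho\equiv0$. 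The delicate case is $\alpha=1$: there $\delta u\cdot\nabla\rho_1$ sits precisely at the critical regularity $B^{-1}_{\cdot,1}$, the same level as $\delta\rho$ itself, so no loss-of-regularity estimate is available and the remainder in the Bony decomposition has regularity indices summing to $0$; it is only the $\ell^1$-summability built into the norms $B^0_{\infty,1}$ and $\dot B^0_{d,1}$ — whence $\delta u\in\dot B^1_{d,1}\cap B^1_{\infty,1}$ and $\nabla\rho_1\in\dot B^{-1}_{d,1}\cap B^{-1}_{\infty,1}$ — that makes this product meaningful and bounded in the critical space, after which the linear transport estimate of Proposition \ref{p:linEstTV} together with Grönwall force $\delta\rho\equiv0$. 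This is exactly why the theorem is stated in $B^0_{q,1}$ rather than in $L^q$.

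I expect this last point, the $\alpha=1$ stability estimate, to be the main obstacle: one has to stay above the regularity floor imposed by $\nabla\rho_1$, keep the product $\rho_1\,\delta u$ well defined at the critical index, and avoid any logarithmic loss in the transport estimate — any one of which, handled carelessly, reintroduces a finite-time restriction. A secondary technical nuisance, present throughout, is that the homogeneous space $\dot B^0_{p,1}$ requires controlling the \emph{full} Littlewood--Paley series of $u$ (including its low-frequency part), which is legitimate only under the $\dot B^0_{p,1}$ — rather than merely $L^p$ — assumption on $\rho$ and is sharp when $\alpha=d$, and forces one to use the homogeneous versions of all the transport estimates above.
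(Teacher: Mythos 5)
Your existence half is essentially the paper's argument: the Besov--Lipschitz bound $\|u\|_{B^1_{\infty,1}}\lesssim\|\rho\|_{\dot B^0_{p,1}\cap B^0_{q,1}}$ from Proposition \ref{p:LPoperatorBound}, the linear-in-velocity estimate of Proposition \ref{p:linEstTV} at regularity $s=0$, and Gr\"onwall giving a global bound $\|\rho(t)\|_{X}\lesssim\|\rho_0\|_X e^{Ct\|\rho_0\|_X}$, followed by a compactness scheme as in Theorem \ref{t:globalWeakSol}. That part is correct and matches the paper.

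The uniqueness argument at the endpoint $\alpha=1$ has a genuine gap. You assert that the $\ell^1$ summability of the spaces $B^0_{\infty,1}$ and $\dot B^0_{d,1}$ makes the product $\rho_1\,\delta u$ ``bounded in the critical space'' so that the estimate closes in $B^{-1}_{\cdot,1}$ with Gr\"onwall. Track the indices: if you propagate $\delta\rho$ at level $-1$ (the level forced by $\nabla\rho_1$), then $\delta u=(-\Delta)^{-1/2}\P(\delta\rho\,g)$ lands in $B^0_{\infty,1}\subset L^\infty$ --- note your own claim $\delta u\in B^1_{\infty,1}$ would require $\delta\rho$ at level $0$, where the source term $\D(\rho_1\delta u)$ is one derivative too rough to close --- and in the Bony decomposition of $\rho_1\,\delta u$ the two paraproducts are fine, but the remainder $\mc R(\rho_1,\delta u)$ has regularity indices summing to exactly $0$: Proposition \ref{p:op} then only yields $\mc R(\rho_1,\delta u)\in B^0_{\infty,\infty}$, with third index $\infty$, never $1$. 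So $\D(\rho_1\delta u)$ only lies in $B^{-1}_{\infty,\infty}$ and the estimate does not close in $B^{-1}_{\infty,1}$. If instead you retreat to $Y=B^{-1}_{\infty,\infty}$, then $\delta u\in B^0_{\infty,\infty}$ is no longer bounded and the paraproduct $\mc T_{\delta u}(\partial_k\rho_1)$ costs an $\epsilon$ of regularity with a $1/\epsilon$ constant --- precisely the obstruction the paper points out. The paper's actual resolution is different in kind: it works in $Y=\dot B^{-1}_{p,\infty}\cap B^{-1}_{q,\infty}$, estimates the product crudely by $\|\rho_1\delta u\|_{L^p}\leq\|\rho_1\|_{L^p}\|\delta u\|_{L^\infty}$, recovers $\|\delta u\|_{L^\infty}$ from $\|\delta u\|_{B^0_{\infty,\infty}}$ via the logarithmic interpolation inequality of Proposition \ref{p:LogInt} against the a priori $B^1_{\infty,1}$ bounds of $u_1,u_2$ (using that $\delta u$ is a \emph{difference} of regular velocity fields), and concludes with the Osgood lemma rather than Gr\"onwall, since the resulting modulus $\mu(r)=r(C-\log r)$ is only Osgood. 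Without this log-interpolation/Osgood step (or some substitute for it), your uniqueness proof for $\alpha=1$ does not go through; for $\alpha>1$ your strict-margin argument is plausible, but the theorem's main point is precisely the endpoints. A minor additional slip: Proposition \ref{p:linEstTV} is an $s=0$ statement and cannot be invoked for the $s=-1$ stability estimate; one must use Theorem \ref{th:transport} there.
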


\begin{proof}
Strictly speaking, we have not yet proved existence of such a solution: so far solutions have been constructed only for initial data that has some integrability $\rho_0 \in L^r$ with $r < d/\alpha$ (in addition to being regular enough). However, once we show that global \textsl{a priori} and stability estimates hold in $\dot{B}^0_{p, 1} \cap B^0_{q, 1}$, then it is a straightforward thing to construct a solution by means of a compactness argument (replicating the different steps in the proof of Theorem \ref{t:globalWeakSol}). For the sake of conciseness, we will focus on the estimates and leave aside the full proof of existence.

\medskip

\textbf{SPEP 1: \textsl{a priori estimates}.} Assume that $\rho$ is a regular solution of \eqref{ieq:AIPM} with associated velocity field $u$. Start by noting that, thanks to Proposition \ref{p:LPoperatorBound}, the following Besov-Lipschitz bound holds:
\begin{equation*}
\| u \|_{B^1_{\infty, 1}} \lesssim \| \rho \|_{\dot{B}^0_{p, 1}} + \| \rho \|_{B^0_{q, 1}}.
\end{equation*}
Set $X = \dot{B}^0_{p, 1} \cap B^0_{q, 1}$. We then use Proposition \ref{p:linEstTV}, which provide, as both spaces $\dot{B}^0_{p, 1}$ and $B^0_{q, 1}$ have regularity exponent $s=0$,
\begin{equation*}
\begin{split}
\forall \, T > 0, \qquad \| \rho \|_{L^\infty_T(X)} & \lesssim \| \rho_0 \|_X \left( 1 + \int_0^T \| \nabla u \|_{L^\infty} \dt \right) \\
& \lesssim \| \rho_0 \|_X \left( 1 + \int_0^T \| \rho \|_{X} \dt \right).
\end{split}
\end{equation*}
Grönwall's lemma immediately gives
\begin{equation*}
\forall \, T > 0, \qquad \| \rho \|_{L^\infty_T(X)} \lesssim \| \rho_0 \|_X \exp \big( CT \| \rho_0 \|_X \big),
\end{equation*}
so that the solution naturally lies in $L^\infty_{\rm loc} (\dot{B}^0_{p, 1} \cap B^0_{q, 1})$. Besov continuity of the solution with respect to time follows from the same arguments as in the proof of Theorem \ref{t:localWP}.

\medskip

\textbf{STEP 2: stability estimates.} We intend to prove stability estimates for \eqref{ieq:AIPM}. We therefore consider two solutions $\rho_1, \rho_2 \in L^\infty_{\rm loc}(X)$ with velocity fields $u_1$ and associated to the same initial datum $\rho_{0, 1} = \rho_{0, 2}$. Then the difference $\delta \rho = \rho_2 - \rho_1$ solves
\begin{equation*}
\begin{cases}
\big( \partial_t + u_2 \cdot \nabla \big) \delta \rho = - \D (\rho_1 \delta u)\\
\delta u = (- \Delta)^{- \alpha / 2} \P (\delta \rho g),
\end{cases}
\end{equation*}
We note $\delta \rho_0$ the difference of the initial data $\delta \rho_0 = \rho_{0, 2} - \rho_{0, 1} = 0$. As we mentioned above, it is possible to perform estimates at a $s=-1$ level of regularity in the transport equation, provided we work in Besov spaces whose third index is $r = +\infty$. We therefore set $Y = \dot{B}^{-1}_{p, \infty} \cap B^{-1}_{q, \infty}$. The estimates of Theorem \ref{th:transport} then yield, for $T > 0$,
\begin{equation}\label{eq:uniqueBesovGlobalEQ1}
\| \delta \rho \|_{L^\infty_T(Y)} \lesssim \left( \| \delta \rho_0 \|_{Y} + \int_0^T \big\| \D (\rho_1 \delta u) \big\|_Y \dt \right) \exp \left\{ C \int_0^T \| \nabla u_2 \|_{L^\infty} \right\}.
\end{equation}
Let us focus on the product $\D (\rho_1 \delta u)$, which we wish to estimate in $Y$. The problem is that, if $\delta \rho \in Y$, then the best bound we can find for the velocity difference is $\delta u \in B^0_{\infty, \infty}$, so it may not be bounded and we risk loosing regularity in the product: for example, if $\delta u \in B^0_{\infty, \infty}$, then the best bound Proposition \ref{p:op} gives for the paraproduct $\mc T_{\delta u_k} (\partial_k \rho_1)$ is
\begin{equation*}
\forall \epsilon > 0, \qquad \| \mc T_{\delta u_k} (\partial_k \rho_1) \|_{B^{-1 - \epsilon}_{q, \infty}} \lesssim \frac{1}{\epsilon} \, \| \delta u \|_{B^0_{\infty, \infty}} \| \nabla \rho_1 \|_{B^0_{q, \infty}}.
\end{equation*}
The way we solve this issue is to resort to the fact that, in reality, since $\delta u$ is a difference of velocity fields $\delta u = u_2 - u_1$, then it has same regularity as these. The logarithmic interpolation inequality of Proposition \ref{p:LogInt} gives
\begin{equation*}
\| \delta u \|_{L^\infty} \lesssim \| \delta u \|_{B^0_{\infty, \infty}} \left\{ 1 + \log \left( \frac{\| (u_1, u_2) \|_{B^1_{\infty, 1}}}{\| \delta u \|_{B^0_{\infty, \infty}}} \right) \right\},
\end{equation*}
and therefore, the inequalities
\begin{equation*}
\big\| \D (\rho_1 \delta u) \big\|_{\dot{B}^{-1}_{p, \infty}} \lesssim \| \rho_1 \delta u \|_{L^p} \lesssim  \| \rho_1 \|_{L^p} \| \delta u \|_{L^\infty}
\end{equation*}
and
\begin{equation*}
\big\| \D (\rho_1 \delta u) \big\|_{B^{-1}_{q, \infty}} \lesssim \| \rho_1 \delta u \|_{L^q} \lesssim  \| \rho_1 \|_{L^q} \| \delta u \|_{L^\infty}
\end{equation*}
lead to 
\begin{equation*}
\big\| \D (\rho_1 \delta u) \big\|_{Y} \lesssim \| \delta \rho \|_{Y} \| \rho_1 \|_{L^p \cap L^q} \left\{ 1 + \log \left( \frac{\| (u_1, u_2) \|_{B^1_{\infty, 1}}}{\| \delta \rho \|_{Y}} \right) \right\}.
\end{equation*}
Applying this directly inside inequality \eqref{eq:uniqueBesovGlobalEQ1} leads to the following integral inequality: for any fixed $T > 0$, we have, for all $t \in [0, T]$,
\begin{equation}
\| \delta \rho \|_{L^\infty_t(Y)} \leq C \| \delta \rho_0 \|_Y + C \int_0^t \mu \big( \| \delta \rho \|_Y \big),
\end{equation}
where $\mu$ is a nonnegative nondecreasing function defiend by
\begin{equation*}
    \mu(r) = r \left\{ 1 + \log \left( \frac{1}{r} \| (u_1, u_2) \|_{L^\infty_T(B^1_{\infty, 1})} \right) \right\} = r \big( C - \log(r) \big),
\end{equation*}
and where the constants $C = C(\| \rho_0 \|_X, T)$ appearing in the two previous equations depends on $\| (\rho_1, \rho_2) \|_{L^\infty_T(X)}$. In particular, $\mu$ is an Osgood modulus of continuity, and the Osgood Lemma (see for example Lemma 3.4 in \cite{BCD}) insures that $\| \delta \rho (t) \|_{Y} = 0$ for all $t \in [0, T]$, thus proving uniqueness for the solution.
\end{proof}

\section{Lifespan Estimate for $\alpha \rightarrow 1^-$}\label{s:lifespan}

In this section, we study the lifespan of solutions who evolve around a stationary profile $R(x_d)$, that is functions $\rho$ such that $\tilde{\rho}(t,x) = \rho(t, x) + R(x_d)$ is a solution of \eqref{ieq:AIPM}. These solve the following problem
\begin{equation}\label{eq:IPMaroundEquilibrium}
\begin{cases}
\Big( \partial + u \cdot \nabla \Big) \rho = - \partial_d R(x_d) u_d\\
u = (- \Delta)^{-\alpha / 2} \P (\rho e_d)
\end{cases}
\end{equation}
with the initial datum $\rho_0$ lying in the appropriate space of functions. This problem is not very different from \eqref{ieq:AIPM}, as the added righthand side in the first equation is linear and of lower order with respect to the unknown.

The constant profile $R(x_d)$ is a typical form of stationary solutions. In fact, by using \textsl{mutatis mutandi} an argument of Elgindi (see Lemma 1.1 in \cite{Elgindi}), stationary solutions that decay sufficiently fast at infinity can be seen to be always functions of $x_d$ only.

\medskip

As we have seen above, if $R$ is, say, regular and bounded, such solutions will be global for $\alpha \geq 1$. Our purpose in this section is to study their lifespan. We will show that it can become arbitrarily large when $\alpha \rightarrow 1^-$ or when the initial perturbation $\rho_0$ is small.

\begin{thm}\label{t:lifespanIncrease}
Assume that $d \geq 2$ and $R(x_d) \in B^{3/2}_{\infty, 1}(\mathbb{R}^d)$ and consider an exponent $p \in [1, d[$. For any $\frac{1}{2} < \alpha < 1$ and initial datum $\rho_0 \in X := L^{p} \cap B^{1-\alpha}_{\infty, 1}$, the lifespan $T^*$ of the unique associated solution of \eqref{eq:IPMaroundEquilibrium} is at least
\begin{equation*}
\| \partial_d R \|_{B^{1- \alpha}_{\infty, 1}} {T^*}^2 + T^* \geq \frac{C}{\| \rho_0 \|_{X}}\log \left( 1 + \frac{C}{1 - \alpha} \right).
\end{equation*}
\end{thm}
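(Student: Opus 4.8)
The plan is to run the same Littlewood--Paley block analysis of the transport equation that underlies Proposition~\ref{p:aPrioriWP} and Theorem~\ref{t:ContCrit}, but now keeping careful track of the $\epsilon := 1-\alpha \to 0^+$ dependence of every constant, and incorporating the extra linear term $-\partial_d R(x_d)\,u_d$. The key point is that the solution lives at regularity $s = 1-\alpha = \epsilon$, which is vanishingly small, so the estimates we use must be of the type that are \emph{linear} in the transport field (in the spirit of Vishik \cite{Vis} and Hmidi--Keraani \cite{HK}, here Proposition~\ref{p:linEstTV}), rather than the exponential Gronwall estimates of Theorem~\ref{th:transport}; the loss in the latter blows up like $1/\epsilon$ in a way that would destroy the logarithmic lower bound. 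So the first step is to set $X = L^p \cap B^{1-\alpha}_{\infty,1}$, apply $\Delta_j$ to the equation $(\partial_t + u\cdot\nabla)\rho = -\partial_d R\, u_d$, and write the $L^\infty$ estimate for each block, obtaining
\begin{equation*}
\|\rho\|_{L^\infty_T(X)} \lesssim \Big( \|\rho_0\|_X + \int_0^T \|\partial_d R\, u_d\|_X \,\dt \Big)\Big(1 + \int_0^T \|\nabla u\|_{L^\infty}\,\dt\Big),
\end{equation*}
where the crucial feature is that the velocity enters \emph{linearly} through $\|\nabla u\|_{L^\infty}$, with a constant that is \emph{uniform} in $\alpha$ on, say, $[\tfrac12,1]$. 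This uniformity is exactly what the refined commutator/linear transport estimate provides, and checking it carefully (tracking how the constant in Proposition~\ref{p:linEstTV} depends on the regularity index as $s \to 0^+$) is the heart of the matter.

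\textbf{The velocity bound.} The second step is to bound $u$ in terms of $\rho$ at the right regularity. By Proposition~\ref{p:LPoperatorBound}, since $(-\Delta)^{-\alpha/2}\P g$ has order $-\alpha$, we get $\|u\|_{B^1_{\infty,1}} \lesssim \|\rho\|_{L^p} + \|\rho\|_{B^{1-\alpha}_{\infty,1}} = \|\rho\|_X$, hence $\|\nabla u\|_{L^\infty} \lesssim \|u\|_{B^1_{\infty,1}} \lesssim \|\rho\|_X$, and likewise $\|u_d\|_{B^{1-\alpha}_{\infty,1}} \lesssim \|\rho\|_X$. For the source term, the product $\partial_d R \cdot u_d$ must be estimated in $X$: using the Bony decomposition and Proposition~\ref{p:op}, with $R \in B^{3/2}_{\infty,1}$ so $\partial_d R \in B^{1/2}_{\infty,1} \subset L^\infty$, one gets $\|\partial_d R\, u_d\|_{B^{1-\alpha}_{\infty,1}} \lesssim \|\partial_d R\|_{B^{1-\alpha}_{\infty,1}} \|u_d\|_{L^\infty} + \|\partial_d R\|_{L^\infty}\|u_d\|_{B^{1-\alpha}_{\infty,1}} \lesssim \|\partial_d R\|_{B^{1/2}_{\infty,1}} \|\rho\|_X$ (the condition $\alpha > \tfrac12$ ensures $1-\alpha < \tfrac12$, so $\|\partial_d R\|_{B^{1-\alpha}_{\infty,1}} \lesssim \|\partial_d R\|_{B^{1/2}_{\infty,1}}$). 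The $L^p$ part of $\partial_d R\, u_d$ is controlled directly by $\|\partial_d R\|_{L^\infty}\|u_d\|_{L^p} \lesssim \|\rho\|_{L^p}$. Note $u_d \in L^p$ because $u \in \dot B^\alpha_{p,\infty}$ with $\alpha < d/p$... wait, here $p < d$ and $\alpha < 1$, so indeed $\Delta_{-1}u \in L^\infty \cap L^p$ and $({\rm Id}-\Delta_{-1})u \in B^\alpha_{p,\infty} \subset L^p$ since $\alpha > 0$. Combining, the source contributes $\lesssim (1 + \|\partial_d R\|_{B^{1-\alpha}_{\infty,1}})\|\rho\|_X$ at each time.

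\textbf{Closing the differential inequality.} Plugging these into the block estimate yields a scalar inequality of the form
\begin{equation*}
E(T) \lesssim \Big( E(0) + \int_0^T (1 + M) E(t)\,\dt \Big)\Big( 1 + \int_0^T E(t)\,\dt \Big),
\end{equation*}
where $E(t) = \|\rho(t)\|_X$ and $M = \|\partial_d R\|_{B^{1-\alpha}_{\infty,1}}$, \emph{with a constant independent of $\alpha \in [\tfrac12,1]$}; the whole point of tracking the $\epsilon$-dependence is that this constant does \emph{not} degenerate. Then one must extract the claimed lifespan bound $M (T^*)^2 + T^* \gtrsim \frac{1}{E(0)}\log(1 + \frac{C}{1-\alpha})$. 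Here the $\log(1+C/(1-\alpha))$ factor has to come from somewhere: I expect it arises not from the constant but from a \emph{gain} — namely, a more careful estimate of $\|\nabla u\|_{L^\infty}$ exploiting that the operator $\nabla(-\Delta)^{-\alpha/2}\P g$ is, for $\alpha$ close to $1$, only barely order $0$, so that one can afford to estimate $\|\nabla u\|_{L^\infty}$ not by $\|\rho\|_{B^{1-\alpha}_{\infty,1}}$ but by a logarithmically weaker norm with a prefactor $\sim \frac{1}{1-\alpha}$ absorbed the right way — equivalently, one runs the estimate at regularity $0$ with a logarithmic loss (as in the $\alpha=1$ global theory), and the lifespan is the time it takes the logarithmically-growing quantity to double, which is $\sim \log(\text{ratio of scales}) \sim \log(1/(1-\alpha))$. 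Concretely, I would iterate: on $[0,T_0]$ with $T_0$ chosen so the integrals are $O(E(0))$, get $E(t) \le 2E(0)$, then feed this back into a Gronwall-type argument for the logarithmic estimate to see $E(t)$ stays bounded until $(1+MT)T \sim \frac{1}{E(0)}\log(1/(1-\alpha))$. The main obstacle is precisely this last bookkeeping: getting the $\log(1+C/(1-\alpha))$ with the correct $(T^*)^2$ (from the source term $M$) and linear $T^*$ (from the main nonlinearity) structure, which requires being scrupulous about which constants blow up as $\alpha \to 1$ and showing the relevant one does not — this is the delicate adaptation of the Vishik/Hmidi--Keraani $B^1_{\infty,1}$ argument down to the shrinking regularity level $1-\alpha$, and I expect it to be where essentially all the work lies.
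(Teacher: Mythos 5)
Your overall strategy is the right one --- adapt the linear-in-$\nabla v$ transport estimates of Vishik and Hmidi--Keraani to the shrinking regularity level $\sigma = 1-\alpha$, bound the source $\partial_d R\, u_d$ via Proposition \ref{p:op} using $\alpha > \tfrac12$, and close a scalar differential inequality for $E(t)=\|\rho(t)\|_X$; your treatment of the velocity bound and of the source term matches what the paper does. But there are two connected gaps at the heart of the argument. First, the main estimate you write down, namely $\|\rho\|_{L^\infty_T(X)} \lesssim \big(\|\rho_0\|_X + \int_0^T\|\partial_d R\, u_d\|_X\,\dt\big)\big(1+\int_0^T\|\nabla u\|_{L^\infty}\,\dt\big)$ with a constant uniform in $\alpha$, is too strong: a purely linear (no exponential) bound of this type is special to regularity exactly $s=0$ (Proposition \ref{p:linEstTV}) and does not hold at $\sigma=1-\alpha>0$. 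If it did, Gr\"onwall applied to $E(T)\lesssim (1+MT)E(0)\big(1+\int_0^T E\big)$ would keep $E$ finite for all time and no factor $\log(1/(1-\alpha))$ would appear anywhere. What the paper actually proves (Lemma \ref{l:transportLin}) is the linear factor multiplied by an extra exponential $\exp\{C\sigma\int_0^T\|\nabla v\|_{L^\infty}\,\dt\}$ whose rate is proportional to $\sigma=1-\alpha$; this is obtained by decomposing $f=\sum_k f_k$ into single-block solutions, using crude $L^\infty$ bounds for $|j-k|<N$ (cost $N2^{\sigma N}$) and the exponential transport estimate at a fixed auxiliary regularity $\pm2\sigma_0$ for $|j-k|\ge N$ (gain $2^{-\sigma_0 N}e^{C\int\|\nabla v\|}$), and then optimizing over $N$.

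Second, and as a consequence, the mechanism producing the logarithm is misidentified. You speculate it comes from a refined bound on $\|\nabla u\|_{L^\infty}$ exploiting that $\nabla(-\Delta)^{-\alpha/2}\P g$ is barely of order zero; in fact it comes from integrating the resulting inequality $E(T)\lesssim (1+\|R\|_{B^{3/2}_{\infty,1}}T)\,\|\rho_0\|_X\big(1+\int_0^T E\big)\exp\{(1-\alpha)C\int_0^T E\}$, which yields $T^*\big(1+\|R\|_{B^{3/2}_{\infty,1}}T^*\big)\ge \frac{C}{\|\rho_0\|_X}\int_0^{+\infty}\frac{e^{-(1-\alpha)Cs}}{1+s}\,{\rm d}s$, and this last integral is bounded below by $\log\big(1+\frac{C}{1-\alpha}\big)$. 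This final step, which you explicitly leave open as ``bookkeeping,'' together with the correct ($\sigma$-dependent exponential) form of the transport estimate, is precisely the substance of the proof; as written the proposal does not establish the claimed lower bound.
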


The proof of the existence of global solutions relies on estimates for the transport equation that are linear with respect to the velocity field (see Proposition \ref{p:linEstTV} and the proof of Theorem \ref{t:globalWPBesov} above). When $\alpha < 1$, we can no longer use these, since there is no Besov space of exponent $s = 0$ that can be embedded in $B^{1 - \alpha}_{\infty, 1}$. Instead, we revisit the proof of the linear estimate of Proposition \ref{p:linEstTV} and adapt it to spaces of positive regularity, while carefully tracking the dependence on $\sigma := 1 - \alpha$.

\begin{lemma}\label{l:transportLin}
Let $v$ be a smooth divergence-free velocity field and $g$ a smooth function. If $f$ is a regular solution of the linear transport equation
\begin{equation*}
\Big( \partial_t + v \cdot \nabla \Big) f = g \qquad \text{with initial datum } f_0,
\end{equation*}
then $f$ satisfies the following \textsl{a priori} estimate: for all $0 < \sigma_0 < 1/2$, the following inequality holds for $0 \leq \sigma \leq \sigma_0$
\begin{equation*}
\| f \|_{{L^\infty_T}(B^{\sigma}_{\infty, 1})} \lesssim \Big( \| f_0 \|_{B^{\sigma}_{\infty, 1}} + \| g \|_{L^1_T (B^{\sigma}_{\infty, 1})} \Big) \left( 1 + \int_0^T \| \nabla v \|_{L^\infty} \dt \right) \exp \left\{ C \sigma \int_0^T \| \nabla v \|_{L^\infty} \dt \right\},
\end{equation*}
where the (possibly implicit) constants appearing in the above may depend on $\sigma_0$.
\end{lemma}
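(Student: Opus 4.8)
The plan is to retrace the proof of the linear transport estimate of Proposition~\ref{p:linEstTV} — which handles the endpoint $\sigma=0$ — carrying out every step at regularity $\sigma\in[0,\sigma_0]$ and recording precisely how $\sigma$ enters the constants. Fix $j\ge-1$, apply $\Delta_j$ to the equation, and use $\div v=0$ to recast the convective term in divergence form:
\begin{equation*}
\big(\partial_t+v\cdot\nabla\big)\Delta_j f=\Delta_j g+[v\cdot\nabla,\Delta_j]f,\qquad [v\cdot\nabla,\Delta_j]f=\partial_k\big([v^k,\Delta_j]f\big),
\end{equation*}
where $[v^k,\Delta_j]f:=v^k\Delta_j f-\Delta_j(v^k f)$, both equalities resting on $\div v=0$. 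Since $v$ is smooth its flow is a diffeomorphism at each time, and an $L^\infty$ estimate along its characteristics gives, for $0\le t\le T$,
\begin{equation*}
\|\Delta_j f(t)\|_{L^\infty}\le\|\Delta_j f_0\|_{L^\infty}+\int_0^t\Big(\|\Delta_j g(\tau)\|_{L^\infty}+\big\|[v\cdot\nabla,\Delta_j]f(\tau)\big\|_{L^\infty}\Big)\dt.
\end{equation*}

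Next I would bound the commutator via the Bony decomposition of $[v^k,\Delta_j]f$, as in the proof of Lemma~\ref{l:CommBCD}, using $\div v=0$ at each step to transfer the outer derivative $\partial_k$ off $v^k$ and onto $f$ — this annihilates the would-be dangerous terms $\mc T_f(\div v)$ and $\mc R(\div v,\cdot)$ and turns the low--high remainder into $\partial_k\Delta_j\mc R(v^k,f)$, on which Bernstein's inequality produces the favourable ratio $2^{j-l}$ rather than $2^{-(l-j)\sigma}$. One then checks that every piece obeys
\begin{equation*}
2^{j\sigma}\big\|(\text{piece})_j(\tau)\big\|_{L^\infty}\lesssim c_j(\tau)\,\|\nabla v(\tau)\|_{L^\infty}\,\|f(\tau)\|_{B^\sigma_{\infty,1}},
\end{equation*}
with $\big(c_j(\tau)\big)_{j\ge-1}$ in the unit ball of $\ell^1$ and an implicit constant uniform for $\sigma\in[0,\sigma_0]$; the hypothesis $\sigma_0<1/2$ is exactly what keeps the auxiliary geometric series arising here summable with a $\sigma_0$-dependent bound (the borderline ratio being $2^{-(1-2\sigma)}$). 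Summing in $j$ and applying Grönwall at this stage would already give the statement, but with $\exp\{CV(T)\}$, $V(T):=\int_0^T\|\nabla v\|_{L^\infty}\dt$, in place of $\exp\{C\sigma V(T)\}$ — far too lossy for the intended regime $\sigma=1-\alpha\to0^+$.

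To obtain the factor $\sigma$ in the exponent one must avoid Grönwall and instead run the block recursion exactly as the proof of Proposition~\ref{p:linEstTV} does at $\sigma=0$: the commutator reshuffles dyadic scales, but transferring one octave of frequency between $\Delta_j f$ and $\Delta_l f$ costs only the weight $2^{\sigma|j-l|}$ in the $B^\sigma_{\infty,1}$ norm (versus nothing at $\sigma=0$), while the total amount of frequency transferred on $[0,t]$ is controlled by $V(t)$. Iterating the recursion $k$ times and resumming then yields $\sum_{k\ge0}\frac{(C\sigma V(t))^k}{k!}=\exp\{C\sigma V(t)\}$, whereas the $\sigma=0$ bookkeeping of Proposition~\ref{p:linEstTV} supplies the linear factor $1+V(t)$ and the norm $\|f_0\|_{B^\sigma_{\infty,1}}+\|g\|_{L^1_T(B^\sigma_{\infty,1})}$. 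The main obstacle is precisely this last step: faithfully transcribing the resummation of Proposition~\ref{p:linEstTV} to positive regularity so that the weights $2^{\sigma|j-l|}$ accumulate multiplicatively as $\exp\{O(\sigma V)\}$ rather than being absorbed into a Grönwall loop (where they would degrade the bound all the way to $\exp\{O(V/\sigma)\}$); once this is carried out, the uniformity of all constants in $\sigma\le\sigma_0$ follows from the $\sigma_0<1/2$ restriction already noted.
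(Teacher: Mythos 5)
Your setup is fine and you have correctly diagnosed the central difficulty: a commutator estimate followed by Gr\"onwall can only produce $\exp\{CV(T)\}$ with $V(T)=\int_0^T\|\nabla v\|_{L^\infty}\dt$, which is useless in the regime $\sigma\to0^+$. But the step you yourself flag as ``the main obstacle'' --- extracting the factor $\sigma$ in the exponent --- is exactly the step your proposal does not supply, and the mechanism you gesture at (iterating a ``block recursion'' and resumming $\sum_k(C\sigma V)^k/k!$) is not how the $\sigma=0$ estimate of Proposition \ref{p:linEstTV} works and cannot be made to work here. Each Duhamel iteration of the commutator term costs a full factor $\|\nabla v\|_{L^\infty}\|f\|_{B^\sigma_{\infty,1}}$ with no gain of $\sigma$ --- indeed the commutator bound is already non-degenerate at $\sigma=0$ --- so a genuine iteration-and-resummation would still return $e^{CV}$. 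The smallness $\sigma$ in the exponent is not a property of any single commutator or product estimate; it can only emerge from a frequency-threshold optimization, which is absent from your argument.

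The paper's proof proceeds differently. One decomposes $f=\sum_{k\ge-1}f_k$, where $f_k$ solves the same transport equation with the spectrally localized data $\Delta_kf_0$ and source $\Delta_kg$, and splits $\sum_{j,k}2^{\sigma j}\|\Delta_jf_k\|_{L^\infty}$ at a threshold $|j-k|=N$. For $|j-k|<N$ one uses only the conservation $\|f_k\|_{L^\infty}\le\|\Delta_kf_0\|_{L^\infty}+\|\Delta_kg\|_{L^1_T(L^\infty)}$ (no exponential at all), paying the weight $2^{\sigma(j-k)}\le2^{\sigma N}$ and the multiplicity $N$; for $|j-k|\ge N$ one uses the standard exponential estimates of Theorem \ref{th:transport} at regularities $\pm2\sigma_0$ to get the off-diagonal decay $\|\Delta_jf_k\|_{L^\infty}\lesssim c_j\,2^{-2\sigma_0|j-k|}(\cdots)e^{CV}$, so that this tail contributes $2^{-\sigma_0N}e^{CV}$. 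Choosing $N\sigma_0\log2=1+CV(T)$ kills the exponential in the tail, while the near-diagonal weight becomes $N2^{\sigma N}\lesssim(1+V(T))\exp\{C\sigma V(T)/\sigma_0\}$ --- this is the sole source of the factor $\sigma$ in the exponent, and it is also where the restriction $\sigma\le\sigma_0<1/2$ and the $\sigma_0$-dependence of the constants enter. Without the decomposition into the $f_k$, the threshold $N$, and this balancing, the claimed estimate is not reached; as written, your proof has a genuine gap at its decisive step.
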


\begin{proof}[Proof (of Lemma \ref{l:transportLin})]
Let, for $k \geq -1$, $f_k$ be the solutions of the initial value problem
\begin{equation*}
\Big( \partial_t + v \cdot \nabla \Big) f_k = \Delta_k g \qquad \text{with initial datum } \Delta_k f_0.
\end{equation*}
Since the transport equation is linear and has unique solutions in Besov spaces (see Theorem \ref{th:transport} below), $f$ is the sum of all the $f_k$. Therefore, for a $N \geq 1$ whose value we will fix later,
\begin{equation*}
\| f \|_{B^{\sigma}_{\infty, 1}} \leq \sum_{j, k \geq -1} 2^{\sigma j} \| \Delta_j f_k \|_{L^\infty} = \sum_{|j - k | < N} 2^{\sigma j} \| \Delta_j f_k \|_{L^\infty} + \sum_{|j - k| \geq N} 2^{\sigma j} \| \Delta_j f_k \|_{L^\infty}.
\end{equation*}

\medskip

We start by looking at the terms $|j - k| < N$. Since $k$ and $j$ are (reasonably) close, we expect the dyadic block $\Delta_j$ to have little influence. We therefore use simple $L^\infty$ estimates so as to obtain
\begin{equation}\label{eq:betterTimeEQ1}
\begin{split}
\sum_{|j - k| < N} 2^{\sigma j} \| \Delta_j f_k \|_{L^\infty} & \lesssim \sum_{|j - k| < N} 2^{\sigma j} \| f_k \|_{L^\infty} = \sum_{|j - k| < N} 2^{\sigma j} \Big( \| \Delta_k f_0 \|_{L^\infty} + \| \Delta_k g \|_{L^1_T(L^\infty)} \Big) \\
& = \sum_{|j - k| < N} 2^{\sigma (j-k)} 2^{\sigma k} \Big( \| \Delta_k f_0 \|_{L^\infty} + \| \Delta_k g \|_{L^1_T(L^\infty)} \Big) \\
& \lesssim N 2^{\sigma N} \sum_k 2^{\sigma k} \Big( \| \Delta_k f_0 \|_{L^\infty} + \| \Delta_k g \|_{L^1_T(L^\infty)} \Big) \\
& \lesssim N 2^{\sigma N} \Big( \| f_0 \|_{B^{\sigma}_{\infty, 1}} + \| g \|_{L^1_T(B^\sigma_{\infty, 1})} \Big).
\end{split}
\end{equation}

\medskip

We now look at the remaining terms $|j - k| \geq N$. We will use standard estimates for the transport equation in Besov spaces, as in Theorem \ref{th:transport}. These provide
\begin{equation*}
\| f_k \|_{B^{\pm 2 \sigma_0}_{\infty, 1}} \lesssim \Big( \| \Delta_k f_0 \|_{B^{\pm 2 \sigma_0}_{\infty, 1}} + \| \Delta_k g \|_{L^1_T(B^{\pm 2 \sigma_0}_{\infty, 1})} \Big) \exp \left\{ C \int_0^T \| \nabla v \|_{L^\infty} \right\}.
\end{equation*}
Note that, in the preceding inequality, the Lipschitz norm $\| \nabla v \|_{L^\infty}$ is enough because we have assumed $\sigma_0 < 1/2$. We deduce the existence of a family of sequences $\big( c_j(t) \big)_{j \geq -1}$ lying in the unit ball of $l^1(j \geq -1)$ such that
\begin{equation*}
\| \Delta_j f_k \|_{L^\infty} \lesssim c_j(t) \, 2^{- 2 \sigma_0 |j - k|} \, \Big( \| \Delta_k f_0 \|_{L^\infty} + \| \Delta_k g \|_{L^1_T(L^\infty)} \Big) \exp \left\{ C \int_0^T \| \nabla v\|_{L^\infty} \dt \right\}.
\end{equation*}
In light of this, we may write
\begin{equation}\label{eq:betterTimeEQ2}
\begin{split}
\sum_{|j - k| \geq N} 2^{\sigma j} \| \Delta_j f_k \|_{L^\infty} & \lesssim \sum_{|j - k| \geq N} 2^{- 2\sigma_0 |j - k|} \, 2^{\sigma j} c_j(t) \, \Big( \| \Delta_k f_0 \|_{L^\infty} + \| \Delta_k g \|_{L^1_T(L^\infty)} \Big) \\
& \qquad \qquad \qquad \qquad \qquad \qquad \times \exp \left\{ C \int_0^T \| \nabla v\|_{L^\infty} \dt \right\} \\
& \leq \sum_{|j - k| \geq N} 2^{- \sigma_0 |j - k|} \, 2^{\sigma k} c_j(t) \, \Big( \| \Delta_k f_0 \|_{L^\infty} + \| g \|_{L^1_T(L^\infty)} \Big) \\
& \qquad \qquad \qquad \qquad \qquad \qquad \times \exp \left\{ C \int_0^T \| \nabla v\|_{L^\infty} \dt \right\} \\
& \leq \Big( \| f_0 \|_{B^{\sigma}_{\infty, 1}} + \| g \|_{L^1_T(B^\sigma_{\infty, 1})} \Big) 2^{-\sigma_0 N} \exp \left\{ C \int_0^T \| \nabla v \|_{L^\infty} \dt \right\}.
\end{split}
\end{equation}

\medskip

We may now finish the proof of the lemma. By putting both inequalities \eqref{eq:betterTimeEQ1} and \eqref{eq:betterTimeEQ2} together, we get
\begin{equation*}
\| f \|_{L^\infty_T (B^{\sigma}_{\infty, 1})} \lesssim \| f_0 \|_{B^{\sigma}_{\infty, 1}} \left( N 2^{\sigma N} + 2^{-\sigma_0 N} \exp \left\{ C \int_0^T \| \nabla v \|_{L^\infty} \dt \right\} \right).
\end{equation*}
Setting $N \sigma_0 \log(2) = 1 + C \int_0^T \| \nabla v \|_{L^\infty}$ so as to exactly balance the exponential term, we end up with the estimate we were seeking.
\end{proof}

\begin{proof}[Proof (of Theorem \ref{t:lifespanIncrease})]
We are now equipped to complete the proof of Theorem \ref{t:lifespanIncrease}. Because the added righthand side term in \eqref{eq:IPMaroundEquilibrium} is linear and of lower order with respect to the unknown, there is no added difficulty to prove existence and uniqueness of solutions when $R \neq 0$ compared to what we have already done above. We therefore will be content with finding \textsl{a priori} estimates.

\medskip

Applying Lemma \ref{l:transportLin} to the transport equation in \eqref{eq:IPMaroundEquilibrium}, we may write a $B^{1 - \alpha}_{\infty, 1}$ estimate for the solution:
\begin{multline}\label{eq:betterTimeEQ3}
\| \rho \|_{B^{1 - \alpha}_{\infty, 1}} \lesssim \Big( \| \rho_0 \|_{B^{1 - \alpha}_{\infty, 1}} + \int_0^T \| \partial_d R u_d \|_{B^{1 - \alpha}_{\infty, 1}} \dt \Big) \left( 1 + \int_0^T \| \nabla u \|_{L^\infty} \dt \right) \\
\times \exp \left\{ (1 - \alpha)C \int_0^T \| \nabla u \|_{L^\infty} \dt \right\}.
\end{multline}
We focus for a while on the product $\partial_d R . u_d$. Proposition \ref{p:op} and the fact that $1/2 < \alpha < 1$ insure that we have $\| \partial_d R u\|_{B^{1-\alpha}_{\infty, 1}} \lesssim \| \partial_d R \|_{B^{1/2}_{\infty, 1}} \| u \|_{B^{1 - \alpha}_{\infty, 1}}$, while the velocity satisfies, by Proposition \ref{p:LPoperatorBound},
\begin{equation*}
\| u \|_{B^{1 - \alpha}_{\infty, 1}} \lesssim \| \rho \|_{L^{p}} + \| \rho \|_{B^{1 - 2 \alpha}_{\infty, 1}} = \| \rho_0 \|_{L^{p}} + \| \rho \|_{B^{1 - 2 \alpha}_{\infty, 1}}.
\end{equation*}
This last inequality shows that the term $\partial_d R(x_d)u_d$ is of lower order than the solution $\rho$, so we expect it to be easier to bound. As a matter of fact, because of the embeddings
\begin{equation*}
B^{1 - \alpha}_{\infty, 1} \subset L^\infty \subset B^0_{\infty, \infty} \subset B^{1 - 2\alpha}_{\infty},
\end{equation*}
which hold for $1/2 < \alpha < 1$, we see that the solution must be uniformly bounded in $B^{1 - 2\alpha}_{\infty, 1}$. More precisely,
\begin{equation*}
\forall t \geq 0, \qquad \| \rho(t) \|_{B^{1 - 2 \alpha}_{\infty, 1}} \lesssim \| \rho(t) \|_{L^\infty} = \| \rho_0 \|_{L^\infty} \lesssim \| \rho_0 \|_{B^{1 - \alpha}_{\infty, 1}}.
\end{equation*}
We may therefore write, instead of inequality \eqref{eq:betterTimeEQ3}, 
\begin{equation*}
\begin{split}
\| \rho \|_{B^{1 - \alpha}_{\infty, 1}} & \lesssim \Big( \| \rho_0 \|_{B^{1 - \alpha}_{\infty, 1}} + T \| \partial_d R \|_{B^{1/2}_{\infty, 1}} \| \rho_0 \|_{L^p \cap B^{1 - \alpha}_{\infty, 1}} \Big) \left( 1 + \int_0^T \| \nabla u \|_{L^\infty} \dt \right) \\
& \qquad \qquad \qquad \qquad \qquad \qquad \qquad \qquad \times \exp \left\{ (1 - \alpha)C \int_0^T \| \nabla u \|_{L^\infty} \dt \right\} \\
& \lesssim \Big( 1 + T \| \partial_d R \|_{B^{1/2}_{\infty, 1}} \Big) \| \rho_0 \|_X \left( 1 + \int_0^T \| \nabla u \|_{L^\infty} \dt \right) \exp \left\{ (1 - \alpha)C \int_0^T \| \nabla u \|_{L^\infty} \dt \right\}.
\end{split}
\end{equation*}
Therefore, by setting $E(t) = \| \rho \|_X$ we obtain an integral inequality
\begin{equation*}
E(T) \lesssim \Big(1 + \| R \|_{B^{3/2}_{\infty, 1}} T \Big) \| \rho_0 \|_X \left( 1 + \int_0^T E(t) \dt \right) \exp \left\{ (1 - \alpha) C \int_0^T E(t) \dt \right\},
\end{equation*}
which we may integrate in order to get a lower bound for the lifespan $T^*$ of the solution:
\begin{equation*}
T^* \Big(1 + \| R \|_{B^{3/2}_{\infty, 1}} T^* \Big) \geq \frac{C}{\| \rho_0 \|_X} \int_0^{+ \infty} \frac{e^{-(1 - \alpha)Cs}}{1 + s} {\rm d}s,
\end{equation*}
so that proving our theorem is only a matter of bounding this integral from below. Our argument runs as follow: a constant fraction of the mass under the exponential function is located in the interval $0 \leq s \leq C(1 - \alpha)^{-1}$, so that nothing is lost by restricting our attention to this interval, except perhaps a change in the constants. Next, we use the same principal to replace the exponential term by an easier linear one: on $0 \leq s \leq (1 - \alpha)^{-1}$, the mass of $e^{-(1 - \alpha)Cs}$ is significantly the same as that of the lower bound $1 - (1 - \alpha)C$. Therefore, we write
\begin{equation*}
\begin{split}
\int_0^{+ \infty} \frac{e^{-(1 - \alpha)Cs}}{1 + s} {\rm d}s & \geq \int_0^{C(1 - \alpha)^{-1}} \frac{1 - (1 - \alpha)Cs}{1+s} {\rm d} s \\
&  = \log \left( 1 + \frac{C}{1 - \alpha} \right) - (1 - \alpha )C \int_0^{C(1 - \alpha)^{-1}} \frac{s}{1+s} {\rm d} s \\
& = \Big( 1 + (1 - \alpha)C \Big) \log \left( 1 + \frac{C}{1 - \alpha} \right) - C \\
& \sim \log \left( 1 + \frac{C}{1 - \alpha} \right), \qquad \text{as } \alpha \rightarrow 0^+.
\end{split}
\end{equation*}
Let us conclude: we have shown that the quantity $E(t) = \| \rho(t) \|_X$ remains bounded on the time interval $[0, T^*]$, and therefore so does $\| \nabla u (t) \|_{L^\infty}$. An application of the continuation criteria of Theorem \ref{t:localWP} shows that the lifespan of the solution is therefore at least $T^*$. This finally ends the proof of Theorem \ref{t:lifespanIncrease}.
\end{proof}

\appendix

\section{Appendix: Littlewood-Paley Analysis and Transport Equations} \label{app:LP}

In this appendix, we collect tools from Fourier analysis and Littlewood-Paley theory which we have freely used throughout all our paper. We refer to Chapters 2 and 3 of \cite{BCD} for details.

\subsection{Littlewood-Paley Decompositions}\label{ss:LP}

In this paragraph, we introduce the notion of Littlewood-Paley decomposition and present some of its basic properties. 

\medskip

The Littlewood-Paley decomposition is based on a dyadic partition of unity in the Fourier variable: fix a smooth, compactly supported and radially symmetric function $\chi \in \mc D$ such that $r \mapsto \chi(re)$ is decreasing for all $e \in \R^d$ and with
\begin{equation*}
\chi(x) = 1 \text{ for } |x| \leq 1 \qquad \text{and} \qquad \chi(x) = 0 \text{ for } |x| \geq 2.
\end{equation*}
Then, by setting $\varphi(\xi) = \chi(\xi) - \chi(2 \xi)$ and $\varphi_j(\xi) = \varphi(2^{-j}\xi)$, we obtain the following partitions of unity in the Fourier variable:
\begin{equation}\label{eq:LPpartition}
1 = \sum_{j \in \Z} \varphi_j (\xi) \qquad \text{and} \qquad 1 = \chi(\xi) + \sum_{j \geq 0} \varphi_j(\xi).
\end{equation}
The first one holds for all $\xi \neq 0$ (with pointwise convergence of the sum) and the second for all $\xi \in \R^d$. These two partitions of unity give rise to two sets of operators: we define the Littlewood-Paley blocks by, on the one hand $\dot{\Delta}_j = \varphi_j(D)$ for $j \in \Z$, and on the other $\Delta_{-1} = \chi(D)$ and $\Delta_j = \dot{\Delta}_j$ for $j \geq 0$. Also set $S_j = \chi(2^{j-1} D)$. 

\begin{rmk}
The operators $\dot{\Delta}_j$, $\Delta_j$ and $S_j$ are scaled versions of $\dot{\Delta}_{0}$ ot $\Delta_{-1}$, and thus are uniformly bounded in the $L^p \tend L^p$ topology for all $p \in [1, + \infty]$. 
\end{rmk}

From the two partitions of unity \eqref{eq:LPpartition}, we may write the formal operator identities
\begin{equation*}
{\rm Id} = \sum_{j \in \Z} \dot{\Delta}_j \qquad \text{and} \qquad {\rm Id} = \sum_{j \geq -1} \Delta_j.
\end{equation*}
The first identity is the \textsl{homogeneous Littlewood-Paley decomposition}, whereas the second is the \textsl{non-homogeneous} one. While the non-homogeneous Littlewood-Paley decomposition holds on the whole space $\mc S'$ of tempered distributions, such is not the case with the homogeneous Littlewood-Paley decomposition (for example, it fails for all polynomials). In fact, there are even examples of bounded functions $f \in L^\infty$ for which the sum $\sum_{j \in \Z} \dot{\Delta}_j f$ may fail to converge in $\mc S'$ (we refer to \cite{Cobb2} for a discussion of this phenomenon).

It will be useful to define a space $\mc S'_h$ of tempered distributions for which the homogeneous Littlewood-Paley decomposition always holds.

\begin{defi}\label{d:SpH}
We note $\mc S'_h$ the set of all $f \in \mc S'$ such that 
\begin{equation*}
\chi(\lambda \xi) \what{f} (\xi) \tend_{\lambda \rightarrow + \infty} 0 \qquad \text{in } \mc S'.
\end{equation*}
In particular, for all $f \in \mc S'_h$, we have $f = \sum_{j \in \mc Z} \dot{ \Delta}_j f$ with convergence of the sum in the $\mc S'$ topology.
\end{defi}

\begin{rmk}
Functions in $\mc S'_h$ should be understood as obeying some some kind of far field. In fact, $\mc S'_h$ functions possess weak (\textsl{i.e.} average) decay properties at infinity, as shown in \cite{Cobb2}.
\end{rmk}

One of the main interests of the Littwood-Paley decomposition is the way it relates to differentiation. Basically, because a block $\dot{\Delta}_j f$ has its Fourier transform supported in an annulus of size roughly $2^j$, the derivative will act as a multiplication by $2^j$. This remark, formalized in the Bernstein inequalities below, can be extended to more general Fourier multiplication operators.

\begin{lemma}[Bernstein inequalities]\label{l:Bernstein}
Let  $0<r<R$.   A constant $C$ exists so that, for any nonnegative integer $k$, any couple $(p,q)$ 
in $[1,+\infty]^2$, with  $p\leq q$,  and any function $u\in L^p$,  we  have, for all $\lambda>0$,
$$
\displaylines{
{\rm supp}\, ( \widehat u ) \subset   B(0,\lambda R)\quad
\Longrightarrow\quad
\|\nabla^k u\|_{L^q}\, \leq\,
 C^{k+1}\,\lambda^{k+d\left(\frac{1}{p}-\frac{1}{q}\right)}\,\|u\|_{L^p}\;;\cr
{\rm supp}\, ( \widehat u ) \subset \{\xi\in\R^d\,|\, r\lambda\leq|\xi|\leq R\lambda\}
\quad\Longrightarrow\quad C^{-k-1}\,\lambda^k\|u\|_{L^p}\,
\leq\,
\|\nabla^k u\|_{L^p}\,
\leq\,
C^{k+1} \, \lambda^k\|u\|_{L^p}\,.
}$$
\end{lemma}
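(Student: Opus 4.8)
The plan is to prove both inequalities first in the normalized case $\lambda = 1$ and then recover the general statement by scaling. If $u$ satisfies the spectral assumption at scale $\lambda$, set $v(x) := u(x/\lambda)$, so that $\widehat v(\xi) = \lambda^d\,\widehat u(\lambda\xi)$ is supported in $B(0,R)$ (resp.\ in $\{r\le|\xi|\le R\}$). The identities $\|v\|_{L^p} = \lambda^{d/p}\|u\|_{L^p}$ and $\|\partial^\beta v\|_{L^q} = \lambda^{d/q-|\beta|}\|\partial^\beta u\|_{L^q}$ then turn the $\lambda=1$ estimates into the stated ones (for both the upper and the lower bound). So from now on I take $\lambda = 1$.

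For the first inequality I would fix $\phi\in\mathcal D(\R^d)$ identically equal to $1$ on $\bar B(0,R)$; then the spectral support of $u$ gives, for any multi-index $\beta$ with $|\beta|=k$, the identity $\widehat{\partial^\beta u} = (i\xi)^\beta\widehat u = (i\xi)^\beta\phi(\xi)\,\widehat u$, hence $\partial^\beta u = (\partial^\beta\check\phi)\star u$ with $\check\phi := \mathcal F^{-1}\phi$. Young's convolution inequality with the exponent $a\in[1,+\infty]$ defined by $1+\tfrac1q = \tfrac1a+\tfrac1p$ (admissible precisely because $p\le q$) yields $\|\partial^\beta u\|_{L^q}\le\|\partial^\beta\check\phi\|_{L^a}\|u\|_{L^p}$, and since $\|\cdot\|_{L^a}\le\max\{\|\cdot\|_{L^1},\|\cdot\|_{L^\infty}\}$ it is enough to bound $\|\partial^\beta\check\phi\|_{L^1}$ and $\|\partial^\beta\check\phi\|_{L^\infty}$ by $C^{k+1}$, with $C$ depending only on $d$, $R$ and $\phi$. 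The $L^\infty$ bound is immediate from $\|\partial^\beta\check\phi\|_{L^\infty}\lesssim\int|\xi|^k|\phi(\xi)|\,{\rm d}\xi$ together with the support of $\phi$, and the $L^1$ bound follows from the weighted estimate $(1+|x|^2)^d|\partial^\beta\check\phi(x)|\lesssim\|(\mathrm{Id}-\Delta_\xi)^d[(i\xi)^\beta\phi]\|_{L^1}$, whose right-hand side is $\le C^{k+1}$ after expanding the derivatives. Taking $p=q$ in this inequality also gives the upper bound in the second assertion, since the annulus is contained in $B(0,R)$.

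For the lower bound in the second inequality I would use the multinomial identity $\sum_{|\beta|=k}\tfrac{k!}{\beta!}\,\xi^{2\beta} = |\xi|^{2k}$. Choose $\phi\in\mathcal D(\R^d\setminus\{0\})$ equal to $1$ on $\{r\le|\xi|\le R\}$ and set $\widehat{g_\beta}(\xi) := \tfrac{k!}{\beta!}\,\tfrac{(-i\xi)^\beta}{|\xi|^{2k}}\,\phi(\xi)$; then $\sum_{|\beta|=k}(i\xi)^\beta\widehat{g_\beta}(\xi) = \phi(\xi)$, so from $\widehat u = \phi\widehat u$ one obtains $u = \sum_{|\beta|=k} g_\beta\star\partial^\beta u$ and therefore $\|u\|_{L^p}\le\bigl(\sum_{|\beta|=k}\|g_\beta\|_{L^1}\bigr)\,\|\nabla^k u\|_{L^p}$ by Young's inequality. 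Each $\|g_\beta\|_{L^1}$ is controlled through the same weighted Fourier-transform argument as above, on the fixed support of $\phi$ where $|\xi|^{-1}$ is bounded, and summing over $\beta$ the factor $\sum_{|\beta|=k}\tfrac{k!}{\beta!} = d^k$ keeps everything of size $\le C^{k+1}$.

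The only genuinely delicate point is the bookkeeping of constants: one must verify that the quantities produced at each step really are of the form $C^{k+1}$, uniformly in $k$ and — for the first inequality — uniformly in the Young exponent $a\in[1,+\infty]$. This reduces to checking that differentiating $(i\xi)^\beta\phi(\xi)$ up to order $2d$ produces only $C^{k+1}$-sized contributions (the polynomial factor $|\xi|^{|\beta|}$ being bounded on $\mathrm{supp}\,\phi$, and the Leibniz coefficients being $\le 2^{k}$), which is routine but somewhat tedious; everything else is a direct application of Young's inequality and of the scaling relations. This is the argument of Lemma~2.1 in \cite{BCD}, to which I would refer for the details.
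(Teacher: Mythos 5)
Your proof is correct and is precisely the standard argument (scaling reduction to $\lambda=1$, a Fourier cutoff plus Young's inequality for the upper bound, and the multinomial identity $\sum_{|\beta|=k}\tfrac{k!}{\beta!}\xi^{2\beta}=|\xi|^{2k}$ to invert $\nabla^k$ on the annulus for the lower bound). The paper itself gives no proof of this lemma --- it is stated in the appendix as a known tool, with \cite{BCD} as the reference --- and your argument is exactly the one from Lemma 2.1 there, so there is nothing to compare beyond noting that your constant bookkeeping ($k^{2d}\le C^k$, Leibniz coefficients $\le 2^k$, $\sum_{|\beta|=k}k!/\beta!=d^k$) is the right way to justify the $C^{k+1}$ dependence.
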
   

\begin{lemma}[see Lemma 2.2 in \cite{BCD}]\label{l:FourierMultiplier}
Let $m(\xi)$ be a $C^k$ function away from $\xi = 0$ with $k = 2 \lfloor 1 + d/2 \rfloor$ and assume there is a degree $s \in \R$ such that $|\nabla^l m (\xi)| \leq C |\xi|^{s - l}$ for all $\xi \neq 0$ and $0 \leq l \leq k$. Then there exists a constant depending only on $m$ and on the the dyadic decomposition function $\chi$ such that, for all $p \in [1, +\infty]$,
\begin{equation*}
\forall j \in \mathbb{Z}, \forall f \in L^p, \qquad \| \dot{\Delta}_j m(D) f \|_{L^p} \leq C 2^{js} \| \dot{\Delta}_j f \|_{L^p}.
\end{equation*}
\end{lemma}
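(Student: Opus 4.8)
The plan is to follow the classical route: realize $m(D)$ restricted to a dyadic block as a convolution operator, control the $L^1$ norm of its kernel, and let Young's inequality handle all Lebesgue exponents at once.

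First I would introduce an auxiliary cut-off. Since the spectrum of $\dot{\Delta}_j f = \varphi(2^{-j}D)f$ lies in a dyadic annulus of the form $\{c_1 2^j \le |\xi| \le c_2 2^j\}$, I fix once and for all a smooth function $\psi$, compactly supported in a fixed annulus bounded away from the origin and equal to $1$ on the support of $\varphi$, so that $\psi(2^{-j}\xi) \equiv 1$ on the Fourier support of $\dot{\Delta}_j$ for every $j \in \Z$. Then, up to a harmless normalization constant,
\[
m(D)\dot{\Delta}_j f = \mc F^{-1}\big[\, m(\xi)\psi(2^{-j}\xi)\,\widehat{\dot{\Delta}_j f}(\xi)\,\big] = \big(\mc F^{-1}g_j\big) * \dot{\Delta}_j f, \qquad g_j(\xi) := m(\xi)\psi(2^{-j}\xi),
\]
and Young's convolution inequality yields $\|m(D)\dot{\Delta}_j f\|_{L^p} \le \|\mc F^{-1}g_j\|_{L^1}\,\|\dot{\Delta}_j f\|_{L^p}$ for all $p \in [1,+\infty]$. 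Everything therefore reduces to the uniform bound $\|\mc F^{-1}g_j\|_{L^1} \lesssim 2^{js}$.

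Next I would rescale. Setting $\xi = 2^j\eta$ and $\tilde g_j(\eta) := 2^{-js}m(2^j\eta)\psi(\eta)$, one has $g_j(\xi) = 2^{js}\tilde g_j(2^{-j}\xi)$, hence $\|\mc F^{-1}g_j\|_{L^1} = 2^{js}\|\mc F^{-1}\tilde g_j\|_{L^1}$, so it suffices to bound $\|\mc F^{-1}\tilde g_j\|_{L^1}$ uniformly in $j$. The function $\tilde g_j$ is supported in the fixed annulus $\mathrm{supp}\,\psi$, which avoids the origin, so only the $C^k$ regularity of $m$ away from $0$ intervenes. There $|\eta| \approx 1$, and the hypothesis $|\nabla^l m(\xi)| \le C|\xi|^{s-l}$ gives $|\nabla^l_\eta (m(2^j\cdot))(\eta)| = 2^{jl}|(\nabla^l m)(2^j\eta)| \lesssim 2^{js}$ for $0 \le l \le k$; combined with the Leibniz rule and the bounds on the fixed bump $\psi$, this gives $\|\tilde g_j\|_{C^k} \lesssim 1$ uniformly in $j$, and since $\tilde g_j$ has fixed compact support, also $\|\tilde g_j\|_{H^k} \lesssim 1$.

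Finally I would invoke the standard fact that Sobolev control of a symbol yields $L^1$ control of its inverse Fourier transform: splitting $\|\mc F^{-1}h\|_{L^1} = \int (1+|x|)^{-k}(1+|x|)^{k}|\mc F^{-1}h(x)|\,dx$, then using Cauchy--Schwarz together with Plancherel (via $x^\beta \mc F^{-1}h = c\,\mc F^{-1}\partial^\beta h$), one obtains $\|\mc F^{-1}h\|_{L^1} \lesssim \|h\|_{H^k}$ whenever $2k > d$; this holds here because $k = 2\lfloor 1 + d/2 \rfloor \ge d + 1$. Applying this to $h = \tilde g_j$ gives $\|\mc F^{-1}\tilde g_j\|_{L^1} \lesssim 1$, hence $\|\mc F^{-1}g_j\|_{L^1} \lesssim 2^{js}$, and the lemma follows. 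The only mildly delicate point is the uniform $C^k$ (equivalently $H^k$) estimate for the rescaled symbol $\tilde g_j$; the rest is bookkeeping, and in particular no sign condition on $s$ is required.
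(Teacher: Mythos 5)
Your proof is correct and is essentially the standard argument behind the cited Lemma 2.2 of [BCD] (the paper itself does not reprove this lemma, it simply imports it): introduce a fattened annular cut-off $\psi$, write $m(D)\dot{\Delta}_j$ as convolution with the kernel $\mc F^{-1}\big[m\,\psi(2^{-j}\cdot)\big]$, rescale to a symbol supported in a fixed annulus with uniformly bounded $C^k$ norm, and conclude via $\|\mc F^{-1}h\|_{L^1}\lesssim \|h\|_{H^k}$ for $2k>d$ together with Young's inequality. All the steps check out, including the verification that $k=2\lfloor 1+d/2\rfloor$ exceeds $d/2$ and the observation that no sign condition on $s$ is needed.
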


These two Lemmas are the corner stone for defining a functional framework that is adapted to the Fourier multiplication operators we use in this article. In particular, we will be interested in the fractional Laplacian $(- \Delta)^{s/2}$, whose symbol is $|\xi|^s$ and the Leray projection operator, whose symbol is
\begin{equation*}
m(\xi) = {\rm Id} - \frac{\xi \otimes \xi}{|\xi|^2}.
\end{equation*}
These operators have symbols which are homogeneous functions, and Lemma \ref{l:FourierMultiplier} applies to both of them.

\subsection{Besov Spaces}

In this paragraph, we introduce the class of Besov spaces (both homogeneous and non-homogeneous). These are based on the Littlewood-Paley decomposition we have just defined, and are tailored to Fourier multiplication operators. Let us start with the non-homogeneous Besov spaces.

\begin{defi} \label{d:c2Besov}
Let $s\in\R$ and $1\leq p,r\leq+\infty$. The \textsl{non-homogeneous Besov space} $B^{s}_{p,r}\,=\,B^s_{p,r}(\R^d)$ is defined as the set of tempered distributions $u \in \mc S'$ for which
$$
\|f\|_{B^{s}_{p,r}}\,:=\,
\left\|\left(2^{js}\,\|\Delta_j f \|_{L^p}\right)_{j \geq -1}\right\|_{\ell^r}\,<\,+\infty\,.
$$
The non-homogeneous space $B^s_{p, r}$ is Banach for all values of $(s, p, r)$.
\end{defi}

As with the usual (potential) Sobolev spaces $W^{s, p}$, the exponent $s \in \R$ acts as a regularity index and $p$ as an integrability exponent. In fact, an immediate consequence of the Bernstein inequalities are the embeddings
\begin{equation}\label{eq:crudeEmbed}
B^k_{p, 1} \subset W^{k, p} \subset B^k_{p, \infty},
\end{equation}
which hold for all $k \in \N$ and all $p \in [1, + \infty]$.

\medskip

In the same way, we define the homogeneous Besov spaces, which are based on the homogeneous Littlewood-Paley decomposition. The only issue is that we must account for the fact that the decomposition might not always converge: this is why we restrict our attention to a subcritical range of exponents and involve the space $\mc S'_h$ in our definition.\footnote{A more general definition (see \cite{Sawano}) of homogeneous Besov spaces involves working with clases of ditributions modulo polynomials, but there is no real difference in our subcritical framework. Here, subcritical spaces refer to homogeneous Besov spaces $\dot{B}^s_{p, r}$ where the exponent $s \in \R$ is \textsl{below} a critical value, see the Definition hereafter.}

\begin{defi}
Let $(s, p, r) \in \R \times [1, + \infty]^2$ such that the following subcriticality condition is fulfilled:
\begin{equation}\label{eq:homoBesovCond}
s < \frac{d}{p} \qquad \text{or} \qquad s = \frac{d}{p} \text{ and } r=1.
\end{equation}
Then we define the \textsl{homogeneous Besov space} $\dot{B}^s_{p, r}$ as being the set of those $f \in \mc S'_h$ such that
\begin{equation*}
\| f \|_{\dot{B}^s_{p, r}} := \left\| \left( 2^{ms} \| \dot{\Delta}_m f \|_{L^p} \right)_{m \in \mathbb{Z}} \right\|_{\ell^r(\mathbb{Z})} < + \infty.
\end{equation*}
The homogeneous Besov space $\dot{B}^s_{p, r}$ is Banach for values of $(s, p, r)$ that satisfy \eqref{eq:homoBesovCond}.
\end{defi}

Similarly to the non-homogeneous case, the Bernstein inequalities let us straightforwardly infer the embeddings
\begin{equation*}
\dot{B}^k_{p, 1} \subset W^{k, p} \subset \dot{B}^k_{p, \infty}
\end{equation*}
as long as the exponent $k \in \N$ satisfies the subcriticality condition $k < \frac{d}{p}$ (in particular, not for $p = +\infty$ and $k=0$).

\medskip

The main reason we consider homogeneous Besov spaces is that they behave better than their non-homogeneous counterparts under the action of Fourier multipliers that exhibit a singular behavior at low frequencies. The following Proposition can be seen as a direct consequence of Lemma \ref{l:FourierMultiplier}.

\begin{prop}\label{p:FracLapBesov}
Consider $\sigma, s \in \R$ and $p, r \in [1, + \infty]$ such that the triplets $(s, p, r)$ and $(\sigma, p, r)$ fulfill the subcriticality condition \eqref{eq:homoBesovCond}. Then the fractional Laplace operator $(- \Delta)^{(s - \sigma)/2}$ defines a bounded operator between the homogeneous Besov spaces
$$
\xymatrix{
(- \Delta)^{(s - \sigma)/2} : \dot{B}^s_{p, r} \ar[r] & \dot{B}^\sigma_{p, r}.
}
$$
\end{prop}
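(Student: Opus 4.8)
The plan is to reduce everything to the dyadic-block estimate of Lemma \ref{l:FourierMultiplier}, applied to the symbol $m(\xi)=|\xi|^{s-\sigma}$.

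\medskip

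First I would check the hypotheses of Lemma \ref{l:FourierMultiplier} for $m(\xi)=|\xi|^{s-\sigma}$. This is a $C^\infty$ function away from $\xi=0$, and a direct (and routine) computation shows that $|\nabla^l m(\xi)|\leq C_l|\xi|^{(s-\sigma)-l}$ for all $l\geq 0$ and all $\xi\neq 0$, so the homogeneity degree in the sense of the Lemma is exactly $s-\sigma$. Hence Lemma \ref{l:FourierMultiplier} yields, for every $p\in[1,+\infty]$ and every $j\in\Z$,
\begin{equation*}
\big\| \dot{\Delta}_j (- \Delta)^{(s-\sigma)/2} f \big\|_{L^p} \leq C\, 2^{j(s-\sigma)} \big\| \dot{\Delta}_j f \big\|_{L^p},
\end{equation*}
with $C$ depending only on $\chi$ (and on $s-\sigma$, $d$).

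\medskip

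Next I would assemble the homogeneous Besov norm. Given $f\in\dot{B}^s_{p,r}$ (so in particular $f\in\mc S'_h$ and the homogeneous Littlewood--Paley decomposition converges), set $g:=(- \Delta)^{(s-\sigma)/2}f$, defined as $\sum_{j\in\Z}\dot{\Delta}_j(- \Delta)^{(s-\sigma)/2}f$. Since the symbol is smooth away from the origin, $g$ again lies in $\mc S'_h$ (the same polynomial-exclusion argument as in the proof of Proposition \ref{p:StokesExUn} applies, or one can argue directly from $\dot{\Delta}_j$-summability). Then, using that $\dot{\Delta}_j$ and $(- \Delta)^{(s-\sigma)/2}$ are Fourier multipliers and hence commute,
\begin{equation*}
\| g \|_{\dot{B}^\sigma_{p, r}} = \left\| \Big( 2^{j\sigma} \big\| \dot{\Delta}_j (- \Delta)^{(s-\sigma)/2} f \big\|_{L^p} \Big)_{j\in\Z} \right\|_{\ell^r} \leq C \left\| \Big( 2^{j\sigma} 2^{j(s-\sigma)} \big\| \dot{\Delta}_j f \big\|_{L^p} \Big)_{j\in\Z} \right\|_{\ell^r} = C \| f \|_{\dot{B}^s_{p, r}},
\end{equation*}
since $2^{j\sigma}2^{j(s-\sigma)}=2^{js}$. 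This is exactly the claimed boundedness. Finally one notes that both $(s,p,r)$ and $(\sigma,p,r)$ satisfy the subcriticality condition \eqref{eq:homoBesovCond} by assumption, so both spaces $\dot{B}^s_{p,r}$ and $\dot{B}^\sigma_{p,r}$ are genuine Banach spaces and the statement is meaningful; no extra care is needed there beyond invoking that hypothesis.

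\medskip

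I do not expect any serious obstacle: the only mild points to be careful about are verifying the derivative bounds on $|\xi|^{s-\sigma}$ (elementary) and confirming that $(- \Delta)^{(s-\sigma)/2}f$ remains in $\mc S'_h$ so that its homogeneous norm is well posed — this follows because the operator is a smooth-away-from-zero multiplier and $f\in\mc S'_h$, just as in the uniqueness part of the proof of Proposition \ref{p:StokesExUn}. Everything else is a one-line bookkeeping of the exponents $2^{js}=2^{j\sigma}\cdot 2^{j(s-\sigma)}$.
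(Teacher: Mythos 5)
Your overall strategy is the same as the paper's: apply Lemma \ref{l:FourierMultiplier} to the symbol $|\xi|^{s-\sigma}$ blockwise and sum the resulting estimates. The norm bookkeeping at the end is fine. However, there is a genuine gap at the one point where the proposition has real content, namely the claim that $g:=\sum_{j\in\Z}\dot{\Delta}_j(-\Delta)^{(s-\sigma)/2}f$ is a well-defined element of $\mc S'_h$. Your justification --- ``this follows because the operator is a smooth-away-from-zero multiplier and $f\in\mc S'_h$'' --- is not correct as stated: a multiplier that is smooth away from the origin but singular at $\xi=0$ (think of $(-\Delta)^{(s-\sigma)/2}$ with $s-\sigma$ very negative) can make the low-frequency part of the series diverge even for $f\in\mc S'_h$. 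The polynomial-exclusion argument from Proposition \ref{p:StokesExUn} does not help either; that argument establishes \emph{uniqueness} of a candidate modulo polynomials, not \emph{convergence} of the defining series.

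The missing step, which is exactly what the paper supplies, is a quantitative bound on the low frequencies: by the Bernstein inequalities,
\begin{equation*}
\sum_{j \leq 0} \big\| \dot{\Delta}_j (- \Delta)^{(s - \sigma)/2} f \big\|_{L^\infty} \lesssim \sum_{j \leq 0} 2^{jd/p} \big\| \dot{\Delta}_j (- \Delta)^{(s - \sigma)/2} f \big\|_{L^p} \lesssim \sum_{j \leq 0} 2^{j(d/p - \sigma)} \| f \|_{\dot{B}^s_{p, r}},
\end{equation*}
and this last sum is finite precisely because $(\sigma,p,r)$ satisfies the subcriticality condition \eqref{eq:homoBesovCond}. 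So the hypothesis on $(\sigma,p,r)$ is not there merely to make the target space Banach, as your closing remark suggests; it is what guarantees normal convergence of the low-frequency part of the series and hence that $g$ exists in $\mc S'_h$ at all. The high-frequency part converges in $\mc S'$ without difficulty. Once you insert this estimate, your proof is complete and coincides with the paper's.
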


\begin{proof}
We consider $f \in \dot{B}^s_{p, r}$. Lemma \ref{l:FourierMultiplier} makes it plain that we have an inequality between the norms
\begin{equation*}
\| (- \Delta)^{(s - \sigma)/2} f \|_{\dot{B}^s_{p, r}} \lesssim \| f \|_{\dot{B^\sigma}_{p, r}},
\end{equation*}
however it is unclear whether $(- \Delta)^{(s - \sigma)/2} f$ is properly defined as an element of $\mc S'_h$. Let us show that the series
\begin{equation*}
g := \sum_{j \in \Z} \dot{\Delta}_j (- \Delta)^{(s - \sigma)/2} f
\end{equation*}
converges in $\mc S'$. On the one hand, the high frequency part $\sum_{j \geq 0} \dot{\Delta}_j (- \Delta)^{(s - \sigma)/2} f$ creates no problem. For the low frequency part, we note that the Bernstein inequalities imply
\begin{equation*}
\begin{split}
\sum_{j \leq 0} \| \dot{\Delta}_j (- \Delta)^{(s - \sigma)/2} f \|_{L^\infty} & \lesssim \sum_{j \leq 0} 2^{jd/p} \| \dot{\Delta}_j (- \Delta)^{(s - \sigma)/2} f \|_{L^p} \\
& \lesssim \sum_{j \leq 0} 2^{j(- \sigma + d/p)} \| f \|_{B^s_{p, r}},
\end{split}
\end{equation*}
and this last sum must be finite by virtue of the subcriticality condition \eqref{eq:homoBesovCond}, thus the series is normally convergent. Finally, this means that $g$ is indeed an element of $\mc S'_h$.
\end{proof}

The Bernstein inequalities immediately translate into embeddings of Besov spaces. We record these in the following Proposition.

\begin{prop}\label{p:BesovEmbed}
Consider $s \in \R$ and $p_1, p_2, r_1, r_2 \in [1, + \infty]$ with $r_1 \leq r_2$ and $p_1 \leq p_2$. Then we have
\begin{equation*}
B^s_{p_1, r_1} \subset B^{s - d \left( \frac{1}{p_1} - \frac{1}{p_2} \right)}_{p_2, r_2}.
\end{equation*}
\end{prop}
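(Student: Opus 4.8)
The statement is a direct consequence of the first Bernstein inequality (Lemma \ref{l:Bernstein}) combined with the elementary nesting $\ell^{r_1} \hookrightarrow \ell^{r_2}$ for $r_1 \leq r_2$. The plan is simply to estimate each Littlewood--Paley block $\Delta_j f$ of $f \in B^s_{p_1, r_1}$ in the larger Lebesgue space $L^{p_2}$ using the frequency localization, renormalize the weights, and then pass from the $\ell^{r_1}$ sum to the $\ell^{r_2}$ sum.

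\textbf{Key steps.} First I would fix $f \in B^s_{p_1, r_1}$ and recall that, for every $j \geq -1$, the block $\Delta_j f$ has Fourier transform supported in a ball $B(0, C 2^j)$ (for $j = -1$ this is the ball defining $\Delta_{-1}$, for $j \geq 0$ it is contained in the annulus, hence a fortiori in a ball of radius $\sim 2^j$). Since $p_1 \leq p_2$, the first inequality of Lemma \ref{l:Bernstein} applied with $k = 0$ and $\lambda \sim 2^j$ gives
\begin{equation*}
\| \Delta_j f \|_{L^{p_2}} \lesssim 2^{j d \left( \frac{1}{p_1} - \frac{1}{p_2} \right)} \| \Delta_j f \|_{L^{p_1}},
\end{equation*}
with a constant independent of $j$ and $f$. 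Writing $\sigma := s - d\big( \tfrac{1}{p_1} - \tfrac{1}{p_2}\big)$ and multiplying by $2^{j\sigma}$, this reads $2^{j\sigma} \| \Delta_j f \|_{L^{p_2}} \lesssim 2^{js} \| \Delta_j f \|_{L^{p_1}}$ for all $j \geq -1$.

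\textbf{Conclusion.} Taking the $\ell^{r_2}(j \geq -1)$ norm of both sides, and using that $r_1 \leq r_2$ implies $\| (a_j)_j \|_{\ell^{r_2}} \leq \| (a_j)_j \|_{\ell^{r_1}}$, I obtain
\begin{equation*}
\| f \|_{B^{\sigma}_{p_2, r_2}} = \left\| \left( 2^{j\sigma} \| \Delta_j f \|_{L^{p_2}} \right)_{j \geq -1} \right\|_{\ell^{r_2}} \lesssim \left\| \left( 2^{js} \| \Delta_j f \|_{L^{p_1}} \right)_{j \geq -1} \right\|_{\ell^{r_2}} \leq \left\| \left( 2^{js} \| \Delta_j f \|_{L^{p_1}} \right)_{j \geq -1} \right\|_{\ell^{r_1}} = \| f \|_{B^s_{p_1, r_1}},
\end{equation*}
which is the claimed embedding. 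There is no real obstacle here: the only point requiring a word of care is that the block $\Delta_{-1}f$ is localized in a ball rather than an annulus, but this is precisely the hypothesis under which the first (ball-type) Bernstein inequality applies, so the argument goes through uniformly in $j$.
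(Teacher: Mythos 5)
Your proof is correct and follows exactly the route the paper intends: the paper states this Proposition without proof, remarking only that "the Bernstein inequalities immediately translate into embeddings of Besov spaces," which is precisely your argument (first Bernstein inequality on each block, then the nesting $\ell^{r_1} \hookrightarrow \ell^{r_2}$). Nothing is missing.
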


We now give a few additional properties of Besov spaces that are useful in our proofs: two interpolation inequalities and a Fatou-type property.

\begin{prop}\label{p:interpolation}
Consider real numbers $s_1 \leq s_2$ and $p, r \in [1, + \infty]$. Then, for all $\theta \in [0, 1]$ and $s := \theta s_1 + (1 - \theta)s_2$, we have
\begin{equation*}
\forall f \in B^{s_2}_{p, r}, \qquad \| f \|_{B^s_{p, r}} \leq \| f \|_{B^{s_1}_{p, r}}^\theta \| f \|_{B^{s_1}_{p, r}}^{1 - \theta}.
\end{equation*}
\end{prop}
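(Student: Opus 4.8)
The final statement to prove is Proposition~\ref{p:interpolation}, the standard interpolation inequality for Besov spaces: for $s_1\le s_2$, $\theta\in[0,1]$ and $s=\theta s_1+(1-\theta)s_2$, one has $\|f\|_{B^s_{p,r}}\le \|f\|_{B^{s_1}_{p,r}}^\theta\|f\|_{B^{s_2}_{p,r}}^{1-\theta}$ for $f\in B^{s_2}_{p,r}$. (I note in passing that the typeset statement has a harmless misprint: the second exponent should read $s_2$, not $s_1$, on the right-hand side.)

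The plan is to work directly from the definition of the Besov norm as the $\ell^r$-norm of the sequence $\bigl(2^{js}\|\Delta_j f\|_{L^p}\bigr)_{j\ge -1}$. First I would decompose the exponent $2^{js}=2^{j\theta s_1}\cdot 2^{j(1-\theta)s_2}$, so that each term factors as
\begin{equation*}
2^{js}\|\Delta_j f\|_{L^p}=\Bigl(2^{js_1}\|\Delta_j f\|_{L^p}\Bigr)^{\theta}\Bigl(2^{js_2}\|\Delta_j f\|_{L^p}\Bigr)^{1-\theta}.
\end{equation*}
Then I would take the $\ell^r$-norm over $j\ge -1$ and apply H\"older's inequality for sequences, with the conjugate pair of exponents $1/\theta$ and $1/(1-\theta)$ (interpreting the boundary cases $\theta\in\{0,1\}$ trivially). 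This gives
\begin{equation*}
\bigl\|(2^{js}\|\Delta_j f\|_{L^p})_j\bigr\|_{\ell^r}\le \bigl\|(2^{js_1}\|\Delta_j f\|_{L^p})_j\bigr\|_{\ell^r}^{\theta}\,\bigl\|(2^{js_2}\|\Delta_j f\|_{L^p})_j\bigr\|_{\ell^r}^{1-\theta},
\end{equation*}
which is exactly the claimed inequality. To apply H\"older cleanly when $r<\infty$ I would raise the left side to the power $r$, write $a_j^r=(a_j^{r\theta})(a_j^{r(1-\theta)})$ with $a_j=2^{js}\|\Delta_j f\|_{L^p}$, and use $\sum_j (b_j)(c_j)\le(\sum_j b_j^{1/\theta})^{\theta}(\sum_j c_j^{1/(1-\theta)})^{1-\theta}$ with $b_j=a_j^{r\theta}$, $c_j=a_j^{r(1-\theta)}$; the case $r=\infty$ is the even simpler pointwise estimate $\sup_j a_j\le(\sup_j a_j')^\theta(\sup_j a_j'')^{1-\theta}$.

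There is essentially no serious obstacle here; the only points requiring a word of care are the degenerate cases $\theta=0$ or $\theta=1$ (where one factor is vacuous and the statement is an identity) and the membership claim, namely that $f\in B^{s_2}_{p,r}$ together with the definition already forces $f\in B^{s_1}_{p,r}$ since $s_1\le s_2$ implies $2^{js_1}\le 2^{js_2}$ for $j\ge -1$ — so $B^{s_2}_{p,r}\subset B^{s_1}_{p,r}$ and all three norms on the right are finite. I would simply remark on this inclusion and then present the two-line H\"older computation. If one wishes, the same proof applies verbatim to homogeneous Besov spaces with the sum taken over $j\in\mathbb{Z}$, but the statement as given is for the non-homogeneous scale, so I would keep the sum over $j\ge -1$.

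\begin{proof}
We may assume $\theta\in\,]0,1[\,$, the cases $\theta\in\{0,1\}$ being trivial. Since $s_1\le s_2$, we have $2^{js_1}\le 2^{js_2}$ for every $j\ge -1$, so $B^{s_2}_{p,r}\subset B^{s_1}_{p,r}$ and all the norms below are finite. For each $j\ge -1$, writing $s=\theta s_1+(1-\theta)s_2$ gives the pointwise factorisation
\begin{equation*}
2^{js}\|\Delta_j f\|_{L^p}=\Bigl(2^{js_1}\|\Delta_j f\|_{L^p}\Bigr)^{\theta}\Bigl(2^{js_2}\|\Delta_j f\|_{L^p}\Bigr)^{1-\theta}.
\end{equation*}
If $r=+\infty$, taking the supremum over $j$ and using that $\sup$ of a product is bounded by the product of the suprema yields the claim. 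If $r<+\infty$, set $a_j:=2^{js}\|\Delta_j f\|_{L^p}$; then
\begin{equation*}
\sum_{j\ge -1} a_j^{\,r}=\sum_{j\ge -1}\Bigl(2^{js_1}\|\Delta_j f\|_{L^p}\Bigr)^{r\theta}\Bigl(2^{js_2}\|\Delta_j f\|_{L^p}\Bigr)^{r(1-\theta)},
\end{equation*}
and H\"older's inequality for sequences with conjugate exponents $1/\theta$ and $1/(1-\theta)$ gives
\begin{equation*}
\sum_{j\ge -1} a_j^{\,r}\le\Bigl(\sum_{j\ge -1}\bigl(2^{js_1}\|\Delta_j f\|_{L^p}\bigr)^{r}\Bigr)^{\theta}\Bigl(\sum_{j\ge -1}\bigl(2^{js_2}\|\Delta_j f\|_{L^p}\bigr)^{r}\Bigr)^{1-\theta}.
\end{equation*}
Raising to the power $1/r$ yields $\|f\|_{B^s_{p,r}}\le\|f\|_{B^{s_1}_{p,r}}^{\theta}\|f\|_{B^{s_2}_{p,r}}^{1-\theta}$, as desired.
\end{proof}
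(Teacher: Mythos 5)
Your proof is correct, and it is the standard argument; note that the paper states Proposition \ref{p:interpolation} in the appendix without any proof, so there is nothing to compare against --- your H\"older computation on the dyadic blocks is exactly what one would supply, and you are right that the statement as typeset contains a misprint (the second exponent on the right should be $s_2$). One small inaccuracy in your preamble sentence: the claim ``$2^{js_1}\le 2^{js_2}$ for every $j\ge -1$'' fails at $j=-1$ when $s_1<s_2$ (there the inequality reverses), so the embedding $B^{s_2}_{p,r}\subset B^{s_1}_{p,r}$ holds only up to the harmless constant $2^{s_2-s_1}$ coming from the $j=-1$ block. This does not affect your argument, since the H\"older step never uses that pointwise comparison and the target inequality is vacuous anyway if the right-hand side is infinite; you could simply delete that sentence or state the embedding with a constant.
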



\begin{prop}[See Proposition 2.104 in \cite{BCD}]\label{p:LogInt}
Consider $\epsilon > 0$ and $f \in B^\epsilon_{\infty, \infty}$. Then,
\begin{equation*}
\| f \|_{L^\infty} \lesssim \frac{1}{\epsilon} \, \| f \|_{B^0_{\infty, \infty}} \left\{ 1 + \log \left( \frac{\| f \|_{B^\epsilon_{\infty, \infty}}}{\| f \|_{B^0_{\infty, \infty}}} \right) \right\}.
\end{equation*}
\end{prop}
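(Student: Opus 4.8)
The plan is to reduce $\|f\|_{L^\infty}$ to a sum of its Littlewood--Paley blocks and to split that sum at a frequency threshold $N$ chosen so as to balance the contributions of the two Besov norms. Using the nonhomogeneous decomposition ${\rm Id} = \sum_{j \geq -1}\Delta_j$, which holds on all of $\mc S'$, together with the triangle inequality, I would start from
\begin{equation*}
\|f\|_{L^\infty} \leq \sum_{j \geq -1}\|\Delta_j f\|_{L^\infty} = \sum_{-1 \leq j \leq N}\|\Delta_j f\|_{L^\infty} + \sum_{j > N}\|\Delta_j f\|_{L^\infty},
\end{equation*}
where $N \geq -1$ is an integer to be fixed later. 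If $\|f\|_{B^0_{\infty,\infty}} = 0$ then every block vanishes and the bound is trivial, so I may assume this quantity is positive.

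For the low-frequency part I would use only the $B^0_{\infty,\infty}$ norm: since $\|\Delta_j f\|_{L^\infty} \leq \|f\|_{B^0_{\infty,\infty}}$ for every $j$, the first sum is at most $(N+2)\|f\|_{B^0_{\infty,\infty}}$, growing linearly in $N$. For the high-frequency part I would instead invoke the $B^\epsilon_{\infty,\infty}$ regularity, $\|\Delta_j f\|_{L^\infty} \leq 2^{-j\epsilon}\|f\|_{B^\epsilon_{\infty,\infty}}$, and sum the geometric series
\begin{equation*}
\sum_{j > N} 2^{-j\epsilon} = \frac{2^{-(N+1)\epsilon}}{1 - 2^{-\epsilon}}.
\end{equation*}
The factor $(1-2^{-\epsilon})^{-1}$ is precisely where the $1/\epsilon$ of the statement originates: applying the elementary inequality $1 - e^{-x} \geq x/(1+x)$ with $x = \epsilon \log 2$ yields $(1-2^{-\epsilon})^{-1} \lesssim 1 + 1/\epsilon$, so the second sum is controlled by $\epsilon^{-1} 2^{-N\epsilon}\,\|f\|_{B^\epsilon_{\infty,\infty}}$ up to a constant.

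Combining the two bounds and optimizing in $N$, I would match the decaying high-frequency scale to the low-frequency one, choosing
\begin{equation*}
N \simeq \frac{1}{\epsilon \log 2}\,\log\!\left(\frac{\|f\|_{B^\epsilon_{\infty,\infty}}}{\|f\|_{B^0_{\infty,\infty}}}\right),
\end{equation*}
rounded up to a nonnegative integer. With this choice $2^{-N\epsilon}\|f\|_{B^\epsilon_{\infty,\infty}} \simeq \|f\|_{B^0_{\infty,\infty}}$, so the high-frequency part contributes $\lesssim \epsilon^{-1}\|f\|_{B^0_{\infty,\infty}}$, while the low-frequency part contributes $(N+2)\|f\|_{B^0_{\infty,\infty}}$, which in the regime $\epsilon \to 0^+$ relevant to our applications is bounded by $\epsilon^{-1}\|f\|_{B^0_{\infty,\infty}}\{1 + \log(\|f\|_{B^\epsilon_{\infty,\infty}}/\|f\|_{B^0_{\infty,\infty}})\}$; summing the two gives exactly the claimed inequality.

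The points requiring care are the choice of $N$ as a genuine nonnegative integer and the degenerate regime in which the ratio $\|f\|_{B^\epsilon_{\infty,\infty}}/\|f\|_{B^0_{\infty,\infty}}$ is close to $1$: there the optimal $N$ is $0$, and the additive constant ``$1$'' inside the braces is exactly what absorbs the leftover $\|f\|_{B^0_{\infty,\infty}}$ term. I expect the only genuinely delicate bookkeeping to be tracking the uniform $\epsilon$-dependence through the geometric-series constant, an issue that the inequality $1-e^{-x} \geq x/(1+x)$ resolves cleanly; everything else is routine dyadic summation.
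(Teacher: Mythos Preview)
The paper does not supply its own proof of this proposition; it is simply quoted from \cite{BCD} (Proposition 2.104 there). Your argument---splitting the Littlewood--Paley series at a threshold $N$, bounding the low-frequency part by $(N+2)\|f\|_{B^0_{\infty,\infty}}$ and the high-frequency tail geometrically via the $B^\epsilon_{\infty,\infty}$ norm, then optimising in $N$---is exactly the standard proof one finds in \cite{BCD}, and it is correct.
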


\begin{prop}[Theorem 2.72 in \cite{BCD}]\label{p:Fatou}
Let $(s, p, r) \in \R \times [1, + \infty]^2$ and consider a sequence of functions $(f_n)_{n \in \N}$ that is uniformly bounded in $B^s_{p, r}$, in other words $\| f_n \|_{B^s_{p, r}} \leq C$. Then the sequence converges in $\mc S'$ up to an extraction to a $f \in \mc S'$ that satisfies
\begin{equation*}
\| f \|_{B^s_{p, r}} \lesssim \limii_{n \rightarrow + \infty} \| f_n \|_{B^s_{p, r}}.
\end{equation*}
\end{prop}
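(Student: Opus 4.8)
The plan is to run the classical weak-compactness argument for Besov spaces; its only genuinely delicate points are the compactness statement in $\mc S'$ and the passage from distributional convergence to almost-everywhere convergence of each dyadic block. First I would check that a bounded subset of $B^s_{p, r}$ is bounded in $\mc S'$: for $\phi \in \mc S$, writing $\langle f_n, \phi\rangle = \sum_{j \geq -1} \langle \Delta_j f_n, \phi \rangle$ and using $|\langle \Delta_j f_n, \phi\rangle| \leq \|\Delta_j f_n\|_{L^p}\, \|\wtilde\Delta_j \phi\|_{L^{p'}}$ together with the uniform bound $\|\Delta_j f_n\|_{L^p} \leq 2^{-js}\|f_n\|_{B^s_{p,r}}$ (valid for every $j \geq -1$ since $\ell^r \subset \ell^\infty$) and the rapid decay in $j$ of $\|\wtilde\Delta_j \phi\|_{L^{p'}}$ coming from $\phi \in \mc S$, one gets $|\langle f_n, \phi\rangle| \leq C(\phi)\, \sup_n \|f_n\|_{B^s_{p,r}}$ where $C(\phi)$ is a continuous seminorm of $\phi$ on the Fr\'echet space $\mc S$. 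Since $\mc S$ is separable, a diagonal extraction over a countable dense subset combined with this equicontinuity yields a subsequence along which $f_n \to f$ in $\mc S'$ for some $f \in \mc S'$. It is convenient to first replace $(f_n)$ by a subsequence realizing $\liminf_n \|f_n\|_{B^s_{p,r}}$, so that the convergence of norms is retained under the subsequent extractions.

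Second, I would upgrade the convergence frequency by frequency. For each fixed $j \geq -1$ the block operator $\Delta_j$ is continuous on $\mc S'$, so $\Delta_j f_n \to \Delta_j f$ in $\mc S'$. Since the spectrum of $\Delta_j f_n$ lies in a fixed annulus (a ball when $j=-1$), the Bernstein inequalities (Lemma \ref{l:Bernstein}), together with the identity $\Delta_j f_n = \psi_j * \Delta_j f_n$ for a fixed $\psi_j \in \mc S$, show that $(\Delta_j f_n)_n$ is bounded in $C^k(\R^d)$ for every $k$, with bound depending only on $j$, $k$ and $\sup_n\|f_n\|_{B^s_{p,r}}$. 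By Arzel\`a--Ascoli and a further diagonal extraction over $j$, one obtains, for every $j \geq -1$, $\Delta_j f_n \to \Delta_j f$ locally uniformly — in particular pointwise on $\R^d$.

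Finally it remains to sum up. Fatou's lemma applied to $|\Delta_j f_n|^p \to |\Delta_j f|^p$ almost everywhere (and the supremum when $p = \infty$) gives $\|\Delta_j f\|_{L^p} \leq \limii_n \|\Delta_j f_n\|_{L^p}$ for each $j$; multiplying by $2^{js}$, taking the $\ell^r$ norm over a finite range $-1 \leq j \leq N$, and using that $\liminf$ is superadditive over finitely many indices (so that $\|(\limii_n a^{(n)}_j)_j\|_{\ell^r} \leq \limii_n \|(a^{(n)}_j)_j\|_{\ell^r}$ when the index set is finite) yields $\| (2^{js}\|\Delta_j f\|_{L^p})_{-1 \leq j \leq N}\|_{\ell^r} \leq \limii_n \|f_n\|_{B^s_{p, r}}$; letting $N \to +\infty$ (monotone convergence of the $\ell^r$ norm over increasing index sets) gives $\|f\|_{B^s_{p, r}} \leq \liminf_n \|f_n\|_{B^s_{p, r}}$, and in particular $f \in B^s_{p, r}$. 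The only point requiring a little care is in the first two steps: one must make sure the compactness in $\mc S'$ and the $C^k$ bounds on the blocks are uniform over all admissible exponents, including $p = \infty$ or $r = \infty$; the argument above is designed to be so, since the block estimate $\|\Delta_j f_n\|_{L^p} \leq 2^{-js}\|f_n\|_{B^s_{p,r}}$ and the Paley--Wiener smoothing are insensitive to $r$ and cover $p = \infty$ directly.
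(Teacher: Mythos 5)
The paper does not prove this proposition; it is quoted directly as Theorem 2.72 of \cite{BCD}. Your argument is correct and is essentially the standard proof of that cited result: weak-$*$ compactness in $\mc S'$ from the equicontinuity bound $|\langle f_n,\phi\rangle|\leq C(\phi)\sup_n\|f_n\|_{B^s_{p,r}}$, locally uniform convergence of each spectrally localized block via Bernstein and Arzel\`a--Ascoli, and a blockwise Fatou argument summed over finite ranges of $j$ — with the two genuinely delicate points (pre-extracting a subsequence realizing the $\liminf$, and handling $p=\infty$ or $r=\infty$) correctly attended to.
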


\begin{rmk}\label{r:Fatou}
This property also holds with the homogeneous Besov space $\dot{B}^s_{p, r}$, provided that \eqref{eq:homoBesovCond} is true. We refer to Theorem 2.25 in \cite{BCD} for a proof.
\end{rmk}

Finally, to conclude this paragraph, we focus on embeddings that are more precise than the ones in \eqref{eq:crudeEmbed} above. In particular, we will devote a special attention to the role of the third index $r$ of the Besov space $B^s_{p, r}$.

\begin{prop}[See Theorems 2.40 and 2.41 in \cite{BCD}]\label{p:refinedEmbed}
Consider $p \in [1, + \infty]$. Then we have the embeddings
\begin{equation*}
\begin{split}
& B^0_{p, p} \subset L^p \subset B^0_{p, 2} \qquad \text{if } 1 < p \leq 2 \\
& B^0_{p, 2} \subset L^p \subset B^0_{p, p} \qquad \text{if } 2 \leq p < + \infty.
\end{split}
\end{equation*}
\end{prop}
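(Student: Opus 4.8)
The plan is to derive all four inclusions from the Littlewood--Paley square function characterization of $L^p$, combined with the elementary nesting of $\ell^r$ sequence spaces and Minkowski's inequality. The single nontrivial input is the equivalence, valid for every $p \in \,]1, +\infty[$,
$$
\|f\|_{L^p} \approx \left\| \Big( \sum_{j \geq -1} |\Delta_j f|^2 \Big)^{1/2} \right\|_{L^p},
$$
with constants depending only on $p$ and $d$; this is exactly the content of the results of \cite{BCD} cited above, and it is where the restriction $1 < p < +\infty$ comes from, the equivalence being known to fail at the endpoints $p \in \{ 1, +\infty \}$. Throughout I write $S(f) := \big( \sum_{j} |\Delta_j f|^2 \big)^{1/2}$ for the square function.

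First I would treat the two inclusions with third index $r = p$, which follow from a pointwise comparison of $\ell^2$ and $\ell^p$ norms. When $1 < p \leq 2$, the nesting $\|\cdot\|_{\ell^2} \leq \|\cdot\|_{\ell^p}$ gives, pointwise in $x$, $S(f)(x) \leq \big( \sum_j |\Delta_j f(x)|^p \big)^{1/p}$; raising to the power $p$ and integrating yields $\|S(f)\|_{L^p}^p \leq \sum_j \|\Delta_j f\|_{L^p}^p = \|f\|_{B^0_{p,p}}^p$, hence $B^0_{p,p} \subset L^p$ via the square function equivalence. When $2 \leq p < +\infty$, the opposite nesting $\|\cdot\|_{\ell^p} \leq \|\cdot\|_{\ell^2}$ applies, and the same computation run in the reverse order gives $\|f\|_{B^0_{p,p}}^p = \int \sum_j |\Delta_j f|^p \,\mathrm{d}x \leq \|S(f)\|_{L^p}^p$, that is $L^p \subset B^0_{p,p}$.

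Next I would obtain the two inclusions with $r = 2$ by interchanging the spatial $L^p$ norm with the $\ell^2$ norm over the dyadic index, using Minkowski's inequality in its mixed-norm form $\|a_j\|_{L^p_x(\ell^r_j)} \leq \|a_j\|_{\ell^r_j(L^p_x)}$ for $r \leq p$, with the inequality reversed for $r \geq p$. Applying this with $r = 2$ and $a_j = \Delta_j f$: for $2 \leq p < +\infty$ we have $r \leq p$, so $\|S(f)\|_{L^p} \leq \|f\|_{B^0_{p,2}}$ and therefore $B^0_{p,2} \subset L^p$; for $1 < p \leq 2$ we have $r \geq p$, the inequality reverses to $\|f\|_{B^0_{p,2}} \leq \|S(f)\|_{L^p}$, and combined with the square function equivalence this gives $L^p \subset B^0_{p,2}$.

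The main obstacle is entirely concentrated in the square function equivalence itself, which rests on the Mihlin--Hörmander (or Marcinkiewicz) multiplier theorem applied to randomized multipliers together with Khinchin's inequality, and which is precisely the step that breaks down for $p \in \{ 1, +\infty \}$. Once it is granted, the remaining arguments are the routine applications of $\ell^r$ monotonicity and Minkowski's inequality indicated above; the only remaining check is that the constants are uniform on the stated ranges of $p$ and that all four descriptions reduce consistently to Plancherel's identity $B^0_{2,2} = L^2$ at the common endpoint $p = 2$.
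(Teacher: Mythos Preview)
Your argument is correct and is precisely the standard route: the Littlewood--Paley square function equivalence on $L^p$ for $1<p<+\infty$, combined with the pointwise $\ell^2$--$\ell^p$ comparison for the $r=p$ inclusions and Minkowski's mixed-norm inequality for the $r=2$ inclusions. The paper does not supply its own proof of this proposition but defers to Theorems~2.40 and~2.41 in \cite{BCD}, whose arguments are exactly the ones you outline; so there is nothing to compare beyond noting that your proposal matches the cited source.
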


In the case $p = +\infty$, the space $B^1_{\infty, r}$ is included in the space of $\log$-Lipschitz functions, which play a special role in the theory of differential and transport equations.

\begin{defi}
Consider $\alpha \geq 0$ and define the space $LL_\alpha$ of bounded functions $f \in L^\infty$ such that, for all $x, y \in \R^d$, we have
\begin{equation}\label{eq:LLineq}
|f(x) - f(y)| \leq C(f) \big( 1 - \log(|x-y|) \, \big)^\alpha |x - y|.
\end{equation}
This space is Banach when associated to the norm $\| f \|_{LL_\alpha} := \| f \|_{L^\infty} + C^*(f)$, where $C^*(f)$ is the best constant $C(f)$ for which \eqref{eq:LLineq} holds.
\end{defi}

\begin{prop}[See Proposition 2.111 in \cite{BCD}]
Consider $\alpha \geq 0$. Then the $LL_\alpha$ norm is equivalent to
\begin{equation*}
\forall f \in LL_\alpha, \qquad \| f \|_{LL_\alpha} \approx \| f \|_{L^\infty} + \sup_{j \geq -1} \left[ \frac{\| \nabla S_j f \|_{L^\infty}}{(j+2)^\alpha} \right].
\end{equation*}
In particular, for all $r \in [1, + \infty]$ and $\alpha = 1 - \frac{1}{r}$, we have the embedding $B^1_{\infty, r} \subset LL_\alpha$.
\end{prop}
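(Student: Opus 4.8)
The plan is to establish the claimed norm equivalence first, and then deduce the embedding $B^1_{\infty,r} \subset LL_\alpha$ as a short corollary. Write $N(f) := \|f\|_{L^\infty} + \sup_{j \geq -1}(j+2)^{-\alpha}\|\nabla S_j f\|_{L^\infty}$ for the right-hand quantity, and recall that $S_j f = h_j * f$, where $h_j(x) = 2^{(j-1)d}h(2^{j-1}x)$ is an $L^1$-normalised dilate of a fixed Schwartz function $h$ with $\int h = 1$; in particular $\int \nabla h = 0$, since $\widehat{\nabla h}(0) = 0$. I would prove the bounds $N(f) \lesssim \|f\|_{LL_\alpha}$ and $\|f\|_{LL_\alpha} \lesssim N(f)$ separately.

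For the bound $N(f) \lesssim \|f\|_{LL_\alpha}$, the key is the cancellation $\int \nabla h_j = 0$, which lets me write $\nabla S_j f(x) = \int \nabla h_j(y)\,[f(x-y) - f(x)]\,{\rm d}y$. Inserting the modulus-of-continuity bound $|f(x-y)-f(x)| \leq C^*(f)(1 - \log|y|)^\alpha|y|$ for $|y|\leq 1$, together with $|f(x-y)-f(x)| \leq 2\|f\|_{L^\infty}$ for $|y|>1$, and then rescaling $z = 2^{j-1}y$, the near-diagonal contribution becomes $C^*(f)\int |\nabla h(z)|\,|z|\,(1 - \log|z| + (j-1)\log 2)_+^\alpha\,{\rm d}z$. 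Since $h$ is Schwartz the integral $\int|\nabla h(z)|\,|z|\,(1+|\log|z||)^\alpha\,{\rm d}z$ converges, and the linear growth of the logarithm in $j$ yields exactly the factor $(j+2)^\alpha$; the far contribution $|y|>1$ is controlled by $\|f\|_{L^\infty}$ using the rapid decay of $h$. This gives $(j+2)^{-\alpha}\|\nabla S_j f\|_{L^\infty} \lesssim \|f\|_{LL_\alpha}$ uniformly in $j$.

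For the reverse bound $\|f\|_{LL_\alpha}\lesssim N(f)$, I would estimate the modulus directly. Given $x,y$ with $|x-y|\leq 1$, choose $J \geq 0$ with $2^{-J} \approx |x-y|$, and split $f = S_J f + (f - S_J f)$. The low-frequency piece is handled by the mean value theorem, $|S_J f(x) - S_J f(y)| \leq \|\nabla S_J f\|_{L^\infty}|x-y| \lesssim (J+2)^\alpha N(f)|x-y|$. For the high-frequency tail I use $\|\Delta_k f\|_{L^\infty} \lesssim 2^{-k}\|\nabla \Delta_k f\|_{L^\infty} \lesssim 2^{-k}(k+2)^\alpha N(f)$, where the first step is the Bernstein lower bound of Lemma \ref{l:Bernstein} and the second comes from $\nabla \Delta_k f = \nabla(S_{k+1}-S_k)f$; summing the resulting geometric-times-polynomial series gives $\|f - S_J f\|_{L^\infty} \lesssim 2^{-J}(J+2)^\alpha N(f)$. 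Adding the two pieces and using $2^{-J}\approx|x-y|$ and $(J+2)^\alpha \approx (1-\log|x-y|)^\alpha$ produces the required bound $|f(x)-f(y)| \lesssim (1-\log|x-y|)^\alpha|x-y|\,N(f)$; the case $|x-y|\geq 1$ is trivial from $\|f\|_{L^\infty}$. Thus $C^*(f)\lesssim N(f)$ and the equivalence is complete.

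Finally, for the embedding I set $a_k := 2^k\|\Delta_k f\|_{L^\infty}$, so that $(a_k)\in\ell^r$ with $\|(a_k)\|_{\ell^r} = \|f\|_{B^1_{\infty,r}}$. Bernstein gives $\|\nabla S_j f\|_{L^\infty} \lesssim \sum_{k\leq j-1}2^k\|\Delta_k f\|_{L^\infty} = \sum_{k\leq j-1}a_k$, and Hölder's inequality in the index $k$ bounds this sum by $(j+1)^{1/r'}\|(a_k)\|_{\ell^r}$. Since $1/r' = 1 - 1/r = \alpha$, this is exactly $\lesssim (j+2)^\alpha\|f\|_{B^1_{\infty,r}}$, whence $N(f)\lesssim\|f\|_{B^1_{\infty,r}}$ and the equivalence yields $\|f\|_{LL_\alpha}\lesssim\|f\|_{B^1_{\infty,r}}$. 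The main obstacle throughout is bookkeeping the power $(j+2)^\alpha$ correctly: it arises from the cancellation integral in the first direction, from the choice $J\approx-\log_2|x-y|$ in the second, and from the Hölder exponent $1/r'=\alpha$ in the embedding, and making these three manifestations match is the crux of the argument.
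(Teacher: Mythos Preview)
Your proof is correct and follows the standard route for this result. Note, however, that the paper does not actually supply a proof of this proposition: it is stated with a citation to Proposition~2.111 in \cite{BCD} and left without argument, so there is no in-paper proof to compare against. Your three-step scheme --- bounding $N(f)$ via the cancellation $\int \nabla h_j = 0$ and the modulus of continuity, recovering $C^*(f)$ from $N(f)$ by the low/high frequency split at scale $2^{-J}\approx |x-y|$, and deducing the embedding by H\"older in the block index with exponent $1/r' = \alpha$ --- is essentially the argument in \cite{BCD}, and each step is sound. The only places worth tightening are cosmetic: after the rescaling $z = 2^{j-1}y$ the region $|y|\leq 1$ becomes $|z|\leq 2^{j-1}$ rather than all of $\R^d$, though this does not affect the conclusion since the Schwartz decay of $h$ absorbs the enlarged domain; and the identity $S_j = \sum_{-1\leq k \leq j-1}\Delta_k$ should be invoked explicitly to justify the passage $\|\nabla S_j f\|_{L^\infty} \lesssim \sum_{k\leq j-1} a_k$.
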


In fact, for our purposes, we will need to work in spaces of functions that embed in $LL_\alpha$, such as the Besov space $B^{1 + d/p}_{p, r} \subset B^1_{\infty, r} \subset LL_\alpha$. We therefore make the following definition (after (3.33) p. 157 in \cite{BCD}).

\begin{defi}
Consider $\alpha \geq 0$ and define the space $LL_\alpha^p$ with the following norm:
\begin{equation*}
\| f \|_{LL_\alpha^p} := \| f \|_{L^\infty} + \sup_{j \geq -1} \left[ \frac{2^{jd/p} \| \nabla S_j f \|_{L^p}}{(j+2)^\alpha} \right].
\end{equation*}
In particular, the Bernstein inequalities imply that $B^{1+d/p}_{p, r} \subset LL_\alpha^p \subset LL_\alpha^\infty =  LL_\alpha$ for all $p \in [1, + \infty]$ and $\alpha = 1 - \frac{1}{r} \geq 0$.
\end{defi}

\subsection{Non-Homogeneous Paradifferential Calculus}\label{s:NHPC}

Let us now introduce the paraproduct operator (after J.-M. Bony, see \cite{Bony}). Constructing the paraproduct operator relies on the observation that, 
formally, any product  of two tempered distributions $u$ and $v,$ may be decomposed into 
\begin{equation}\label{eq:bony}
u\,v\;=\;\mathcal{T}_u(v)\,+\,\mathcal{T}_v(u)\,+\,\mathcal{R}(u,v)\,,
\end{equation}
where we have defined
$$
\mathcal{T}_u(v)\,:=\,\sum_jS_{j-1}u\Delta_j v,\qquad\qquad\mbox{ and }\qquad\qquad
\mathcal{R}(u,v)\,:=\,\sum_j\sum_{|j'-j|\leq1}\Delta_j u\,\Delta_{j'}v\,.
$$
The above operator $\mc T$ is called ``paraproduct'' whereas
$\mc R$ is called ``remainder''.
The paraproduct and remainder operators have many nice continuity properties. 
The following ones have been of constant use in this paper. 
\begin{prop}\label{p:op}
For any $(s,p,r)\in\R\times[1,+\infty]^2$ and $t>0$, the paraproduct operator 
$\mathcal{T}$ maps continuously $L^\infty\times B^s_{p,r}$ in $B^s_{p,r}$ and  $B^{-t}_{\infty,\infty}\times B^s_{p,r}$ in $B^{s-t}_{p,r}$.
Moreover, the following estimates hold:
$$
\|\mathcal{T}_u(v)\|_{B^s_{p,r}}\,\leq\, C\,\|u\|_{L^\infty}\,\|\nabla v\|_{B^{s-1}_{p,r}}
$$
as well as
$$
\|\mathcal{T}_u(v)\|_{B^{s-t}_{p,r}}\,\leq\, \frac{C}{t} \|u\|_{B^{-t}_{\infty,\infty}}\,\|\nabla v\|_{B^{s-1}_{p,r}} \qquad \text{and} \qquad \|\mathcal{T}_u(v)\|_{B^{s-t}_{p,r}}\,\leq\, \frac{C}{t} \|u\|_{B^{-t}_{p,\infty}}\,\|\nabla v\|_{B^{s-1}_{\infty,r}}.
$$
For any $(s_1,p_1,r_1)$ and $(s_2,p_2,r_2)$ in $\R\times[1,+\infty]^2$ such that 
$s_1+s_2>0$, $1/p:=1/p_1+1/p_2\leq1$ and~$1/r:=1/r_1+1/r_2\leq1$,
the remainder operator $\mathcal{R}$ maps continuously~$B^{s_1}_{p_1,r_1}\times B^{s_2}_{p_2,r_2}$ into~$B^{s_1+s_2}_{p,r}$.
In the case $s_1+s_2=0$, provided $r=1$, operator $\mathcal{R}$ is continuous from $B^{s_1}_{p_1,r_1}\times B^{s_2}_{p_2,r_2}$ with values
in $B^{0}_{p,\infty}$.
\end{prop}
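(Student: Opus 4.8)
The statement is Bony's paradifferential calculus, and its proof is entirely a matter of spectral localization plus Young's inequality on the dyadic scale. The whole argument rests on two elementary support facts, which I would record first. If $\widehat u$ is supported in a ball $B(0,2^{j-1})$ and $\widehat v$ in an annulus of inner radius $\gtrsim 2^j$, then $\widehat{S_{j-1}u\,\Delta_j v}$ is supported in a \emph{dyadic annulus} $\{\tilde c\,2^j\le|\xi|\le\tilde C\,2^j\}$; consequently there is a fixed integer $N_0$ (depending only on $\chi$) such that
$\Delta_q\mc T_u(v)=\sum_{|j-q|\le N_0}\Delta_q\big(S_{j-1}u\,\Delta_j v\big)$.
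On the other hand, for $|j'-j|\le1$ the product $\Delta_j u\,\Delta_{j'}v$ has Fourier transform supported merely in a \emph{ball} $B(0,\tilde C\,2^j)$, so with $\widetilde\Delta_j:=\Delta_{j-1}+\Delta_j+\Delta_{j+1}$ one has $\Delta_q\mc R(u,v)=\sum_{j\ge q-N_0}\Delta_q\big(\Delta_j u\,\widetilde\Delta_j v\big)$. In both cases the defining series converges in $\mc S'$, since each of its dyadic blocks is a locally finite (resp. geometrically convergent) sum.

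\textbf{The paraproduct estimates.} From the annular localization and the uniform $L^p\to L^p$ bound for $\Delta_q$ I would write $\|\Delta_q\mc T_u(v)\|_{L^p}\lesssim\sum_{|j-q|\le N_0}\|S_{j-1}u\|_{L^\infty}\,\|\Delta_j v\|_{L^p}$ by Hölder. Using $\|S_{j-1}u\|_{L^\infty}\le\|u\|_{L^\infty}$, multiplying by $2^{qs}$, absorbing $2^{qs}\approx2^{js}$ (valid since $|j-q|\le N_0$), and taking the $\ell^r$-norm in $q$, Young's inequality with the finitely supported convolution kernel yields $\|\mc T_u(v)\|_{B^s_{p,r}}\lesssim\|u\|_{L^\infty}\|v\|_{B^s_{p,r}}$. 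The refinement with $\nabla v$ at regularity $s-1$ is identical once one replaces $\|\Delta_j v\|_{L^p}\approx2^{-j}\|\Delta_j\nabla v\|_{L^p}$ (legitimate because only blocks $\Delta_j v$ with genuine annular support intervene in $\mc T$, the lowest frequency being harmless). For $u\in B^{-t}_{\infty,\infty}$ one instead estimates $\|S_{j-1}u\|_{L^\infty}\le\sum_{k\le j-2}\|\Delta_k u\|_{L^\infty}\le\|u\|_{B^{-t}_{\infty,\infty}}\sum_{k\le j-2}2^{tk}\le\frac{C}{t}\,2^{tj}\,\|u\|_{B^{-t}_{\infty,\infty}}$, where the factor $\frac1t$ comes from $2^t-1\ge t\log2$; the extra $2^{tj}$ is exactly what the regularity shift $s\mapsto s-t$ can absorb, and the rest of the argument is unchanged. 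For the last displayed estimate the roles of the two integrability exponents are swapped: one puts $S_{j-1}u$ in $L^p$ (with the same geometric-series gain $\frac1t2^{tj}\|u\|_{B^{-t}_{p,\infty}}$) and $\Delta_j v$ in $L^\infty$, via $\|\Delta_j v\|_{L^\infty}\approx2^{-j}\|\Delta_j\nabla v\|_{L^\infty}$.

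\textbf{The remainder estimate.} I would write $\mc R(u,v)=\sum_j R_j$ with $R_j:=\Delta_j u\,\widetilde\Delta_j v$, and bound, by Hölder with $\frac1p=\frac1{p_1}+\frac1{p_2}\le1$,
$\|R_j\|_{L^p}\le\|\Delta_j u\|_{L^{p_1}}\|\widetilde\Delta_j v\|_{L^{p_2}}=2^{-j(s_1+s_2)}\,c_j d_j\,\|u\|_{B^{s_1}_{p_1,r_1}}\|v\|_{B^{s_2}_{p_2,r_2}}$,
where $(c_j)$ and $(d_j)$ are sequences of unit $\ell^{r_1}$- and $\ell^{r_2}$-norm respectively. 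Since $\Delta_q R_j=0$ unless $j\ge q-N_0$, this gives $2^{q(s_1+s_2)}\|\Delta_q\mc R(u,v)\|_{L^p}\lesssim\|u\|_{B^{s_1}_{p_1,r_1}}\|v\|_{B^{s_2}_{p_2,r_2}}\sum_{j\ge q-N_0}2^{(q-j)(s_1+s_2)}c_j d_j$. When $s_1+s_2>0$ the kernel $k\mapsto2^{-k(s_1+s_2)}\mathds{1}_{k\ge-N_0}$ lies in $\ell^1$ and $(c_j d_j)\in\ell^r$ with $\frac1r=\frac1{r_1}+\frac1{r_2}\le1$ by Hölder, so Young's inequality puts the left side in $\ell^r(q)$, i.e. $\mc R(u,v)\in B^{s_1+s_2}_{p,r}$. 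In the borderline case $s_1+s_2=0$ with $\frac1{r_1}+\frac1{r_2}=1$ (that is $r=1$) the sum reduces to $\sum_j c_j d_j\le\|(c_j)\|_{\ell^{r_1}}\|(d_j)\|_{\ell^{r_2}}\le1$, a bound uniform in $q$, whence $\mc R(u,v)\in B^0_{p,\infty}$.

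\textbf{Main obstacle.} There is no genuine difficulty here; the proof is a standard one (as in Chapter~2 of \cite{BCD}). The only two points that require care are the spectral-support bookkeeping of the first paragraph — which is what forces only $O(1)$ frequency interactions in $\mc T$ and a one-sided frequency restriction in $\mc R$ — and keeping track of the $\frac1t$ produced by the geometric sum $\sum_{k\le j-2}2^{tk}$ in the negative-regularity paraproduct bounds; everything else is summation of geometric series and Young's inequality.
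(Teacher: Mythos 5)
Your proof is correct and is precisely the standard argument; the paper itself gives no proof of Proposition \ref{p:op} but simply quotes it from Chapter 2 of \cite{BCD}, where the proof is the one you reproduce (spectral localization of $S_{j-1}u\,\Delta_j v$ in dyadic annuli and of $\Delta_j u\,\widetilde\Delta_j v$ in dyadic balls, followed by H\"older and Young on the dyadic scale, with the $1/t$ arising from the geometric sum $\sum_{k\le j-2}2^{tk}$). Your remark that the reverse Bernstein inequality $\|\Delta_j v\|_{L^p}\approx 2^{-j}\|\Delta_j\nabla v\|_{L^p}$ is legitimate because only annular blocks of $v$ enter $\mc T_u(v)$ (the terms with $j\le 0$ vanishing since $S_{j-1}=0$ there) correctly handles the only point where the paper's formulation with $\|\nabla v\|_{B^{s-1}_{p,r}}$ differs from the textbook statement.
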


\subsection{Transport Equations and Commutator Estimates}

In this section, we focus on the transport. We refer again to Chapters 2 and 3 of \cite{BCD} for additional details. We study the initial value problem
\begin{equation}\label{eq:TV}
\begin{cases}
\partial_t f + v \cdot \nabla f = g \\
f_{|t = 0} = f_0\,.
\end{cases}
\end{equation}
Here, $g$ is a given function. In the sequel, the velocity field $v=v(t,x)$ will always assumed to be divergence-free, \tsl{i.e.} $\D(v) = 0$, so that Lebesgue norms will be conserved $\| f(t) \|_{L^p} \leq \| f_0 \|_{L^p} + \| g \|_{L^1(L^p)}$ by the flow of \eqref{eq:TV} if $v$ is regular enough ($v \in L^1(W^{1, 1}_{\rm loc})$ if we want to use Di Perna-Lions theory for example).

\medskip

We first focus on the case where the vector field $v$ is globally Lipschitz: we will assume that $v$ possesses Besov-Lipschitz regularity $B^\sigma_{p, r}$ with
\begin{equation}\label{eq:Lip}
\sigma > 1 + \frac{d}{p} \qquad \text{or} \qquad \sigma = 1 + \frac{d}{p} \text{ and } r=1.
\end{equation}
Finding \textsl{a priori} estimates for problem \eqref{eq:TV} in Besov spaces requires to look at the dyadic blocks. Let $j \geq -1$. Applying $\Delta_j$ to the transport equation yields
\begin{equation}\label{eq:AppTransport}
\Big( \partial_t + v \cdot \nabla \Big) \Delta_j f = \big[ v \cdot \nabla, \Delta_j \big] f + \Delta_j g,
\end{equation}
where $\big[ v \cdot \nabla, \Delta_j \big] f$ is the commutator $(v \cdot \nabla) \Delta_j - \Delta_j (v \cdot \nabla)$. The following estimate is of recurring use in this article (see Lemma 2.100). 

\begin{lemma}\label{l:CommBCD}
Consider $s \in \R$, indices $p, r \in [1, + \infty]$ and a function $f \in B^s_{p, r}$. Assume that $q \in [1, + \infty]$ satisfies $q \geq p$. We set $\sigma_0 = -1 - \min \left\{ \frac{1}{q}, \frac{1}{p'} \right\}$ where $p'$ is the conjuguate exponent $\frac{1}{p} + \frac{1}{p'} = 1$. Then the following inequalities hold
\begin{equation*}
\Big\| 2^{js} \big[ v \cdot \nabla, \Delta_j \big] f \Big\|_{\ell^r(L^p)} \lesssim
\begin{cases}
\| \nabla v \|_{B^{d/q}_{q, \infty} \cap L^\infty} \| f \|_{B^s_{p, r}}  & \text{if } \sigma_0 < s < 1 + \frac{d}{q},\\
\| \nabla v \|_{B^{d/q}_{q, 1}} \| f \|_{B^s_{p, \infty}} & \text{if } s = \sigma_0 \text{ and } r = + \infty,\\
\| \nabla v \|_{L^\infty} \| f \|_{B^s_{p, r}} + \| \nabla f \|_{L^p} \| \nabla v \|_{B^{s-1}_{\infty, r}} & \text{if } s > -1
\end{cases}
\end{equation*}
\end{lemma}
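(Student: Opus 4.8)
The strategy is the classical one for commutator estimates in Besov spaces (compare Lemma~2.100 in \cite{BCD}). Since $\D(v)=0$, the derivative may be moved freely between $v\cdot\nabla f=v^k\partial_k f$ and $\D(fv)$; I would start from $v^k\partial_k f$ (with implicit sum over $k$) and apply Bony's decomposition~\eqref{eq:bony} to both $v^k\,\partial_k\Delta_j f$ and $\Delta_j\big(v^k\,\partial_k f\big)$. Because $\partial_k$ commutes with $\Delta_j$, the commutator splits into five pieces,
\begin{equation*}
\big[v\cdot\nabla,\Delta_j\big]f=\big[\mc T_{v^k},\Delta_j\big]\partial_k f+\mc T_{\partial_k\Delta_j f}(v^k)+\mc R\big(v^k,\partial_k\Delta_j f\big)-\Delta_j\mc T_{\partial_k f}(v^k)-\Delta_j\mc R\big(v^k,\partial_k f\big),
\end{equation*}
and the plan is to estimate each, multiply by $2^{js}$ and take the $\ell^r(L^p)$ norm.

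The first piece is the only genuinely commutator-type term. Spectral localisation reduces it to $\sum_{|j-j'|\le4}\big[S_{j'-1}v^k,\Delta_j\big]\Delta_{j'}\partial_k f$, and the basic commutator bound of Lemma~\ref{l:basicComm} estimates each summand by $2^{-j}\|\nabla S_{j'-1}v\|_{L^\infty}\|\Delta_{j'}\partial_k f\|_{L^p}$; Bernstein (Lemma~\ref{l:Bernstein}) and $2^{j'}\approx2^{j}$ then yield $\lesssim\|\nabla v\|_{L^\infty}\|\Delta_{j'}f\|_{L^p}$, so after summation this contributes $\lesssim\|\nabla v\|_{L^\infty}\|f\|_{B^s_{p,r}}$ for \emph{every} $s\in\R$ --- in particular it is the only piece that survives in the third (crude) estimate. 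For the two paraproducts $\mc T_{\partial_k\Delta_j f}(v^k)$ and $\Delta_j\mc T_{\partial_k f}(v^k)$ I would invoke Proposition~\ref{p:op}, distinguishing whether the regularity index $s-1$ carried by $\partial_k f$ is $\ge0$ or $<0$; the former case (which occurs precisely when $s\ge1$, hence the relevance of the ceiling $s<1+d/q$) forces one to use the $\|\nabla v\|_{B^{d/q}_{q,\infty}}$ norm rather than just $\|\nabla v\|_{L^\infty}$.

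The delicate pieces are the two remainders $\mc R\big(v^k,\partial_k\Delta_j f\big)$ and $\Delta_j\mc R\big(v^k,\partial_k f\big)$: using $\D(v)=0$ to relocate the derivative and then the remainder estimate of Proposition~\ref{p:op}, one needs both the sum of the regularity indices of the two factors to be positive and the integrability exponents to be compatible, and it is the bookkeeping of these constraints --- together with the Besov embeddings needed to reconcile the integrability mismatch between $v$ (index $q$) and $f$ (index $p\le q$) --- that produces exactly the floor $s>\sigma_0=-1-\min\{1/q,1/p'\}$ and, at the endpoint $s=\sigma_0$, $r=\infty$, the need for the stronger summation norm $\|\nabla v\|_{B^{d/q}_{q,1}}$. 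For the third estimate, valid for all $s>-1$, the trick is not to make the remainders self-contained: instead I would split off the low-frequency part of $\nabla f$, bound it directly in $L^p$, and absorb it into the term $\|\nabla f\|_{L^p}\|\nabla v\|_{B^{s-1}_{\infty,r}}$, so that only factors carrying a high-frequency component ever need the $B^{s-1}_{\infty,r}$ regularity of $\nabla v$. I expect the main obstacle to be exactly this index bookkeeping for the remainder terms under the constraint $p\le q<+\infty$ --- i.e. verifying that every application of $\mc R$ lands in a range covered by Proposition~\ref{p:op} --- whereas the commutator term and the paraproducts are comparatively routine.
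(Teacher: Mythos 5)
The paper does not actually prove this lemma --- it is quoted from \cite{BCD} (Lemma 2.100 there, as the text surrounding the statement indicates) --- so there is no in-house proof to compare against; your outline correctly reproduces the skeleton of the standard argument, including the exact five-term Bony decomposition of the commutator that the author does write out and estimate term by term later, in the proof of Theorem \ref{t:ContCrit}. Be aware, though, that what you have written is a plan rather than a proof: the decisive index bookkeeping for the two remainder terms and for the paraproduct $\mc T_{\partial_k\Delta_j f}(v^k)$ --- which is precisely where the thresholds $\sigma_0$ and $1+d/q$ and the endpoint condition $r=+\infty$ come from --- is asserted to ``produce exactly'' the stated constraints but is not carried out, and that is where all the work of the BCD proof lies.
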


The following basic commutator inequality will also be useful.

\begin{lemma}[See Lemma 2.97 in \cite{BCD}]\label{l:basicComm}
    Let $\theta \in C^1$ be such that $(1 + |\xi|) \what{\theta}(\xi) \in L^1$. Then there exists a constant $C > 0$ such that for any Lipschitz $a$ with gradient $\nabla a \in L^p$ and for any $f \in L^q$, we have
    \begin{equation*}
        \forall \lambda > 0, \qquad \big\| \big[ \theta (\lambda^{-1} D), a \big] f \big\|_{L^r} \leq \frac{C}{\lambda} \| \nabla a \|_{L^p} \| f \|_{L^q}, \qquad \text{if } \frac{1}{r} = \frac{1}{p} + \frac{1}{q} \geq 1.
    \end{equation*}
    \end{lemma}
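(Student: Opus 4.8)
The plan is to represent the commutator as an integral operator whose kernel carries the difference $a(y) - a(x)$ explicitly, and then to control that difference through the gradient of $a$.

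First I would write the Fourier multiplier $\theta(\lambda^{-1}D)$ as a convolution, $\theta(\lambda^{-1}D)h = k_\lambda * h$, where $k_\lambda(z) = \lambda^d k(\lambda z)$ and $k := \mc F^{-1}\theta$, so that $|k|$ agrees with $|\what{\theta}|$ up to a reflection and an inessential multiplicative constant; in particular the hypothesis $(1+|\cdot|)\what{\theta} \in L^1$ ensures that the first moment $M_\theta := \int_{\R^d} |z|\,|k(z)|\,{\rm d}z$ is finite. Spelling out the two terms of the commutator and moving the factor $a(x)$ inside the convolution integral yields
\begin{equation*}
\big[ \theta(\lambda^{-1}D), a \big] f(x) = \int_{\R^d} k_\lambda(x-y)\,\big( a(y) - a(x) \big)\, f(y)\,{\rm d}y.
\end{equation*}
Because $a$ is Lipschitz, for almost every $x$ one has $a(y) - a(x) = -(x-y)\cdot\int_0^1 \nabla a\big( x + t(y-x) \big)\,{\rm d}t$, and the change of variables $z = x - y$ turns the identity above into
\begin{equation*}
\big[ \theta(\lambda^{-1}D), a \big] f(x) = -\int_{\R^d}\int_0^1 k_\lambda(z)\,\big( z\cdot\nabla a(x - tz) \big)\, f(x-z)\,{\rm d}t\,{\rm d}z.
\end{equation*}

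Next I would take the $L^r$ norm in $x$ and use Minkowski's integral inequality to bring the norm under the $(t,z)$-integral, followed by Hölder's inequality with $1/r = 1/p + 1/q$ and the translation invariance of Lebesgue norms; for each fixed $t$ and $z$ the resulting contribution is bounded by $|k_\lambda(z)|\,|z|\,\|\nabla a\|_{L^p}\,\|f\|_{L^q}$, so that
\begin{equation*}
\big\| \big[ \theta(\lambda^{-1}D), a \big] f \big\|_{L^r} \leq \| \nabla a \|_{L^p}\,\| f \|_{L^q}\int_{\R^d} |z|\,|k_\lambda(z)|\,{\rm d}z.
\end{equation*}
The scaling relation $k_\lambda(z) = \lambda^d k(\lambda z)$ finally gives $\int_{\R^d}|z|\,|k_\lambda(z)|\,{\rm d}z = \lambda^{-1} M_\theta$, which is exactly the announced bound with $C = M_\theta$.

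I do not expect any genuine obstacle here: the only points deserving a word of care are the translation of the Fourier-side hypothesis on $\theta$ into the weighted $L^1$ control of the kernel $k$, and the justification of the pointwise identity for $a(y) - a(x)$ together with the application of Fubini's theorem. Both are legitimate since $a$ is Lipschitz (hence differentiable almost everywhere and absolutely continuous along almost every line) and $k$ has a finite first moment; if full rigour is wanted, it suffices to first establish the estimate for $f \in \mc S$ and smooth $a$ and then pass to the limit by density, the final inequality itself furnishing the required uniform bound.
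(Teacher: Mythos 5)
Your proof is correct and is precisely the classical argument behind Lemma 2.97 of \cite{BCD}, which the paper simply cites without reproducing: represent $\theta(\lambda^{-1}D)$ as convolution with the rescaled kernel $k_\lambda$, write the commutator as an integral against $a(y)-a(x)$, expand that difference along the segment, and conclude by Minkowski and H\"older together with the finite first moment of the kernel. The only point worth noting is that your Minkowski step requires $r \geq 1$, i.e.\ $\tfrac1p + \tfrac1q \leq 1$; this is the intended hypothesis (and the one under which the lemma is actually invoked in the paper, e.g.\ with $p=q=\infty$), so the ``$\geq 1$'' in the statement should be read as a typo for ``$\leq 1$'' rather than as a condition your argument fails to cover.
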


Applying these commutator inequalities to obtain $L^p$ estimates in the transport equation \eqref{eq:AppTransport} results (with some additional work) in a well-posedness theorem for problem \eqref{eq:TV} in general Besov spaces (see Theorem 3.19 in \cite{BCD}).

\begin{thm}\label{th:transport}
Let $s \in \R$ and $p, q, r \in [1, + \infty]$ such that $p \leq q$. Define $\sigma_0$ as in Lemma \ref{l:CommBCD} and suppose that $s > \sigma_0$, or that $s = \sigma_0$ and $r = + \infty$. Given $T > 0$, assume that the (divergence free) vector field $v$ is such that $\Delta_{-1} v \in L^q_T(L^\infty)$ for some $q > 1$ and
\begin{equation*}
\begin{split}
& \nabla v \in L^1_T(B^{d/p}_{p, \infty} \cap L^\infty) \; \, \qquad \text{if } s < 1 + \frac{d}{p}, \\
& \nabla v \in L^1_T(B^{s-1}_{p, r}) \qquad \qquad \quad \text{if } s > 1 + \frac{d}{p}, \text{ or if } s = 1 + \frac{d}{p} \text{ and } r=1.
\end{split}
\end{equation*}
Then, for any initial datum $f_0 \in \B$ and $g \in L^1_T(B^s_{p, r})$, the transport equation \eqref{eq:TV} has a unique solution $f$ in\footnote{If $X$ is a Banach space with predual $Y$, the notation $C_w([0, T[ ; X)$ refers to the space of weakly continuous functions in $X$, that is measurable functions $f : [0, T[ \tend X$ such that the brackets $t \mapsto \langle f(t), \phi \rangle_{X \times Y}$ are continuous for all $\phi \in Y$.} 
\begin{itemize}
\item the space $C^0_T(B^s_{p, r})$, if $r < +\infty$;
\item the space $\left( \bigcap_{s'<s} C^0_T(B^{s'}_{p, \infty}) \right) \cap C^0_{w, T}(B^s_{p, \infty})$, if $r = +\infty$.
\end{itemize}
Moreover, this unique solution satisfies the following estimate:
\begin{equation*} 
\| f \|_{L^\infty_T(\B)} \leq \exp \left( C \int_0^T \| \nabla v \|_{B^{s-1}_{p, r}} \right)
\left\{ \| f_0 \|_{\B} + \int_0^T \exp \left( - C \int_0^\tau \| \nabla v \|_{B^{s-1}_{p, r}} \right) \| g(\tau) \|_{\B} {\rm d} \tau  \right\},
\end{equation*}
for some constant $C = C(d, p, r, s)>0$.
\end{thm}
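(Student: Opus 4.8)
The plan is to follow the classical three–step strategy for transport equations in Besov spaces: first derive the \textsl{a priori} estimate via a Littlewood--Paley/commutator analysis, then build a solution by regularization and compactness, and finally deduce uniqueness and time-continuity from the same estimate.

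\textbf{Step 1: \textsl{a priori} estimate.} Assume the data are smooth and $f$ is a smooth solution of \eqref{eq:TV}. Applying $\Delta_j$ gives \eqref{eq:AppTransport}; since $\D(v) = 0$, an $L^p$ estimate along the flow of $v$ (multiply by $|\Delta_j f|^{p-2}\Delta_j f$ and integrate, or invoke the standard transport bound) yields
\[
\|\Delta_j f(t)\|_{L^p} \le \|\Delta_j f_0\|_{L^p} + \int_0^t \Big( \big\| [v\cdot\nabla,\Delta_j] f \big\|_{L^p} + \|\Delta_j g\|_{L^p} \Big) \, {\rm d}\tau .
\]
Multiplying by $2^{js}$, taking the $\ell^r$ norm in $j$, and bounding the commutator with Lemma \ref{l:CommBCD}: in the regime $\sigma_0 < s < 1 + d/p$ one gets a contribution $\lesssim \|\nabla v\|_{B^{d/p}_{p,\infty}\cap L^\infty}\|f\|_{B^s_{p,r}}$, while for $s > 1 + d/p$ (or $s = 1+d/p$, $r=1$) the third line of the Lemma together with $B^{s-1}_{p,r}\subset L^\infty$ gives $\lesssim \|\nabla v\|_{B^{s-1}_{p,r}}\|f\|_{B^s_{p,r}}$; the borderline case $s = \sigma_0$, $r=\infty$ uses the second line and $\nabla v\in L^1_T(B^{d/q}_{q,1})$, which the hypotheses supply through embeddings. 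In all admissible cases one obtains
\[
\|f(t)\|_{B^s_{p,r}} \le \|f_0\|_{B^s_{p,r}} + \int_0^t \Big( C\|\nabla v(\tau)\|_{B^{s-1}_{p,r}}\|f(\tau)\|_{B^s_{p,r}} + \|g(\tau)\|_{B^s_{p,r}} \Big) \, {\rm d}\tau,
\]
and Grönwall's lemma produces exactly the stated bound.

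\textbf{Step 2: existence.} Regularize: set $f_0^n = S_n f_0$, $g^n = S_n g$, and let $v^n$ be a smoothed divergence-free field with $\nabla v^n$ converging to $\nabla v$ and uniformly bounded in the relevant $L^1_T$-space. The method of characteristics (or a contraction argument) provides smooth solutions $f^n$. Step 1 makes $(f^n)_n$ uniformly bounded in $L^\infty_T(B^s_{p,r})$, and the equation makes $(\partial_t f^n)_n$ uniformly bounded in $L^1_T(B^{s-1}_{p,r})$ (or a slightly larger space). By an Aubin--Lions-type compactness argument plus the Fatou property of Proposition \ref{p:Fatou}, one extracts a limit $f\in L^\infty_T(B^s_{p,r})$ solving \eqref{eq:TV} in $\mc D'$; the term $v\cdot\nabla f = \D(vf)$ passes to the limit since $v^n\to v$ strongly on compacts while $f^n$ converges weakly.

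\textbf{Step 3: uniqueness and time-continuity.} The difference of two solutions with the same data solves \eqref{eq:TV} with zero source and datum; since the top index $r$ (and the critical value of $s$) may be lost, one applies the \textsl{a priori} estimate at a lower level $s' < s$ with $s' > \sigma_0$, using $B^s_{p,r}\subset B^{s'}_{p,r}$, to conclude $f_1 = f_2$. For $r < +\infty$, each $\Delta_j f$ is continuous in time, so $S_n f\in C^0_T(B^s_{p,r})$, and the tail $\|f - S_n f\|_{B^s_{p,r}}$ is bounded uniformly in time (apply Step 1 to the high-frequency truncation) and $\to 0$ by dominated convergence in $\ell^r$, so $(S_n f)_n$ is Cauchy in $C^0_T(B^s_{p,r})$; for $r = +\infty$ one recovers only the weaker $\bigcap_{s'<s}C^0_T(B^{s'}_{p,\infty})\cap C^0_{w,T}(B^s_{p,\infty})$ statement by the same truncation argument. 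The genuinely delicate point throughout is Lemma \ref{l:CommBCD}: matching its three regimes to the assumptions on $v$, checking that the single low-frequency hypothesis $\Delta_{-1}v\in L^q_T(L^\infty)$ is enough, and covering the endpoint indices $s = \sigma_0$ and $s = 1 + d/p$; once that estimate is secured, the rest is routine.
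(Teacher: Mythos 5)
This theorem is not proved in the paper: it is quoted from the literature (Theorem 3.19 in \cite{BCD}), with the paper only indicating that it follows from the commutator estimates of Lemma \ref{l:CommBCD} applied to the localized equation \eqref{eq:AppTransport} "with some additional work". Your proposal reconstructs exactly that standard argument (dyadic $L^p$ estimate along the flow, commutator bound matched to the three regimes of Lemma \ref{l:CommBCD}, Grönwall, then regularization/compactness for existence and a lower-regularity estimate for uniqueness and time continuity), so it is correct and takes essentially the same approach as the cited proof.
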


\begin{rmk}
By Remark 3.16 in \cite{BCD}, Theorem \ref{th:transport} extends to homogeneous Besov spaces: the space $B^s_{p, r}$ may be replaced by $\dot{B}^s_{p, r}$ under the additional requirement that $s < 1 + \frac{d}{q}$, or that $s = 1 + \frac{d}{q}$ and $r=1$.
\end{rmk}

In the special case of Besov spaces of regularity index $s = 0$, a better estimate is available for the solution of \eqref{eq:TV}. When $s \neq 0$, Theorem \ref{th:transport} shows that the norm $\| f(t) \|_{B^s_{p, r}}$ might grow exponentially with respect to $\nabla v$, whereas $L^p$ norms are simply conserved $\| f(t) \|_{L^p} \leq \| f_0 \|_{L^p} + \| g \|_{L^1_T(L^p)}$. Some kind of logarithmic interpolation allows to combine both inequalities and obtain a bound that is linear with respect to $\nabla v$.

\begin{prop}[see Theorem 3.18 in \cite{BCD}]\label{p:linEstTV}
Consider $p, r \in [1, + \infty]$ and let $f \in L^\infty_T(B^0_{p, r})$ be a solution of the transport equation \eqref{eq:TV} for soem initial datum $f_0 \in B^0_{p, r}$ and with $v \in L^1_T(W^{1, \infty})$ and $g \in L^1_T(B^0_{p, r})$ for some $T > 0$. Then the following inequality holds:
\begin{equation*}
\| f \|_{L^\infty_T(B^0_{p, r})} \lesssim \left( \| f_0 \|_{B^0_{p, r}} + \int_0^T \| g \|_{B^0_{p, r}} \dt \right) \left( 1 + \int_0^T \| \nabla v \|_{L^\infty} \dt \right).
\end{equation*}
\end{prop}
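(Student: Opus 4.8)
The plan is to prove this $s=0$ linear estimate by the superposition argument of Vishik and of Hmidi--Keraani: split the solution into spectrally localized pieces, use conservation of $L^p$ norms for those pieces (an estimate cheap in regularity but exponential in time), balance it against the standard Besov estimate at a small auxiliary regularity (cheap in time but costing a factor in frequency width), and optimize.

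Concretely, first I would decompose $f$ by linearity: for $k\geq -1$ let $f_k$ solve $\big(\partial_t+v\cdot\nabla\big)f_k=\Delta_k g$ with datum $\Delta_k f_0$; by the uniqueness part of Theorem \ref{th:transport}, $f=\sum_{k\geq-1}f_k$. Fix once and for all an auxiliary exponent $\epsilon_0\in\,]0,1[$. Two bounds on each $f_k$ are then available. Since $v$ is divergence free, the $L^p$ norm is merely advected, so $\|f_k(t)\|_{L^p}\leq b_k(t):=\|\Delta_k f_0\|_{L^p}+\|\Delta_k g\|_{L^1_t(L^p)}$. On the other hand, applying the commutator estimate of Lemma \ref{l:CommBCD} with $q=+\infty$ (for which $\sigma_0=-1$, so that $\pm\epsilon_0\in\,]-1,1[\,$ and the Lipschitz norm $\|\nabla v\|_{L^\infty}$ is the only weight needed) together with Grönwall's lemma gives, using the spectral localization $\|\Delta_k f_0\|_{B^{\pm\epsilon_0}_{p,r}}\approx 2^{\pm\epsilon_0 k}\|\Delta_k f_0\|_{L^p}$,
\[
\|f_k(t)\|_{B^{\pm\epsilon_0}_{p,r}}\lesssim 2^{\pm\epsilon_0 k}\,b_k(t)\,\exp\Big\{C\int_0^t\|\nabla v\|_{L^\infty}\dt\Big\}.
\]
Combining the $+\epsilon_0$ estimate for the frequencies $j\geq k$ with the $-\epsilon_0$ one for $j<k$ yields a family of sequences $\big(c_j^{(k)}(t)\big)_j$ in the unit ball of $\ell^r$ such that $\|\Delta_j f_k(t)\|_{L^p}\leq c_j^{(k)}(t)\,2^{-\epsilon_0|j-k|}\,b_k(t)\,\exp\{C\int_0^t\|\nabla v\|_{L^\infty}\dt\}$; in particular $c_j^{(k)}\leq 1$.

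Then I would bound $\|\Delta_j f\|_{L^p}\leq\sum_k\|\Delta_j f_k\|_{L^p}$, splitting the sum into $|j-k|<N$ and $|j-k|\geq N$ for an integer $N$ to be chosen. For the near-diagonal part I discard everything but $\|\Delta_j f_k\|_{L^p}\lesssim\|f_k\|_{L^p}\leq b_k$; since at most $\sim N$ indices $k$ contribute for each $j$, Minkowski's inequality in $\ell^r_j$ gives a contribution $\lesssim N\big(\|f_0\|_{B^0_{p,r}}+\|g\|_{L^1_T(B^0_{p,r})}\big)$. For the off-diagonal part I use the geometrically decaying bound above; the convolution inequality $\ell^1\ast\ell^r\hookrightarrow\ell^r$ (together with $c_j^{(k)}\leq 1$) produces a contribution $\lesssim 2^{-\epsilon_0 N}\exp\{C\int_0^T\|\nabla v\|_{L^\infty}\dt\}\big(\|f_0\|_{B^0_{p,r}}+\|g\|_{L^1_T(B^0_{p,r})}\big)$. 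Adding the two and choosing $N$ so that $\epsilon_0 N\log 2\approx 1+C\int_0^T\|\nabla v\|_{L^\infty}\dt$, i.e. $N\approx 1+\int_0^T\|\nabla v\|_{L^\infty}\dt$, makes the exponential collapse to a constant and leaves exactly the claimed bound.

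There is no deep point here; the only care required is bookkeeping. The main things to get right are: (i) extracting the uniform geometric factor $2^{-\epsilon_0|j-k|}$ from the two $B^{\pm\epsilon_0}$ estimates while keeping the $\ell^r$-summation constants under control as $k$ varies — this is where one uses only $c_j^{(k)}\leq1$ rather than the full $\ell^r$ smallness; and (ii) the counting of near-diagonal indices producing the benign factor $N$. The interplay between the two estimates — one cheap in regularity but exponential in time, the other cheap in time but quasi-linear in $N$ — is the conceptual heart, and it is precisely this scheme that Lemma \ref{l:transportLin} upgrades to positive-regularity spaces $B^\sigma_{\infty,1}$, where $N$ is replaced by $N2^{\sigma N}$.
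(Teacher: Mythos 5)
Your argument is correct and is precisely the Vishik/Hmidi--Keraani superposition scheme: the paper itself only cites Theorem 3.18 of \cite{BCD} for this statement, but its proof of the companion Lemma \ref{l:transportLin} follows exactly the decomposition, the near/far-diagonal splitting, and the optimization in $N$ that you describe. Nothing to add.
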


\begin{rmk}
Just as for Theorem \ref{th:transport}, the linear estimates of Proposition \ref{p:linEstTV} also are true when replacing $B^0_{p, r}$ by the homogeneous space $\dot{B}^0_{p, r}$, only requiring that $r = 1$ when $p = +\infty$.
\end{rmk}

Finally, we consider the transport equation when the (still divergence free) vector field $v$ is not quite Lipschitz, but $\log$-Lipschitz. In that case, it is possible that the solution will loose regularity with time. The rate at which this loss happens depends on how far $v$ is from actually being Lipschitz: for instance if $v \in L^1(LL_\alpha)$ for some $\alpha > 0$, then loss of regularity can be made arbitrarily small.

\begin{thm}[See Theorem 3.33 and Remark 3.35 in \cite{BCD}]\label{t:limitedLoss}
Consider $p, q \in [1, + \infty]$ and define $\sigma_0$ as in Lemma \ref{l:CommBCD}. Consider $s \in ]\sigma_0, 1 + d/q [$ and an initial datum $f_0 \in B^s_{p, \infty}$. We assume that the velocity field $v$ satisfies
\begin{equation}\label{eq:limitedLossHypV}
\Delta_{-1} v \in L^\infty_T(L^\infty) \qquad \text{and} \qquad ({\rm Id} - \Delta_{-1}) v \in L^1_T(LL^q_\alpha)
\end{equation}
for some $T > 0$. For all $g$ such that the inequality
\begin{equation}\label{eq:limitedLossHypG}
\| g \|_{B^{s - \eta}_{p, \infty}} \lesssim \| f \|_{B^{s-\eta}_{p, \infty}} \Big( \| \Delta_{-1} v \|_{L^\infty} + \big\| ({\rm Id} - \Delta_{-1})v \big\|_{LL^q_\alpha} \Big)
\end{equation}
holds for all values of $\eta$ such that $\sigma_0 < s - \eta \leq s$, the transport equation has a unique solution in the space $C^0_T( \bigcap_{s' < s} B^{s'}_{p, \infty})$ and, for $\epsilon > 0$ small enough, the following inequality holds:
\begin{equation}\label{eq:limitedLossEst}
\| f \|_{B^{s - \epsilon}_{p, \infty}} \lesssim \| f_0 \|_{B^s_{p, \infty}} \exp \left\{ \frac{C}{\epsilon^{\alpha/(1 - \alpha)}} \left( \int_0^T \Big( \| \Delta_{-1} v \|_{L^\infty} + \big\| ({\rm Id} - \Delta_{-1})v \big\|_{LL^q_\alpha} \Big) \dt \right)^{1/(1 - \alpha)} \right\}.
\end{equation}
\end{thm}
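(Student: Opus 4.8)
The plan is to reconstruct the ``losing'' transport estimate of \cite{BCD}, adapted to the regularity class $LL^q_\alpha$ and with the dependence on $\alpha$ tracked explicitly. Throughout, write $V(t) := \| \Delta_{-1} v(t) \|_{L^\infty} + \big\| ({\rm Id} - \Delta_{-1}) v(t) \big\|_{LL^q_\alpha}$ and $\Theta(t) := \int_0^t V(\tau)\dt$. The first ingredient is the localized equation: applying $\Delta_j$ to \eqref{eq:TV} gives $(\partial_t + v \cdot \nabla)\Delta_j f = [v \cdot \nabla, \Delta_j] f + \Delta_j g$, and since $v$ is divergence free the $L^p$ estimate along the flow yields, for every $j \geq -1$,
\begin{equation*}
\frac{d}{dt}\| \Delta_j f(t) \|_{L^p} \leq \big\| [v \cdot \nabla, \Delta_j] f(t) \big\|_{L^p} + \| \Delta_j g(t) \|_{L^p}.
\end{equation*}

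The second, and really the only delicate, ingredient is a commutator estimate for $LL^q_\alpha$ fields: for $\sigma$ in any fixed compact subinterval of $]\sigma_0, 1 + d/q[$ there should be a family $(c_j(t))_{j \geq -1}$ bounded in $\ell^\infty$ with
\begin{equation*}
2^{j\sigma}\big\| [v \cdot \nabla, \Delta_j] f \big\|_{L^p} \lesssim c_j(t)\,(j+2)^\alpha\, V(t)\,\| f \|_{B^\sigma_{p, \infty}}.
\end{equation*}
I would prove this exactly as Lemma \ref{l:CommBCD}, splitting $v \cdot \nabla f$ by the Bony decomposition and controlling the paraproduct-commutator piece $[\mathcal{T}_{v_k}, \Delta_j]\partial_k f$ with the basic commutator bound of Lemma \ref{l:basicComm}, the only change being that $\nabla v$ is no longer bounded in $B^{d/q}_{q, \infty}$ but is estimated through the characterization $\| \nabla S_m v \|_{L^q} \lesssim 2^{m d/q}(m+2)^\alpha \| v \|_{LL^q_\alpha}$; this is precisely what produces the loss $(j+2)^\alpha$, while the genuinely Lipschitz low-frequency part $\Delta_{-1}v$ (whose gradient is $L^\infty$-controlled by $\| \Delta_{-1}v \|_{L^\infty}$ via Bernstein) enters without any loss.

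With these two facts I would run the estimate at a time-dependent regularity. Fix $\epsilon$ so small that $]s - \epsilon, s[\, \subset\, ]\sigma_0, 1 + d/q[$, let $\lambda \in C^1([0,T])$ be nondecreasing with $\lambda(0)=0$, $\lambda(T)=\epsilon$ (to be optimized), and set $\mathcal{F}(t) := \| f(t) \|_{B^{s - \lambda(t)}_{p, \infty}} = \sup_{j} 2^{j(s - \lambda(t))}\| \Delta_j f(t) \|_{L^p}$. Combining the two displays with hypothesis \eqref{eq:limitedLossHypG} at $\eta = \lambda(t)$ (which legitimately gives $\| g(t) \|_{B^{s-\lambda(t)}_{p,\infty}} \lesssim \mathcal{F}(t)V(t)$) leads, for each $j$, to
\begin{equation*}
\frac{d}{dt}\Big( 2^{j(s - \lambda(t))}\| \Delta_j f \|_{L^p} \Big) \leq \Big( -j \log 2\,\dot\lambda(t) + C(j+2)^\alpha V(t) \Big)\mathcal{F}(t) + C c_j(t) V(t)\mathcal{F}(t).
\end{equation*}
The bracket is a concave function of $j$, so its supremum over $j \geq -1$ is $\lesssim V(t)^{1/(1-\alpha)}\dot\lambda(t)^{-\alpha/(1-\alpha)}$; after the usual mollification argument to justify the differentiation and a Gr\"onwall step this gives
\begin{equation*}
\| f(T) \|_{B^{s - \epsilon}_{p, \infty}} \lesssim \| f_0 \|_{B^s_{p, \infty}}\exp\left\{ C\int_0^T \Big( V(t)^{1/(1-\alpha)}\dot\lambda(t)^{-\alpha/(1-\alpha)} + V(t) \Big)\dt \right\}.
\end{equation*}
Finally, minimizing $\int_0^T V^{1/(1-\alpha)}\dot\lambda^{-\alpha/(1-\alpha)}\dt$ over nondecreasing $\lambda$ with $\lambda(T) = \epsilon$ is a one-line H\"older/Lagrange computation whose optimum is $\dot\lambda(t) = \epsilon\, V(t)/\Theta(T)$; substituting turns the integral into $\epsilon^{-\alpha/(1-\alpha)}\Theta(T)^{1/(1-\alpha)}$, which dominates the harmless term $\int_0^T V = \Theta(T)$ for $\epsilon$ small, so we obtain exactly \eqref{eq:limitedLossEst}.

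Uniqueness then follows by applying this estimate to the difference of two solutions with vanishing data ($f_0 = 0$, $g = 0$), which forces the difference to vanish; existence in $C^0_T(\bigcap_{s' < s} B^{s'}_{p, \infty})$ is obtained along the standard route — regularizing $v$ into $S_n v$ and $f_0$ into $S_n f_0$, solving by Theorem \ref{th:transport}, invoking the a priori bound above (uniform in $n$ since $\| S_n v \|_{LL^q_\alpha} \lesssim \| v \|_{LL^q_\alpha}$), using the equation for time-equicontinuity at regularity $s' < s$, and passing to the limit by a standard compactness (Ascoli) argument. The main obstacle is the first commutator estimate with the sharp weight $(j+2)^\alpha$: one has to revisit the proof of Lemma \ref{l:CommBCD} term by term and check that only the factor $(m+2)^\alpha$ coming from $\| \nabla S_m v \|_{L^q}$ intervenes, with no compounding of losses — once this is in place, the concave maximization and the optimization of $\lambda$ are routine.
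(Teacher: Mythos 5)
The paper does not actually prove Theorem \ref{t:limitedLoss}: it is imported from \cite{BCD} (Theorem 3.33 and Remark 3.35), and the only original content is the remark explaining why the low/high-frequency splitting of $v$ in \eqref{eq:limitedLossHypV} is harmless. Your proposal is therefore measured against the cited proof, and it reconstructs that argument faithfully: the commutator estimate with a $(j+2)^\alpha$ loss traced back to $\| \nabla S_m v \|_{L^q} \lesssim 2^{md/q}(m+2)^\alpha \| v \|_{LL^q_\alpha}$ (with the Lipschitz low-frequency block $\Delta_{-1}v$ contributing no loss, which is exactly the point of the paper's remark), the time-dependent index $s - \lambda(t)$, the concave maximization over $j$ giving $V^{1/(1-\alpha)}\dot\lambda^{-\alpha/(1-\alpha)}$, and the choice $\dot\lambda = \epsilon V/\Theta(T)$, which produces precisely the exponents in \eqref{eq:limitedLossEst}. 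One display is written incorrectly as stated: you replace the damping term $-j\log 2\,\dot\lambda(t)\,2^{j(s-\lambda(t))}\| \Delta_j f \|_{L^p}$ by $-j\log 2\,\dot\lambda(t)\,\mathcal{F}(t)$, and since this quantity is nonpositive for $j \geq 0$ the substitution goes in the wrong direction; taken literally, the resulting ``inequality'' is false. The repair is the standard one and is presumably what you intend by the mollification/Gr\"onwall step: keep the individual block $X_j = 2^{j(s-\lambda)}\| \Delta_j f \|_{L^p}$ in the damping term, note that for smooth approximate solutions the supremum $\mathcal{F} = \sup_j X_j$ is attained, and at a maximizing index one has $X_j = \mathcal{F}$, so that the full bracket $-j\log 2\,\dot\lambda + C(j+2)^\alpha V$ does multiply $\mathcal{F}$ there and its supremum over $j$ controls $D^+ \log \mathcal{F}$. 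You should also be aware that the $(j+2)^\alpha$ loss is not confined to the paraproduct-commutator piece $[\mathcal{T}_{v_k},\Delta_j]\partial_k f$: terms such as $\Delta_j \mathcal{T}_{\partial_k f}(v_k)$ also pick it up through $\| \Delta_m v \|_{L^\infty} \lesssim 2^{-m}(m+2)^\alpha V$; but since your stated commutator bound carries the loss against the full norm $\| f \|_{B^\sigma_{p,\infty}}$ anyway, this does not affect the Gr\"onwall step above. With these points made precise the proof is complete and matches the cited one.
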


\begin{rmk}
Theorem \ref{t:limitedLoss} is a slight modification of Theorem 3.33 in \cite{BCD} to allow for vector fields such that $\Delta_{-1} v$ does not decay at infinity (as it would if it were $L^q$ for $q < + \infty$), and instead use assumptions \eqref{eq:limitedLossHypV}. The proof is identical, once it is noticed that Lemma 3.29 (still in \cite{BCD}) only involves the low frequencies to deal with the commutator $[\Delta_j, S_{j+1}v \cdot \nabla] f$, and then resorts to Lemma \ref{l:CommBCD} (Lemma 2.100 in \cite{BCD}) whose proof is already based on a separation of low and high frequencies as in \eqref{eq:limitedLossHypV}.
\end{rmk}

\begin{rmk}
The inequality \eqref{eq:limitedLossHypG} we have imposed on $g$ can be relaxed even further (see Remark 3.35 in \cite{BCD}), but we do not need such generality.
\end{rmk}

\addcontentsline{toc}{section}{References}
{\small

}

\end{document}